\documentclass{amsart}

%Packages

\usepackage{amsmath,url,graphicx,amssymb,enumerate,stmaryrd, amsthm}
\usepackage[pagebackref,colorlinks,citecolor=blue,linkcolor=blue,urlcolor=blue,filecolor=blue]{hyperref}
\usepackage{microtype}
\usepackage[all]{xy}
\usepackage{dsfont}
\usepackage{aliascnt}
\usepackage{pinlabel}
\usepackage{etoolbox}
\usepackage{tikz} \usetikzlibrary{matrix,arrows}
%\usetikzlibrary{matrix,arrows,decorations.pathmorphing,backgrounds,positioning,fit}
%\usetikzlibrary{arrows,decorations.pathmorphing,backgrounds,fit,positioning, shapes.symbols,chains,calc}
%\usepackage{verbatim}
%\usepackage{subcaption}

\tolerance=1000

%Formatting

\usepackage[hmargin=3cm,vmargin=3cm]{geometry}

%Math commands

\newcommand{\m}{\to}

\providecommand{\fS}{\mathfrak S}

\providecommand{\Bun}{\ensuremath\mathrm{Bun}}

\providecommand{\PBr}{\ensuremath\mathrm{PBr}}
\providecommand{\Br}{\ensuremath\mathrm{Br}}
\providecommand{\Bur}{\ensuremath\mathrm{Bur}}
\providecommand{\Ab}{\ensuremath\mathsf{Ab}}
\providecommand{\FI}{\ensuremath\mathsf{FI}}
\providecommand{\VIC}{\ensuremath\mathsf{VIC}}

\providecommand{\SI}{\ensuremath\mathsf{SI}}

\providecommand{\C}{\ensuremath\mathcal{C}}
\providecommand{\G}{\ensuremath\mathcal{G}}
\providecommand{\UG}{\ensuremath U\G}

\providecommand{\hooklongrightarrow}{\lhook\joinrel\longrightarrow}

\providecommand{\inject}{\hooklongrightarrow}

\providecommand{\N}{\ensuremath\mathbb N_0}
\providecommand{\Z}{\ensuremath\mathbb Z}
\providecommand{\uZ}{ {\underline{ \mathbb Z}}}
\providecommand{\Q}{\ensuremath\mathbb Q}
\providecommand{\R}{\ensuremath\mathbb R}
\providecommand{\F}{\ensuremath\mathbb F}
\providecommand{\bR}{\ensuremath\mathbf R}
\providecommand{\barR}{\overline{\ensuremath\mathbf R}}
\providecommand{\bk}{\mathbb{K}}

\providecommand{\cM}{\ensuremath\mathcal M}
\providecommand{\cQ}{\ensuremath\mathcal Q}
\providecommand{\cN}{\ensuremath\mathcal N}
\providecommand{\cG}{\ensuremath\mathcal G}
\providecommand{\cL}{\ensuremath\mathcal L}

\DeclareMathOperator{\coker}{coker}

\DeclareMathOperator{\cone}{cone}

\DeclareMathOperator{\Map}{Map}

\providecommand{\id}{\ensuremath\mathrm{id}}
\providecommand{\SSt}{\St^{E_1}}

\DeclareMathOperator{\Aut}{Aut}
\DeclareMathOperator{\hAut}{hAut}

\DeclareMathOperator{\Diff}{Diff}
\DeclareMathOperator{\Symp}{Symp}

\DeclareMathOperator{\GL}{GL}
\DeclareMathOperator{\colim}{colim}
\DeclareMathOperator{\Tor}{Tor}
\DeclareMathOperator{\Conf}{Conf}

\DeclareMathOperator{\Sp}{Sp}

\DeclareMathOperator{\Mod}{Mod}

\DeclareMathOperator{\Ind}{Ind}
\DeclareMathOperator{\Res}{Res}

\DeclareMathOperator{\St}{St}

\providecommand{\op}{\ensuremath\mathrm{op}}

%\newcommand{\s}{\sigma}
%\newcommand{\cat}[1]{\mathsf{#1}}
%\newcommand{\mr}[1]{{\rm #1}}
%\newcommand{\fS}{\mathfrak{S}}
%\newcommand{\fD}{\mathbf{D}_\theta}
%\newcommand{\blue}[1]{\textcolor{blue}{#1}}
%\newcommand{\cdt}{F_n}
%\newcommand{\cdtf}{\cE^\mr{SO}_n}
%\newcommand{\cdtt}{\cE^*_n}
%\newcommand{\bdt}{\mathbf{E}^\theta_n}
%\newcommand{\bdtc}{\mathbf{E}^{\theta,C}_n}
%\newcommand{\Map}{\mr{Map}}
%\newcommand{\coker}{\mr{coker}}
%\newcommand{\Hom}{\mathrm{Hom}}
%\newcommand{\FI}{\mathrm{FI}}
%\newcommand{\Inj}{\mathrm{Inj}}
%\newcommand{\im}{\mathrm{im}}
%\newcommand{\wInj}{\widetilde{\mathrm{Inj}}}
%\newcommand{\Tor}{\mathrm{Tor}}
%\newcommand{\Top}{\mathrm{Top}}
%\newcommand{\Ind}{\mathrm{Ind}}

%other commands

\definecolor{grey}{gray}{.5}

\providecommand{\con}[1]{\textbf{\textup{#1}}}

%autorefnames

\numberwithin{thmcounter}{section}
\newaliascnt{thmauto}{thmcounter}

\newaliascnt{Defauto}{thmcounter}

\newaliascnt{exauto}{thmcounter}

\newaliascnt{lemauto}{thmcounter}

\newaliascnt{propauto}{thmcounter}

\newaliascnt{corauto}{thmcounter}

\newaliascnt{remauto}{thmcounter}

\newaliascnt{convauto}{thmcounter}

%%Theorems
%\theoremstyle{plain}
%\newtheorem{thm}[thmauto]{Theorem}
%\newtheorem{Def}[Defauto]{Definition}
%\newtheorem{ex}[exauto]{Example}
%\newtheorem{lem}[lemauto]{Lemma}
%\newtheorem{prop}[propauto]{Proposition}
%\newtheorem{cor}[corauto]{Corollary}
%\newtheorem{rem}[remauto]{Remark}
%\newtheorem{thmA}{Theorem}
%\renewcommand{\thethmA}{\Alph{thmA}}
%\newtheorem*{rem*}{Remark}
%\newtheorem*{thm*}{Theorem}
%\newtheorem*{exs*}{Examples}

%Theorems

\newtheorem{atheorem}{Theorem}

\newtheorem*{ThmA'}{Theorem A'}
\newtheorem*{ThmB'}{Theorem B'}
\newtheorem*{ThmC'}{Theorem C'}

\newtheorem{theorem}[thmauto]{Theorem}
\newtheorem{lemma}[lemauto]{Lemma}
\newtheorem{proposition}[propauto]{Proposition}
\newtheorem{corollary}[corauto]{Corollary}
\theoremstyle{definition}
\newtheorem{definition}[Defauto]{Definition}
\newtheorem{example}[exauto]{Example}
\newtheorem{remark}[remauto]{Remark}

\title{Representation stability, secondary stability, and polynomial functors}
\author{Jeremy Miller}\thanks{Jeremy Miller was supported in part by NSF grant DMS-1709726 and a Simons Foundation Collaboration Grant}
\address{Department of Mathematics, Purdue University, USA}
\email{jeremykmiller@purdue.edu}
\author{Peter Patzt}
\address{University of Copenhagen, Centre for Geometry and Topology, Universitatsparken 5, 2100 Copenhagen, Denmark} \address{University of Oklahoma, Department of Mathematics, 601 Elm Avenue, Norman, OK, 73019, USA} \email{ppatzt@ou.edu}
\thanks{Peter Patzt was supported by the Danish National Research Foundation through the Copenhagen Centre for Geometry and Topology (DNRF151) and the European Research Council under the European Union’s Seventh Framework Programme ERC Grant agreement ERC StG 716424 - CASe, PI Karim Adiprasito.}

\author{Dan Petersen}
\address{Department of Mathematics, Stockholm University, SE}
\email{dan.petersen@math.su.se}
\thanks{Dan Petersen was supported in part by ERC-2017-STG 759082 and by a Wallenberg Academy Fellowship}

 \date{\today}

\begin{document}

\begin{abstract}
We prove a general representation stability result for polynomial coefficient systems which lets us prove representation stability and secondary homological stability for many families of groups with polynomial coefficients. This gives two generalizations of classical homological stability theorems with twisted coefficients.  We apply our results to prove homological stability for hyperelliptic mapping class groups with twisted coefficients, prove new representation stability results for congruence subgroups, establish secondary homological stability for groups of diffeomorphisms of surfaces viewed as discrete groups, and improve the known stable range for homological stability for general linear groups of the sphere spectrum.

\end{abstract}
\maketitle

\tableofcontents

%%%%%%%%%%%%% SECTION %%%%%%%%%%%%%%%%%%%%

\section{Introduction}

\subsection{Homological stability with polynomial coefficients}
Consider a family of groups (or spaces) and maps between them: \[ G_0 \m G_1 \m G_2\m\cdots\] For example, $G_n$ could be the $n$th symmetric group or general linear group. Such a sequence of groups are said to exhibit \emph{homological stability} if the map $H_i(G_n) \m H_i(G_{n+1})$ is an isomorphism for $n$ large in comparison to $i$ ($n \gg i$). Homological stability is a ubiquitous phenomena and has been instrumental in the study of group cohomology, moduli spaces, and $K$-theory. Since the early days of homological stability, it was noticed that it is not only desirable to know that the homology stabilizes with trivial coefficients but also important to know that the homology stabilizes with certain families of twisted coefficient systems. For example, Dywer \cite{DwyerTwisted} used twisted homological stability for general linear groups to prove finiteness results for $A$-theory. 

Even if a sequence of groups $\{G_n\}_n$ have homological stability, it is not reasonable to expect that the homology will stabilize with arbitrary twisted coefficients $\{A_n\}_n$. In particular, there must be some compatibility between the coefficients for different $n$.  The usual condition on coefficients to ensure stability is called \emph{polynomiality}. See \autoref{defpoly} for the definition we use which agrees with that of Randal-Williams--Wahl \cite{RWW}, generalizing Dwyer's work for general linear groups \cite{DwyerTwisted}. In addition to implying that $H_i(G_n;A_n)$ is independent of $n$ for $n \gg i$ \cite{RWW}, this polynomiality condition implies that the groups $A_n$ have ranks that grow at most polynomially (or are infinite).

There are several recent generalizations of homological stability such as representation stability and secondary stability. The goal of this paper is to develop a set of tools which will allow us to prove representation stability and secondary homological stability theorems for the homology of families of groups with polynomial coefficients.

\subsection{Representation stability with polynomial coefficients}

There are many natural families of groups that do not exhibit homological stability; for example, the first homology of the pure braid group is given by $H_1(\PBr_n) \cong \Z^{\binom n2}$. In this case, the homology groups carry natural symmetric group actions which control the growth. In this and other examples, the homology satisfies \emph{representation stability} in the sense of Church--Farb \cite{CF} and Church--Ellenberg--Farb \cite{CEF}, which is an equivariant generalization of homological stability. There are many related definitions of representation stability with the most basic being \emph{finite generation degree}. Consider a sequence of groups \[Q_0 \m Q_1 \m \dots, \] and a sequence of $\Z Q_n$-modules $A_n$ with $Q_n$-equivariant maps $A_n \m A_{n+1}$. A sequence $\{A_n\}_n$ is said to have generation degree $\leq d$ if \[\Ind_{Q_n}^{Q_{n+1}}A_{n} \longrightarrow A_{n+1}\] is surjective for all $n \geq d$. In the previous example, $A_n$ would be $H_1(\PBr_n)$ and $Q_n$ would be the symmetric group on $n$ letters. Then $A$ has generation degree $\le 2$. Finite generation degree is an equivariant analogue of the statement that the stabilization map is surjective in a stable range. There are also equivariant analogues of the range where the stabilization map is an isomorphism such as \emph{presentation degree}. This is described in \autoref{SecC} and \autoref{defPresentation}. Before we can state our main representation stability theorem (see \autoref{main}), we need to review several notions used to state the theorem.

Usually, groups $\{N_n\}_n$ whose homology exhibits representation stability appear in short exact sequences
\[ 1 \longrightarrow N_n \longrightarrow G_n \longrightarrow Q_n \longrightarrow 1\]
with families $\{G_n\}_n$ and $\{Q_n\}_n$ satisfying homological stability. Such a short exact sequence gives a natural action of $Q_n$ on $H_i(N_n; A_n)$ for any $\Z G_n$-module $A_n$. Examples of such short exact sequences include the pure braid groups together with the braid groups and the symmetric groups as well as congruence subgroups together with general linear groups over the integers and over finite fields. In the construction used in this paper, we in fact need the three sequences $\mathcal N = \{N_n\}_n$, $\G = \{G_n\}_n$, and $\cQ = \{Q_n\}_n$ to form stability groupoids (which are monoidal groupoids with some extra conditions, see \autoref{Def:stability groupoid}) and we need the maps of groups to come from monoidal functors. We assemble all of this information by saying
\[  1 \longrightarrow \mathcal N  \longrightarrow \G \longrightarrow  \cQ \longrightarrow 1 \]
is a stability short exact sequence (see \autoref{Def:stability SES}).

Furthermore, we need a topological condition called \con{H3}  (see \autoref{Def:H3}) for braided stability groupoids $\G$ that was first introduced by \cite{RWW} to axiomatize homological stability arguments. It is also important in representation stability; see Putman--Sam \cite{PS}, \cite{MillerWilson2}, and \cite{Pa2}. There is a semisimplicial set $W_n^\G$ for every $n\in \N$ such that the set of $p$-simplices of $W_n^\G$ is given by $G_n/G_{n-p-1}$. \con{H3} is the condition that $W_n^\G$ is highly homologically connected in a range increasing linearly with $n$. 

%This semisimplicial set can by constructed using the hom-sets in the stability category $U\G$ of a braided stability groupoid $\G$ (see \autoref{Def:stability category}). 

Associated to each stability groupoid $\G$ is a category called $U\G$ (see \autoref{Def:stability category}) which can be used to formulate notions of representation stability for sequences of $G_n$-representations. For $G_n$ the symmetric group, $U\G$ is equivalent to the category of finite sets and injections $\FI$ studied by Church--Ellenberg--Farb \cite{CEF}. A $U\G$-\emph{module} is a functor from $U\G$ to the category of abelian groups and encodes a sequence of $G_n$-representations $A_n$ and equivariant maps $A_n \m A_{n+1}$. To state our representation stability result, we need one more condition called \emph{degree-wise coherence}. We say that a stability category $U\G$ is degree-wise coherent if for all $U\G$-submodules $A \subseteq B$, if $B$ is presented in finite degree and $A$ is generated in finite degree, then $A$ is presented in finite degree. This algebraic condition is a weakening of regularity in the sense of Church--Ellenberg \cite{CE}. We can now state our main theorem on representation stability.

\begin{atheorem} \label{main}
Let
\[  1 \longrightarrow \mathcal N  \longrightarrow \G \longrightarrow  \cQ \longrightarrow 1 \]
be a stability short exact sequence. Assume that $\G$ and $\cQ$ are braided and satisfy \con{H3} and that $U\cQ$ is degree-wise coherent. Let $A$ be a $U\G$-module of finite polynomial degree. Then the sequence
\[ \{H_i(N_n; A_n)\}_n\]
 is presented (and hence also generated) in finite degree for every fixed $i\in \N$.
\end{atheorem}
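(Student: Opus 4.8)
The plan is to realize $\{H_i(N_n;A_n)\}_n$ via a Borel construction on the complex $W_n^\G$ and then induct. Since $N_n\triangleleft G_n$, the space $E N_n\times_{N_n}|W_n^\G|$ carries a residual $\cQ$-action compatible with the $U\cQ$-module structure we are after, and \con{H3} for $\G$ makes $|W_n^\G|$ highly connected, so that $E N_n\times_{N_n}|W_n^\G|\to BN_n$ is $\gg n$-connected; hence for $n\gg i$ the skeletal spectral sequence of $E N_n\times_{N_n}|W_n^\G|$ computes $H_i(N_n;A_n)$. The stability short exact sequence forces the $\G$-set of $p$-simplices of $W_n^\G$ to restrict along $N_n\hookrightarrow G_n$ to $\bigsqcup_{Q_n/Q_{n-p-1}}N_n/N_{n-p-1}$, so Shapiro's lemma gives
\[ E^1_{p,q}\;\cong\;\Ind_{Q_{n-p-1}}^{Q_n}H_q\bigl(N_{n-p-1};\,\Res^{G_n}_{N_{n-p-1}}A_n\bigr), \]
and, letting $n$ vary, each fixed-$p$ column is an iterated power of the shift functor $S$ applied to a $U\cQ$-module of the shape $\{H_q(N_m;\Res A_{m+p+1})\}_m$; the differentials are $U\cQ$-module maps.

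The heart of the argument, and the step I expect to fight hardest with, is to strip the restricted coefficients $\Res^{G_n}_{N_{n-p-1}}A_n$ down to honestly simpler coefficient systems. Polynomiality of $A$ says precisely that the iterated stabilization $A_{n-p-1}\to A_n$ has, after restriction to $N_{n-p-1}$, a kernel vanishing for $n\gg 0$ and a cokernel filtered by the iterated derivatives of $A$, each of polynomial degree $\le(\deg A)-1$; feeding this through the long exact sequences in $N_{n-p-1}$-homology rewrites $H_q(N_{n-p-1};\Res A_n)$ as an iterated extension of $\{H_q(N_m;A_m)\}$ at $m=n-p-1$ by modules of the form $\{H_{q'}(N_m;B_m)\}$ with $B$ polynomial of strictly smaller degree. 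This powers a double induction on $(i,\deg A)$, with base cases $\deg A<0$ (where $A_n=0$ eventually) and $i=0$ (where $H_0(N_n;A_n)=(A_n)_{N_n}$ is a polynomial $U\cQ$-module of degree $\le\deg A$, hence presented in finite degree by the structural results for polynomial $U\cQ$-modules, which already use \con{H3} for $\cQ$ and degree-wise coherence of $U\cQ$). In the inductive step every $E^1_{p,q}$ with $p\ge 1$ has $q\le i-1$ and, by the induction on $i$ together with the derivative reduction, is presented in finite degree; the remaining term $E^1_{0,i}$ is, up to a presented-in-finite-degree error governed by the first derivative of $A$, a shift $SM$ of $M:=\{H_i(N_n;A_n)\}_n$ itself. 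This self-referential term carries the homological-stability phenomenon.

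To finish I would first extract from the low-degree corner of the spectral sequence that $\coker(SM\to M)$ is finite degree, i.e. that $M$ is generated in finite degree, and run this for all $i$ and all polynomial $A$ before tackling presentation. Then I would work inside the abelian subcategory of presented-in-finite-degree $U\cQ$-modules, which — by degree-wise coherence of $U\cQ$ — is closed under kernels, cokernels, extensions, and the shift functor, so that the long exact sequences and the finitely many extensions assembling the abutment stay within it; finite presentation of $M$ follows from its finite generation together with the fact that its relations, i.e. $\widetilde H_1^{U\cQ}(M)$, are computed by a complex built from shifts of the various $H_\ast(N_\bullet;A_\bullet)$ (central stability homology, available because $\cQ$ satisfies \con{H3} and $U\cQ$ is degree-wise coherent), hence are generated in finite degree by the generation statements already proved. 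The genuinely laborious parts are pinning down the derivative filtration on $\Res^{G_n}_{N_{n-p-1}}A_n$ and checking that all subquotients and extensions produced along the way remain presented in finite degree; this is where the hypotheses \con{H3} and degree-wise coherence are really consumed.
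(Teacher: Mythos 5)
Your overall architecture --- the spectral sequence of the $N_n$-action on the \con{H3} complex, a double induction on $(i,\deg A)$ driven by the derivative filtration of a polynomial module, and degree-wise coherence of $U\cQ$ to propagate finite presentation --- is the same as the paper's. But there is a decisive technical divergence: you run everything inside the spectral sequence whose $E^1$-page carries the \emph{restricted} coefficients $\Res^{G_n}_{N_{n-p-1}}A_n$, and try to correct each row afterwards. The paper deliberately avoids this. It first proves, as a separate statement about the coefficient system alone (\autoref{derivedRepPolynomials}, quantified in \autoref{PolynomialsCentral}, proved via the double-complex comparison of \autoref{prop:EhatE}), that a polynomial $U\G$-module has vanishing central stability homology $\widetilde H^\G_*(A)$ in a range; it then uses the two-spectral-sequence comparison of \autoref{PeterSS}, whose first spectral sequence has $E^1_{p,q}=\Ind_{Q_{n-p-1}}^{Q_n}H_q(N_{n-p-1};A_{n-p-1})$ --- the \emph{correct} coefficients, i.e.\ genuinely $\widetilde C^\cQ_p(H_q(\cN;A))_n$ --- and whose convergence to zero in a range is exactly the vanishing of $\widetilde H^\G_*(A)$. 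This factorization is not cosmetic, as the next paragraph explains; the appendix of the paper explicitly discusses why the restricted-coefficient spectral sequence is "not useful" without it.

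The gap in your version is in ruling out the higher differentials $d^s$ into the positions $(-1,i)$ and $(0,i)$. The paper kills them by identifying $E^2_{p,q}$ with $\widetilde H^\cQ_p(H_q(\cN;A))_n$ for $q<i$ and invoking the coherence function to get actual \emph{vanishing} in a range. In your spectral sequence the $q$-th row at stage $n$ has $p$-th term $\Ind_{Q_{n-p-1}}^{Q_n}H_q(N_{n-p-1};\Res^{G_n}_{N_{n-p-1}}A_n)$, whose coefficient module depends on $n$ and not only on $n-p-1$; the row is therefore \emph{not} the central stability complex of any single $U\cQ$-module, $E^2_{p,q}$ is not a $\widetilde H^\cQ_p$ of anything, and coherence yields at best that $E^2_{p,q}$ is presented in finite degree --- which, contrary to what your inductive step assumes, does not make the differentials out of it vanish. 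For the generation statement you can still escape, because $\widetilde H^\cQ_{-1}(M)$ has all its structure maps equal to zero, so "finitely generated" forces "eventually zero"; but for the presentation statement no such rescue is available at $E^2_{0,i}$, and repairing it amounts to proving that the comparison map from $\widetilde C^\cQ_*(H_q(\cN;A))$ to your $E^1$-row is a quasi-isomorphism in a range --- which is precisely the content of \autoref{prop:EhatE} and the reason the paper isolates \autoref{derivedRepPolynomials} as a theorem in its own right. A second, smaller error: your base case asserts that $H_0(N_n;A_n)=(A_n)_{N_n}$ is a polynomial $U\cQ$-module of degree $\le\deg A$. Coinvariants do not commute with the shift functor (the map $(A_n)_{N_n}\to(A_{n+1})_{N_{n+1}}$ acquires a kernel from $H_1(N_{n+1};\coker(A_n\to A_{n+1}))$), so polynomiality does not descend; the $i=0$ case must also be extracted from the spectral sequence, as in \autoref{mainCentralVersion}.
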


See \autoref{CoherenceExamples} and  \autoref{RemCoh} for examples of groups where degree-wise coherence is known. The condition \con{H3} is known for basically all families of groups that are known to satisfy homological stability.  In \autoref{mainCentralVersion}, we give a quantitative version of this theorem.  This is a generalization of a result of Randal-Williams--Wahl \cite{RWW} which shows that if $\G$ satisfies $\con{H3}$ and $A$ has finite polynomial degree, then $H_i(G_n;A_n)$ stabilizes. One obtains this result from our result by specializing to the case that the groups $Q_n$ are all trivial. An application of \autoref{main} is representation stability for the homology of the pure braid group with polynomial coefficients, such as the Burau representation (see \autoref{exampleBurau}). Perhaps surprisingly, this was not known before, even though the homology of the pure braid group was one of the examples which started the whole subject of representation stability. 

\begin{remark}
In \cite{Kra}, Krannich generalized the main stability results of Randal-Williams--Wahl \cite{RWW} to apply to families of spaces that are not classifying spaces of discrete groups. We expect that a similar generalization of \autoref{main} should be possible.

\end{remark}

\subsection{Secondary homological stability with polynomial coefficients}

Secondary homological stability is a stability pattern for the unstable homology of groups or spaces exhibiting homological stability. The prototypical example of this phenomena is Galatius--Kupers--Randal-Williams' result that the relative homology of mapping class groups with one boundary component $H_{i}(\Mod_{g,1},\Mod_{g-1,1})$ stabilize as $g$ increases by $3$ and $i$ increases by $2$ \cite{GKRW2}. In this paper, we show that the techniques of Galatius--Kupers--Randal-Williams \cite{GKRW1,GKRW2,GKRW3} can be used to prove secondary homological stability with coefficients in polynomial $U\G$-modules as well as improved stable ranges in classical homological stability with polynomial coefficients. 

Let $\G$ be a stability groupoid. Homological stability can be rephrased as the statement that the groups $H_i(G_n,G_{n-1})$ vanish in a range. Secondary homological stability is the statement that these relative homology groups themselves stabilize with respect to degree shifting maps in an even larger range. A \emph{secondary stability map} of bidegree $(a,b)$ is a certain kind of map that induces a degree shifting map
\[H_{i-b}(G_{n-a},G_{n-a-1}) \m H_i(G_n,G_{n-1}).\]
See \autoref{secSec} for a description. We show that secondary stability for untwisted coefficients implies secondary stability with polynomial coefficients if certain simplicial complexes called splitting complexes are highly connected.

The $n$th \emph{splitting complex} of a monoidal groupoid $\G$ is a semi-simplicial set whose set of $p$-simplices are given by: \[\bigsqcup_{a_0+\dots + a_{p+1}=n} G_n/(G_{a_0} \times \dots \times G_{a_{p+1}}).\] We say that $\G$ satisfies the standard connectivity assumptions if for all $n$ the $n$th splitting complex is $(n-3)$-acyclic. We prove that the standard connectivity assumption combined with secondary stability with untwisted coefficients implies secondary stability with polynomial coefficients.

\begin{atheorem} \label{mainSecondary}
Let $A$ be a polynomial $U\G$-module of degree $\leq r$ in ranks $>d$. Let $\lambda \leq 1$ and $c \geq 0$. Assume $\G$ satisfies the standard connectivity assumptions and that there is a secondary stability map $f$ of bidegrees $(a,b)$ which induces a surjection 
\[f_*\colon H_{i-b}(G_{n-a},G_{n-a-1}) \m H_{i}(G_{n},G_{n-1})\]
 for $i \leq \lambda (n-c)$ and an isomorphism for $i \leq \lambda (n-c)-1$. Then 
 \[f_*\colon H_{i-b}(G_{n-p},G_{n-a-1};A_{n-a},A_{n-a-1}) \m H_{i}(G_{n},G_{n-1};A_{n},A_{n-1})\]
 is a surjection for $i \leq \lambda (n-c-\max(r,d))$ and an isomorphism for $i \leq \lambda (n-c-\max(r,d))-1$.
\end{atheorem}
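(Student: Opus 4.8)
The plan is to reduce Theorem B to the untwisted case (which we assume) by filtering a polynomial $U\G$-module through its ``polynomial filtration'' and running an induction on the polynomial degree $r$. Recall that a $U\G$-module $A$ of polynomial degree $\leq r$ in ranks $>d$ sits in an exact sequence relating $A$, its cross-effect (which has polynomial degree $\leq r-1$), and an ``induced'' or ``free'' summand coming from $A$ evaluated at small ranks. The base case $r = -1$ (together with $d$ controlling the small ranks) is essentially the untwisted statement applied to the constant pieces, shifted by $d$; the inductive step will produce the shift by $\max(r,d)$ in the conclusion. So the first step is to set up this filtration carefully at the level of the relative homology groups $H_*(G_n, G_{n-1}; A_n, A_{n-1})$ and check that the secondary stability map $f_*$ is compatible with it.

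The key geometric input is the splitting complex and the standard connectivity assumption: $(n-3)$-acyclicity of the $n$th splitting complex. This is exactly the tool that lets one express the homology of $G_n$ with free/induced coefficients in terms of homology with untwisted coefficients at smaller ranks, via a spectral sequence (or a filtration of a resolution) whose $E^1$-page involves $H_*(G_{a_0} \times \cdots \times G_{a_{p+1}})$ with constant coefficients and whose convergence is governed by the connectivity bound. Running this for $G_n$ and for $G_{n-1}$ simultaneously, and comparing via $f$, one gets that the induced/free part of the coefficient system contributes only terms to which the untwisted secondary stability estimate applies — in an appropriately shifted range. I would phrase this as: the secondary stability map is a surjection/isomorphism on the free part of the coefficients in the range dictated by the untwisted hypothesis minus the shift coming from how deep into the splitting complex one must go, which is where the $d$ (size of the ranks where polynomiality kicks in) enters.

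The inductive step then combines the long exact sequence in relative homology associated to the short exact sequence of coefficient systems $0 \to (\text{shifted/cross-effect part}) \to A \to (\text{free part}) \to 0$ (or the relevant variant) with the five lemma: on the free part we have the bound just discussed, on the cross-effect part we have polynomial degree $\leq r-1$ and hence the inductive hypothesis with shift $\max(r-1, d)$, and since $\max(r-1,d) \leq \max(r,d)$ both outer terms satisfy the estimate in the claimed range, forcing the middle. One must be slightly careful that $f$ being a secondary stability map means it is compatible with all these algebraic manipulations — i.e. that the filtration of coefficients and the map $f$ commute up to the relevant connectivity — but this is formal once the splitting-complex spectral sequence is known to be natural in $\G$ and compatible with the $E_1$-module structure used to define $f$.

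The main obstacle I expect is bookkeeping the two independent shifts $r$ and $d$ and showing they combine into a single $\max(r,d)$ rather than $r + d$: one has to be careful that the ranks $\leq d$ where the coefficient system is not yet polynomial only force a one-time shift (absorbed into the $\max$) rather than accumulating at each stage of the induction on $r$. This is handled by being careful about where in the splitting complex the ``defect ranks'' contribute — they only appear as the bottom vertices, so they cost $d$ once — and by noting that the induction on $r$ adds at most $1$ to the degree at each step, which is already accounted for by $\lambda \leq 1$ and the linear form $\lambda(n - c - \max(r,d))$. A secondary subtlety is that one works with relative homology $H_*(G_n, G_{n-1}; A_n, A_{n-1})$ rather than absolute homology, so the splitting-complex argument must be applied to the pair; this is done by taking the mapping cone of the stabilization map at the chain level and filtering that, which does not change the connectivity estimates.
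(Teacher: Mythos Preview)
Your approach has a genuine gap: the filtration you describe does not exist. A polynomial $U\G$-module has no ``free/induced summand'' to split off; the recursive definition goes via the map $A \to \Sigma A$, whose cokernel has degree $\leq r-1$. But $(\Sigma A)_n = A_{n+1}$ is only a $G_n$-module, so the sequence $0 \to A_n \to A_{n+1} \to (\coker A)_n \to 0$ does not relate $H_*(G_n, G_{n-1}; A_n, A_{n-1})$ to the same relative groups for a coefficient system of lower polynomial degree, and the five-lemma step you sketch cannot be set up. More fundamentally, you never engage with what a secondary stability map is: it is a map of $\barR_\bk$-modules in the sense of Galatius--Kupers--Randal-Williams, and passing from untwisted to twisted coefficients requires controlling the derived indecomposables $H^{\barR_\bk}_{*,*}(\bR_A)$, which your outline does not address.

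The paper's route is entirely different. It first proves, as an independent result (\autoref{PolynomialsTor}), that polynomial modules have derived representation stability: $\Tor^{\uZ}_i(A,\Z)_n = 0$ for $n > i + \max(r,d)$. The induction on $r$ happens \emph{there}, at the level of the Koszul complex $K_*(A) = A \oast \SSt_*$ built from the split Steinberg modules, by comparing the double complexes $K_*(K_*(A))$ and $K_*(A \otimes K_*(\uZ))$; the standard connectivity assumption forces the second spectral sequence to vanish below the diagonal, and polynomiality (now correctly used, via $A \to \Sigma^p A$ as a map of $G_{n-p}$-modules inside the Koszul complex) makes the comparison an isomorphism in the needed range. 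Once this $\Tor$ vanishing is in hand, \autoref{mainSecondary} is a formal consequence of the machinery of \cite{GKRW1}: the quantity $H^{\barR_\bk}_{n,i}(\bR_A)$ is hyperhomology with coefficients in the bar complex computing $\Tor^{\uZ}_*(A,\Z)$, hence vanishes for $n > i + \max(r,d)$, and \autoref{SecondaryGeneralHyper} (the secondary analogue of \cite[Theorem~19.2]{GKRW1}) then transfers the untwisted hypothesis to $A$ with exactly the shift $\max(r,d)$.
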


The standard connectivity assumption is satisfied for all families of groups known to exhibit secondary homological stability or homological stability with stable range larger than slope $\frac 12$. It is one of the main technical conditions that allows the techniques of Galatius--Kupers--Randal-Williams to apply to a family of groups.  See \autoref{AclyList} for a list of some groups which are known to satisfy the standard connectivity assumption.

\subsection{Stability for polynomial coefficients}
\label{SecC} Let $\G$ be a stability groupoid.
The category of $\G$-modules has a monoidal structure called the induction or convolution product. Using this monodial product, one can define rings, modules, $\Tor$ groups, etc. There is a formulation of representation stability in terms of vanishing of certain $\Tor$ groups. In \autoref{SecRing}, we describe how to associate to a $U\G$-module $A$, a $\G$-module called $\Tor^\uZ_i(A,\Z)$ whose vanishing controls representation stability for $A$. Concretely, \[ \Tor^\uZ_0(A,\Z)_n=\coker \big(\Ind_{G_{n-1}}^{G_n} A_{n-1} \m A_n\big)\] and the higher $\Tor$ groups are the higher derived functors of this functor. The $\G$-module $\Tor^\uZ_0(A,\Z)$ can be thought of as $U\G$-module indecomposables or minimal generators of $A$. In particular, vanishing of $\Tor^\uZ_0(A,\Z)_n$ for $n>d$ is equivalent to $A$ having generation degree $\leq d$ and vanishing of $\Tor^\uZ_0(A,\Z)_n$ and $\Tor^\uZ_1(A,\Z)_n$ for $n>r$ is equivalent to $A$ having presentation degree $\leq r$.  We say that $A$ has \emph{derived representation stability} if $\Tor^\uZ_i(A,\Z)_n \cong 0$ for $n \gg i$. See \autoref{prop:res of finite type} for the relationship between these $\Tor$-groups and resolutions of $U\G$-modules. Derived representation stability is equivalent to $A$ having a free resolution with each syzygy generated in finite degree.

 Galatius--Kupers--Randal-Williams \cite[Remark 19.11]{GKRW1}  asked if polynomial $U\G$-modules exhibit derived representation stability. We answer this in the affirmative if $\G$ satisfies \con{H3} or the standard connectivity assumptions.

\begin{atheorem} \label{derivedRepPolynomials}
Assume $\G$ satisfies \con{H3} or the standard connectivity assumptions and let $A$ be a polynomial $U\G$-module. Then $A$ exhibits derived representation stability. 
\end{atheorem}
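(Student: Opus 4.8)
The plan is to prove the equivalent statement that $A$ admits a free resolution $\cdots\to F_1\to F_0\to A\to 0$ in which every $F_i$ is generated in finite degree, equivalently that $\Tor^\uZ_i(A,\Z)_n=0$ for $n\gg i$. I would deduce this from two claims. The first, \textbf{(I)}, is that every polynomial $U\G$-module is generated in finite degree, i.e.\ $\Tor^\uZ_0(A,\Z)_n=0$ for $n$ beyond some bound; this is the $i=0$ case of the theorem and is the only step genuinely using the topological hypothesis. The second, \textbf{(II)}, is purely algebraic: if $A$ is polynomial and $F\twoheadrightarrow A$ is a surjection from a free $U\G$-module, then $\ker(F\to A)$ is, up to an extension by a $U\G$-module supported in finitely many ranks, again polynomial, of degree bounded in terms of $\deg F$ and the polynomial degree of $A$. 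Granting both, one concludes by iteration: by \textbf{(I)} choose $F_0\twoheadrightarrow A$ generated in finite degree, set $A_1=\ker(F_0\to A)$, which by \textbf{(II)} is polynomial up to bounded support; a $U\G$-module supported in ranks $\le d$ evidently has derived representation stability (peel off one rank at a time), so by \textbf{(I)} again $A_1$ is generated in finite degree; now repeat with $A_1$ in place of $A$. This produces the required resolution.

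For \textbf{(II)} I would induct on the polynomial degree $r$ of $A$ (in ranks $>d$), using the defect short exact sequences attached to the shift functor $\Sigma$. The functor $\Sigma$ is exact; on a free module it satisfies $\Sigma\,\uZ[U\G(m,-)]\cong\uZ[U\G(m,-)]\oplus(\text{a finite sum of copies of }\uZ[U\G(m-1,-)])$, with the canonical map $\uZ[U\G(m,-)]\to\Sigma\,\uZ[U\G(m,-)]$ the split inclusion of the first summand; hence $\ker(F\to\Sigma F)=0$ and $\coker(F\to\Sigma F)$ is a finite sum of free modules of strictly lower generation degree. Applying $\Sigma$ to $0\to\ker(F\to A)\to F\to A\to 0$ and running the snake lemma against the shift maps shows first that $\ker\big(\ker(F\to A)\to\Sigma\ker(F\to A)\big)=0$, and second that $\coker\big(\ker(F\to A)\to\Sigma\ker(F\to A)\big)$ sits in a four-term exact sequence involving $\ker(A\to\Sigma A)$ (supported in ranks $\le d$), $\coker(F\to\Sigma F)$ (a sum of lower-degree frees, hence polynomial), and $\coker(A\to\Sigma A)$ (polynomial of degree $\le r-1$). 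The middle term of that sequence is a kernel of the same shape as $\ker(F\to A)$ but for the degree-$\le (r-1)$ module $\coker(A\to\Sigma A)$, so the inductive hypothesis applies; this is exactly what is needed to conclude that $\ker(F\to A)$ is polynomial up to bounded support. The base case $r<0$ is $A$ itself of bounded support.

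The main obstacle is claim \textbf{(I)}, the finite generation of polynomial $U\G$-modules. I would obtain it from homological stability with polynomial coefficients: there is a two-row hyperhomology spectral sequence
\[
E^2_{p,0}=H_p\big(G_n;\Tor^\uZ_0(A,\Z)_n\big),\qquad
E^2_{p,1}=H_p\big(G_n;\ker(\Ind_{G_{n-1}}^{G_n}A_{n-1}\to A_n)\big)
\]
converging to the relative group homology $H_{p+q}(G_n,G_{n-1};A_n,A_{n-1})$; the bottom corner gives $H_0(G_n;\Tor^\uZ_0(A,\Z)_n)=H_0(G_n,G_{n-1};A_n,A_{n-1})$, which vanishes for $n\gg 0$ by the twisted homological stability theorem of Randal-Williams--Wahl (valid under \con{H3}), and a Nakayama-type argument then upgrades this to $\Tor^\uZ_0(A,\Z)_n=0$ for $n\gg 0$. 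Under the standard connectivity assumptions one argues the same way inside the $E_1$-homology framework of Galatius--Kupers--Randal-Williams, with the splitting complexes supplying the input to homological stability. The remaining—and most laborious—task is quantitative: tracking the bounded-support parameters and polynomial degrees produced at each stage of the iteration of \textbf{(I)} and \textbf{(II)}, checking that they grow at most linearly in the homological degree, so that one obtains not merely $\Tor^\uZ_i(A,\Z)_n=0$ for $n\gg i$ but an explicit linear range. I expect this bookkeeping, together with the precise formulation of \textbf{(I)}, to be the bulk of the proof; the structural input is the behaviour of $\Sigma$ on free modules.
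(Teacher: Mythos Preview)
Your strategy has two genuine gaps, one in each of your two claims.

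For \textbf{(I)}, the Nakayama step does not go through. Your two-row spectral sequence correctly identifies $H_0(G_n;\Tor^\uZ_0(A,\Z)_n)=(\Tor^\uZ_0(A,\Z)_n)_{G_n}$ with $H_0(G_n,G_{n-1};A_n,A_{n-1})$, and the latter vanishes for $n\gg 0$ by Randal-Williams--Wahl. But vanishing of $G_n$-coinvariants of a $G_n$-module does not force the module itself to vanish: over $\Q$, any $\fS_n$-representation with no trivial summand has zero coinvariants. There is no local ring here, and the $U\G$-module structure on $\Tor^\uZ_0(A,\Z)$ gives no leverage because all non-isomorphisms act by zero on it (this is \autoref{cor:Tor(A,Z)} in the paper).

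For \textbf{(II)}, the decomposition $\Sigma\,\Z U\G(m,-)\cong \Z U\G(m,-)\oplus(\text{copies of }\Z U\G(m-1,-))$ holds for $\G=\fS$ (it is a reflection of the $\FI\sharp$-structure on representable $\FI$-modules) but fails when $\G$ is braided and not symmetric, in particular for $\G=\Br$. This is precisely why the earlier proofs of special cases of the theorem, which exploit such structural facts about free modules, do not extend to braid groups; the paper notes this explicitly in the remark immediately following the statement. Without that decomposition you cannot conclude that $\coker(F\to\Sigma F)$ is free, and your inductive step for \textbf{(II)} collapses.

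The paper avoids both problems by never attempting to build a resolution step by step. Instead it proves the vanishing of $\widetilde H^\G_i(A)_n$ (under \con{H3}) or of $\Tor^\uZ_i(A,\Z)_n$ (under the standard connectivity assumption) directly, by nested induction on the polynomial degree and on $i$. The engine is a comparison of two double-complex spectral sequences: one, built from $A\otimes \widetilde C^\G_*(\Z U\G(0,-))$ (resp.\ $A\otimes K_*(\uZ)$), has $E^\infty$ vanishing in a range by the topological hypothesis; the other, $\widetilde C^\G_*(\widetilde C^\G_*(A))$ (resp.\ $K_*(K_*(A))$), has its relevant $d^1$-differentials provably zero. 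Polynomiality enters only through the fact that $\coker(A\to\Sigma^p A)$ has strictly lower polynomial degree, which makes the comparison map between the two spectral sequences an isomorphism in a range. No structural property of free modules beyond their role in the hypothesis \con{H3} is ever used, which is what allows the argument to work for genuinely braided $\G$.
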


While \autoref{main} and \autoref{mainSecondary} are about stability properties for the homology of groups with polynomial coefficients, \autoref{derivedRepPolynomials} is about the polynomial coefficients themselves. \autoref{derivedRepPolynomials} says that the polynomial $U\G$-modules have representation stability under mild assumptions on $\G$. For example, the sequence of Burau representations have representation stability with respect to the action of the braid groups (see \autoref{exampleBurau}).

\begin{remark}
\autoref{derivedRepPolynomials} was previously known in the case that $\G$ is the groupoid of symmetric groups, general linear groups of $PID$s, or the symplectic groups of $PID$s \cite{Pa2,MPW}. Those techniques do not apply to the braid groups (see \cite[Example 7.11]{Pa2}), or more generally any case where the stability groupoid is nontrivially braided monoidal instead of symmetric monoidal. However, the techniques of this paper do apply to the braid groups, which is in particular used in our study of moduli spaces of hyperelliptic curves. Moreover, in \autoref{PolynomialsCentral}, we give a quantitative version of \autoref{derivedRepPolynomials} which improves the stable ranges given in \cite{MPW} in the case of general linear groups and the symplectic groups of $PID$s. We recently learned that Andrew Putman has independently proven a theorem similar to \autoref{PolynomialsCentral} in the case that $G_n$ is a general linear group or a symplectic group. This is used in his work on congruence subgroups of mapping class groups. We additionally prove  \autoref{PolynomialsTor} which further improves the stable range for \autoref{derivedRepPolynomials} for many families of groups. This improved range is crucial for our applications to secondary homoloigcal stability.

\end{remark}

\subsection{Applications}

We now describe a few applications of our general stability theorems.

\vspace{.1in}
\noindent \textbf{Moduli space of hyperelliptic curves with twisted coefficients:}

\noindent 
Let $\mathcal H_g$ be the moduli space of closed hyperelliptic curves (i.e.\ curves which admit a degree $2$ map onto $\mathbb P^1$), and $\mathcal M_g$ denote the moduli space of closed curves. We consider $\mathcal H_g$ as a closed substack of $\mathcal M_g$. Any representation of the symplectic group $\Sp_{2g}(\Z)$ defines a local coefficient system on $\mathcal H_g$ via the composition 
\[\pi_1(\mathcal H_g) \m \pi_1(\mathcal M_g) \m \Sp_{2g}(\Z).\]
 The algebraic representations of the symplectic group are indexed by partitions, and we let $V_\lambda$ be the local coefficient system on $\mathcal H_g$ associated to a partition $\lambda$. The following is a corollary of \autoref{main}.

\begin{theorem}\label{hyperelliptic-thm}
	Let $\{V_\lambda\}_g$ denote the collection of local systems on the moduli spaces of hyperelliptic curves $\mathcal H_g$ corresponding to the partition $\lambda$. There are isomorphisms $H^k(\mathcal H_g;V_\lambda) \cong H^k(\mathcal H_{g+1};V_{\lambda})$ for $g \gg k$ which are moreover compatible with the natural mixed Hodge structure on these cohomology groups and with the structure of $\ell$-adic Galois representation obtained after tensoring with $\Q_\ell$. 
\end{theorem}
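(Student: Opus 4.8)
The plan is to identify $H^*(\mathcal H_g;V_\lambda)$ with group cohomology with polynomial twisted coefficients, to apply \autoref{main} (in fact its specialization to trivial $\cQ$, which recovers the twisted homological stability theorem of Randal-Williams--Wahl \cite{RWW}), and then to promote the resulting numerical isomorphism to the categories of mixed Hodge structures and of $\ell$-adic Galois representations by verifying that every map involved is motivic. Recall first that $\mathcal H_g$ is an Eilenberg--MacLane stack: it is the quotient of a contractible space --- the fixed locus of a hyperelliptic involution acting on Teichm\"uller space --- by the hyperelliptic mapping class group $\Delta_g$, so that $H^*(\mathcal H_g;V_\lambda)\cong H^*(\Delta_g;V_\lambda)$. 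Via the classical description of $\mathcal H_g$ as a $\mathrm{PGL}_2$-quotient of the space of $(2g{+}2)$-point configurations in $\mathbb P^1$, the groupoid $\{\Delta_g\}_g$ is built from the braid groupoid, which is a braided stability groupoid satisfying \con{H3} by \cite{RWW}; I would use this braid-group model to supply \con{H3} and the stabilization maps appearing in \autoref{main}.

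The coefficient system is the pullback along A'Campo's symplectic monodromy representation $\Delta_g\to\Sp_{2g}(\Z)$ of the algebraic irreducible $V_\lambda$. The standard representation $\Z^{2g}$ pulls back to the integral reduced Burau representation evaluated at $-1$, which is a $U\G$-module of polynomial degree $1$, exactly as in \autoref{exampleBurau}. Since $V_\lambda$ is obtained from the standard representation by a symplectic Schur functor, it is, stably in $g$, a direct summand of the $|\lambda|$-fold tensor power of the standard representation, cut out by an idempotent that is $\Sp_{2g}(\Z)$-equivariant --- hence $\Delta_g$-equivariant --- and compatible with the stabilization maps; and since direct summands and tensor powers of polynomial $U\G$-modules are again polynomial (this is precisely the principle that polynomial functors preserve polynomiality), the sequence $\{V_\lambda\}_g$ is a $U\G$-module of polynomial degree $\le|\lambda|$. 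Applying \autoref{main} with trivial $\cQ$, using \con{H3} for the braid groupoid, we conclude that the groups $H_i(\Delta_g;(V_\lambda)_g)$, and dually $H^k(\mathcal H_g;V_\lambda)$, are independent of $g$ for $g\gg k$.

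It remains to upgrade this isomorphism. The local systems $V_\lambda$ are admissible variations of pure $\Q$-Hodge structure on the smooth Deligne--Mumford stack $\mathcal H_g$ over $\Q$, being direct summands of $(R^1f_*\Q)^{\otimes|\lambda|}$ for the universal hyperelliptic curve $f$; hence $H^k(\mathcal H_g;V_\lambda)$ carries a functorial mixed Hodge structure (via mixed Hodge modules, or by pulling back to a finite level cover that is a smooth quasi-projective scheme and taking invariants), and $H^k(\mathcal H_g;V_\lambda)\otimes\Q_\ell$ acquires a continuous action of $\mathrm{Gal}(\overline\Q/\Q)$ by comparison with the \'etale cohomology of $\mathcal H_{g,\overline\Q}$. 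The stabilization maps are induced by an algebraic correspondence of $\Q$-stacks --- realized, for instance, by the boundary divisor of a compactification $\overline{\mathcal H}_{g+1}$ parametrizing a genus-$g$ hyperelliptic curve with a fixed genus-one hyperelliptic handle glued on at a pair of Weierstrass points --- and this correspondence sends $V_\lambda$ on $\mathcal H_g$ to $V_\lambda$ on $\mathcal H_{g+1}$, since gluing on the handle realizes $H^1$ of the larger curve as an orthogonal symplectic direct sum and the Schur-functor construction is natural for such sums. Therefore the stabilization maps are morphisms of mixed Hodge structures and of Galois representations; being isomorphisms of $\Q$-vector spaces for $g\gg k$, they are isomorphisms in each of the two enriched categories, where for the Hodge statement one invokes strictness of morphisms of mixed Hodge structures.

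The step I expect to be the main obstacle is precisely this last matching: one must verify that the abstract stabilization map produced by the homological stability machinery --- a priori only a map of abelian groups --- agrees with the map induced by an honest algebraic correspondence of moduli stacks over $\Q$ that respects the coefficient systems $V_\lambda$. Since one cannot add branch points to a point of the coarse space $\mathcal H_g$, the stabilization must be presented as a span of stacks (or as a Gysin map along a boundary stratum, in the spirit of Galatius--Kupers--Randal-Williams) and then identified with the monoidal-groupoid stabilization underlying \autoref{main}. Two subsidiary points also require care: making the $K(\pi,1)$-model for the stack $\mathcal H_g$ precise, and checking that the $\mathrm{PGL}_2$-quotient and the hyperelliptic involution contribute nothing to rational cohomology in a $\Q$-motivic fashion; and verifying that the symplectic-Schur-functor description of $V_\lambda$ is compatible with the Burau $U\G$-module structure uniformly in $g$.
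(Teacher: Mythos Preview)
Your proposal has a genuine gap that the paper was specifically written to address. You identify $\pi_1(\mathcal H_g)$ with the hyperelliptic mapping class group $\Delta_g$ and propose to use the braid-group model to supply stabilization maps and \con{H3}. But the fundamental group of the \emph{closed} hyperelliptic moduli $\mathcal H_g$ is \emph{not} a braid group: the braid group $\Br_{2g+1}$ is $\pi_1(\mathcal H_{g,\partial})$, the moduli of hyperelliptic curves with a marked Weierstrass point and a nonzero tangent vector there. There are no natural stabilization maps relating the spaces $\mathcal H_g$ for different $g$ --- the paper says so explicitly --- and applying \autoref{main} with trivial $\cQ$ to $\Br$ only recovers homological stability for $\mathcal H_{g,\partial}$, which was already Randal-Williams--Wahl's Theorem~D. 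Your proposed algebraic correspondence via a boundary stratum might perhaps be developed into a proof, but this is the entire content of the problem, not a technicality to be patched at the end; the paper remarks that ``it does not seem easy to deduce from the results of Randal-Williams--Wahl the existence of such an isomorphism in the case of closed surfaces.''

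The paper's route is therefore substantially different from yours. Rather than specializing \autoref{main} to trivial $\cQ$, it uses the full strength of the theorem with $\cN=\PBr$, $\G=\Br$, $\cQ=\fS$ to obtain representation stability for the \emph{pure} braid groups with coefficients in $V_\lambda$ (\autoref{CorBur}). One then writes $H^*(\mathcal H_g;V_\lambda) = H^*(\widetilde{\mathcal H}_g;V_\lambda)^{\fS_{2g+2}}$, where $\widetilde{\mathcal H}_g$ parametrizes hyperelliptic curves with an ordering on their Weierstrass points. A Gysin-sequence argument for the circle bundle $\widetilde{\mathcal H}_{g,\partial}\to\widetilde{\mathcal H}_g$ (whose Euler class vanishes rationally by \autoref{acyclic}) transfers uniform multiplicity stability from $H^*(\widetilde{\mathcal H}_{g,\partial};V_\lambda)\cong H^*(\PBr_{2g+1};V_\lambda)$ to $H^*(\widetilde{\mathcal H}_g;V_\lambda)$ as $\fS_{2g+1}$-representations, and an elementary ``de-shift'' argument (\autoref{deShift}) upgrades this to $\fS_{2g+2}$-stability. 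Taking invariants then gives the desired isomorphism. The absence of a stabilization map on $\mathcal H_g$ is circumvented by working equivariantly upstairs and passing to symmetric-group invariants --- and this is precisely why representation stability with polynomial coefficients, rather than mere homological stability, is needed.
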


It is perhaps a bit inaccurate to refer to \autoref{hyperelliptic-thm} as a homological stability theorem, as there are in fact no natural maps relating the moduli spaces $\mathcal H_g$ for different genera. Nevertheless, one reason to be interested in such a result is the following. By the Grothendieck--Lefschetz trace formula for stacks (Behrend \cite[Corollary 6.4.10]{behrend-derived}), and using the fact that $\mathcal H_g$ is the complement of a simple normal crossing divisor in a smooth proper stack over $\mathbb Z[\tfrac 12]$, there is an equality 
\begin{equation}
\label{jonas} q^{\dim \mathcal H_g}\sum_k (-1)^k \operatorname{Tr}(\mathrm{\Phi}_q \mid H^k(\mathcal H_{g,\overline \Q};V_\lambda \otimes \Q_\ell)) = \sum_{x \in \mathcal H_g(\mathbb F_q)} \frac 1 {\vert \mathrm{Aut}(x)\vert} \operatorname{Tr}(\mathrm{\Phi}_q \mid x^\ast V_\lambda \otimes \Q_\ell),
\end{equation} 
where $\Phi_q$ denotes the arithmetic Frobenius at the odd prime power $q$, $V_\lambda \otimes \Q_\ell$ denotes the lisse $\Q_\ell$-sheaf corresponding to the local system $V_\lambda$, and $x^\ast V_\lambda \otimes \Q_\ell$ denotes the stalk of this sheaf at (a geometric point over) $x$. For example, if $\lambda=0$, so $V_\lambda$ is the trivial local system, then the left hand side becomes the trace of Frobenius on the cohomology of $\mathcal H_g$ and the right hand side becomes the number of $\mathbb F_q$-points of $\mathcal H_g$ weighted by their automorphisms (which turns out to always equal $q^{2g-1}$). Note that the right hand side can be calculated for any given $q$ if one can make a list of all $\mathbb F_q$-isomorphism classes of hyperelliptic curves of given genus, the size of their automorphism groups, and their Frobenius eigenvalues (which determine the quantity $\operatorname{Tr}(\mathrm{\Phi}_q \mid x^\ast V_\lambda \otimes \Q_\ell)$). Bergstr\"om \cite{bergstrom09} studied the quantity \eqref{jonas} by very direct methods, using that all hyperelliptic curves admit an affine equation $y^2=f(x)$ with squarefree $f$ and summing over all $f$, and discovered in the process curious recursive formulas in the genus for the quantity \eqref{jonas}. A particular consequence of Bergstr\"om's recursions is that when $q$ is fixed and $g \to \infty$, the expression
\[\sum_k (-1)^k \operatorname{Tr}(\mathrm{\Phi}_q \mid H^k(\mathcal H_g;V_\lambda \otimes \Q_\ell))\]
converges exponentially fast to a power series in $q^{-1}$, which is in fact given by a rational function with all poles on the unit circle (and in particular it converges on the unit disk), depending only on $\lambda$. Moreover, Bergstr\"om gave an algorithmic procedure to compute this rational function for any $\lambda$, which has been carried out for all $\vert \lambda \vert \leq 30$ (pers.\ comm.); the answers are highly nontrivial. 

Given the above it is natural to expect that there should be homological stability for the spaces $\mathcal H_g$ with coefficients in $V_\lambda$, and that the rational functions calculated by Bergstr\"om are really giving the trace of Frobenius on the stable cohomology. A result of Randal-Williams--Wahl \cite[Theorem D]{RWW} very nearly verifies this expectation, except they deal with hyperelliptic surfaces with boundary (i.e.\ the braid group) instead of closed hyperelliptic surfaces. Our  \autoref{hyperelliptic-thm} fills in this gap and shows that one also has homological stability for closed surfaces, which in particular (combined with an easy bound on the unstable Betti numbers, e.g.\ the one obtained from the Fuks stratification of the configuration space of points in $\mathbb C$) implies that Bergstr\"om's calculations are indeed giving the trace of Frobenius on stable cohomology. 

\vspace{.1in}
\noindent \textbf{Congruence subgroups:}

\noindent 
In \cite{Pu}, Putman proved that the homology of congruence subgroups satisfy representation stability as $U\fS$-modules and asked if a similar statement was true with respect to the action of general linear groups. When the quotient ring is finite, this was resolved by Putman--Sam \cite[Theorem G]{PS}. In the case that the quotient ring is a PID, this was resolved by \cite[Theorem C]{MPW}. We extend the result, removing these assumptions on the quotient ring and improving all known stable ranges. 

Let $J \subset R$ be an ideal in a commutative ring. Let $\GL_n(J)$ denote the kernel of $\GL_n(R) \m \GL_n(R/J)$ and let $\GL_n^{\mathfrak U}(R/J)$ denote the group of matrices with determinant in the image of $R^\times \m R/J$. When the map $\GL_n(R) \m \GL^{\mathfrak U}_n(R/J)$ is surjective, the homology groups $\{H_i(\GL_n(J))\}_n$ assemble to form a $U\GL^{\mathfrak U}(R/J)$-module which we denote by $H_i(\GL(J))$. Recall that a $U\G$-module $A$ is presented in degree $\leq r$ if $$\Tor^\uZ_0(A,\Z)_n\cong \Tor^\uZ_1(A,\Z)_n \cong 0\text{ for all } n>r.$$ The following is an application of \autoref{derivedRepPolynomials}.

\begin{theorem} \label{thmCong} Let $R$ be a commutative ring and $J \subset R$ an ideal. Let $\mathfrak U$ be the units in $R/J$ which lift to units in $R$. Let $t$ be the stable rank of $R/J$ and $s$ the stable rank of $R$. If $\GL_n(R) \m \GL_n^{\mathfrak U}(R/J)$ is surjective for all $n$, then the $U\GL^{\mathfrak U}(R/J)$-module $H_i(\GL(J))$ has presentation degree $\leq  \max(8i+4s+t+8,4i+2s+2t-1)$.
%Then, the  has presentation degree $\leq   8i+4s+8$.
\end{theorem}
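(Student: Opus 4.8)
The plan is to apply \autoref{derivedRepPolynomials}, in the quantitative forms \autoref{PolynomialsCentral} and \autoref{PolynomialsTor}, to the $U\GL^{\mathfrak U}(R/J)$-module $A = H_i(\GL(J))$. The surjectivity hypothesis on $\GL_n(R)\m\GL_n^{\mathfrak U}(R/J)$ is exactly what makes
\[1 \m \GL(J) \m \GL(R) \m \GL^{\mathfrak U}(R/J) \m 1\]
a stability short exact sequence of symmetric monoidal groupoids (\autoref{Def:stability SES}), so that $\{H_i(\GL_n(J))\}_n$ genuinely assembles into a $U\GL^{\mathfrak U}(R/J)$-module; and $\GL^{\mathfrak U}(R/J)$ satisfies \con{H3}, in fact also the standard connectivity assumptions, with connectivity range governed by the stable rank $t$ of $R/J$ (via the usual high connectivity of posets of unimodular sequences). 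Granting that $A$ is polynomial, with polynomial degree linear in $i$ and attained in ranks above a threshold linear in $s$ and $t$, plugging these into \autoref{PolynomialsCentral} and \autoref{PolynomialsTor} produces the stated bound on the presentation degree of $A$.

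The heart of the matter is thus to prove that $H_i(\GL(J))$ is a polynomial $U\GL^{\mathfrak U}(R/J)$-module of degree linear in $i$ --- one expects degree $\le 2i$, so that the two quantitative estimates come out with leading terms $8i$ and $4i$. The route I would take is Shapiro's lemma: since $\GL_n(J)\trianglelefteq\GL_n(R)$ has quotient $\GL_n^{\mathfrak U}(R/J)$, there is an isomorphism
\[H_i(\GL_n(J);\Z) \;\cong\; H_i\bigl(\GL_n(R);\,\Z[\GL_n^{\mathfrak U}(R/J)]\bigr)\]
of $\GL_n^{\mathfrak U}(R/J)$-representations, natural in $n$, with the group acting on the coefficients by right translation. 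The coefficient system $\{\Z[\GL_n^{\mathfrak U}(R/J)]\}_n$ is not polynomial over $\GL(R)$ --- its ranks grow exponentially --- but it carries a filtration by $\GL(R)$-subsystems, indexed by the orbit type of $\GL_n^{\mathfrak U}(R/J)$ acting on $(R/J)^n$ under reduction modulo $J$, whose graded pieces in low filtration degree are polynomial $U\GL(R)$-coefficient systems of degree growing linearly in the filtration level. Twisted homological stability for $\GL_n(R)$ with polynomial coefficients --- available because $\GL(R)$ satisfies \con{H3}, with range governed by $s$ --- then shows that only the first, linearly-in-$i$ many, filtration pieces can contribute to $H_i$; this converts the exponential-rank coefficient system into polynomial-rank homology, and tracking the $\GL_n^{\mathfrak U}(R/J)$-action through the comparison exhibits $H_i(\GL(J))$ as a polynomial $U\GL^{\mathfrak U}(R/J)$-module with a degree bound linear in $i$ and threshold linear in $s,t$. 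For $R/J$ a PID this recovers, with improved ranges, the argument of \cite{MPW}.

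The main obstacle is precisely this polynomiality step, and inside it the bookkeeping: one must produce the filtration of $\Z[\GL_n^{\mathfrak U}(R/J)]$ uniformly in $n$ and compatibly with the right $\GL^{\mathfrak U}(R/J)$-action, identify its graded pieces concretely enough to recognize them as polynomial $U\GL(R)$-modules, and --- most delicately --- pass through the twisted-stability spectral sequence without inflating the polynomial degree beyond linear in $i$, since the whole estimate $\max(8i+4s+t+8,\,4i+2s+2t-1)$ hinges on a linear degree bound. Once that is in hand, what remains is to substitute into \autoref{PolynomialsCentral} and \autoref{PolynomialsTor} and simplify.
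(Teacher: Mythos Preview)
Your overall strategy is correct --- establish that $H_i(\GL(J))$ is a polynomial $U\GL^{\mathfrak U}(R/J)$-module, then feed this into \autoref{PolynomialsCentral} --- but the polynomiality step is both easier than you suggest and numerically different from what you guess.

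The paper does not prove polynomiality from scratch. Instead it imports two existing results about the underlying $U\fS$-module: Gan--Li \cite[Theorem~11]{GanLiCongruenceSubgroups} give generation degree $\le 4i+2s-1$ and presentation degree $\le 4i+2s+4$ as an $\FI$-module, and \cite[Theorem~3.30]{MPW} converts this into polynomiality of degree $\le 4i+2s-1$ in ranks $> 8i+4s+7$. The crucial observation you are missing is \autoref{rem:ZXpoly}: polynomiality of a $U\G$-module depends only on the underlying $U1$-module, so polynomiality as a $U\fS$-module is automatically polynomiality as a $U\GL^{\mathfrak U}(R/J)$-module. No Shapiro's lemma, no filtration of $\Z[\GL_n^{\mathfrak U}(R/J)]$, no delicate bookkeeping --- the whole step is two citations and a remark.

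Two of your numerical expectations are off. First, the polynomial degree is $4i+2s-1$, not $2i$; with degree $2i$ the output of \autoref{PolynomialsCentral} would not reproduce the term $4i+2s+2t-1$. Second, you assert that $\GL^{\mathfrak U}(R/J)$ satisfies the standard connectivity assumptions, but this is only known when $R/J$ is a PID (\autoref{AclyList}); the paper uses only \con{H3($2,t+1$)} from \autoref{H3examples} and applies \autoref{PolynomialsCentral}, never \autoref{PolynomialsTor}. After obtaining the vanishing ranges for $\widetilde H^{\GL^{\mathfrak U}(R/J)}_{-1}$ and $\widetilde H^{\GL^{\mathfrak U}(R/J)}_{0}$, one applies \autoref{thm:res of finite type} with the gap condition $d_1-d_0\ge\max(k,a)=t+1$, which is where the extra $t$ in $8i+4s+t+8$ and the $2t$ in $4i+2s+2t-1$ come from.
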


The range established here is roughly three times better than that of \cite[Theorem C]{MPW}. See Bass \cite[Section 4]{Bass} for a definition of stable rank.

\vspace{.1in}
\noindent \textbf{Diffeomorphism groups:}

\noindent 
Let $\Diff^\delta(M)$ denote the group of $C^\infty$ diffeomorphisms of a manifold which fix a neighborhood of the boundary point-wise, topologized with the discrete topology. The cohomology groups of $B\Diff^\delta(M)$ are characteristic classes of flat bundles with fiber $M$ and have many applications to foliation theory and realization problems. Let $M_{g,1}$ denote an orientable surface with one boundary component. Nariman  \cite{Nar} proved that the groups $\Diff^\delta(M_{g,1})$ have homological stability. We prove that they also exhibit secondary homological stability. The following is an application of \autoref{mainSecondary}. 

\begin{theorem} \label{DiffSec}
There is a map 
\[H_{i-2}(B\Diff^\delta(M_{g-3,1}),B\Diff^\delta(M_{g-4,1});\Z[\textstyle\frac{1}{10}] ) \m H_{i}(B\Diff^\delta(M_{g,1}),B\Diff^\delta(M_{g-1,1});\Z[\textstyle\frac{1}{10}] )\]
 which is a surjection for $i <
\frac{3}{4}g$ and an isomorphism for $i < \frac{3}{4}g-1$.
\end{theorem}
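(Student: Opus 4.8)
The plan is to deduce \autoref{DiffSec} from the already-known secondary homological stability of the mapping class groups $\Mod_{g,1}$, by feeding the homology of the ``flat part'' of the diffeomorphism group into \autoref{mainSecondary} as a polynomial coefficient system. Write $\Diff(M_{g,1})$ for the topological group of diffeomorphisms fixing a neighbourhood of the boundary. Since the identity component $\Diff_0(M_{g,1})$ is contractible for a surface with non-empty boundary, the short exact sequence of discrete groups $1 \to \Diff^\delta_0(M_{g,1}) \to \Diff^\delta(M_{g,1}) \to \Mod_{g,1} \to 1$ induces a homotopy fibre sequence of classifying spaces
\[ B\Diff^\delta_0(M_{g,1}) \longrightarrow B\Diff^\delta(M_{g,1}) \longrightarrow B\Mod_{g,1}. \]
Boundary connected sum makes $\G = \{\Mod_{g,1}\}_g$ into a braided stability groupoid, and extending diffeomorphisms by the identity over an attached handle makes $g \mapsto H_q(\Diff^\delta_0(M_{g,1});\Z[\tfrac1{10}])$ into a $U\G$-module $A_q$ for every $q\ge 0$. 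The associated Serre spectral sequence has $E^2_{p,q} = H_p(\Mod_{g,1};(A_q)_g)$ and converges to $H_{p+q}(B\Diff^\delta(M_{g,1});\Z[\tfrac1{10}])$; it is natural both for the genus-stabilisation maps and for the secondary stability operation on $\{\Mod_{g,1}\}_g$ acting fibrewise.

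First I would establish that each $A_q$ is a polynomial $U\G$-module, with an explicit bound on its polynomial degree. This is where the Mather--Thurston theorem enters, in the form exploited by Nariman \cite{Nar}: $B\Diff^\delta_0(M_{g,1})$ is homology-equivalent to a space of (suitably supported) sections over $M_{g,1}$ of a bundle with fibre built from the Haefliger classifying space $B\overline{\Gamma}_2$, which by Mather is $3$-connected. Because $M_{g,1}$ deformation retracts onto a wedge of $2g$ circles, this section space is, up to homology, a labelled-configuration/bar-type construction on $2g$ copies of the relevant loop space, and one reads off that $H_q(\Diff^\delta_0(M_{g,1});\Z[\tfrac1{10}])$ is assembled naturally from boundedly many tensor factors of the fibre homology --- which is exactly polynomiality of $A_q$ as a $U\G$-module, with degree controlled in terms of $q$. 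Inverting $2$ and $5$ is what makes the Mather--Thurston comparison and the analysis of $B\overline{\Gamma}_2$ behave well, and it is also the coefficient ring in which the secondary stability input below is available.

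With the modules $A_q$ in hand, the remaining hypotheses of \autoref{mainSecondary} are standard. The groupoid $\G = \{\Mod_{g,1}\}_g$ satisfies the standard connectivity assumptions (\autoref{AclyList}), and by Galatius--Kupers--Randal-Williams \cite{GKRW2} there is a secondary stability map of bidegree $(3,2)$ for $\{\Mod_{g,1}\}_g$ which, with $\Z[\tfrac1{10}]$-coefficients, is a surjection in a range of slope $\tfrac34$ and an isomorphism one degree below; this is the source of the constant $3$, the slope $\tfrac34$, and the coefficient ring $\Z[\tfrac1{10}]$ appearing in \autoref{DiffSec}. Hence \autoref{mainSecondary} applies to each $A_q$ and gives secondary stability of bidegree $(3,2)$ for $H_p(\Mod_{g,1},\Mod_{g-1,1};(A_q)_g,(A_q)_{g-1})$ with $\Z[\tfrac1{10}]$-coefficients, in a range of slope $\tfrac34$ shifted by the polynomial degree of $A_q$. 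Comparing the Serre spectral sequences for genus $g$ and genus $g-3$ along the fibrewise secondary stability operation --- which respects the Serre filtration, being induced from the base $B\Mod_{g,1}$ (equivalently from the $E_2$-algebra structure on $\bigsqcup_g B\Mod_{g,1}$) and acting coefficientwise on the fibres --- promotes these statements to a surjection (resp.\ isomorphism) on the abutments $H_\ast(B\Diff^\delta(M_{g,1});\Z[\tfrac1{10}])$ in the stated range, and passing to the cofibre of the genus-stabilisation map yields the relative statement of \autoref{DiffSec}.

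I expect the main obstacle to be the estimate in the second paragraph: extracting from Mather--Thurston theory a polynomiality statement for $H_q(\Diff^\delta_0(M_{g,1});\Z[\tfrac1{10}])$ that is uniform enough --- in particular, whose polynomial degree is controlled well enough as a function of $q$ --- to preserve the slope $\tfrac34$ when run through the Serre spectral sequence. This requires genuine input about the Haefliger space $B\overline{\Gamma}_2$ (and its behaviour after inverting $2$ and $5$), or equivalently a careful scanning / non-abelian Poincaré duality description of $B\Diff^\delta_0(M_{g,1})$; everything downstream of that is spectral sequence bookkeeping together with the appeal to \autoref{mainSecondary}.
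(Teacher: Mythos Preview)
Your approach is essentially the paper's: both fibre $B\Diff^\delta(M_{g,1})$ (or its homology-equivalent model $\cM_{g,1}(\theta)$ coming from Nariman) over $B\Mod_{g,1}$, show that the homology of the fibre is a polynomial $U\Mod$-module of degree $\leq q$ in homological degree $q$, and then run the relative Serre spectral sequence using secondary stability for $\Mod_{g,1}$ with polynomial coefficients (\autoref{mappingclassgroups}, which is the specialization of \autoref{mainSecondary} you invoke).

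Two points where your account diverges from the paper and should be corrected. First, the coefficients $\Z[\tfrac1{10}]$ have nothing to do with Mather--Thurston or the Haefliger space; those ingredients work integrally. The $\tfrac1{10}$ enters only because the secondary stability map for $\Mod_{g,1}$ in \cite{GKRW2} is identified with multiplication by a class $\lambda/10$ after inverting $10$. Second, the paper does not attempt your direct polynomiality argument for the fibre $B\Diff_0^\delta(M_{g,1})$. Instead it proves the statement first for tangential structures $\theta$ whose fibre is a single $K(A,n)$ with $n\geq 4$, where the fibre over $B\Mod_{g,1}$ is $\Map_*(M_{g,1},K(A,n))$ and Cohen--Madsen give the exact bound ``polynomial of degree $\leq i$ in $H_i$'' (\autoref{sectionIsPoly}); it then climbs the relative Postnikov tower of $\theta$ (\autoref{stablility4}) to reach an arbitrary $4$-connected $\theta$, and finally specializes to the Haefliger $\theta$, which is $4$-connected by Thurston. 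Your ``labelled-configuration/bar-type'' picture is not quite the right mechanism --- the fibre is a mapping space $(\Omega\overline B)^{2g}$, not a configuration space --- and the Postnikov induction is precisely how one secures the degree-$\leq q$ bound you correctly flag as the crux.

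One further point you elide: the secondary stability map must be defined on $B\Diff^\delta$ itself, not just on the base. The paper does this by showing (\autoref{surjec}) that for $4$-connected $\theta$ the map $H_2(\cM_{3,1}(\theta))\to H_2(B\Mod_{3,1})$ is surjective, so $\lambda/10$ lifts to a class $\lambda'$ on the total space, and ``multiply by $\lambda'$'' is the desired map.
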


\vspace{.1in}
\noindent \textbf{Homotopy automorphisms and general linear groups of the sphere spectrum:}

%In addition to proving a general secondary stability theorem for polynomial coefficients, we also prove a general theorem for improved stable ranges for (primary) homological stability with polynomial coefficients; see \autoref{improvedrangeGeneral}. We apply this result to study the general linear groups of the sphere spectrum. For $X$ a based space, let $\hAut(X)$ denote topological group of based homotopy automorphisms. Let $S^d$ denote the $d$-dimensional sphere and let $B$ denote the classifying space functor. Our model for the general linear group of the sphere spectrum is the $\Omega$-spectrum of the infinite loop space
%\[ \colim_{d\to \infty} \hAut(\bigvee_n S^d).\]

%\begin{theorem} \label{HAutThm}The natural map $H_i( B\GL_{n-1}(\mathbb S); \Z[\textstyle\frac 12] ) \m  H_i( B\GL_n(\mathbb S); \Z[\textstyle\frac 12]) $ is surjective for $i < \frac{2}{3}n$ and an isomorphism for $i \leq \frac{2}{3}n-1$.
%\end{theorem}

%Although not explicitly stated, the methods of Dwyer \cite[Section 4]{DwyerTwisted} are sufficient to establish a slope $\frac 12$ stable range. One reason to care about these spaces is their connection to $A$-theory. They play the same role in $A$-theory as classical general linear groups play in $K$-theory.
%\[\colim_{d \m \infty} H_i(B\hAut(\bigvee_{n} S^d)) \cong H_i(B\GL_n(\mathbb S))\] with $\mathbb S$ the sphere spectrum. Thus the homology of these spectra also stabilize.

\noindent In addition to proving a general secondary stability theorem for polynomial coefficients, we also prove a general theorem for improved stable ranges for primary homological stability with polynomial coefficients; see \autoref{improvedrangeGeneral}. We apply this result to study homotopy automorphism monoids. For $X$ a based space, let $\hAut(X)$ denote the topological monoid of based homotopy automorphisms topologized with the compact open topology. Let $B$ denote the bar construction for topological monoids. The space $B\hAut(X)$ can be viewed as the moduli space of spaces homotopy equivalent to $X$ with a choice of marked point. Let $S^d$ denote the $d$-dimensional sphere. We prove the following stability result.

\begin{theorem} \label{HAutThm} For $d \geq 3$, the natural map $H_i(B\hAut(\bigvee_{n-1} S^d) ; \Z[\frac 12] ) \m  H_i(B\hAut(\bigvee_{n} S^d); \Z[\frac 12]) $ is surjective for $i \leq \frac{2}{3}n$ and an isomorphism for $i \leq \frac{2}{3}n-1$.
\end{theorem}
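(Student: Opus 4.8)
The plan is to realise $B\hAut(\bigvee_n S^d)$ as the total space of a fibration whose base is the classifying space of a discrete stability groupoid with good homological stability and whose fibre has polynomial homology, and then to apply \autoref{improvedrangeGeneral}. For $d\geq2$, Hilton's theorem gives $\pi_d(\bigvee_n S^d)\cong\Z^n$, so a based self-map of $\bigvee_n S^d$ is determined up to homotopy by an integer $n\times n$ matrix and is a homotopy equivalence exactly when that matrix is invertible; hence $\pi_0\hAut(\bigvee_n S^d)\cong\GL_n(\Z)$, and the evaluation $\hAut(\bigvee_n S^d)\m\GL_n(\Z)$ has trivial kernel on $\pi_0$. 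Since $\hAut(\bigvee_n S^d)$ is grouplike, this gives a fibration sequence
\[B\hAut_\circ(\bigvee_n S^d)\m B\hAut(\bigvee_n S^d)\m B\GL_n(\Z),\]
where $\hAut_\circ(\bigvee_n S^d)=\mathrm{Map}_*(\bigvee_n S^d,\bigvee_n S^d)_{\id}$ is the identity component; letting $n$ vary via $\phi\mapsto\phi\vee\id_{S^d}$ makes this compatible with the stabilization maps and with the block inclusions $\GL_{n-1}(\Z)\hookrightarrow\GL_n(\Z)$. The stability groupoid $\GL(\Z)$ satisfies the standard connectivity assumptions --- the relevant splitting complex, whose simplices are direct sum decompositions of $\Z^n$ into nonzero free summands, is classically highly acyclic, see \autoref{AclyList} --- so the sequence $\{H_i(\GL_n(\Z);\Z[\tfrac12])\}_n$ satisfies homological stability of slope $\tfrac23$.

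\textbf{The fibre homology is a polynomial coefficient system.} The crux is to prove that $A^{(j)}:=\{H_j(B\hAut_\circ(\bigvee_n S^d);\Z[\tfrac12])\}_n$ is a polynomial $U\GL(\Z)$-module whose degree $r_j$ grows at most linearly in $j$, with $r_0=0$. For $k\geq1$ one has $\pi_k(\hAut_\circ(\bigvee_n S^d))\cong\bigoplus_{i=1}^n\pi_{k+d}(\bigvee_n S^d)$, and Hilton's theorem together with the Witt formula for the ranks of the graded pieces $\mathrm{Lie}_\ell(\Z^n)$ of the free Lie algebra exhibits $\pi_{k+d}(\bigvee_n S^d)$ as built out of the pieces $\mathrm{Lie}_\ell(\Z^n)\otimes\pi_{k+d}(S^{\ell(d-1)+1})$ for $1\leq\ell\leq1+\tfrac{k}{d-1}$, hence as a polynomial functor of $\Z^n$ of degree $\leq1+\tfrac{k}{d-1}$. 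For $d\geq3$ the weight-one contributions $\pi_{d+1}(S^d)=\Z/2$ are killed by inverting $2$ (and no Whitehead products contribute in that range), so $B\hAut_\circ(\bigvee_n S^d)$ has vanishing reduced $\Z[\tfrac12]$-homology in degrees $\leq2$; feeding the polynomial-functor bounds on the homotopy groups through the homology spectral sequence of the Postnikov tower of the simply connected space $B\hAut_\circ(\bigvee_n S^d)$ --- and using that finite-degree polynomial functors are closed under the tensor and $\Tor$ operations involved and that the homology of Eilenberg--MacLane spaces on them has linearly controlled degree --- gives $r_j\leq\tfrac{2d}{3(d-1)}\,j+O(1)$, which is $\leq j+O(1)$ for $d\geq3$. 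The conjugation $\GL_n(\Z)$-actions are compatible with these stabilization maps, so by the Randal-Williams--Wahl dictionary $A^{(j)}$ is a genuine polynomial $U\GL(\Z)$-module, to which \autoref{derivedRepPolynomials} also applies.

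\textbf{Assembling via a spectral sequence.} By \autoref{improvedrangeGeneral}, $H_p(\GL_n(\Z);A^{(q)}_n)$ stabilizes for $p\leq\tfrac23(n-\max(r_q,d_q))$. Consider the Serre spectral sequence
\[E^2_{p,q}=H_p(\GL_n(\Z);A^{(q)}_n)\ \Longrightarrow\ H_{p+q}(B\hAut(\bigvee_n S^d);\Z[\tfrac12]).\]
A contribution in total degree $\leq i$ with large fibre degree $q$ has correspondingly small base degree $p=i-q$, and since $r_q$ is linear with slope $<\tfrac32$, the constraint $i-q\leq\tfrac23(n-r_q)$ optimized over $q$ is binding at $q=0$, where $r_0=d_0=0$ and $H_p(\GL_n(\Z);\Z[\tfrac12])$ is stable for $p\leq\tfrac23 n$ (the rows $q=1,2$ vanishing over $\Z[\tfrac12]$). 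So all $E^2$-terms contributing in total degree $\leq i$, together with the slack needed for differentials, have stabilized once $i\leq\tfrac23 n$; as the spectral sequence is compatible with the stabilization maps, $H_i(B\hAut(\bigvee_{n-1}S^d);\Z[\tfrac12])\m H_i(B\hAut(\bigvee_n S^d);\Z[\tfrac12])$ is then surjective for $i\leq\tfrac23 n$ and an isomorphism for $i\leq\tfrac23 n-1$, the exact constants coming out of a careful accounting of the inputs.

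\textbf{Main obstacle.} The hard step is the second one: showing $H_*(B\hAut_\circ(\bigvee_n S^d);\Z[\tfrac12])$ is a polynomial $U\GL(\Z)$-module of linearly bounded degree. The $\GL_n(\Z)$-equivariance of Hilton's splitting is delicate, since the $\GL_n(\Z)$-action mixes basic products and only respects the associated-graded free Lie algebra structure, and one must control how $2$-torsion in the unstable homotopy groups of spheres propagates through the Postnikov tower; this is exactly where the hypotheses $d\geq3$ and the inversion of $2$ are used. Finally, the same strategy --- with $\hAut_\circ(\bigvee_n S^d)$ replaced by the identity component of $\GL_n(\mathbb S)=\mathrm{Aut}_{\mathbb S}(\mathbb S^n)$ and Hilton's theorem by the splitting of the corresponding mapping spectra --- yields the improved stable range for the general linear groups of the sphere spectrum.
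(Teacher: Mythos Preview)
Your overall strategy is the same as the paper's: fiber over $B\GL_n(\Z)$, show the relevant coefficient systems are polynomial $U\GL(\Z)$-modules of linearly bounded degree, and feed this into the improved slope-$\tfrac{2}{3}$ stability for $\GL_n(\Z)$ over $\Z[\tfrac12]$ (\autoref{improvedRangeGLn}). The paper organises this slightly differently: rather than a single fibration with fibre $B\hAut_\circ(\bigvee_n S^d)$ and a separate analysis of the fibre homology, it inducts directly up the Postnikov tower of the \emph{total space} $B\hAut(\bigvee_n S^d)$, whose first stage is already $B\GL_n(\Z)$, applying the twisted improved-range theorem at each stage via a relative Serre spectral sequence. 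Each Postnikov fibre is an Eilenberg--MacLane space $K(\pi_R,R)$, and the two ingredients are (i) $\pi_R(B\hAut(\bigvee_n S^d))$ is polynomial of degree $\le R$ for $d\ge 3$ (\autoref{prop:polydegpi}), proved by recognising it as an $\FI\sharp$-module via Hilton--Milnor and counting generators, and (ii) Eilenberg--MacLane's bound (\autoref{DwyerCor}) that $H_q(K(\pi_R,R);A)$ is polynomial of degree $\le q + \deg A$. This gives exactly the vanishing $E^2_{p,q}=0$ for $p<\tfrac{2}{3}n-q-a$, which is what you need.

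The genuine gap in your write-up is the second step. You assert a degree bound $r_j\le \tfrac{2d}{3(d-1)}j+O(1)$ for the fibre homology, but neither derive it nor indicate why that particular slope should appear; the delicate point you correctly flag --- that the Hilton splitting is not literally $\GL_n(\Z)$-equivariant --- is exactly what the paper resolves via the $\FI\sharp$-module argument, and you do not supply a substitute. Moreover, your use of $\Z[\tfrac12]$ to kill $\pi_{d+1}(S^d)=\Z/2$ in low degrees is a red herring: the paper's polynomiality results (\autoref{prop:polydegpi}, \autoref{DwyerCor}) are integral, and $\Z[\tfrac12]$ enters only through the Galatius--Kupers--Randal-Williams stable range for $\GL_n(\Z)$. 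Once you actually prove the polynomial degree bounds (e.g.\ $\pi_R$ has degree $\le R$), your single-fibration version and the paper's Postnikov-tower version become formally equivalent --- but that proof is the entire content of Propositions~5.12--5.15, and your sketch does not carry it out.
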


One model of the $n$th general linear group of the sphere spectrum  is 
\[\GL_{n}(\mathbb S) := \underset{d\to \infty}{\colim} \hAut(\bigvee_n S^d).\] The following is a corollary of \autoref{HAutThm}.

\begin{corollary}\label{GLS cor}Let $\mathbb S$ denote the sphere spectrum. The natural map $H_i(B  \GL_{n-1}(\mathbb S); \Z[\textstyle\frac 12] ) \m  H_i(B  \GL_n(\mathbb S); \Z[\textstyle\frac 12]) $ is surjective for $i < \frac{2}{3}n$ and an isomorphism for $i \leq \frac{2}{3}n-1$.
\end{corollary}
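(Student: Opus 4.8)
The plan is to deduce the corollary from \autoref{HAutThm} by passing to the colimit over $d$, using that all ranges appearing in \autoref{HAutThm} are independent of $d$. Recall the model $\GL_n(\mathbb S) = \colim_{d\to\infty}\hAut(\bigvee_n S^d)$, in which the structure maps are induced by suspension and are maps of topological monoids. Via the evident identifications $\bigvee_n S^{d+1}\cong \Sigma\bigvee_n S^d$, these suspension maps are compatible with the stabilization maps $\hAut(\bigvee_{n-1}S^d)\to\hAut(\bigvee_n S^d)$ that extend a based self-equivalence by the identity on the new wedge summand. Applying the bar construction then produces, for each $n$, a sequential diagram of spaces $\{B\hAut(\bigvee_n S^d)\}_d$ together with stabilization maps to the corresponding diagram for $n$.

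First I would note that $B(-)$ is built levelwise out of finite products, and hence commutes with sequential colimits of topological monoids; equivalently, the homology of each classifying space is computed by the bar complex of singular chains, which commutes with filtered colimits. After choosing suitable point-set models (or replacing the colimit by the homotopy colimit, which is equivalent here since the suspension maps may be taken to be cofibrations), this gives $B\GL_n(\mathbb S)\simeq\colim_d B\hAut(\bigvee_n S^d)$. Since singular homology commutes with filtered colimits of spaces, we obtain
\[ H_i(B\GL_n(\mathbb S);\Z[\tfrac12]) \cong \colim_{d\geq 3} H_i(B\hAut(\textstyle\bigvee_n S^d);\Z[\tfrac12]), \]
where we have restricted to the cofinal subdiagram $d\geq 3$; under this identification the map in the corollary is the colimit of the stabilization maps $H_i(B\hAut(\bigvee_{n-1}S^d);\Z[\tfrac12])\to H_i(B\hAut(\bigvee_n S^d);\Z[\tfrac12])$.

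Finally I would invoke \autoref{HAutThm}, which for every $d\geq 3$ makes each of these stabilization maps surjective for $i\leq\frac23 n$ and an isomorphism for $i\leq\frac23 n-1$, with a range independent of $d$. Filtered colimits of abelian groups are exact, so a filtered colimit of surjections is surjective and a filtered colimit of isomorphisms is an isomorphism; the colimit map is therefore surjective for $i\leq\frac23 n$ (in particular for $i<\frac23 n$) and an isomorphism for $i\leq\frac23 n-1$, which is the claim. The only non-formal point is the identification $B\GL_n(\mathbb S)\simeq\colim_d B\hAut(\bigvee_n S^d)$ together with the compatibility of the two stabilization directions; this is standard, and once it is in place the rest is just exactness of filtered colimits, with no geometric input beyond \autoref{HAutThm}.
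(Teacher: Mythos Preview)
Your proposal is correct and follows essentially the same approach as the paper: the paper simply records the identification $\colim_{d\to\infty} H_i(B\hAut(\bigvee_n S^d)) \cong H_i(B\GL_n(\mathbb S))$ and observes that homological stability for the homotopy automorphism monoids (\autoref{HAutThm}) immediately yields the corollary, without spelling out the details about commuting $B$ with the colimit or the exactness of filtered colimits that you supply.
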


One reason to care about general linear groups of ring spectra is their connection to $A$-theory. They play a similar role in $A$-theory as classical general linear groups play in $K$-theory. Although not explicitly stated, the methods of Dwyer \cite[Section 4]{DwyerTwisted} are sufficient to establish versions of these theorems with a slope $\frac 12$ stable range.

%One reason to care about these spaces is that \[\colim_{d \m \infty} H_i(B\hAut(\bigvee_{n} S^d)) \cong H_i(B\GL_n(\mathbb S))\] with $\mathbb S$ the sphere spectrum. Thus the homology of these spectra also stabilize.

\subsection{Outline of the paper}
In \autoref{Sec2}, we describe the categorical setup for our stability results. In \autoref{SecPolyn}, we prove \autoref{derivedRepPolynomials} which states that polynomial coefficient systems exhibit derived representation stability. We use this result in \autoref{Sec4} to prove our representation stability and secondary stability theorems with polynomial coefficients, \autoref{main} and \autoref{mainSecondary}. Finally, in  \autoref{Sec5}, we apply these general stability theorems to concrete examples and prove \autoref{hyperelliptic-thm} \autoref{thmCong}, \autoref{DiffSec}, and \autoref{HAutThm}.

In \autoref{appendix}, we give a summary of various kinds of stability arguments and give motivation for how one should think about the techniques of this paper. Readers interested in a qualitative and big picture view of this paper, should start with the appendix. Those who are instead only interested in rigorous proofs and precise statements should ignore the appendix.

\subsection{Acknowledgments}

We thank Zachary Himes, Manuel Krannich, Alexander Kupers, Rohit Nagpal, Sam Nariman, Andrew Putman, Oscar Randal-Williams, and Robin Sroka for helpful conversations.

\section{Categorical and algebraic preliminaries}
\label{Sec2}
In this section, we review the categorical framework for our stability results. Much of this setup has appeared in or is inspired by other papers such as \cite{DsurPoly,SSIntro,PS,RWW,Pa2,HepEdge,GKRW1}.

\subsection{Tensor products as coends}

We will describe a certain coend construction in this section, which can be thought of as the tensor product over a category. Let $\C$ be a small category. We write $c\in \C$ when $c$ is an object of $\C$ and $\C(c,c')$ for the set of morphisms from $c$ to $c'$ in $\C$. We call a functor from $\C$ to the category of sets a $\C$-set and a functor from $\C$ to modules over a commutative ring $\bk$ a $\C$-module. A morphism of $\C$-sets or $\C$-modules is a natural transformation. In later sections we will freely use the analogous terminology also for $\mathcal C$-chain complexes, $\mathcal C$-simplicial sets, $\mathcal C$-spaces, etc. 

Given a $\C$-module $M$ and a $\C^\op$-module $N$, their coend is the $\bk$-module $M \otimes_\C N$ defined as the coequalizer of the two maps
\[\bigoplus_{c,c' \in \C, f\in \C(c,c')} M(c) \otimes_\bk N(c') \rightrightarrows \bigoplus_{c\in \C} M(c) \otimes_\bk N(c)\]
given by $m\otimes n$ mapping to $f(m) \otimes n$ and $m\otimes f(n)$, respectively.

Similarly, given a $\C$-set $M$ and a $\C^\op$-set $N$, their coend is the set $M \otimes_\C N$ defined as the coequalizer of the two maps
\[ \coprod_{c,c' \in \C, f\in \C(c,c')} M(c) \times N(c') \rightrightarrows \coprod_{c\in \C} M(c) \times N(c)\]
given by $(m, n)$ mapping to $(f(m) , n)$ and $(m, f(n))$, respectively.

These construction work analogously to the tensor product or the balanced product and have many useful properties. For example, the $\C$-module 
\[ M \otimes_\C \bk\C(-,-)\]
 is canonically isomorphic to $M$, where $\bk\C(-,-)$ is the ``representable'' $\C^\op \times \C$-module given by taking the free $\bk$-module on the morphisms in $\C$. A more detailed introduction can be found in MacLane \cite[Chapter X]{maclane} and \cite[Section 2]{PWG}.

\subsection{Stability categories}

In this section, we review the framework of stability categories used in \cite{Pa2}. The following is {\cite[Def 3.1]{Pa2}}.

\begin{definition}\label{Def:stability groupoid}
Let $(\G,\oplus,0)$ be a monoidal skeletal groupoid whose monoid of objects is the natural numbers $\N$ with addition. The automorphism group of the object $n \in \N$ is denoted $G_n = \Aut^\G(n)$. Then $\G$ is called a \emph{stability groupoid} if it satisfies the following properties:
\begin{enumerate}
\item The monoidal structure
\[ \oplus \colon  G_m\times G_n \inject G_{m+n}\]
is injective for all $m,n\in \N$.
\item The group $G_0$ is trivial.
\item $(G_{l+m}\times 1)\cap (1\times G_{m+n}) = 1\times G_{m} \times 1\subset G_{l+m+n}$ for all $l,m,n\in \N$.
\end{enumerate}

A \emph{homomorphism} of stability groupoids is a monoidal functor sending $1$ to $1$.
\end{definition}

The following is a special case of a definition of Quillen.

\begin{definition}
Let $(\G,\oplus,0)$ be a monoidal groupoid. Let $U\G$ be the category which has the same objects as $\G$, and its morphisms $f\colon A \to B$ are equivalence classes of pairs $(\hat f,C)$ where $C$ is an object in $\G$ and $\hat f$ is an (iso)morphism  $C\oplus A \to B$ in $\G$. Two of these  pairs $(\hat f,C)$ and $(\hat f',C')$ are equivalent if there is an isomorphism $h\colon  C \to C'$ (in $\G$) such that the diagram
\[ \xymatrix{
C\oplus A \ar[r]^{\hat f} \ar[d]_{h\oplus \id_A} & B\\
C'\oplus A \ar[ru]_{\hat g'}
}\]
commutes. We will denote the equivalence class of $(\hat f,C)$ by $[\hat f,C]$. Composition is defined by
\[ [\hat f, C] \circ [ \hat g, D] = [ \hat f \circ (\id_C \oplus \hat g), C \oplus D ] \]
for $f = [\hat f , C] \colon A' \to A''$ and $g = [ \hat g, D] \colon A \to A'$.
\end{definition}

\begin{remark}
Let $\underline{*}$ be the $\G$-set that is a singleton for every $n$. Note that
\[ U\G(-_1,-_2) \cong \underline{*} \otimes_\G \G( - \oplus -_1,-_2)\]
as $\G^\op\times \G$-sets.
\end{remark}

\begin{definition}\label{Def:stability category}
	If $\G$ is a braided stability groupoid, then we call $U\G$ the \emph{stability category} of $\G$. We will denote the braiding by $b_{m,n} \colon m \oplus n \to n\oplus m$.
\end{definition}

\begin{example}
The following is a list of some stability groupoids.
\begin{itemize}
\item Trivial groups $1=(1)_{n \in \N}$. Functors from $U1$ to the category of abelian groups are the same data as a graded modules over a polynomial ring $\Z[x]$.
\item Symmetric groups $\fS = (\fS_n)_{n\in\N}$.  The category $U \fS$ is equivalent to the category of finite sets and injections that is denoted $\FI$ by Church--Ellenberg--Farb \cite{CEF}.
\item Braid groups $\Br = (\Br_n)_{n\in\N}$.
\item Pure braid groups $\PBr= (\PBr_n)_{n\in\N}$. Note that this stability groupoid is not braided.
\item General linear groups $\GL(R)=(\GL_n(R))_{n \in \N}$. If $R$ is commutative and $U$ is a subgroup of the group of units in $R$, we let $\GL^U(R)_n$ denote the subgroup of  $\GL_n(R)$ of matrices with determinant in $U$ and let $\GL^U(R)=(\GL_n^U(R))_{n \in \N}$. The categories $U\GL(R)$ and $U\GL^U(R)$ are equivalent to the categories denoted $\VIC(R)$ and  $\VIC^U(R)$ respectively by Putman--Sam \cite{PS}.
\item Congruence subgroups $\GL(J)= (\GL_n(J))_{n\in \N}$ for an ideal $J$. Note that this stability groupoid is generally not braided.
\item Symplectic groups $\Sp(R) =(\Sp_{2n}(R))_{n \in \N}$. The category $U\Sp(R)$ is equivalent to the category $\SI(R)$ of Putman--Sam \cite{PS}.
\item Mapping class groups of orientable surfaces with one boundary component $\Mod=(\Mod_{g,1})_{g \in \N}$.
\end{itemize}
See Randal-Williams--Wahl \cite{RWW} for more examples of stability groupoids and details on the monoidal structure and braidings.

\end{example}

The following proposition summarizes some results about stability categories.

\begin{proposition}\label{prop:stabcat}
Let $\G$ be a braided stability groupoid. Then:
\begin{enumerate}
\item \cite[Proposition 1.8(i)]{RWW} $0$ is initial in $U\G$. Let $\iota_m$ denote the unique map from $0$ to $m$.
\item \cite[Proposition 1.8(ii)]{RWW} $U\G$ is a pre-braided monoidal category. That is, the diagram
\[\xymatrix{
m \ar[r]^{\id_m \oplus \iota_n} \ar[rd]_{\iota_n \oplus \id_m} & m \oplus n \ar[d]^{b_{m,n}}\\
& n \oplus m
}\]
commutes.
\item \cite[Proposition 3.11(a)]{Pa2} Every map in $U\G$ is a monomorphism.
\item \label{item:semisimpl} \cite[Theorem 2.3]{Pa2} Let $\Delta_{inj,+}$ be the category of ordered finite sets and injective ordered maps. There is a unique monoidal functor $\Delta_{inj,+} \to U\G$ that sends $\{0\}$ to $1$.
\end{enumerate}
\end{proposition}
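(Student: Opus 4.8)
The plan is to establish the four parts in turn. Parts (i) and (ii) I would derive directly from the definition of $U\G$; parts (iii) and (iv) are where the stability-groupoid axioms of \autoref{Def:stability groupoid} must be used, and these are the substance.

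For (i): a morphism $0\to m$ in $U\G$ is an equivalence class $[\hat f, C]$ with $\hat f$ an isomorphism $C\oplus 0\to m$ in $\G$. Since $0$ is the monoidal unit, $C\oplus 0 = C$, so $\hat f\colon C\to m$ is an isomorphism in the skeletal groupoid $\G$, forcing $C = m$. Given two representatives $(\hat f, m)$ and $(\hat f', m)$, the automorphism $h := (\hat f')^{-1}\hat f$ of $m$ satisfies $\hat f'\circ(h\oplus\id_0) = \hat f$, so they are equivalent; hence $U\G(0,m)$ is a singleton, $0$ is initial, and $\iota_m$ denotes its element. For (ii): I would first invoke the monoidal structure on $U\G$ coming (via the braiding of $\G$) from \cite[Section 1]{RWW}, in which the tensor of two morphisms uses the braiding to move the newly adjoined summands past the old summand in the target. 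Then I would write down representatives for $\id_m\oplus\iota_n$ and for $\iota_n\oplus\id_m$ and check that $b_{m,n}\circ(\id_m\oplus\iota_n)$ and $\iota_n\oplus\id_m$ are represented by the same pair (the evident isomorphism of $n\oplus m$ together with the object $n$). This is \cite[Proposition 1.8]{RWW}.

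For (iii), suppose $[\hat f, C]\colon A\to B$ and $[\hat g_1, D_1],[\hat g_2, D_2]\colon A'\to A$ with $[\hat f,C]\circ[\hat g_1,D_1] = [\hat f,C]\circ[\hat g_2,D_2]$. By the composition formula these composites are $[\hat f\circ(\id_C\oplus\hat g_i),\,C\oplus D_i]$, so an equivalence between them is an isomorphism $h\colon C\oplus D_1\to C\oplus D_2$ fitting into the defining triangle. The key step is to show that $h$ must decompose as $\id_C\oplus h'$ for a unique $h'\colon D_1\to D_2$; here axiom (3) of \autoref{Def:stability groupoid} pins down the relevant intersection $(G_?\times 1)\cap(1\times G_?)$ and axiom (1) supplies the needed injectivity, after which $h'$ witnesses $[\hat g_1,D_1] = [\hat g_2,D_2]$. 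This is \cite[Proposition 3.11]{Pa2}. I expect this to be the main obstacle: it is the only place where ``stability groupoid'' is used in an essential way rather than merely ``monoidal groupoid,'' and the bookkeeping of the decomposition of $h$ is the part that does not formalize itself.

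For (iv): I would use that $\Delta_{inj,+}$ is generated as a strict monoidal category by the one-point object $\{0\}$ together with the unique morphism $\emptyset\to\{0\}$ — every object is an iterated $\oplus$-sum of copies of $\{0\}$, and every order-preserving injection is a composite of morphisms of the form $\id\oplus(\emptyset\to\{0\})\oplus\id$, subject to the cosimplicial identities. A monoidal functor $F$ with $F(\{0\}) = 1$ must therefore send $\emptyset\to\{0\}$ to the unique map $0\to 1$, namely $\iota_1$ by (i), which gives uniqueness. For existence I would set $F$ to send an ordered set of size $n$ to $n$ and each generating injection to the corresponding morphism $\id\oplus\iota_1\oplus\id$ in $U\G$, and then check that the cosimplicial relations hold in $U\G$ using that it is pre-braided monoidal with initial unit $0$ — i.e.\ parts (i) and (ii). This is \cite[Theorem 2.3]{Pa2}.
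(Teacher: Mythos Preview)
Your proposal is correct. The paper itself gives no proof of this proposition: it is stated as a summary of results from \cite{RWW} and \cite{Pa2}, with each part attributed to the corresponding reference and nothing further. Your sketches for (i)--(iv) are sound and match the standard arguments in those references; in particular, your argument for (iii) is exactly the intended one --- from $(\id_C\oplus\hat g_2)\circ(h\oplus\id_{A'})=\id_C\oplus\hat g_1$ one deduces $h\oplus\id_{A'}\in(G_{c+d}\times 1)\cap(1\times G_{d+a'})=1\times G_d\times 1$ by axiom~(iii) of \autoref{Def:stability groupoid}, and then axiom~(i) strips the outer factors to give $h=\id_C\oplus h'$.
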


Lastly, we add one new technical observation that we will need later in the paper.

\begin{lemma}
 Let $\G$ be a braided stability groupoid. Let $f \colon l \to m$ be a 
 map in $U\G$ and $n$ an object in $U\G$. Then the diagram
 \[\xymatrix{ n \oplus l \ar[r]^{\id_n \oplus f} \ar[d]_{b_{n,l}} & n 
 \oplus m \ar[d]^{b_{n,m}} \\
 l\oplus n \ar[r]_{f \oplus \id_n} & m\oplus n}\]
 commutes.
 \end{lemma}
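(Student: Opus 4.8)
The plan is to read the square as the naturality square, evaluated at $f$, of the family $b_{n,-}$ viewed as a transformation between the two endofunctors $n\oplus(-)$ and $(-)\oplus n$ of $U\G$. Naturality squares compose, so it suffices to verify the square for a class of morphisms generating all of $U\G$ under composition. Recall from the definition of $U\G$ that any $f\colon l\to m$ is represented by a pair $(\hat f,C)$ with $\hat f\colon C\oplus l\to m$ an isomorphism in $\G$; unwinding the composition law in $U\G$ gives $f=\overline{\hat f}\circ(\iota_C\oplus\id_l)$, where $\iota_C\colon 0\to C$ is the unique morphism out of the initial object (\autoref{prop:stabcat}) and $\overline{\hat f}$ is the image of $\hat f$ under the strict monoidal ``inclusion'' functor $\G\to U\G$, $\phi\mapsto[\phi,0]$, which by construction of the braiding on $U\G$ carries the braiding of $\G$ to that of $U\G$. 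Hence it is enough to treat $f=\overline{\hat f}$ and $f=\iota_C\oplus\id_l$ separately.

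For $f=\overline{\hat f}$ the square is simply the image under $\G\to U\G$ of the naturality square of the braiding in the braided monoidal groupoid $\G$, so there is nothing to prove. For $f=\iota_C\oplus\id_l$ I would first dispose of the case $l=0$: the square for $\iota_C\colon 0\to C$ reduces, using $b_{n,0}=\id_n$, to the pre-braided identity $b_{n,C}\circ(\id_n\oplus\iota_C)=\iota_C\oplus\id_n$ of \autoref{prop:stabcat}. The general case then follows from a general principle: if $b_{n,-}$ is natural with respect to $g\colon a\to b$, then it is natural with respect to $g\oplus\id_l$. This last implication is a short diagram chase using the hexagon identity $b_{n,\,a\oplus l}=(\id_a\oplus b_{n,l})\circ(b_{n,a}\oplus\id_l)$ (valid in $U\G$ because it holds in $\G$ and is preserved by $\G\to U\G$) together with bifunctoriality of $\oplus$: both legs of the square collapse to $g\oplus b_{n,l}$.

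The only genuine subtlety — and the reason this lemma requires an argument at all rather than being ``naturality of the braiding'' — is that $U\G$ is a priori only \emph{pre}-braided: the maps $b_{m,n}$ are present, but their naturality against the non-invertible inclusions $\iota_C$ is not part of that package and has to be extracted from \autoref{prop:stabcat}. Once the $\iota_C$ case is in hand, everything else (factoring a general morphism, bifunctoriality of $\oplus$, and transporting the hexagon from $\G$) is routine, and I do not anticipate a real obstacle; alternatively one could simply compute both composites directly from the $[\hat f,C]$-representatives after the factorization above.
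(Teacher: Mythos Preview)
Your proof is correct and follows essentially the same approach as the paper: both factor $f$ as an isomorphism in $\G$ composed with $\iota_{m-l}\oplus\id_l$, handle the isomorphism via naturality of the braiding in $\G$, and handle the inclusion via the hexagon identity together with the pre-braided property of \autoref{prop:stabcat}. Your framing in terms of naturality squares composing, with the intermediate reduction ``natural for $g$ implies natural for $g\oplus\id_l$'', is a slightly more conceptual packaging of exactly the same diagram chase the paper carries out directly.
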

 
 \begin{proof}
 By construction of $\UG$, we can find a morphism $\hat f\colon (m-l) \oplus l \to 
 m$ in $\G$ such that the composition
 \[ l \stackrel{ \iota_{m-l} \oplus \id_m } \longrightarrow (m-l) \oplus 
 l \stackrel{\hat f} \longrightarrow m\]
 is $f\colon l \to m$. Thus we can split the problem in proving that both  
 squares
 \[\xymatrix{ n \oplus l \ar[rr]^<<<<<<<<{\id_n \oplus \iota_{m-l} \oplus 
 \id_{l}} \ar[d]_{b_{n,l}} &&n \oplus (m-l) \oplus l 
 \ar[rr]^<<<<<<<<{\id_n \oplus \hat f} \ar[d]_{ b_{n,(m-l) \oplus l}} && 
 n \oplus m \ar[d]^{b_{n,m}} \\
 l\oplus n \ar[rr]_<<<<<<<<{ \iota_{m-l} \oplus \id_{l}\oplus \id_n } && 
 (m-l) \oplus l \oplus n \ar[rr]_<<<<<<<<{\hat f \oplus \id_n}&& m\oplus 
 n}\]
 are commutative. The right square commutes because $b$ is a braiding of 
 $\G$. For the left square, note that
 \[ b_{n,(m-l) \oplus l} = (\id_{m-l} \oplus b_{n,l}) \circ (b_{n,m-l} 
 \oplus \id_{l} )\]
 by the hexagon relation in a braided monoidal category. Using the pre-braiding property, we see that
  \[\xymatrix{
 n \oplus l \ar[rr]^<<<<<<<<{\id_n \oplus \iota_{m-l} \oplus \id_{l}} 
 \ar[d]_{b_{n,l}}\ar[rrd]_<<<<<<<<{\iota_{m-l} \oplus\id_n \oplus \id_{l} 
 }  &&n \oplus (m-l) \oplus l \ar[d]^{b_{n,m-l} \oplus \id_{l}} \\
 l\oplus n\ar[rrd]^<<<<<<<<{\iota_{m-l} \oplus \id_{l} \oplus \id_{n}} 
 &&(m-l) \oplus n \oplus l \ar[d]^{ \id_{m-l} \oplus b_{ n,l}}\\
 &&(m-l) \oplus l\oplus n} \]
 commutes, which is the left square.
 \end{proof}

The precomposition of $b_{n,m}$ gives a map $\UG(m\oplus n, - ) \to \UG(n \oplus m, -)$ of $\UG$-sets. The lemma shows that this is in fact functorial in $m$. But it is not functorial in $m$ and $n$, unless $b$ is a braiding of $\UG$.

\begin{corollary}\label{cor:newbraidiso}Let $\G$ be a braided stability groupoid.
Precomposing with the braiding gives an isomorphism
\[ \UG(-\oplus n, -) \longrightarrow \UG(n \oplus -, -)\]
as $\UG^\op\times \UG$-sets.
\end{corollary}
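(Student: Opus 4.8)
The plan is to unwind the statement into three routine checks: that precomposition with the braiding is a bijection object-wise, that it is natural in the second ($\UG$) variable, and that it is natural in the first ($\UG^\op$) variable. The first two are formal, and the third is exactly the lemma just proved; so essentially no new work is needed beyond assembling these.

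Fix the object $n$. For objects $m$ and $k$, write
\[ \phi_{m,k}\colon \UG(m\oplus n,k)\longrightarrow \UG(n\oplus m,k),\qquad g\longmapsto g\circ b_{n,m}, \]
where $b_{n,m}\colon n\oplus m\to m\oplus n$ is the braiding of $\G$, regarded as a morphism of $\UG$. Since $\G$ is a groupoid, $b_{n,m}$ is an isomorphism in $\G$, hence an isomorphism in $\UG$, so $\phi_{m,k}$ is a bijection, with inverse given by precomposition with $b_{n,m}^{-1}$. Naturality in $k$ is immediate: for $h\colon k\to k'$ in $\UG$ one has $(h\circ g)\circ b_{n,m}=h\circ(g\circ b_{n,m})$ by associativity, so the square relating $\phi_{m,k}$ and $\phi_{m,k'}$ commutes.

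It remains to check naturality in $m$. For $f\colon l\to m$ in $\UG$, the functor $m'\mapsto\UG(m'\oplus n,k)$ sends $f$ to precomposition with $f\oplus\id_n\colon l\oplus n\to m\oplus n$, while the functor $m'\mapsto\UG(n\oplus m',k)$ sends $f$ to precomposition with $\id_n\oplus f\colon n\oplus l\to n\oplus m$. Hence, for $g\in\UG(m\oplus n,k)$, the two ways around the naturality square produce $g\circ b_{n,m}\circ(\id_n\oplus f)$ and $g\circ(f\oplus\id_n)\circ b_{n,l}$, and these coincide because $b_{n,m}\circ(\id_n\oplus f)=(f\oplus\id_n)\circ b_{n,l}$ is precisely the commuting square of the preceding lemma. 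Combining the three observations yields the asserted isomorphism of $\UG^\op\times\UG$-sets.

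I do not expect any genuine obstacle: all the content sits in the preceding lemma, and the remainder is bookkeeping about variances. The one point that must be handled with care is that $n$ is held fixed throughout: the naturality square invokes the lemma, which was proved only for a single $\oplus$-factor, and since the braiding of $\G$ need not descend to a braiding of $\UG$ one should not expect joint naturality in $m$ and $n$ — this is exactly the caveat flagged in the remark immediately preceding the statement.
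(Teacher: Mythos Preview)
Your proposal is correct and follows exactly the approach the paper takes: the paper's ``proof'' is the sentence immediately preceding the corollary, which observes that precomposition with $b_{n,m}$ gives a map of $\UG$-sets and that the lemma shows functoriality in $m$. You have simply spelled out these steps in more detail, including the (immediate) bijectivity and the naturality in the second variable, and correctly flagged the caveat about $n$ being fixed.
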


\subsection{Central stability homology and degree-wise coherence}

In this subsection, we recall the definition of central stability homology and how it relates to the generation degree of syzygies of a $U\G$-module. To define this we need that $\G$ is a braided stability groupoid and we assume this for the remainder of this section.

We recall the notion of central stability homology of $U\G$-modules.

\begin{definition}\label{Def:central stability chains}
For a $U\G$-module $A$, consider
\[ A \otimes_{U\G} \Z U\G(- \oplus -_1,-_2)\]
as a $\Delta_{inj,+}^\op \times U\G$-module using the functor $\Delta_{inj,+} \to \UG$ from \autoref{prop:stabcat} \ref{item:semisimpl}. In other words, it is an augmented semisimplicial $U\G$-module. This gives rise to a $U\G$-chain complex that we denote by $\widetilde C^\G_*(A)$.
Call $\widetilde C^\G_*(A)$ \emph{central stability chains} of $A$. We write $\widetilde H^\G_i(A)$ for $H_i(\widetilde C^\G_*(A))$ and refer to it as the \emph{central stability homology} of $A$.
\end{definition}

\begin{remark}\label{rem:CS}
The objects in $\Delta_{inj,+}$ are the integers $n\ge -1$. Given an augmented semisimplicial abelian group $A_\bullet\colon \Delta_{inj,+}^\op \to \Ab$, the associated chain complex $C_*(A)$ is given by $C_p(A) = A_p$, so it is supported in degrees $\ge -1$. Because the inclusion $\Delta_{inj,+} \to U\G$ sends $n$ to $n+1$, we get a degree shift in the indexing:
\[ \tilde C^\G_p(A)_n =  A \otimes_{U\G} \Z U\G(- \oplus (p+1), n)  \cong \Ind_{G_{n-p-1}}^{G_n} A_{n-p-1}\]
This is consistent with the notation in \cite{Pa2}.
\end{remark}

In this paper, a $U\G$-module is called \emph{free} if it is isomorphic to the direct sum of representable functors $\Z U\G(m,-)$ for $m\in \N$. 

\begin{definition} \label{defPresentation}
A free $U\G$-module is said to be generated in degrees $\le d$ if it is a direct sum of representable functors $\Z U\G(m,-)$ with $m \leq d$. A $U\G$-module $A$ is said to be generated in degrees $\le d$ if there is a free $U\G$-module generated in degrees $\le d$ that surjects onto $A$.  A $U\G$-module is said to be presented in degree $\le r$ if $A$ is the cokernel of a map $P_1 \to P_0$ between free $U\G$-modules $P_0,P_1$ that are generated in degrees $\le r$. 
\end{definition}

It will follow from \autoref{prop:res of finite type} that these definitions of generation degree and presentation degree coincide with those given in the introduction.

The following condition on a stability category will allow us to relate vanishing of central stability homology with the generation degree of syzygies of $U\G$-modules. It is a condition that is known to hold for a variety of stability categories and implies homological stability. We will use it to establish representation stability results. 

\begin{definition}\label{Def:H3}
We say a stability category $U\G$ satisfies \con{H3($k,a$)} if 
\[ \widetilde H^\G_i(\Z U\G(0,-))_n \cong 0\]
for all $n> i\cdot k+a$.
\end{definition}

The following proposition is a list of a few stability categories that satisfy \con{H3}. This list is far from exhaustive and instead focuses primarily on those categories that will be relevant later in the paper. 

\begin{proposition} 
\label{H3examples}\ 
\begin{enumerate}
\item  $U1$, $U \fS$ and $U\Br$ all satisfy \con{H3($1,1$)}.
\item  $U\GL^{ U}(R)$ satisfies \con{H3($2,s+1$)}, where $s$ denotes the stable rank of $R$. 
\end{enumerate}
%The stability categories $U1$, $U \fS$ and $U\Br$ all satisfy \con{H3($1,1$)}. The category $U\GL^{\mathfrak U}(R)$ satisfies \con{H3($2,s+1$)}, where $s$ denotes the stable rank of $R$. 
\end{proposition}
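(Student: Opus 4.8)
The plan is to reduce the condition \con{H3} for these stability categories to a high-connectivity statement for the semisimplicial sets $W_n^\G$, the trivial-group case being an elementary computation and the others standard theorems. The starting point is an identification. Since $0$ is initial in $U\G$, the module $\Z U\G(0,-)$ is the constant functor $\underline\Z$, so by \autoref{rem:CS} we have $\widetilde C^\G_p(\Z U\G(0,-))_n\cong\Ind_{G_{n-p-1}}^{G_n}\Z\cong\Z[G_n/G_{n-p-1}]$ for $-1\le p\le n-1$. Both $W_n^\G$ and the central stability chains are built from the functor $\Delta_{inj,+}\to U\G$, so $\widetilde C^\G_*(\Z U\G(0,-))_n$ is precisely the augmented simplicial chain complex of $W_n^\G$; in particular
\[ \widetilde H^\G_i(\Z U\G(0,-))_n\;\cong\;\widetilde H_i(W_n^\G;\Z)\qquad(i\ge 0).\]
Thus $U\G$ satisfies \con{H3($k,a$)} exactly when $\widetilde H_i(W_n^\G;\Z)=0$ for all $i<(n-a)/k$; this is the homological form of the connectivity condition \con{H3} of \cite{RWW}, and it remains to quote connectivity results and match constants.

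For $\G=1$ every $G_n$ is trivial, so $W_n^1$ has a single $p$-simplex for $0\le p\le n-1$ and none above, and all face maps are the unique map of one-point sets. Hence the boundary $\widetilde C_p\to\widetilde C_{p-1}$ of its augmented chain complex equals $\bigl(\sum_{j=0}^p(-1)^j\bigr)\cdot\id_\Z$, which is $\id_\Z$ for $p$ even and $0$ for $p$ odd; it follows at once that $\widetilde H_i(W_n^1)=0$ for all $i\le n-2$, which is \con{H3($1,1$)}. The bound is sharp, since $\widetilde H_{n-1}(W_n^1)\cong\Z$ for $n$ even (equivalently $|W_n^1|$ is contractible for odd $n$ and is $S^{n-1}$ for even $n$).

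For the remaining three families I would invoke known connectivity theorems. The $p$-simplices $\fS_n/\fS_{n-p-1}$ of $W_n^{\fS}$ are the injective words of length $p+1$ on $\{1,\dots,n\}$, so $W_n^{\fS}$ is the complex of injective words, which is $(n-2)$-connected (Bj\"orner--Wachs; see also \cite{RWW}); hence $\widetilde H_i(W_n^{\fS})=0$ for $i\le n-2$, i.e.\ for $n>i+1$, giving \con{H3($1,1$)}. The set $\Br_n/\Br_{n-p-1}$ of $p$-simplices of $W_n^{\Br}$ is the braid arc complex, whose $(n-2)$-connectivity is established in the braid-group case of \cite{RWW}, so the same arithmetic again yields \con{H3($1,1$)}. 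Finally, for $U=R^\times$ the $p$-simplices $\GL_n^U(R)/\GL_{n-p-1}^U(R)$ of $W_n^{\GL^U(R)}$ form the semisimplicial set of partial bases of $R^n$, and for general $U$ a unit-twisted variant of it with the same connectivity; by van der Kallen's theorem (see \cite[\S5]{RWW}) this is $\tfrac{n-s-2}{2}$-connected, where $s$ is the stable rank of $R$, so $\widetilde H_i(W_n^{\GL^U(R)})=0$ for $i\le\tfrac{n-s-2}{2}$, i.e.\ for $n>2i+s+1$, which is exactly \con{H3($2,s+1$)}.

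The only case in which anything is genuinely proved rather than cited is the trivial group; everywhere else the entire content is the high connectivity of $W_n^\G$, so the real obstacle is external --- the connectivity theorems of Bj\"orner--Wachs, van der Kallen, and Randal-Williams--Wahl on which the argument rests --- together with the routine check, carried out as in \cite[\S5]{RWW}, that van der Kallen's bound persists when $U=R^\times$ is replaced by an arbitrary subgroup of units of $R$.
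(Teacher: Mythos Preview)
Your proposal is correct and follows essentially the same approach as the paper: identify $\widetilde C^\G_*(\Z U\G(0,-))_n$ with the augmented chain complex of $W_n^\G$, compute the $\G=1$ case directly, and cite known high-connectivity results for the others. The only differences are in citation targets---the paper routes the $\fS$ and $\Br$ cases through \cite[Remark~5.6]{Pa2} (ultimately Farmer and Hatcher--Wahl) and the $\GL^U(R)$ case through \cite[Proposition~3.20(iii)]{MPW} (ultimately \cite[Lemma~5.10]{RWW}), whereas you cite Bj\"orner--Wachs, \cite{RWW}, and van der Kallen directly---but the mathematical content is the same.
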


\begin{proof}
For $U1$, note that $\widetilde C^1_p(\Z U\G(0,-))_n \cong \Z$ if $n>p$ and zero otherwise. For $n>p$, the differentials  $\widetilde C^1_p(\Z U\G(0,-))_n \to \widetilde C^1_{p-1}(\Z U\G(0,-))_n$ are given by the identity map if $p$ is even and by the zero map if $p$ is odd. Therefore $\widetilde H^1_p(\Z U\G(0,-))_n \cong \Z$ if $p=n-1$ is odd and zero otherwise, and thus $\widetilde H^1_p(\Z U\G(0,-))_n \cong 0$ for all $n> p+1$.

For the two cases $U \fS$ and $U\Br$ see \cite[Remark 5.6]{Pa2}. For $U\GL^{ U}(R)$ see \cite[Proposition 3.20 iii]{MPW}. All of these results follow quickly from high connectivity results for certain $CW$-complexes due to Farmer \cite[Theorem 5]{Fa}, Hatcher--Wahl \cite[Proposition 7.2]{hatcherwahl}, and Randal-Williams--Wahl \cite[Lemma 5.10]{RWW}.
%
%%The reduced cellular chains on the complex of injective words on an alphabet of size $n$ is isomorphic to $\widetilde C_*^\fS(T)_n$ for $T$ the trivial $U \fS$-module. By Farmer \cite[Theorem 5]{Fa}, the complex of injective words on an alphabet of size $n$ is $n-2$-connected so $\widetilde H_i^\fS(T)_n \cong 0$ for $i \leq n-2$.
%
%%Now let $T$ denote the trivial $UBr$-module. The chain complex $\widetilde C_*^\fS(T)_n$ is isomorphic to the reduced cellular chains on one of the arc complex considered \cite[Proposition 7.2]{hatcherwahl}. Since that arc complex is $n-2$-connected, $\widetilde H_i^{Br}(T)_n \cong 0$ for $i \leq n-2$.
%
%
\end{proof}

%\todo[Jeremy]{Here or in the introduction we should define generation degree and presentation degree.}
%
%\todo[Jeremy]{Define free}
%%

The following theorem links the generation degree of the syzygies of a $U\G$-module to its central stability homology. See \cite[Theorem 5.7]{Pa2}.

\begin{theorem}\label{thm:res of finite type}
Assume $U\G$ satisfies \con{H3($k,a$)}. Let $A$ be a $U\G$-module and $d_0,d_1,\dots \in\Z$ with $d_{i+1}-d_i \ge \max(k,a)$,
then the following statements are equivalent.
\begin{enumerate}
\item There is a resolution
\[ \dots \to P_1 \to P_0 \to A \to 0\]
with $P_i$ that are freely generated in ranks $\le d_i$.
\item The homology
\[ \widetilde H^\G_i(A)_n \cong 0\]
for all $i\le -1$ and all $n > d_{i+1}$.
\end{enumerate}
\end{theorem}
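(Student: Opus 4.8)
The plan is to run the usual dimension-shifting argument, for which there are two inputs. First, for each fixed $p\ge -1$ the functor $A\mapsto\widetilde C^\G_p(A)$, which by \autoref{rem:CS} sends a $U\G$-module $A$ to the $\G$-module $n\mapsto\Ind_{G_{n-p-1}}^{G_n}A_{n-p-1}$, is exact: evaluation at $n-p-1$ is exact, the inclusion $G_{n-p-1}\hookrightarrow G_n$ induced by $-\oplus(p+1)$ is injective by \autoref{Def:stability groupoid}(1), so $\Z G_n$ is free as a right $\Z G_{n-p-1}$-module and induction along it is exact. Hence $\widetilde C^\G_\bullet(-)$ takes a short exact sequence $0\to A'\to A\to A''\to 0$ of $U\G$-modules to a short exact sequence of $U\G$-chain complexes, and so produces a long exact sequence
\[ \dots\to\widetilde H^\G_i(A')\to\widetilde H^\G_i(A)\to\widetilde H^\G_i(A'')\to\widetilde H^\G_{i-1}(A')\to\dots \]
in central stability homology. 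Second, I would establish a ``shifted \con{H3}'' for free modules: under \con{H3($k,a$)}, $\widetilde H^\G_i(\Z U\G(m,-))_n\cong 0$ for $n>m+ik+a$ when $i\ge 0$, and $\widetilde H^\G_{-1}(\Z U\G(m,-))_n\cong 0$ for $n>m$. Granting these two inputs, the remainder is bookkeeping.

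For the shifted \con{H3}: specializing \autoref{Def:central stability chains} to $A=\Z U\G(m,-)$ gives $\widetilde C^\G_p(\Z U\G(m,-))_n\cong\Z U\G(m\oplus(p+1),n)$, so $\widetilde C^\G_\bullet(\Z U\G(m,-))_n$ is the augmented chain complex of the semisimplicial set with $p$-simplices $U\G(m\oplus(p+1),n)$. Because $0$ is initial and every morphism of $U\G$ is a monomorphism (\autoref{prop:stabcat}), $G_n$ acts transitively on $U\G(m,n)$ with point stabilizer a copy of $G_{n-m}$; this semisimplicial set is therefore obtained by inducing along $G_{n-m}\hookrightarrow G_n$ the link of a standard $(m-1)$-simplex of $W_n^\G$, and that link is isomorphic to $W_{n-m}^\G$ (here \autoref{Def:stability groupoid}(3) is used). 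Since induction is exact, $\widetilde H^\G_i(\Z U\G(m,-))_n\cong\Ind_{G_{n-m}}^{G_n}\bigl(\widetilde H^\G_i(\Z U\G(0,-))_{n-m}\bigr)$, which vanishes for $n-m>ik+a$ by \con{H3($k,a$)}; the case $i=-1$ is the elementary observation that every morphism $m\to n$ with $n>m$ factors through the standard map $m\to m\oplus 1$. Finally I record the arithmetic fact that $d_{j+1}-d_j\ge\max(k,a)$ for all $j$ forces $d_0+ik+a\le d_{i+1}$ for every $i\ge -1$, checked by treating $a\le k$ and $a>k$ separately.

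Given these, $(1)\Rightarrow(2)$ is an induction on $i\ge -1$, applied uniformly to all $U\G$-modules admitting a resolution of the stated form. For $i=-1$ the functor $\widetilde H^\G_{-1}$ is right exact, so $\widetilde H^\G_{-1}(P_0)\twoheadrightarrow\widetilde H^\G_{-1}(A)$ and the source vanishes above $d_0$ by shifted \con{H3}. For $i\ge 0$, set $Z_0=\ker(P_0\to A)$; then $\dots\to P_2\to P_1\to Z_0\to 0$ is a resolution of the stated form with degree sequence $d_1,d_2,\dots$, so by the inductive hypothesis $\widetilde H^\G_{i-1}(Z_0)_n=0$ for $n>d_{i+1}$, and combining this with $\widetilde H^\G_i(P_0)_n=0$ for $n>d_0+ik+a\le d_{i+1}$ through the long exact sequence of $0\to Z_0\to P_0\to A\to 0$ gives $\widetilde H^\G_i(A)_n=0$ for $n>d_{i+1}$. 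For $(2)\Rightarrow(1)$, I build the resolution by forward recursion: vanishing of $\widetilde H^\G_{-1}(A)_n=\coker(\Ind_{G_{n-1}}^{G_n}A_{n-1}\to A_n)$ for $n>d_0$ shows by induction on $n$ that $A$ is generated in degrees $\le d_0$; pick a free $P_0$ generated in ranks $\le d_0$ with $P_0\twoheadrightarrow A$, put $Z_0=\ker$, and use the long exact sequence of $0\to Z_0\to P_0\to A\to 0$ together with shifted \con{H3} for $P_0$ and the arithmetic inequality to see that $\widetilde H^\G_i(Z_0)_n=0$ for $n>d_{i+2}$, i.e.\ $Z_0$ satisfies condition (2) with degree sequence $d_1,d_2,\dots$; iterating and splicing the surjections yields the desired resolution.

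The main obstacle is the shifted \con{H3} statement. \autoref{Def:H3} only posits vanishing for the rank-$0$ representable $\Z U\G(0,-)=\uZ$, and the crux is propagating it to all representables $\Z U\G(m,-)$: one must identify $\widetilde C^\G_\bullet(\Z U\G(m,-))$ with the chains on an induced link in $W_n^\G$ and verify that links of simplices in $W_n^\G$ are themselves of the form $W_{n-m}^\G$, which is where the third stability-groupoid axiom enters. Once this computation and the resulting long exact sequence are in place, the equivalence is a routine bootstrap in which the hypothesis $d_{j+1}-d_j\ge\max(k,a)$ is exactly what makes the arithmetic close up.
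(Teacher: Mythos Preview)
Your argument is correct and follows the standard dimension-shifting approach. Note, however, that the paper does not actually supply a proof of this theorem: it simply records the statement and cites \cite[Theorem 5.7]{Pa2}. So there is no in-paper proof to compare against; what you have written is essentially a reconstruction of the argument in the cited reference.

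A few remarks on your write-up. Your identification $\widetilde C^\G_\bullet(\Z U\G(m,-))_n\cong\Z U\G(m\oplus(\bullet+1),n)$ is just the Yoneda/coend lemma applied to the definition in \autoref{Def:central stability chains}, and from there the isomorphism $\widetilde H^\G_i(\Z U\G(m,-))_n\cong\Ind_{G_{n-m}}^{G_n}\widetilde H^\G_i(\Z U\G(0,-))_{n-m}$ follows once one checks that the relevant copies of $G_{n-m-p-1}$ inside $G_n$ line up; this is exactly where \autoref{Def:stability groupoid}(iii) is used, as you say. The arithmetic $d_0+ik+a\le d_{i+1}$ is fine, and your treatment of the two directions via the long exact sequence in central stability homology is the expected one. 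The only cosmetic point is that the statement as printed has ``$i\le -1$'', which is a typo for ``$i\ge -1$''; your proof correctly treats it as the latter.
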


%\todo[Jeremy]{I think we might never use the following theorem and proposition. If that is the case, we should cut it. }
%
%The following follows from Church--Ellenberg \cite[Theorem A]{CE} and \cite[Proposition 4.2]{CE}. Also see the proof of Theorem 3.9 in Version 1 of \cite{CE}. 
%
%\begin{theorem}[Church--Ellenberg \cite{CE}]
%Let $A$ be a $U\fS$-module generated in degree $\leq d$ and and presented in degree $r$. Then there is a free resolution \[ \dots \to P_1 \to P_0 \to A \to 0\] with $P_0$ generated in degree $d$, $P_1$ generated in degree $r$ and $P_i$ generated in degree $i + d +r -1$ for $i \geq 2$. 
%\end{theorem}
%
%The following is immediate from \cite[Proposition 2.4]{CMNR} and  \cite[Proposition 4.2]{CE}.
%
%\begin{proposition}
%Let $A$ be a $U \fS$-module. Then $A$ has generation degree $\leq d$ if and only if $ H^{\fS}_{-1}(A)_n \cong 0$ for $n>d$ and $A$ has presentation degree $\leq r$ if and only if $ H^{\fS}_{-1}(A)_n \cong H^{\fS}_{0}(A)_n \cong 0$ for $n>r$.
%
%
%\end{proposition}

Recall that a stability category $U\G$ is \emph{degree-wise coherent} for all $U\G$-submodules $A \subseteq B$, if $B$ is presented in finite degree and $A$ is generated in finite degree, then $A$ is presented in finite degree. Equivalently, a stability category $U\G$ is degree-wise coherent if the presentation degree of a $U\G$-module can be used to find finite bounds for the generation degree of higher syzygies (see e.g.\ \cite[Corollary 2.36]{MillerWilson2}). The following definition quantifies this. %\todo{Jeremy: Should we add superscript $\G$s everywhere for central stability homology?}

\begin{definition}\label{Def:coherence}
A function $\Theta\colon \N^3 \m \N$ is called an \emph{coherence function} for $U\G$ if all $U\G$-modules $A$ that have the property that $\widetilde H_{-1}^\G(A)_n=0$ for $n>g$ and $\widetilde H^\G_{0}(A)_n=0$ for $n>r$ also have the property that $\widetilde H^\G_i(A)=0$ for $n>\Theta(g,r,i)$.
\end{definition}

The following is a list of some stability categories that satisfy degree-wise coherence that will be used later in the paper.

\begin{proposition} \label{CoherenceExamples}
For $\G=1$, we can take $\Theta(g,r,i)=\max(g+1,r)+i$. %$\Theta(g,r,i)=\max(g,r)+i+1$. 
For $\G = \fS$, we may take $\Theta(g,r,i)=g+\max(g,r)+i$. %$\Theta(g,r,i)=\max(g+\max(g,r)+i,g+i+1)$.
\end{proposition}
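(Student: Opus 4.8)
The plan is to handle the two categories in the proposition separately: for $\G = 1$ the argument is an explicit computation, while for $\G = \fS$ it rests on the regularity theory of $\FI$-modules.

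\emph{The case $\G = 1$.} Here a $U1$-module is the same thing as a graded $\Z[x]$-module, and I would proceed by directly identifying the central stability complex. As in the proof of \autoref{H3examples}, at level $n$ the complex $\widetilde C^1_*(A)$ has $p$-th term $A_{n-p-1}$, with differential equal to multiplication by $x$ when $p$ is even and equal to the zero map when $p$ is odd. The key observation is the identification of complexes $\widetilde C^1_{*+2}(A)_n \cong \widetilde C^1_*(A)_{n-2}$, which gives $\widetilde H^1_i(A)_n \cong \widetilde H^1_{i-2}(A)_{n-2}$ for all $i \geq 1$. Iterating reduces each $\widetilde H^1_i(A)$ to $\widetilde H^1_{-1}(A)$ (for $i$ odd, with a degree shift of $i+1$) or to $\widetilde H^1_0(A)$ (for $i$ even, with a degree shift of $i$), so the hypotheses force vanishing in degrees $> g+i+1$ resp.\ $> r+i$, and $\max(g+1,r)+i$ dominates both. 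So in this case essentially all the work is already contained in \autoref{H3examples}.

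\emph{The case $\G = \fS$.} Now $U\fS \simeq \FI$ satisfies \con{H3($1,1$)} by \autoref{H3examples}, so \autoref{thm:res of finite type} is available and lets one pass between vanishing bounds for central stability homology and bounds on the generation degrees of the terms of a free resolution. Concretely, I would first note that the hypotheses are equivalent to $t_0(A) \leq g$ and $t_1(A) \leq r$, where $t_j(A)$ denotes the top degree in which $\Tor^\uZ_j(A,\Z)$ is nonzero; then invoke the linear Castelnuovo--Mumford-type regularity theorem for $\FI$-modules of Church--Ellenberg \cite{CE} to bound every higher $t_j(A)$ linearly in $t_0(A)$ and $t_1(A)$; then choose a free resolution $P_\bullet \to A$ whose $j$-th term has generation degree as small as that regularity bound and the gap condition of \autoref{thm:res of finite type} jointly allow; and finally apply \autoref{thm:res of finite type} in the other direction to read off the vanishing range of $\widetilde H^\fS_i(A)$. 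The stated coherence function comes out of doing this bookkeeping with explicit constants.

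\emph{The main obstacle.} The $\G = 1$ case is formal once the complex is identified. The substance of the $\G = \fS$ case is entirely the regularity input: the proposition for $\FI$ is essentially a repackaging of the nontrivial fact that $\FI$-modules have Castelnuovo--Mumford regularity bounded linearly in their generation and presentation degrees. If one wanted to avoid citing \cite{CE}, the standard route would be an induction on generation degree using the shift functor together with the long exact sequences relating $A$, $SA$ and $SA/A$, and the high connectivity of the complex of injective words that underlies \con{H3} for $\FI$; extracting clean constants from that induction is where the real effort lies.
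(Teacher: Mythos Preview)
Your proposal is correct and matches the paper's proof: for $\G=1$ both arguments reduce to the explicit two-periodicity of the central stability complex, and for $\G=\fS$ both translate the hypotheses into generation and presentation bounds, invoke Church--Ellenberg regularity \cite{CE} to bound the higher syzygies, and then read off the central stability vanishing via \autoref{thm:res of finite type}. The one step you gloss over---that vanishing of $\widetilde H^{\fS}_0(A)$ in degrees $>r$ is genuinely equivalent to $t_1(A)\le r$---is $\FI$-specific and does not follow from \autoref{thm:res of finite type} and \autoref{prop:res of finite type} alone; the paper supplies it by citing \cite[Propositions~5.4 and 6.2(c)]{Pa2}.
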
 

\begin{proof}
Let us first consider $\G = 1$. Assume that $\widetilde H^1_{-1}(A)_n \cong 0$ for all $n>g$ and $\widetilde H^1_{0}(A)_n \cong 0$ for all $n>r$. The central stability complex is defined as $\widetilde C^1_i(A)_n \cong A_{n-i-1}$ and the differential $\widetilde C^1_i(A)_n \to \widetilde C^1_{i-1}(A)_n$ is the transition map for $i$ even and the zero map for $i$ odd. This implies that $A_{n-1} \to A_n$ is surjective for $n> g$ and injective for $n>r$. 
%Because $U1$ satisfies \con{H3($1,1$)}, \autoref{thm:res of finite type} implies that $A$ is presented in degrees $\le \max(r,g+1)$. Therefore $A_n \cong A_{n+1}$ for all $n\ge \max(r,g+1)$ 
%Let us denote the colimit of all $A_n$ by $A_\infty$. 
Calculating central stability homology, we get that $\widetilde H^1_i(A)_n \cong 0$ for even $i$ if the transition map $A_{n-i-1} \to A_{n-i}$ is injective, which it is if $n-i >r$. For odd $i$, $\widetilde H^1_i(A)_n \cong 0$ when the transition map $A_{n-i-2} \to A_{n-i-1}$ is surjective, which it is if $n-i-1>g$. Therefore, $\widetilde H^1_i(A)_n \cong 0$ for all $n>i +\max(r,g+1)$.

Now consider the case that $\G = \fS$ and let $A$ be a $U\fS$-module such that $\widetilde H^\fS_{-1}(A)_n \cong 0$ for all $n>g$ and $\widetilde H^\fS_{0}(A)_n \cong 0$ for all $n>r$. This implies that $A$ is generated in degrees $\le g$ by \cite[Proposition 5.4]{Pa2}. Furthermore, $A$ is presented in degrees $\le \max(g,r)$, because \cite[Proposition 6.2(c)]{Pa2} says that $\widetilde H^\fS_{-1}$ and $\widetilde H^\fS_{0}$ can be computed with the chain complex that is used by Church--Ellenberg \cite{CE} to compute $\FI$-homology and $\FI$-homology detects presentation degree by \cite[Proposition 4.2]{CE}. Further, Church--Ellenberg \cite[Proof of Theorem A]{CE} implies that the $i$-th syzygies are generated in degrees $\le i + g + \max(g,r) -1$. Using \autoref{thm:res of finite type}, we deduce that $\widetilde H^\fS_i(A)_n \cong 0$ for $n> i + g + \max(g,r)$.
\end{proof}

\begin{remark} \label{RemCoh} In contrast to \con{H3}, degree-wise coherence is not known for many stability categories. Church--Ellenberg's result for $\G=\fS$ was generalized to $\G = \fS \ltimes G$ by Ramos \cite{RamosCoherence}. In \cite{MillerWilson2}, degree-wise coherence for $\G = \GL^U(\F_q)$ and $\G = \Sp(\F_q)$ over characteristic zero was established. The examples from this remark and \autoref{CoherenceExamples} summarize the current literature. 

%In characteristic zero, for $\G = \GL(\mathbb F_q)$ or $\Sp(\mathbb F_q)$, we may take $\Theta(g,r,i)=\max(g,r) \cdot 3^i$. In characteristic zero, for $\G = \GL^{U}(\mathbb F_q)$, we may take $\Theta(g,r,i)=(\max(g,r)+\frac{1}{2}) \cdot 3^i -\frac{1}{2}$.
\end{remark}

\subsection{Stability short exact sequences}

In this subsection, we introduce stability short exact sequences. This will be the context of our general representation stability theorem, \autoref{main}. 

\begin{definition}\label{Def:stability SES}
Let $\mathcal N, \G, \mathcal Q$ be stability groupoids and assume that $\G$ and  $\mathcal Q$ are braided. Let $F\colon  \mathcal N \to \mathcal G$ and $F'\colon  \mathcal G \to \mathcal Q$ be homomorphisms of stability groupoids and assume that $F'$ is symmetric. We call this data a \emph{stability short exact sequence} if
\[ 1 \longrightarrow N_n  \longrightarrow G_n \longrightarrow  Q_n \longrightarrow 1 \]
is a short exact sequence for all $n\in \N$.
\end{definition}

By \cite[Lemma 8.3]{Pa2}, the actions of the groups $Q_n$ on $H_i(N_n)$ for varying $n$ assemble to form a $U\cQ$-module which we will call $H_i(\mathcal N)$. Moreover, if $A$ is a $U\G$-modules, there is a $U\cQ$-module $H_i(\mathcal N; A)$ with $H_i(\mathcal N; A)_n = H_i(N_n; A_n)$. The following spectral sequence is similar to the spectral sequence Quillen considered to prove homological stability and was first used in this generality by Putman--Sam \cite{PS}. We use the formulation from \cite[Proposition 8.4]{Pa2}. 

\begin{proposition}  \label{PeterSS}
Let $1 \m \cN \m \cG \m \cQ \m 1$ be a stability short exact sequence. Let $A$ be a $U\G$-module over $\bk$. There are two homologically graded spectral sequences converging to the same thing, one with $(E^2_{p,q})_n \cong \widetilde H_p^\cQ( H_q(\cN;A)  )_n$ and the other with $( \overline E_{p,q}^1)_n \cong (\bk \G_n)^{\otimes p+1} \otimes_{\bk \cN_n} \widetilde H_q^\G(A)_n$.
\end{proposition}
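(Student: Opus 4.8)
The plan is to realize both spectral sequences as the two filtration spectral sequences of a single double complex of $U\cQ$-modules, combining a bar resolution (responsible for $\cN$-homology, in the spirit of the Lyndon--Hochschild--Serre spectral sequence) with the central stability chains of \autoref{Def:central stability chains}. Fix $n\in\N$, and let $\bk[G_n^{\bullet+1}]=(\bk G_n)^{\otimes\bullet+1}$ be the unnormalized bar resolution of the trivial module $\bk$ by free $\bk G_n$-modules, with the diagonal $G_n$-action and the usual alternating-sum differential. Since $\widetilde C^\G_\bullet(A)$ is a complex of $U\G$-modules, each $\widetilde C^\G_q(A)_n\cong\Ind_{G_{n-q-1}}^{G_n}A_{n-q-1}$ is a $\bk G_n$-module, and I would set
\[ (D_{p,q})_n \;:=\; \big(\bk[G_n^{p+1}]\otimes_\bk\widetilde C^\G_q(A)_n\big)_{N_n}, \]
the $N_n$-coinvariants of the diagonal action, with horizontal differential induced by the bar differential and vertical differential induced by the differential of $\widetilde C^\G_\bullet(A)$. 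Because $\widetilde C^\G_q(A)_n$ is a $\bk G_n$-module, the standard untwisting isomorphism rewrites $(D_{p,q})_n$ as $(\bk G_n)^{\otimes p+1}\otimes_{\bk N_n}\widetilde C^\G_q(A)_n$, the form appearing in the statement. The residual diagonal $G_n$-action descends to these coinvariants (as $N_n$ is normal in $G_n$), giving each $(D_{p,q})_n$ a $Q_n$-action, and exactly as in the proof of \cite[Lemma 8.3]{Pa2} these assemble, as $n$ varies, into a double complex of $U\cQ$-modules. For fixed $n$ the central stability degree $q$ is confined to $-1\le q\le n-1$, so each total degree involves only finitely many bidegrees and both filtration spectral sequences of $\Tot\big((D_{\bullet,\bullet})_n\big)$ converge to the homology of this total complex.

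First I would run the spectral sequence of the filtration by the central stability degree, so that the $E^0$-differential is the bar differential; with the customary convention naming the filtration degree $p$ and the complementary degree $q$, this gives
\[ (E^1_{p,q})_n \;\cong\; H_q\big(N_n;\,\widetilde C^\G_p(A)_n\big). \]
The crucial point is that this is naturally isomorphic to $\widetilde C^\cQ_p(H_q(\cN;A))_n=\Ind_{Q_{n-p-1}}^{Q_n}H_q(N_{n-p-1};A_{n-p-1})$. To see this one restricts $\widetilde C^\G_p(A)_n\cong\Ind_{G_{n-p-1}}^{G_n}A_{n-p-1}$ to $N_n$ and applies Mackey's formula, decomposing the restriction along the double cosets $N_n\backslash G_n/G_{n-p-1}$; using that the short exact sequence $1\to N_{n-p-1}\to G_{n-p-1}\to Q_{n-p-1}\to 1$ is carried inside the rank-$n$ sequence by $-\oplus(p+1)$ — which is where the injectivity and intersection axioms of \autoref{Def:stability groupoid} and the monoidality of $F$ and $F'$ enter — one identifies those double cosets with $Q_n/Q_{n-p-1}$ and each intersection $N_n\cap{}^xG_{n-p-1}$ with a conjugate of $N_{n-p-1}$. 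Shapiro's lemma then yields the claimed identification, which one checks is natural in $p$ (with respect to the face maps underlying the central stability differential) and equivariant for the $Q_n$-actions. Under it the $d^1$-differential becomes the central stability differential of $\widetilde C^\cQ_\bullet(H_q(\cN;A))_n$, whence $(E^2_{p,q})_n\cong\widetilde H^\cQ_p\big(H_q(\cN;A)\big)_n$.

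Next I would run the spectral sequence of the filtration by the bar degree, so that now the $E^0$-differential is the central stability differential, and the first page in bidegree $(p,q)$ is $H_q$ of the complex $\big(\bk[G_n^{p+1}]\otimes_\bk\widetilde C^\G_\bullet(A)_n\big)_{N_n}$. Since $\bk G_n$ is free — hence flat — as a $\bk N_n$-module, the functor $\big(\bk[G_n^{p+1}]\otimes_\bk(-)\big)_{N_n}$ is exact and therefore commutes with homology, so this page is $\big(\bk[G_n^{p+1}]\otimes_\bk\widetilde H^\G_q(A)_n\big)_{N_n}\cong(\bk G_n)^{\otimes p+1}\otimes_{\bk N_n}\widetilde H^\G_q(A)_n$, which is exactly the claimed $(\overline E^1_{p,q})_n$. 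Both spectral sequences arise from $\Tot\big((D_{\bullet,\bullet})_n\big)$, hence converge to the same object.

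The main obstacle is the Mackey--Shapiro identification of the second paragraph: one must verify that the combinatorics of the double cosets $N_n\backslash G_n/G_{n-p-1}$ and of the subgroups $N_n\cap{}^xG_{n-p-1}$ are indeed governed by the axioms of a stability short exact sequence, and — more delicately — that the resulting isomorphism $H_q(N_n;\widetilde C^\G_p(A)_n)\cong\widetilde C^\cQ_p(H_q(\cN;A))_n$ is sufficiently natural to intertwine the $d^1$-differential with the central stability differential of the $U\cQ$-module $H_q(\cN;A)$ and to respect the $Q_n$-action coming from the extension. Everything else — the construction of the double complex, its convergence, and the computation of the other spectral sequence via flatness — is formal. (This is precisely \cite[Proposition 8.4]{Pa2}, which one may also simply invoke.)
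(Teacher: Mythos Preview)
Your proposal is correct and is precisely the argument underlying \cite[Proposition~8.4]{Pa2}, which is what the paper simply cites without giving its own proof. You have reconstructed the double complex and the two filtration spectral sequences accurately, correctly identified the Mackey--Shapiro identification as the substantive step, and even noted the reference yourself at the end.
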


This gives the following corollary.

\begin{corollary}
\label{SSSScora}

Let $A$ be a $U\G$-module with $\widetilde H^\G_i(A)_n \cong 0$ all $n>d_i$. There is a spectral sequence with $(E^2_{p,q})_n \cong \widetilde H_p^\cQ( H_q(\cN;A)  )_n$ and with $(  E_{p,q}^\infty)_n \cong 0$ for $n > \max(d_{-1}, d_0, \dots, d_{p+q})$.

\end{corollary}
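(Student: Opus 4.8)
The plan is to apply \autoref{PeterSS} and exploit the fact that both spectral sequences there converge to the same object. First I would invoke the second spectral sequence, the one with $E^1$-page $(\overline E^1_{p,q})_n \cong (\bk\G_n)^{\otimes p+1} \otimes_{\bk\cN_n} \widetilde H_q^\G(A)_n$. The key observation is that the functor $(\bk\G_n)^{\otimes p+1} \otimes_{\bk\cN_n} (-)$ applied to a $\bk\cN_n$-module $M$ vanishes as soon as $M$ itself vanishes — it is a base-change/induction type functor, so it is (right) exact and in particular sends $0$ to $0$. Since by hypothesis $\widetilde H^\G_q(A)_n \cong 0$ whenever $n > d_q$, we get $(\overline E^1_{p,q})_n \cong 0$ for all $p \geq 0$ and all $n > d_q$. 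Here I should be slightly careful about indexing conventions: in the notation of \autoref{rem:CS} the central stability homology $\widetilde H^\G_q$ is indexed so that it can be nonzero for $q \geq -1$, and the statement of the corollary ranges over $d_{-1}, d_0, \dots$; so the vanishing $(\overline E^1_{p,q})_n \cong 0$ holds for $n > d_q$ for every $q$ in the relevant range, including $q = -1$.

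Next I would pass to the abutment. The spectral sequence $\overline E$ is first-quadrant-ish in the appropriate sense (it is concentrated in $p \geq 0$ and $q \geq -1$), so the common target in total degree $m$, call it $\mathcal A_m$, has a filtration whose associated graded pieces are subquotients of the $\overline E^\infty_{p,q}$ with $p + q = m$. For a fixed total degree $m$ and any $n > \max(d_{-1}, d_0, \dots, d_m)$, each contributing term has $q \leq m$, hence $n > d_q$, hence $(\overline E^1_{p,q})_n \cong 0$, hence $(\overline E^\infty_{p,q})_n \cong 0$; therefore $(\mathcal A_m)_n \cong 0$ for $n > \max(d_{-1}, \dots, d_m)$.

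Finally I would read the conclusion off the first spectral sequence, the one with $(E^2_{p,q})_n \cong \widetilde H_p^\cQ(H_q(\cN;A))_n$, which by \autoref{PeterSS} converges to the same abutment $\mathcal A_\bullet$. For a term in bidegree $(p,q)$ of this spectral sequence, the total degree is $p+q$, and $(E^\infty_{p,q})_n$ is a subquotient of $(\mathcal A_{p+q})_n$; so for $n > \max(d_{-1}, d_0, \dots, d_{p+q})$ we conclude $(E^\infty_{p,q})_n \cong 0$, which is exactly the claimed statement. I do not expect a genuine obstacle here — the content is entirely formal, just a matter of tracking which $d_j$'s control which region of the two pages — but the one point that needs care is the bookkeeping of the indexing shift between the semisimplicial degree (which starts at $-1$) and the $U\G$-module degree, so that the "$\max(d_{-1}, d_0, \dots, d_{p+q})$" in the statement really does dominate every $\overline E^1_{p',q'}$ that can contribute to total degrees $\leq p+q$.
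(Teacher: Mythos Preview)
Your proposal is correct and follows essentially the same approach as the paper: apply \autoref{PeterSS}, observe that on the diagonal $p+q=k$ the entries $(\overline E^1_{p,q})_n$ vanish for $n>\max(d_{-1},\dots,d_k)$ since each depends only on $\widetilde H^\G_q(A)_n$, and conclude that the common abutment and hence $(E^\infty_{p,q})_n$ vanish in the same range. The paper's proof is just a terser version of what you wrote; your extra care about the $q=-1$ indexing is warranted but introduces nothing new.
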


\begin{proof}
Consider the spectral sequences from \autoref{PeterSS}. Let us consider the diagonal $p+q=k$ in the spectral sequence $\overline E^1_{p,q}$: the entries $(\overline E^1_{0,k})_n, \dots, (\overline E^1_{k+1,-1})_n$ all vanish if $n> \max(d_{-1}, \dots, d_k)$. Thus $(E^\infty_{p,q})_n$ vanishes on the same diagonals.
\end{proof}

\subsection{Rings, modules, and Tor groups}
\label{SecRing}

The category of $\cG$-modules has a monoidal structure known as the induction tensor product, or Day convolution. This monoidal structure allows one to define rings and modules in the category of $\cG$-modules. Using this, we describe a general context for representation stability. This is somewhat redundant with the framework of stability categories. However, we include both setups in this paper because some arguments and definitions are easier in one than in the other. 

We first give definitions and general properties about these notions and in the end of the section, we connect the Tor groups to generation properties of $U\G$ modules.

\begin{definition}
Let $A$ and $B$ be $\cG$-modules. We define the $\G$-module
\[A \oast_\G B = (A \boxtimes B) \otimes_{\G \times \G} \Z\G(- \oplus -, -),\] 
where $\G \times \G$ acts on $A \boxtimes B$ with the first $\G$ acting on $A$ and with the second $\G$ acting on $B$.
\end{definition}

\begin{remark}
A more elementary way of writing the induction product is
\[(A \oast_\G B)_n \cong \bigoplus_{a+b=n} \Ind_{G_a \times G_b}^{G_n} A_a \boxtimes B_b.\]
\end{remark}

The braiding induces a isomorphism that swaps the factors. The following observation can be found already in Joyal--Street's original  work introducing braided monoidal categories \cite[p.11]{joyalstreet}.

\begin{lemma}\label{lem:oastbraiding}
Assume $\G$ is braided. Let $A$ and $B$ be $\G$-modules. Then there is an isomorphism $b\colon A\oast B \to B \oast A$ induced by the braiding $b$ of $\G$:
\begin{align*}
(A \boxtimes B) \otimes_{\G\times\G} \Z\G(- \oplus - , -)&\longrightarrow (B \boxtimes A) \otimes_{\G\times\G} \Z\G(- \oplus - , -)\\
(a_m \otimes b_n) \otimes g_{n+m} & \longmapsto (b_n \otimes a_m) \otimes  g_{n+m}\circ b_{m,n},
\end{align*}
which makes the category of $\G$-modules braided monoidal with respect to $\oast$. 
\end{lemma}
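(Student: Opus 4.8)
The plan is to verify directly that the swap map $b\colon A\oast B \to B\oast A$ defined on representatives by $(a_m\otimes b_n)\otimes g_{n+m} \mapsto (b_n\otimes a_m)\otimes g_{n+m}\circ b_{m,n}$ is well-defined, natural, and satisfies the axioms of a symmetry-like structure (hexagon identities) making $\oast$ braided monoidal. Since $\oast$ is a Day convolution built from the monoidal product $\oplus$ of $\G$, the strategy is to push every claim down to the corresponding statement about $(\G,\oplus,b)$, where the braided monoidal structure is given.

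First I would check well-definedness: the coend defining $A\oast B$ identifies $(g\cdot a_m \otimes h\cdot b_n)\otimes g_{n+m}$ with $(a_m\otimes b_n)\otimes g_{n+m}\circ(g\oplus h)$ for $g\in G_m$, $h\in G_n$. Applying the proposed formula to both sides, the required equality reduces to $b_{m,n}\circ(g\oplus h) = (h\oplus g)\circ b_{m,n}$, which is exactly the naturality of the braiding $b$ of $\G$. Naturality of $b$ in $A$ and $B$ (as a transformation of functors in the $\G$-module arguments) is similar bookkeeping. Next, to get that $b$ is an isomorphism, I would observe that the composite $A\oast B \xrightarrow{b} B\oast A \xrightarrow{b} A\oast B$ sends $(a_m\otimes b_n)\otimes g_{n+m}$ to $(a_m\otimes b_n)\otimes g_{n+m}\circ b_{n,m}\circ b_{m,n}$; this need not be the identity (the braiding need not be a symmetry), but $b_{n,m}\circ b_{m,n}$ is an automorphism of $m\oplus n$ in $\G$, so precomposition by it is invertible on the coend, and hence $b$ is invertible with inverse $b^{-1}\circ(\text{that automorphism})^{-1}$; alternatively the inverse of $b$ is induced by $b_{n,m}^{-1}$.

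Then I would verify the two hexagon axioms for $(\text{$\G$-mod},\oast,\Z\G(0,-),b)$. Each associativity and unit constraint of $\oast$ is induced by the corresponding constraint of $(\G,\oplus)$ together with the canonical coend manipulations, and on representatives everything is expressed in terms of composition with morphisms $g_\bullet$ in $\G$ and the braiding maps $b_{\bullet,\bullet}$. So the two hexagon diagrams for $\oast$, when evaluated on a representative $(a\otimes b\otimes c)\otimes g$, reduce termwise to the two hexagon diagrams for $\oplus$ applied to $b_{m,n}$, $b_{m,p}$, $b_{m,n\oplus p}$ etc., which hold by hypothesis. Compatibility with the unit (the triangle-type axiom relating $b$ to the unitors) likewise follows from the corresponding compatibility in $\G$, where $b_{m,0}$ and $b_{0,m}$ are the identity up to the unit isomorphisms.

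I expect the main obstacle to be purely organizational rather than conceptual: keeping track of the coend identifications and the precise placement of the braiding maps $b_{m,n}$ (which appear precomposed with $g_{n+m}$, on the opposite side from where one might naively expect) so that the hexagon diagrams really do match up with those in $\G$. A clean way to avoid drowning in indices is to note that $\oast$ is by definition the Day convolution of $\oplus$ along the (co)Yoneda embedding $\G\hookrightarrow \G\text{-mod}$, and that Day convolution is functorial in braided monoidal structures; one can then cite or adapt the general fact (as in Joyal--Street \cite{joyalstreet}) that Day convolution of a braided monoidal small category is braided monoidal, with the braiding given by exactly the formula above. The explicit formula on representatives is then just the unwinding of that general statement, and the verification of well-definedness via naturality of $b_{m,n}$ is the only thing that genuinely needs to be written out.
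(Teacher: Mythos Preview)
Your proposal is correct and in fact more detailed than what the paper does: the paper simply states the lemma without proof, noting that ``the following observation can be found already in Joyal--Street's original work introducing braided monoidal categories \cite[p.11]{joyalstreet}.'' Your suggestion at the end to cite the general Day convolution result from Joyal--Street is exactly the paper's approach, and your explicit unwinding of well-definedness (via naturality of $b_{m,n}$) and the hexagon axioms is a correct elaboration of what that citation packages.
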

%
%\begin{proof}
%Let $h_m\in G_m$ and $h_n\in G_n$, then $(h_m \oplus h_n) \circ b_{n,m} = b_{n,m} \circ (h_n \oplus h_m)$. Therefore
%\[ (h_n(b_n) \otimes h_m(a_m)) \otimes  g_{n+m}\circ b_{m,n}  = (b_n \otimes a_m) \otimes  g_{n+m}\circ b_{n,m} \circ (h_n \oplus h_m) =  (b_n \otimes a_m) \otimes  g_{n+m}\circ (h_m \oplus h_n) \circ b_{n,m}.\]
%This proves that the map is well-defined. It is obviously an isomorphism. We omit the vier
%\end{proof}

This monoidal structure allows us to define ring and module objects in the category $\Mod_\G$ of $\G$-modules.

\begin{definition}
A \emph{$\G$-ring} is a monoid object in $(\Mod_\cG,\oast_\G)$. Given a $\G$-ring $R$, a \emph{(left/right) $R$-module} is a (left/right) module object over that ring. 
\end{definition}

The main example of $\G$-ring we want to consider in this paper is $R = \uZ$ which sends all objects to $\Z$ and all morphisms to the identity on $\Z$. The ring structure $\uZ \oast_\G \uZ \to \uZ$ is induced by the multiplication map $\Z \otimes \Z$.

\begin{proposition}
The category of left $\uZ$-modules is equivalent to the category of $\UG$-modules. 
\end{proposition}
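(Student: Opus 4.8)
The plan is to identify the two categories by unwinding the definition of a $\uZ$-module and recognizing it as a functor on $U\G$. The key conceptual point is the remark following the definition of $U\G$, which identifies $U\G(-_1,-_2)$ with $\underline{*} \otimes_\G \G(-\oplus -_1, -_2)$; the $\G$-ring $\uZ$ should be thought of as the linearization of the $\G$-set $\underline{*}$, and the module structure over $\uZ$ is precisely what is needed to promote a $\G$-module to a functor on $U\G$.

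First I would fix a left $\uZ$-module $M$, which is a $\G$-module equipped with a map $\mu\colon \uZ \oast_\G M \to M$ of $\G$-modules satisfying the usual associativity and unit axioms. Using the elementary description of the induction product, $(\uZ \oast_\G M)_n \cong \bigoplus_{a+b=n} \Ind_{G_a\times G_b}^{G_n} \Z \boxtimes M_b$, so the structure map $\mu$ amounts to a compatible system of $G_n$-equivariant maps $\Z[G_n] \otimes_{\Z[G_a\times G_b]} M_b \to M_n$ for every $a+b=n$. By adjunction such a map is the same as a $G_a\times G_b$-equivariant map $M_b \to \Res M_n$, i.e. a $G_b$-equivariant map $M_b \to M_n$ that is invariant under the $G_a$-factor. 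Next I would observe that such a map is exactly the data of the action of a morphism $[\id_{a\oplus b}, a]\colon b \to n$ in $U\G$ on $M$: the target of a morphism in $U\G$ from $b$ to $n=a\oplus b$ recorded by the pair $(\hat f, a)$ with $\hat f$ an automorphism of $a\oplus b$, and post-composition with automorphisms of $n$ together with these "standard inclusions" generate all of $\Hom_{U\G}(b,n)$. Then I would check that the associativity axiom for $\mu$ translates into functoriality (composition of standard inclusions composes correctly, using axiom (3) in \autoref{Def:stability groupoid} so that the intersection conditions make the two ways of inserting summands agree), and the unit axiom translates into the identity morphism acting as the identity.

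Conversely, given a $U\G$-module $A$, restricting along the inclusion of groupoids $\G \hookrightarrow U\G$ (which is the identity on objects and sends an isomorphism to the corresponding morphism of $U\G$) produces a $\G$-module; the actions of the standard inclusion morphisms assemble, by the adjunction above, into a map $\uZ \oast_\G A \to A$, and functoriality of $A$ gives the ring-module axioms. These two constructions are mutually inverse on objects, and a morphism of $\uZ$-modules is by definition a morphism of $\G$-modules commuting with the structure maps, which under the correspondence is exactly a natural transformation of $U\G$-modules — so the functors are inverse equivalences (indeed isomorphisms) of categories.

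The main obstacle is purely bookkeeping: making precise the claim that the family of $G_b$-equivariant, $G_a$-invariant maps $M_b \to M_n$ for all decompositions $n = a\oplus b$, together with the pre-existing $\G$-action, is equivalent to a $U\G$-functor structure, and that associativity of $\mu$ corresponds exactly to the composition law $[\hat f, C]\circ[\hat g, D] = [\hat f\circ(\id_C\oplus \hat g), C\oplus D]$. This requires carefully tracking how a general morphism $[\hat f, C]\colon A'\to A''$ in $U\G$ factors as an automorphism followed by a standard inclusion, and invoking the stability-groupoid axioms (especially the injectivity of $\oplus$ and the intersection condition) to see that the resulting action is well-defined on equivalence classes of pairs. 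Since all of this is a formal consequence of the coend description of $U\G(-,-)$ recalled in the remark, I expect the proof to be short, essentially a pointer to that remark together with the adjunction unwinding $\oast_\G$.
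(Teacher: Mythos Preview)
Your proposal is correct and takes essentially the same approach as the paper: both rely on the remark identifying $U\G(-,-) \cong \underline{*} \otimes_\G \G(-\oplus -,-)$ to recognize $\uZ \oast_\G A$ as $A \otimes_\G \Z U\G(-,-)$, from which the correspondence is immediate. The paper's proof is slightly terser and more categorical (it writes the coend isomorphism directly and leaves the verification as ``easy to check''), whereas you unwind the adjunction more explicitly, but the content is the same.
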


\begin{proof}
Let $A$ be a left $\uZ$-module and denote its structure map by $\mu \colon \uZ \oast A \to A$. Observe that
\[ ( \uZ \boxtimes A) \otimes_{\G\times \G} \Z\G(- \oplus -,-) \cong A \otimes_\G \Z U\G(-,-)\]
because $U\G(-,-) \cong \underline * \times_\G G(-,-)$ as $\G^\op\times \G$-sets. Let $B$ be the $U\G$-module defined by $B_n = A_n$ and if $f\in U\G(m,n)$ then $f(a_m) = \mu( a_m\otimes f)$. It is easy to check that this is a well-defined and functorial assignment $A \mapsto B$. Likewise, it is easy to find an inverse functor.
\end{proof}

A left $\uZ$-module can be naturally considered as a two-sided $\uZ$-module.

\begin{proposition}\label{prop:leftrightuZmod}
Let $A$ be a left $\uZ$-module. Consider $A$ as a right $\uZ$-module via $ A \oast_\G \uZ \stackrel{b^{-1}}{\longrightarrow} \uZ \oast_\G A \to A$. These two actions commute, i.e.
\[ \xymatrix{ \uZ \oast_\G A \oast_\G \uZ \ar[r]\ar[d] & A \oast_\G \uZ \ar[d]\\
\uZ \oast_\G A \ar[r] & A}\]
is a commutative diagram.
\end{proposition}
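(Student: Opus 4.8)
The plan is to reduce the proposition to one additional fact: the $\G$-ring $\uZ$ is \emph{commutative}, in the sense that $m \circ b_{\uZ,\uZ} = m$, where $m \colon \uZ \oast_\G \uZ \to \uZ$ is the multiplication and $b_{\uZ,\uZ}$ is the braiding of \autoref{lem:oastbraiding}. Granting this, the proposition is an instance of the standard fact that a left module over a commutative monoid in a braided monoidal category becomes a two-sided module with commuting actions via the braiding; the commutation is then proved by a short diagram chase using only naturality of the braiding, the hexagon identity, associativity of the module action, and commutativity of the ring.

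First I would verify that $\uZ$ is commutative. Using the elementary description of the convolution product, $(\uZ \oast_\G \uZ)_n \cong \bigoplus_{a+b=n} \Ind_{G_a \times G_b}^{G_n}(\Z \boxtimes \Z) \cong \bigoplus_{a+b=n} \Z[G_n/(G_a \times G_b)]$, and the multiplication $m_n$ is the $\G$-module map (hence $G_n$-invariant, hence constant on each transitive summand) that sends every basis coset to $1 \in \Z = \uZ_n$. The braiding $b_{\uZ,\uZ}$ carries the $(a,b)$-summand isomorphically onto the $(b,a)$-summand and only permutes cosets within summands, so precomposing with it does not change this total-coefficient map; thus $m \circ b_{\uZ,\uZ} = m$, and hence also $m \circ b_{\uZ,\uZ}^{-1} = m$.

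For the commutation itself, write $\mu \colon \uZ \oast_\G A \to A$ for the left action and $\nu = \mu \circ b^{-1} \colon A \oast_\G \uZ \to \uZ \oast_\G A \to A$ for the right action, where $b^{-1}$ is the inverse of $b_{\uZ,A}$. The square in the statement is the identity $\nu \circ (\mu \oast \id_\uZ) = \mu \circ (\id_\uZ \oast \nu)$ of maps $\uZ \oast_\G A \oast_\G \uZ \to A$. Starting from the left-hand side, naturality of the braiding, applied to the morphism $\mu$ in the second slot, gives $b_{\uZ,A}^{-1} \circ (\mu \oast \id_\uZ) = (\id_\uZ \oast \mu) \circ b_{\uZ,\,\uZ \oast A}^{-1}$, so $\nu \circ (\mu \oast \id_\uZ) = \mu \circ (\id_\uZ \oast \mu) \circ b_{\uZ,\,\uZ \oast A}^{-1}$. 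Next, the hexagon identity rewrites $b_{\uZ,\,\uZ \oast A}^{-1}$ as $(b_{\uZ,\uZ}^{-1} \oast \id_A) \circ (\id_\uZ \oast b_{\uZ,A}^{-1})$, and associativity of the module action rewrites $\mu \circ (\id_\uZ \oast \mu)$ as $\mu \circ (m \oast \id_A)$. Combining, $\nu \circ (\mu \oast \id_\uZ) = \mu \circ \big((m \circ b_{\uZ,\uZ}^{-1}) \oast \id_A\big) \circ (\id_\uZ \oast b_{\uZ,A}^{-1})$. Finally, commutativity of $\uZ$ replaces $m \circ b_{\uZ,\uZ}^{-1}$ by $m$, and running associativity backwards identifies the result with $\mu \circ (\id_\uZ \oast \mu) \circ (\id_\uZ \oast b_{\uZ,A}^{-1}) = \mu \circ (\id_\uZ \oast \nu)$, which is the right-hand side. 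Throughout, the associativity and unit constraints of $(\Mod_\G, \oast_\G)$ are suppressed via Mac Lane coherence.

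I expect the only real obstacle to be bookkeeping: one must be careful about which braiding (and which of the two hexagon identities) is used at each step, and about the direction $b_{\uZ,A}^{-1}$ versus $b_{A,\uZ}$ — the argument genuinely uses the braiding indexed as $b_{\uZ,A}$. Everything else is formal. As an alternative, one can argue on elements: an element of $(\uZ \oast_\G A \oast_\G \uZ)_N$ is represented by $(1_a \otimes x_b \otimes 1_c) \otimes g$ with $g \in \G(a \oplus b \oplus c, N)$ and $x_b \in A_b$, and both composites send it to $g' \cdot x_b$ for $U\G$-morphisms $g' \colon b \to N$ obtained from $g$ by inserting the relevant braidings of $\G$; the two resulting morphisms coincide precisely by commutativity of $\uZ$, which is the same input as in the diagrammatic argument.
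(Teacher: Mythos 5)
Your proof is correct, but it takes a genuinely different route from the paper's. The paper argues directly on elements: writing the left action as $(1_l \otimes a_m)\otimes g \mapsto [g\circ(\iota_l\oplus\id_m)](a_m)$, it uses the pre-braiding identity in $U\G$ (\autoref{prop:stabcat}) to rewrite $g\circ b^{-1}_{n,m}\circ(\iota_n\oplus\id_m)$ as $g\circ(\id_m\oplus\iota_n)$, and then observes that both composites in the square send $(1_l\otimes a_m\otimes 1_n)\otimes g$ to $[g\circ(\iota_l\oplus\id_m\oplus\iota_n)](a_m)$. You instead isolate the statement that $\uZ$ is a commutative monoid in the braided category $(\Mod_\G,\oast_\G)$ and then run the standard naturality/hexagon/associativity chase; your verification of $m\circ b_{\uZ,\uZ}=m$ (the multiplication sends every coset to $1$, and the braiding only permutes cosets) is sound, and the deduction $m\circ b_{\uZ,\uZ}^{-1}=m$ needed in the chase follows. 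The inputs are genuinely different: the paper's key fact is pre-braidedness of $U\G$, yours is commutativity of $\uZ$, which here holds for the trivial reason that $\uZ$ is ``constant'' and does not itself invoke pre-braiding. Your approach is more conceptual and would apply verbatim to any commutative $\G$-ring $R$ in place of $\uZ$, at the cost of coherence bookkeeping; the paper's computation is shorter and has the side benefit of producing the explicit formula $[g\circ(\iota_l\oplus\id_m\oplus\iota_n)](a_m)$ for the two-sided action, which is reused later (e.g.\ in \autoref{lem:comm tensor} and in the description of the $U\G$-action on the Koszul complex). Your closing remark about the element-level alternative is essentially the paper's proof.
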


\begin{proof}
Let us consider $A$ as a $U\G$-module. Then the left action is given by 
\[(1_l \otimes a_m) \otimes g_{l+m} \stackrel{\mu}{\longmapsto} [g_{l+m} \circ (\iota_l \oplus \id_m)] (a_m).\]
Therefore the right action is given by
\[(a_m \otimes 1_n) \otimes g_{m+n} \stackrel{b^{-1}}{\longmapsto} (1_n \otimes a_m) \otimes g_{m+n} \circ b^{-1}_{n,m} \stackrel{\mu}{\longmapsto} [g_{m+n} \circ b^{-1}_{n,m} \circ (\iota_n \oplus \id_m)] (a_m).\]
Notice that this is $[g_{m+n} \circ  (\id_m  \oplus \iota_n)] (a_m)$ because $U\G$ is prebraided. We conclude that both compositions of the square are given by
\[ (1_l \otimes a_m \otimes 1_n) \otimes g_{l+m+n} \longmapsto [g_{l+m+n} \circ (\iota_l \oplus \id_m \oplus \iota_n) ](a_m).\qedhere\]
\end{proof}

If $A$ is a $U\G$-module, we will consider it simultaneously as two-sided $\uZ$-module via the actions described above. Next we introduce $\oast$-products over $\G$-rings.

\begin{definition}\label{def:indprodoverR}
If $A$ is a right $R$-module and $B$ is a left $R$-module, we define $A \oast_R B$ to be the coequalizer of the two natural maps: 
\[A \oast_\G R \oast_\G B \rightrightarrows A \oast_R B.\] 
Let $\Tor^R_i( -,B)\colon  \Mod_R \m \Mod_\G$ be the $i$th left derived functor of $- \oast_R B\colon  \Mod_R \m \Mod_\G$.
\end{definition}

If $A$ is a $\UG$-module and $B$ a $\G$-module, then $A \oast_\G B$ has a left $\uZ$-module structure given by
\[ (\uZ \oast_\G A) \oast_\G B \longrightarrow A \oast_\G B.\]
If both $A$ and $B$ are $U\G$-modules, then this action descends to $A \oast_{\uZ} B$ because of \autoref{prop:leftrightuZmod}. Note that $b \colon A\oast_\G B \to B \oast_\G A$ is not $\uZ$-equivariant because in the codomain, $\uZ$ acts on $B$. Note that $b$ descends to a morphism $b \colon A\oast_\uZ B \to B \oast_\uZ A$ using the commutative diagram
\[ \xymatrix{ 
A \oast_\G \uZ \oast_\G B \ar@<2pt>[rrr]\ar@<-2pt>[rrr]\ar[d]_{(\id_B \oast b) \circ (b \oast \id_{\uZ})\circ (\id_A \oast b)}^{=( b\oast \id_A) \circ ( \id_{\uZ}\oast b)\circ (b\oast \id_B)} &&& A \oast_\G B\ar[d]^b\ar[r] & A \oast_\uZ B\ar@{-->}[d]\\
B \oast_\G \uZ \oast_\G B \ar@<2pt>[rrr]\ar@<-2pt>[rrr] &&& B \oast_\G A\ar[r]&B \oast_\uZ A
}\]
where the equality comes from the braid relations.

\begin{lemma}\label{lem:comm tensor}
The swap $b \colon A \oast_\uZ B \to B \oast_\uZ A$ is $\uZ$-equivariant. In particular, $U\G$ acts on $A \oast_\uZ \Z$ via identity with isomorphisms and via zero with non-isomorphisms.
\end{lemma}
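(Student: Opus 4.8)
The plan is to unwind both the construction of $b$ on the level of $\oast_\G$ and the $\uZ$-module structures, reducing the statement to a diagram chase among representables. Concretely, regard $A$, $B$ as $U\G$-modules and hence (by \autoref{prop:leftrightuZmod}) as two-sided $\uZ$-modules. The map $b\colon A\oast_\uZ B\to B\oast_\uZ A$ is the one constructed just above the statement, fitting into the displayed commutative square with $b\colon A\oast_\G B\to B\oast_\G A$ on top. The content to verify is that $b$ commutes with the residual left $\uZ$-action: on $A\oast_\uZ B$ the left $\uZ$ acts through $A$, while on $B\oast_\uZ A$ it acts through $B$. So what must be checked is that the left $\uZ$-action on $A$ (followed by $b$) agrees with $b$ followed by the left $\uZ$-action on $B$ — i.e.\ that the outer square in the diagram
\[\xymatrix{
\uZ\oast_\G A\oast_\G B \ar[r]\ar[d] & \uZ\oast_\G B\oast_\G A\ar[d]\\
A\oast_\G B\ar[r]^b & B\oast_\G A
}\]
commutes before passing to coequalizers, where the top map is $\id_\uZ\oast b$ and the vertical maps are the left actions $\mu_A$ resp.\ $\mu_B$.

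First I would spell out all four maps explicitly on elements of the form $(1_l\otimes a_m\otimes b_n)\otimes g_{l+m+n}$, using the elementary formulas already derived in the proof of \autoref{prop:leftrightuZmod}: the left $\uZ$-action on a $U\G$-module sends $(1_l\otimes a_m)\otimes g_{l+m}$ to $[g_{l+m}\circ(\iota_l\oplus\id_m)](a_m)$, and $b$ on $\oast_\G$ is given by $(a_m\otimes b_n)\otimes g_{m+n}\mapsto (b_n\otimes a_m)\otimes g_{m+n}\circ b_{m,n}$. Going around one way: apply $\mu_A$ to absorb $1_l$ into $a_m$ across $\iota_l$, then swap with $b_{m,n}$; going the other way: swap $a_m$ and $b_n$ first via $b_{m,n}$ (bundling $1_l$ with $a_m$), then apply $\mu_B$ to absorb $1_l$. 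The two resulting expressions in $B\oast_\G A$ differ only in the order in which the braiding $b_{m,n}$ (on the $m$,$n$ factors) and the structural inclusion $\iota_l$ (inserting a $0$-block) are performed, so equality reduces to a single naturality/compatibility identity between the braiding of $\G$ and the maps $\iota$ in $U\G$. This is precisely the kind of statement guaranteed by the pre-braiding property of $U\G$ (\autoref{prop:stabcat}(ii)) together with the new Lemma proved just above \autoref{cor:newbraidiso}, which says the braiding map $\UG(m\oplus n,-)\to\UG(n\oplus m,-)$ is natural in $m$; inserting $\iota_l$ is exactly an instance of such a map, so the two composites coincide.

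Having established the outer square commutes, one concludes by the universal property of the coequalizers defining $\oast_\uZ$: the left $\uZ$-structure maps on $A\oast_\uZ B$ and $B\oast_\uZ A$ are induced from those on $A\oast_\G B$ and $B\oast_\G A$, and $b$ on $\oast_\uZ$ is induced from $b$ on $\oast_\G$ by the displayed diagram in the excerpt, so the compatibility descends. For the ``in particular'' clause, take $B=\Z$ (the constant $U\G$-module, which as a $\uZ$-module is $\uZ$ itself) so that $A\oast_\uZ\Z$ carries a $U\G$-module structure; by what we just proved this structure is $b$-equivariant, hence for any morphism $f=[\hat f,C]$ in $U\G$ the action of $f$ on $A\oast_\uZ\Z$ factors through the action of the same $f$ after swapping, where it acts on the $\Z$-factor. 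When $f$ is an isomorphism $C=0$ and $\hat f\in G_m$, and the action on $\Z U\G(m,-)\oast_\uZ\Z\cong \Z$ is by the identity; when $f$ is a non-isomorphism it factors through $\iota_1$, i.e.\ through a generator of positive degree, which acts by $0$ on $\Z\oast_\uZ\Z$ in each degree by the explicit description of $\oast_\uZ$ with the constant module. The main obstacle, and the only real content, is the braiding–inclusion compatibility identity in the second paragraph; once the right lemma is cited (pre-braiding plus the naturality Lemma above), everything else is formal coequalizer bookkeeping.
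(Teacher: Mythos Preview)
Your central claim has a gap: the square you display does \emph{not} commute at the $\oast_\G$ level. Follow an element $(1_l\otimes a_m\otimes b_n)\otimes g$. Going down then right yields $\big(b_n\otimes[\iota_l\oplus\id_m](a_m)\big)\otimes g\circ b_{l+m,n}$, lying in the summand of $(B\oast_\G A)_{l+m+n}$ indexed by the bidegree $(n,l+m)$; going right then down yields $\big([\iota_l\oplus\id_n](b_n)\otimes a_m\big)\otimes g\circ(\id_l\oplus b_{m,n})$, lying in the summand indexed by $(l+n,m)$. For $l>0$ these are distinct pieces of the coend, and since the coend over $\G\times\G$ preserves the bidegree, no identity in $U\G$ --- pre-braiding, the lemma before \autoref{cor:newbraidiso}, or otherwise --- can equate them inside $B\oast_\G A$. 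That lemma concerns naturality of the braiding with respect to morphisms in $U\G$; it says nothing relating the two \emph{different} module actions $\mu_A$ and $\mu_B$.

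What is actually true --- and this is what the paper's argument exploits --- is that the two composites agree only \emph{after} projecting to $B\oast_\uZ A$. By naturality of the braiding and the hexagon axiom, $b\circ(\mu_A\oast\id_B)$ equals $(\id_\uZ\oast b)$ followed by braiding $\uZ$ past $B$ and then applying $\id_B\oast\mu_A$. The discrepancy with $(\mu_B\oast\id_A)\circ(\id_\uZ\oast b)$ therefore reduces, after the common first step, to comparing the two maps $\uZ\oast_\G B\oast_\G A\to B\oast_\G A$ given by ``braid $\uZ$ past $B$, then act on $A$'' versus ``act on $B$''. Precomposing the coequalizer relation $\nu_B\oast\id_A\sim\id_B\oast\mu_A$ (where $\nu_B=\mu_B\circ b^{-1}$ is the right action from \autoref{prop:leftrightuZmod}) with the braiding shows these coincide in $B\oast_\uZ A$. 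The paper's diagram packages exactly this, routing both actions through the parallel pair $A\oast_\G\uZ\oast_\G B\rightrightarrows A\oast_\G B$ whose coequalizer is $A\oast_\uZ B$. One minor slip in your final paragraph: $\Z$ is not ``the constant $U\G$-module $\uZ$'' but $\uZ/\uZ_+$, concentrated in degree $0$; your deduction of the ``in particular'' clause survives once this is corrected.
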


\begin{proof}
The following commutative diagram proves the first assertion.
\[\xymatrix{
\uZ \oast_\G A \oast_\G \ar[r]\ar[d]_{b^{-1}\oast \id_B}\ar[rd] & \uZ \oast_\G A \oast_\uZ B \ar[rd] \ar@/^2.0pc/[dd]_<{b^{-1}\!\!} \\
A \oast_\G \uZ \oast_\G B \ar@<2pt>[r]\ar@<-2pt>[r] & A\oast_\G B \ar[r] &A \oast_\uZ B\\
A \oast_\G B \oast_\G \uZ \ar[u]^{\id_A \oast b} \ar[r]\ar[ru] & A \oast_{\uZ} B \oast_\G \uZ \ar[ru]
}\]

The second assertion follows because that is how $\UG$ acts on $\Z$.
\end{proof}

We will prove the same statement for the Tor-groups. To do this, we first will define an $\uZ$-action on $ \Tor^\uZ_*(A,B)$. Let $P_* \to A$ be a free $\UG$-resolution, then the chain complex $P_* \oast_\uZ B$ computes $ \Tor^\uZ_*(A,B)$. It is clear from the analysis above that the differentials are $\uZ$-equivariant (considering the left $\uZ$-action on $P_*$). From the previous lemma, we see that $b\colon P_* \oast_{\uZ} B  \to B \oast_{\uZ} P_*$ is a $\uZ$-equivariant isomorphism, where the codomain computes $\Tor^\uZ_*(B,A)$. We thus get the following corollary.

\begin{corollary}\label{cor:Tor(A,Z)}
The swap $b\colon \Tor^\uZ_*(A,B) \to \Tor^\uZ_*(B,A)$ is $\uZ$-equivariant. In particular, $U\G$ acts on $\Tor^\uZ_*(A,\Z)$ via identity with isomorphisms and via zero with non-isomorphisms.
\end{corollary}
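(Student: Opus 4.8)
The plan is to reduce \autoref{cor:Tor(A,Z)} to \autoref{lem:comm tensor} by choosing a free resolution and checking that the chain-level swap is compatible with differentials. First I would pick a free $U\G$-resolution $P_* \to A$ (which exists since free $U\G$-modules generate the category of $U\G$-modules, equivalently left $\uZ$-modules). Since each $P_i$ is a free $U\G$-module and hence in particular flat as a right $\uZ$-module, the complex $P_* \oast_\uZ B$ computes $\Tor^\uZ_*(A,B)$, and symmetrically $B \oast_\uZ P_*$ computes $\Tor^\uZ_*(B,A)$ (using that $B$ is flat is not needed; rather we use that $P_*$ is a complex of projectives in the second variable after applying the swap). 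Each differential $\partial\colon P_i \to P_{i-1}$ is a map of left $\uZ$-modules, so $\partial \oast_\uZ \id_B$ and $\id_B \oast_\uZ \partial$ are $\uZ$-equivariant by the discussion preceding \autoref{lem:comm tensor}.

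Next I would invoke \autoref{lem:comm tensor} degreewise: for each $i$, the swap $b\colon P_i \oast_\uZ B \to B \oast_\uZ P_i$ is a $\uZ$-equivariant isomorphism. The only remaining point is that these isomorphisms assemble into a chain map, i.e.\ that $b$ commutes with the differentials $\partial \oast_\uZ \id_B$ on one side and $\id_B \oast_\uZ \partial$ on the other. This is the naturality of the braiding isomorphism $b\colon A \oast_\uZ B \to B \oast_\uZ A$ in the first variable $A$, applied to the morphism $\partial\colon P_i \to P_{i-1}$ — which holds because $b$ on $\oast_\G$ is natural (it is a braiding of the monoidal category of $\G$-modules, \autoref{lem:oastbraiding}) and this naturality descends to $\oast_\uZ$ via the commutative square displayed just before \autoref{lem:comm tensor}. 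Passing to homology, $b$ induces the claimed $\uZ$-equivariant isomorphism $\Tor^\uZ_*(A,B) \to \Tor^\uZ_*(B,A)$.

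For the second assertion, specialize $B = \Z$ (meaning the $U\G$-module $\underline{\mathbb Z}$ with all isomorphisms acting as the identity and all non-isomorphisms acting as zero — this is the $U\G$-module structure on $\Z$ as in \autoref{lem:comm tensor}). Then $\Tor^\uZ_*(A,\Z) \cong \Tor^\uZ_*(\Z, A)$ via $b$, and on the right-hand side the $\uZ$-module structure is the one induced from the left $\uZ$-action on $\Z$, which by definition of the $U\G$-module $\Z$ acts by the identity through isomorphisms of $U\G$ and by zero through non-isomorphisms. Transporting this structure back along the isomorphism $b$ gives the statement for $\Tor^\uZ_*(A,\Z)$.

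I expect the main obstacle to be purely bookkeeping: verifying that the chain-level swap is genuinely a chain map, i.e.\ chasing the large commutative diagram before \autoref{lem:comm tensor} to confirm that naturality of $b$ in the first variable survives the passage from $\oast_\G$ to $\oast_\uZ$. Everything else — flatness of free modules, existence of resolutions, $\uZ$-equivariance of differentials — is routine given the earlier results, so the proof in the paper is likely just a couple of sentences assembling these observations, exactly as sketched in the paragraph immediately preceding the corollary.
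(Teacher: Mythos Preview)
Your proposal is correct and matches the paper's approach exactly: the paper's proof is precisely the paragraph immediately preceding the corollary, which chooses a free $U\G$-resolution $P_* \to A$, notes the differentials are $\uZ$-equivariant, applies \autoref{lem:comm tensor} degreewise to get a $\uZ$-equivariant isomorphism $P_* \oast_{\uZ} B \to B \oast_{\uZ} P_*$, and passes to homology. You even correctly anticipated that the paper would leave the chain-map verification implicit and that the argument would appear as a short setup paragraph rather than a displayed proof.
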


We now give a connection between generation properties and the Tor groups of $\uZ$-modules. We first define the $\uZ$-module $\Z$.

\begin{definition}
Given a $\G$-module $X$, let $X_+$ be $(X_+)_n=X_n$ for $n>0$ and $(X_+)_0=0$. Let $\Z$ denote the $\uZ$-bimodule $\uZ/\uZ_+$.
\end{definition}

The following is an analogue of \autoref{thm:res of finite type} for Tor-groups.

\begin{proposition}\label{prop:res of finite type}
Let $A$ be a $U\G$-module and $d_0,d_1,\dots \in\Z$  with $d_{i} \leq d_{i+1}$ for all $i$. Then the following statements are equivalent:
\begin{enumerate}
\item\label{item:partialres} There is a resolution
\[ \dots \to P_1 \to P_0 \to A \to 0\]
with $P_i$ freely generated in ranks $\le d_i$.
\item The groups  $\Tor_i^\uZ(A,\Z)_n \cong 0$ for $n > d_i$.
\end{enumerate}
\end{proposition}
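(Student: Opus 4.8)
The strategy is to reduce everything to the single fact that $\Tor^\uZ_0(\,\cdot\,,\Z)$ computes generators, and then run a routine dimension-shifting argument. Recall (as in the introduction) that $\Tor^\uZ_0(A,\Z)_n\cong\coker(\Ind_{G_{n-1}}^{G_n}A_{n-1}\to A_n)$ and, by \autoref{def:indprodoverR}, that $\Tor^\uZ_i(A,\Z)\cong H_i(P_\bullet\oast_\uZ\Z)$ for any resolution $P_\bullet\to A$ by free (hence projective) $U\G$-modules. The key auxiliary computation is that $\Z U\G(m,-)\oast_\uZ\Z$ is supported in the single degree $m$: applying the cokernel formula, $U\G(m,k)=\emptyset$ for $k<m$, while for $k>m$ the map $\Ind_{G_{k-1}}^{G_k}\Z U\G(m,k-1)\to\Z U\G(m,k)$ is surjective because $G_k$ acts transitively on $U\G(m,k)$ with the standard inclusion $m\to k$ as a point, and this standard inclusion factors through $k-1\to k$. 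Hence, if $P$ is free and generated in degrees $\le d$, then $P\oast_\uZ\Z$ is supported in degrees $\le d$ and $\Tor^\uZ_{\ge 1}(P,\Z)=0$. The implication \ref{item:partialres}$\Rightarrow$(ii) is then immediate, since $\Tor^\uZ_i(A,\Z)_n$ is a subquotient of $(P_i\oast_\uZ\Z)_n$, which vanishes for $n>d_i$.

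The first ingredient for the converse is the length-zero case: \emph{$A$ is generated in degrees $\le d$ if and only if $\Tor^\uZ_0(A,\Z)_n=0$ for all $n>d$}. The ``only if'' direction follows from the previous paragraph together with right exactness of $-\oast_\uZ\Z$. For ``if'', let $P_0=\bigoplus_{m\le d}\bigoplus_{a\in A_m}\Z U\G(m,-)$, a legitimate free $U\G$-module since arbitrary direct sums are allowed, with $P_0\to A$ sending the generator indexed by $a\in A_m$ to $a$. Writing $\bar A_\bullet$ for its image, I claim $\bar A_m=A_m$ for all $m$, by induction on $m$: this holds for $m\le d$ by construction, and for $m>d$ the inductive hypothesis $\bar A_{m-1}=A_{m-1}$, together with the observation that post-composing a morphism out of a generator with the standard inclusion $\iota\colon m-1\to m$ again yields a morphism out of a generator, gives $\iota(A_{m-1})\subseteq\bar A_m$; since $\bar A_m$ is $G_m$-stable it then contains the image of $\Ind_{G_{m-1}}^{G_m}A_{m-1}\to A_m$, whose cokernel $\Tor^\uZ_0(A,\Z)_m$ vanishes by hypothesis, so $\bar A_m=A_m$.

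The implication (ii)$\Rightarrow$\ref{item:partialres} is then obtained by dimension shifting. Set $K_{-1}:=A$, so by hypothesis $\Tor^\uZ_j(K_{-1},\Z)_n=0$ whenever $j\ge 0$ and $n>d_j$. Inductively, assume $\Tor^\uZ_j(K_{i-1},\Z)_n=0$ for all $j\ge 0$ and $n>d_{i+j}$. Taking $j=0$ and the previous paragraph, $K_{i-1}$ is generated in degrees $\le d_i$, so choose a free $P_i$ generated in degrees $\le d_i$ surjecting onto $K_{i-1}$ and let $K_i:=\ker(P_i\to K_{i-1})$. The long exact sequence of $\Tor^\uZ_\bullet(-,\Z)$ applied to $0\to K_i\to P_i\to K_{i-1}\to 0$, using $\Tor^\uZ_{\ge 1}(P_i,\Z)=0$, gives $\Tor^\uZ_j(K_i,\Z)\cong\Tor^\uZ_{j+1}(K_{i-1},\Z)$ for $j\ge 1$ and an exact sequence $0\to\Tor^\uZ_1(K_{i-1},\Z)\to\Tor^\uZ_0(K_i,\Z)\to\Tor^\uZ_0(P_i,\Z)$; in degrees $n>d_{i+1}$ the two outer terms vanish (the first by induction, the second since $P_i$ is generated in degrees $\le d_i\le d_{i+1}$), and the displayed isomorphisms propagate the vanishing in higher homological degree. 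Thus $\Tor^\uZ_j(K_i,\Z)_n=0$ for all $j\ge 0$ and $n>d_{i+1+j}$, closing the induction; splicing the maps $P_i\twoheadrightarrow K_{i-1}\hookrightarrow P_{i-1}$ produces the required resolution.

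The only input that is not pure homological algebra is the description of $U\G$ used above: that $G_k$ acts transitively on $U\G(m,k)$, so that any morphism $m\to k$ with $m<k$ factors, after post-composition by an element of $G_k$, through the standard inclusion $k-1\to k$. I expect this to be the only point requiring genuine care; it follows directly from the construction of $U\G$ together with the skeletality of $\G$ and the axioms of \autoref{Def:stability groupoid}. Everything else is the standard argument that $\Tor$ against the residue module $\Z$ detects the generation degrees of the terms in a free resolution.
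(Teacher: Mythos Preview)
Your proof is correct and follows essentially the same approach as the paper's: both directions rest on the computation that free modules have $\Tor^\uZ_{\ge 1}(-,\Z)=0$ and $\Tor^\uZ_0$ supported in the generating degrees, followed by the standard dimension-shifting induction via the long exact sequence in $\Tor$. Your argument is in fact slightly more explicit than the paper's---you spell out the transitivity of the $G_k$-action on $U\G(m,k)$ to compute $\Tor^\uZ_0$ of a representable, and you unwind the Nakayama-type step (that $\Tor^\uZ_0(W,\Z)=0$ forces $W=0$) as a direct induction on degree---whereas the paper just invokes a ``standard argument'' and the hyperhomology spectral sequence; but these are the same ideas.
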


\begin{proof}
Free modules are isomorphic to modules of the form $X \oast_\G \uZ $ with $X$ a $\G$-module with each $X_n$ free as a $\Z G_n$-module. A standard argument shows that $\Tor_0^\uZ(X \oast_\G \uZ,\Z) \cong X$ and $\Tor_i^\uZ(X \oast_\G \uZ,\Z) \cong 0$ for $i>0$. Applying this fact and the hyper-homology spectral sequence associated to a free resolution shows that i) implies ii).

Now we prove that ii) implies i). Let $A$ be a $U\G$-module with $\Tor_i^\uZ(A,\Z)_n \cong 0$ for $n > d_i$ for all $i$. Let $X_m$ be a free $\Z G_m$ module with a choice of surjection $X_m \m A_m$ and let $Y$ be the $\G$-module which is $X_m$ in degrees $\leq d_0$ and $0$ in higher degrees. Let $P_0=Y \oast_\G \uZ$ and let $P_0 \m A$ be the natural map. Then $\Tor^\uZ_0(P_0,\Z)_n$ surjects onto $\Tor^\uZ_0(A,\Z)_n$ for all $n\in \N$. It follows that the cokernel $W = \coker(P_0 \to A)$ has the property that $\Tor^\uZ_0(W,\Z)_n \cong 0$ for all $n\in\N$ and thus itself must be zero. This proves that $P_0\to A$ is surjective. Continuing, $P_0$ is generated in degrees $\le d_0$ and thus $\Tor^\uZ_i(P_0,\Z)_n \cong 0$ for $n>d_0$ if $i=0$ and for all $n\in \N$ if $i>0$. Let $K_0 = \ker (P_0 \to A)$. The long exact sequence of $\Tor$ groups associate to the short exact sequence \[0\m K_0 \m P_0 \m A \m 0\] implies that $\Tor^\uZ_0(K_0,\Z)_n \cong \Tor^\uZ_1(A,\Z)_n$ for $n>d_0$.  The same argument as before ensures the existence of a surjection $P_1 \to K_0$ from a free $U\G$-modules $P_1$ generated in degrees $\le d_1$. 
 We proceed by induction. \end{proof}

\subsection{Splitting complexes and the Koszul complex}

In this subsection, we recall the Koszul complex of \cite[Example 19.5]{GKRW1}. We will need some details concerning its construction so we repeat the augments of \cite[Example 19.5]{GKRW1} here. We begin by recalling the two-sided bar construction. 

\begin{definition}
Given a $\G$-ring $R$, a right $R$-module $A$, and a left $R$-module $B$, let $B_p(A,R,B) =A \oast_\G R^{\oast_\G p} \oast_\G B$.
\end{definition}

The natural maps $A \oast_\G R \m A$, $R \oast_\G R \m R$, and $R \oast_\G B \m B$ give face maps making $B_\bullet(A,R,B)$ into a semi-simplicial $\G$-module. If $R$ is a unital ring, then  $B_\bullet(A,R,B)$  has the structure of a simplicial $\G$-module via the unit map $\Z \m R$. Let $B_*(A,R,B)$ denote the chain complex associated to $B_\bullet(A,R,B)$ whose differential is the alternating sum of the face maps. If $R$ is unital, let $\bar B_*(A,R,B)$ be the quotient of $ B_*(A,R,B)$ by the images of the degeneracies. The proof of the following proposition is the same as the analogous proof in the classical setting.

\begin{proposition}
Let $R$ be a unital $\G$-ring, let $A$ be a right $R$-module, and let $B$ be a left $R$-module. Then the following statements are true.
\begin{enumerate}
\item $\bar B_*(R,R,B) \to B \to 0$ is an exact sequence.
\item If $R_n$ and $B_n$ are free abelian groups for all $n$, then $- \oast_\G R$ and $- \oast_\G B$ are exact functors and thus $\bar B_p(R,R,B)$ are flat left $R$-module for all $p$.
\item $\bar B_*(A,R,B) \cong A \oast_R \bar B_*(R,R,B)$.
\item If $R_0 \cong \Z$, then $\bar B_*(A,R,B) = B_*(A,R_+,B)$.
\end{enumerate}
\end{proposition}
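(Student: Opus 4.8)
The plan is to reduce every part to the standard homological algebra of two-sided bar constructions over a ring, now carried out internally to $(\Mod_\G,\oast_\G)$; the only genuinely new ingredients are two facts about the monoidal product $\oast_\G$. First, for $a+b=n$ the induction functor $\Ind_{G_a\times G_b}^{G_n}$ is exact: by \autoref{Def:stability groupoid}(1) (and its iterates) the map $G_a\times G_b\to G_n$ is injective, so $\Z G_n$ is free as a right module over the group ring of the image, hence $\Ind_{G_a\times G_b}^{G_n}=\Z G_n\otimes_{\Z(G_a\times G_b)}(-)$ is exact; together with the formula $(A\oast_\G B)_n\cong\bigoplus_{a+b=n}\Ind_{G_a\times G_b}^{G_n}(A_a\otimes_\Z B_b)$ this yields all the exactness statements we need. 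Second, the monoidal unit of $(\Mod_\G,\oast_\G)$ is the $\G$-module concentrated in degree $0$ with value $\Z$ --- the module the paper calls $\Z$ --- since $(\Z\oast_\G X)_n\cong X_n$; this is what powers the reduction to $R_+$ in part (4).

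For part (1), I would observe that when $A=R$ the augmented object $B_\bullet(R,R,B)\to B$, made simplicial via the unit $\Z\to R$, carries an extra degeneracy $s_{-1}$ inserting the unit in the leftmost slot, exactly as for the classical bar resolution; an extra degeneracy is a contracting homotopy for the associated augmented unnormalized chain complex, so that complex is exact, and since it splits as the direct sum of $\bar B_*(R,R,B)$ and the acyclic subcomplex generated by degeneracies (Dold--Kan), $\bar B_*(R,R,B)\to B\to 0$ is exact too --- no freeness is needed here. For part (2), exactness of $-\oast_\G R$ and $-\oast_\G B$ is immediate from the displayed formula together with exactness of $-\otimes_\Z B_b$ (valid since $B_b$ is free abelian), of induction, and of direct sums; for flatness of $\bar B_p(R,R,B)$ as a left $R$-module I would use the natural isomorphism $A\oast_R(R\oast_\G X)\cong A\oast_\G X$ (tensoring with a free left $R$-module) to identify $A\oast_R B_p(R,R,B)\cong A\oast_\G(R^{\oast_\G p}\oast_\G B)$, which is exact in $A$ because $R^{\oast_\G p}\oast_\G B$ is degreewise free abelian, and then note that $\bar B_p(R,R,B)$ is an $R$-linear direct summand of $B_p(R,R,B)$ by the Dold--Kan decomposition, hence also flat.

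For part (3), the same natural isomorphism $A\oast_R(R\oast_\G X)\cong A\oast_\G X$ upgrades to an isomorphism of simplicial $\G$-modules $A\oast_R B_\bullet(R,R,B)\cong B_\bullet(A,R,B)$ --- the faces of $B_\bullet(R,R,B)$ are left-$R$-linear in the leftmost factor, and under the isomorphism the face multiplying the first two $R$'s corresponds to the one acting on $A$ --- and since $A\oast_R(-)$ is additive and right exact, it commutes with passage to the normalized complex (a cokernel of a sum of degeneracies), giving $\bar B_*(A,R,B)\cong A\oast_R\bar B_*(R,R,B)$. For part (4), if $R_0\cong\Z$ then the unit realizes $\Z$ (in degree $0$) as a summand $R\cong\Z\oplus R_+$ of $\G$-modules, and expanding $R^{\oast_\G p}=(\Z\oplus R_+)^{\oast_\G p}$ while cancelling copies of the $\oast_\G$-unit $\Z$ yields $B_p(A,R,B)\cong\bigoplus_{S\subseteq\{1,\dots,p\}}A\oast_\G R_+^{\oast_\G|S|}\oast_\G B$; the summand $S=\{1,\dots,p\}$ is $B_p(A,R_+,B)$, the other summands are precisely the images of the degeneracies, and the faces restrict correctly because $(R_+)_a\cdot(R_+)_b\subseteq R_{a+b}=(R_+)_{a+b}$ for $a,b>0$, so $\bar B_*(A,R,B)=B_*(A,R_+,B)$. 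The one place I expect to demand real care is exactly this last identification: one must verify that the algebraic splitting of $R^{\oast_\G p}$ coming from $R\cong\Z\oplus R_+$ coincides with the Dold--Kan splitting of the simplicial $\G$-module $B_\bullet(A,R,B)$ --- i.e.\ that ``some internal tensor factor lies in the image of the unit'' is the same condition as ``lies in the image of a degeneracy'' --- which is the classical identification of the normalized bar complex with the unnormalized bar complex of the augmentation ideal and transfers verbatim once one knows $\Z$ is the $\oast_\G$-unit; everything else is formal.
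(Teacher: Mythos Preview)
Your proposal is correct and is precisely what the paper intends: the paper states only that ``the proof of the following proposition is the same as the analogous proof in the classical setting'' and gives no further details, so you have simply carried out that classical two-sided bar resolution argument internally to $(\Mod_\G,\oast_\G)$, supplying the needed facts about $\oast_\G$ (exactness of induction from $G_a\times G_b$ to $G_n$, and identification of the monoidal unit) along the way. There is nothing to compare---your writeup \emph{is} the omitted proof.
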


We then get the following corollary for $R = \uZ$. 

\begin{corollary}
$H_i(B_*(A,\uZ_+,\Z)) \cong \Tor^\uZ_i(A,\Z)$
\end{corollary}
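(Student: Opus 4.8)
The plan is to deduce this corollary directly from the preceding proposition by specializing $R = \uZ$, $A$ a right $\uZ$-module, and $B = \Z = \uZ/\uZ_+$. First I would invoke part (iv) of the proposition: since $(\uZ)_0 \cong \Z$, we have the identification $\bar B_*(A,\uZ,\Z) = B_*(A,\uZ_+,\Z)$ of chain complexes, so it suffices to compute $H_i(\bar B_*(A,\uZ,\Z))$. Next I would use part (iii), $\bar B_*(A,\uZ,\Z) \cong A \oast_{\uZ} \bar B_*(\uZ,\uZ,\Z)$, to rewrite this as the homology of $A$ tensored over $\uZ$ with the bar resolution of $\Z$.

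The remaining point is that $\bar B_*(\uZ,\uZ,\Z) \to \Z \to 0$ is a flat (indeed free) resolution of $\Z$ as a left $\uZ$-module. Part (i) of the proposition gives exactness of $\bar B_*(\uZ,\uZ,\Z) \to \Z \to 0$, and part (ii) — using that each $(\uZ)_n \cong \Z$ and each $(\Z)_n$ is free abelian (it is $\Z$ for $n>0$ and $0$ for $n=0$) — shows each $\bar B_p(\uZ,\uZ,\Z)$ is a flat left $\uZ$-module. Then by the standard characterization of derived functors via flat resolutions, $H_i(A \oast_{\uZ} \bar B_*(\uZ,\uZ,\Z)) \cong \Tor_i^{\uZ}(A,\Z)$, which is exactly the derived functor introduced in \autoref{def:indprodoverR}. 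Combining the three identifications yields $H_i(B_*(A,\uZ_+,\Z)) \cong \Tor_i^{\uZ}(A,\Z)$.

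I do not anticipate a genuine obstacle here; the corollary is essentially a formal consequence of the bar-resolution formalism exactly as in the classical case. The only mild subtlety worth a sentence is checking that a flat resolution (rather than a projective one) computes $\Tor$ — this is standard homological algebra in the abelian category of $\G$-modules, which has enough projectives, so flat resolutions suffice for the left-derived functors of $A \oast_{\uZ} -$. One should also note that the degeneracies in $B_\bullet(\uZ,\uZ,\Z)$ come from the unit $\Z \to \uZ$ of the unital $\G$-ring $\uZ$, so passing to the normalized complex $\bar B_*$ is legitimate, and the identification in part (iv) is what lets us drop the basepoint-degeneracy issue and identify the normalized bar complex with the reduced (non-unital) bar construction $B_*(A,\uZ_+,\Z)$.
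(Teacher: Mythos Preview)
Your proof is correct and follows exactly the approach the paper intends: the corollary is stated immediately after the four-part proposition with the remark ``We then get the following corollary for $R = \uZ$'', and your argument is precisely the intended specialization of parts (i)--(iv). One harmless slip: the $\uZ$-bimodule $\Z = \uZ/\uZ_+$ is $\Z$ in degree $0$ and $0$ in positive degrees, not the other way around; this does not affect the argument since both are free abelian.
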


The following is a chain-level enhancement of \autoref{cor:Tor(A,Z)}.

\begin{lemma} \label{changeOrder}  Let $A$ be a $\UG$-module. There is a zig-zag of quasi-isomorphisms of $U\G$-modules between $B_*(A,\uZ_+,\Z)$ and $B_*(\Z,\uZ_+,A)$. 
\end{lemma}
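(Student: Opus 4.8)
The plan is to pass through free resolutions, which avoids the braiding‐coherence bookkeeping that a more direct argument would require. First, observe that both complexes are naturally complexes of $U\G$-modules: $B_*(A,\uZ_+,\Z)$ carries the left $\uZ$-action coming from the left $\uZ$-action on $A$, which is unused by the bar differential since there $A$ enters only through its right $\uZ$-action; and $B_*(\Z,\uZ_+,A)$ carries the left $\uZ$-action obtained from the leftover \emph{right} $\uZ$-action on $A$ by applying the swap $b$, exactly as in the construction preceding \autoref{cor:Tor(A,Z)}. One natural first attempt would be the ``flip'' map reversing the order of all tensor factors by means of the braiding; since $\uZ$ is a commutative $\G$-ring and the right $\uZ$-action on $A$ is by definition the left action conjugated by $b$ (\autoref{prop:leftrightuZmod}), this is a chain map up to a sign of $(-1)^p$ in degree $p$, which one can absorb. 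Matching it against the prescribed $U\G$-structures, however, requires some coherence manipulation, so instead the plan is as follows.

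Second, I would reduce to the case of a free $U\G$-module. Choose a free resolution $P_\bullet \to A$ with $P_p = X_p \oast_\G \uZ$ and each $(X_p)_n$ free over $\Z G_n$ (using \autoref{prop:res of finite type}). The functors $B_p(-,\uZ_+,\Z) = (-)\oast_\G\bigl(\uZ_+^{\oast_\G p}\oast_\G\Z\bigr)$ and $B_p(\Z,\uZ_+,-) = \bigl(\Z\oast_\G\uZ_+^{\oast_\G p}\bigr)\oast_\G(-)$ are exact, because the $\G$-modules $\uZ_+^{\oast_\G p}\oast_\G\Z$ and $\Z\oast_\G\uZ_+^{\oast_\G p}$ are degreewise free abelian (sums of induced modules $\Z[G_n/H]$). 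Hence, forming the double complexes $B_\bullet(P_\bullet,\uZ_+,\Z)$ and $B_\bullet(\Z,\uZ_+,P_\bullet)$ and totalizing, the row augmentations give quasi-isomorphisms of complexes of $U\G$-modules $\Tot B_\bullet(P_\bullet,\uZ_+,\Z)\xrightarrow{\ \sim\ }B_*(A,\uZ_+,\Z)$ and $\Tot B_\bullet(\Z,\uZ_+,P_\bullet)\xrightarrow{\ \sim\ }B_*(\Z,\uZ_+,A)$.

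Third, for a single free module $P = X\oast_\G\uZ$, I would produce natural (in $P$) quasi-isomorphisms of $U\G$-modules identifying both $B_*(P,\uZ_+,\Z)$ and $B_*(\Z,\uZ_+,P)$ with $X$ placed in homological degree $0$, carrying the $U\G$-structure of $\Tor^\uZ_0(P,\Z)$ (non-isomorphisms acting by zero). For the first: by parts (3)--(4) of the preceding Proposition, $B_*(P,\uZ_+,\Z) = \bar B_*(P,\uZ,\Z) = P\oast_\uZ\bar B_*(\uZ,\uZ,\Z)\cong X\oast_\G\bar B_*(\uZ,\uZ,\Z)$; since $\bar B_*(\uZ,\uZ,\Z)\to\Z$ is exact (part (1)) and $X\oast_\G-$ is exact, this maps quasi-isomorphically onto $X\oast_\G\Z\cong X$. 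For the second: $B_*(\Z,\uZ_+,P) = \Z\oast_\uZ\bar B_*(\uZ,\uZ,P)$, and by parts (1)--(2) this is $\Z$ tensored over $\uZ$ with a flat resolution of the ($\uZ$-flat) module $P$, hence maps quasi-isomorphically onto $\Z\oast_\uZ P$, which is $U\G$-isomorphic to $P\oast_\uZ\Z\cong X$ by \autoref{lem:comm tensor}. That the two identifications with $X$ carry the \emph{same} $U\G$-module structure is exactly the content of \autoref{cor:Tor(A,Z)} together with \autoref{lem:comm tensor}.

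Concatenating, one obtains the desired zig-zag of quasi-isomorphisms of complexes of $U\G$-modules
\[ B_*(A,\uZ_+,\Z)\xleftarrow{\ \sim\ }\Tot B_\bullet(P_\bullet,\uZ_+,\Z)\xrightarrow{\ \sim\ }X_\bullet\xleftarrow{\ \sim\ }\Tot B_\bullet(\Z,\uZ_+,P_\bullet)\xrightarrow{\ \sim\ }B_*(\Z,\uZ_+,A), \]
where $X_\bullet$ is the complex $\dots\to X_1\to X_0$ underlying the resolution. The step I expect to be the main obstacle is the third one: verifying that the two comparison maps really land in the same $U\G$-structure on $X$ --- the one transported through the swap --- and keeping track of the exactness/flatness inputs (ultimately the fact that induced $\Z G_n$-modules are free as abelian groups) that make the bar resolutions usable here.
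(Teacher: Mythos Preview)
Your approach is correct but differs from the paper's. You resolve $A$ by a chosen free resolution $P_\bullet$ and pass through the intermediate complex $X_\bullet = P_\bullet \oast_{\uZ} \Z$, obtaining a five-term zig-zag. The paper instead resolves $\Z$ by its own bar resolution $B_*(\uZ,\uZ_+,\Z)$ and uses the single double complex $B_*(B_*(\uZ,\uZ_+,\Z),\uZ_+,A)$: one spectral sequence collapses to $B_*(\Z,\uZ_+,A)$, the other to $B_*(\uZ,\uZ_+,\Z)\oast_{\uZ} A$, and the latter is identified with $B_*(A,\uZ_+,\Z)\cong A\oast_{\uZ} B_*(\uZ,\uZ_+,\Z)$ via \autoref{lem:comm tensor}. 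This gives a three-term zig-zag and avoids choosing a resolution of $A$. Both arguments ultimately rest on the same ingredients --- exactness of the bar functors degreewise and the $\uZ$-equivariance of the swap --- so your concern about matching the two $U\G$-structures on $X$ is exactly the role played by \autoref{lem:comm tensor} in either proof. The paper's route is shorter and canonical; yours has the virtue of making explicit that the result holds because it reduces to the trivial case of free modules.
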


\begin{proof}
Consider the double complex $B_*(B_*(\uZ,\uZ_+,\Z), \uZ_+, A)$ and its associated spectral sequences. Note that $B_*(\uZ,\uZ_+,\Z)$ is a free resolution of $\Z$. Both spectral sequences collapse on the first page. One of the spectral sequences has
\[ B_p(\Z,\uZ_+,A)\]
in the row $q=0$ and zero everywhere else on the first page. The other spectral sequence has
\[ B_p(\uZ,\uZ_+,\Z) \oast_{\uZ} A\]
in the row $q=0$ and zero everywhere else on the first page. Together with \autoref{lem:comm tensor}, this implies that there is a zig-zag of quasi-isomorphisms
\[ B_*(A,\uZ_+,\Z) \cong A \oast_{\uZ} B_*(\uZ,\uZ_+,\Z) \leftarrow B_*(B_*(\uZ,\uZ_+,\Z), \uZ_+, A) \rightarrow B_*(\Z,\uZ_+,A).\qedhere\]
\end{proof}

We now recall the relationship between the reduced bar complex
\[ B_p(A,\uZ_+,\Z)_n 
%\cong ( A \boxtimes \uZ_+ \boxtimes \dots \boxtimes \uZ_+ \boxtimes \Z) \otimes_{\G^{\times (p+2)}} \Z\G(- \oplus \dots \oplus -, -) 
\cong \bigoplus_{\substack{n_0+\dots +n_{p}=n\\ n_i > 0}} \Ind_{G_{n_0} \times \dots \times G_{n_p}}^{G_n}A_{n_0}\]%\boxtimes \Z \boxtimes \dots \boxtimes \Z\]
 and a semi-simplicial set known as the $E_1$-splitting complex. It has previously been considered by Charney \cite{CharStab}, Hepworth \cite{HepEdge}, and Galatius--Kupers--Randal-Williams \cite{GKRW1,GKRW2,GKRW3}. 

\begin{definition}
Let \[S^{E_1}_p(\G)_n = \bigsqcup_{\substack{n_0+\dots +n_{p+1}=n\\ n_i > 0}}  G_n/(G_{n_0} \times \cdots \times G_{n_{p+1}} ).\]
This has the structure of a semi-simpicial set with $i$th face map induced by the map $G_{n_i} \times G_{n_{i+1}} \m G_{n_i+n_{i+1}}$. We denote this by $S^{E_1}_\bullet(\G)_n$.
\end{definition}

The superscript $E_1$ refers to the associative operad (or operads equivalent to it in various categories). The semi-simplicial set $S^{E_1}_\bullet(\G)_n$ has dimension equal to $n-2$. 
%Note that an isomorphic description of $S^{E_1}_\bullet(\G)_n$ is given by
%\[ S^{E_1}_p(\G)_n \cong \{ (V_0, \dots, V_{p+1}) \mid \text{ $V_i$ is a subobject of $n$ in $U\G$ and } V_0\oplus \dots \oplus V_{p+1} = n\}\]
%and the $i$th face map sends $(V_0, \dots, V_{p+1})$ to $(V_0, \dots, V_i \oplus V_{i+1} ,\dots,V_{p+1})$. 
Recall that the reduced homology of the realization of a semi-simplicial set can be computed by its augmented Moore complex $\Z[ S^{E_1}_*(\G)_n] \to \Z$.  It is immediate that this complex is isomorphic to $B_{*+2}(\Z,\uZ_+,\Z)_n$. Thus we obtain an isomorphism 
\[ \Tor_{i+2}^{\uZ}(\Z,\Z) \cong \widetilde H_{i}( \Vert S^{E_1}_\bullet(\G)_n\Vert)\]
for $i\ge 0$. As in \cite{GKRW1}, we make the following definition. 
\begin{definition}
We say $\G$ satisfies the \emph{standard connectivity assumption} if $\widetilde H_i(\Vert S^{E_1}_\bullet(\G)_n\Vert) \cong 0$ for $i \leq n-3$. If $\G$ satisfies the standard connectivity assumptions, then we denote  $\widetilde H_{n-2}(\Vert S^{E_1}_\bullet(\G)_n\Vert)$ by $\SSt_n$ and call this the $n$th \emph{split Steinberg module} of $\G$. 
\end{definition}

The standard connectivity assumption is equivalent to the statement that $\Tor_i^{\uZ}(\Z,\Z)_n\cong 0$ for $i \neq n$. This can be interpreted as Koszulness of $\uZ$.

For $\G=\GL_n(R)$, the splitting complex is Charney's split version of the Tits building. The name split Steinberg module is in analogy with the fact that the top reduced homology of the classical Tits building is the classical Steinberg module. %View $S^{E_1}_p(\G)_n$ as the set of tuples $(V_0,\dots, V_{p+1})$ with $V_i$ a subobject of $n$ and $n = V_0 \oplus \dots \oplus V_{p+1}$. 

The following theorem lists some groupoids that are known to satisfy the standard connectivity assumption. 

\begin{theorem}[Charney, Hepworth, Galatius--Kupers--Randal-Williams] \label{AclyList}
For $\G=\fS$, $\Br$, $\GL(R)$ for $R$ a PID, or $\Mod$,
 $\G$ satisfies the standard connectivity assumption. \end{theorem}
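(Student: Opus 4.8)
The plan is to reduce the statement to known high-connectivity results for certain simplicial complexes associated to each of these groupoids, and then carefully translate between these complexes and the split complex $S^{E_1}_\bullet(\G)_n$. By the discussion preceding the theorem, the standard connectivity assumption is equivalent to the assertion that $\widetilde H_i(\Vert S^{E_1}_\bullet(\G)_n\Vert) \cong 0$ for all $i \leq n-3$, i.e. that the split complex is $(n-3)$-acyclic. Equivalently, via $\Tor_{i+2}^{\uZ}(\Z,\Z)_n \cong \widetilde H_i(\Vert S^{E_1}_\bullet(\G)_n\Vert)$, it says $\Tor_i^{\uZ}(\Z,\Z)_n \cong 0$ for $i \neq n$, the Koszulness of $\uZ$. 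So for each of the four cases I would identify the relevant result in the literature and cite it.

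For $\G = \GL(R)$ with $R$ a PID, the split complex is Charney's split building, and Charney \cite{CharStab} proved that it is highly connected; I would quote her Cohen--Macaulay-type connectivity statement (this is also discussed in \cite{GKRW1}). For $\G = \fS$, the split complex is the complex of ``splittings'' of a finite set, whose connectivity is classical and can be extracted from the work on $E_\infty$- or $E_1$-cells; here I would cite Hepworth \cite{HepEdge} or the treatment in \cite{GKRW1,GKRW2}, where the $E_1$-splitting complex for symmetric groups is shown to have the required connectivity (alternatively this follows from the fact that $\Tor_*^{\Z[\fS]}$ is concentrated on the diagonal, a computation that goes back to the bar resolution of the associative operad). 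For $\G = \Br$, the braid groups, the split complex connectivity is again due to Hepworth \cite{HepEdge} (the ``complex of splittings'' of the braid group), and for $\G = \Mod$, the mapping class groups of surfaces with one boundary component, the relevant connectivity of the splitting complex is established by Galatius--Kupers--Randal-Williams in \cite{GKRW3} (the arc-splitting complex / tethered splitting argument). In each case the cited connectivity bound is exactly what is needed to conclude $(n-3)$-acyclicity of $S^{E_1}_\bullet(\G)_n$.

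The one genuinely non-formal step is making sure the indexing and the precise form of the complexes match: the various sources use slightly different but equivalent models (ordered vs. unordered splittings, with or without the extra ``block'', augmented vs. reduced), and one must check that the connectivity bound stated there translates to ``$\widetilde H_i = 0$ for $i \leq n-3$'' with $S^{E_1}_\bullet(\G)_n$ of dimension $n-2$ as defined above. I would do this by comparing the augmented Moore complex $\Z[S^{E_1}_*(\G)_n] \to \Z$ with $B_{*+2}(\Z,\uZ_+,\Z)_n$, as already noted in the excerpt, which identifies the homology in question with $\Tor^{\uZ}_{*+2}(\Z,\Z)_n$; Koszulness of the associative operad (the $\fS$ and $\Br$ cases) and Charney's and GKRW's theorems (the $\GL$ and $\Mod$ cases) then give the vanishing. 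The main obstacle, then, is not any deep new argument but the bookkeeping of matching conventions across the cited papers; I expect this to be routine but requiring care, especially for the mapping class group case where the splitting complex is the most involved.
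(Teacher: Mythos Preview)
Your proposal is correct and takes essentially the same approach as the paper: both simply cite the relevant connectivity results from the literature for each of the four cases. The only minor discrepancy is that you attribute the $\Mod$ case to \cite{GKRW3}, whereas the paper cites \cite[Theorem 3.4]{GKRW2}; the precise Hepworth references the paper gives are Propositions 4.1 and 4.11 of \cite{HepEdge} for $\fS$ and $\Br$ respectively, and Theorem 1.1 of \cite{CharStab} for $\GL(R)$.
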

\begin{proof}
The case of $\GL(R)$ for $R$ a PID is due to Charney \cite[Theorem 1.1]{CharStab}. The case of $\G=\fS$ or $\Br$ is due to Hepworth \cite[Proposition 4.1 and 4.11]{HepEdge}. The case of $\Mod$ is due to Galatius--Kupers--Randal-Williams \cite[Theorem 3.4]{GKRW2}.
\end{proof}

Using the standard connectivity assumption, we will now construct the \emph{Koszul complex} $K_*(A)$ as a quasi-isomorphic subcomplex of $B_*(A,\uZ_+,\Z)$.

Let $A^{\ge m}$ denote the submodule of $A$ with
\[ A^{\ge m}_n = \begin{cases} A_n &n \ge m\\ 0 & n<m.\end{cases}\]
Consider the filtration
\[ F_pB_q(A,\uZ_+,\Z)_n = B_q(A^{\ge n-p}, \uZ_+,\Z)_n .\]
%\cong \bigoplus_{\substack{n_0+\dots +n_{q}=n\\ n_i >0, n_0\ge n-p}} \Ind_{G_{n_0} \times \dots \times G_{n_q}}^{G_n}A_{n_0}.\]
This gives us a spectral sequence
\[ E^0_{pq} =F_p B_{p+q}(A,\uZ_+,\Z)_n/ F_{p-1} B_{p+q}(A,\uZ_+,\Z)_n \cong A_{n-p} \oast B_{p+q}(\Z,\uZ_+,\Z)_p,\]
where $A_{n-p}$ is the $\G$-module that is $A_{n-p}$ in degree $(n-p)$ and zero elsewhere.
%  \bigoplus_{\substack{n_0+n_1+\dots +n_{p+q}=n\\ n_i >0, n_0 = n-p}} \Ind_{G_{n_0} \times \dots \times G_{n_{p+q}}}^{G_n}A_{n_0} \]
This spectral sequence converges to $H_{p+q}(B_*(A,\uZ_+,\Z)_n) \cong \Tor^{\uZ}_{p+q}(A,\Z)_n$.
Because the degree of $A_{n-p}$ cannot change and 
%the $E^0$-page is isomorphic to
%\[ \Ind_{G_{n-p} \times G_p}^{G_n} A_{n-p} \boxtimes  \bigoplus_{\substack{n_1+\dots +n_{p+q}=p\\ n_i >0}} \Ind_{G_{n_1} \times \dots \times G_{n_{p+q}}}^{G_p}\Z \cong \Ind_{G_{n-p} \times G_p}^{G_n} A_{n-p} \boxtimes \Z[ S^{E_1}_{p+q-2}(\G)_p].\]
%Due to 
the standard connectivity assumption, we conclude that
\[ E^1_{pq} \cong A_{n-p} \oast \Tor^{\uZ}_{p+q}(\Z,\Z)_p\]
%\Ind_{G_{n-p} \times G_p}^{G_n} A_{n-p} \boxtimes  \widetilde H_{p+q-2}(|| S^{E_1}_\bullet(\G)_p ||)\]% \cong \Ind_{G_p \times G_{n-p}}^{G_n} \SSt_p \otimes A_{n-p}\]
is concentrated in the row $q =0$ 
as $\Tor^{\uZ}_{p+q}(\Z,\Z)_p$ vanishes unless $q= 0$, and then it is $\Tor^{\uZ}_{p}(\Z,\Z)_p = \SSt_p$.
%because $\widetilde H_{p+q-2}(|| S^{E_1}_\bullet(\G)_p ||) \cong \SSt_p$ if $q=0$ and zero otherwise.
Therefore $E^1_{*,0}$ computes $\Tor^{\uZ}_{*}(A,\Z)$. We denote this complex by 
\[K_*(A)_n = E^1_{*,0} \cong (A_{n-p} \oast \SSt_p)_n = (A \oast \SSt_p)_n \cong \Ind_{G_{n-p} \times G_p}^{G_n} A_{n-p} \boxtimes \SSt_p\]
and call it the \emph{Koszul complex} of $A$. Here, by abuse of notation, $\SSt_p$ also denotes the $\G$-module that is $\SSt_p$ in degree $p$ and zero everywhere else.
%Observe that
%\[ K_p(A)_n = E^1_{p,0} \inject E^0_{p,0} = F_pB_p(A,\uZ_+,\Z)_n \subset B_p(A,\uZ_+,\Z)_n\]
%because $F_pB_{p+q}(A,\uZ_+,\Z)_n \cong 0$ for $q>0$, as $n = n_0 + \dots + n_{p+q} \ge (n-p)+ p+q$.

Let us also describe the boundary map in $K_*(A)$. In $B_*(A,\uZ_+,\Z)$, the boundary map is given by the alternating sum of the maps induced by all ways of multiplying two adjacent factors into one factor.  Then $\SSt_p$ is in the kernel of $B_p(\Z,\uZ_+,\Z) \to B_{p-1}(\Z,\uZ_+,\Z)$. The restriction of the boundary map of $B_*(A,\uZ_+,\Z)$ to $K_*(A) \subset B_*(A,\uZ_+,\Z)$ therefore fits into the commutative diagram
\begin{equation}\label{Kd}\vcenter{\xymatrix{
A \oast \SSt_p \ar@{^(->}[r]\ar[d] & B_p(A,\uZ_+,\Z) = A \oast \uZ_+ \oast \uZ_+ \oast \dots \oast \uZ_+ \oast \Z \ar[d]^{m \oast \id}\\
A \oast \SSt_{p-1} \ar@{^(->}[r] & B_{p-1}(A,\uZ_+,\Z) =  A \oast \uZ_+ \oast \dots \oast \uZ_+ \oast \Z 
}}\end{equation}
where $m \colon A \oast \uZ_+ \to A$ denotes the multiplication map of $A$ as a right $\uZ$-module.

We end this section by considering a $U\G$-action on $K_*(A)$.
 %and $B_*(A,\uZ_+,\Z)$. We start with $K_*(A)$. 
 %Note that $K_p(A) = A \oast_\G \SSt_p$, where $\SSt_p$ is considered as the $\G$-module that is $\SSt_p$ in degree $p$ and zero everywhere else. 
 Because $K_p(A) = A \oast \SSt_p$, we can use the $\UG$-action described after \autoref{def:indprodoverR}.
 %This allows us to equip $K_p(A)$ with a $U\G$ action as described after \autoref{def:indprodoverR}. 
 Note that the inclusion 
 \[ K_p(A) \inject B_p(A,\uZ_+,\Z) \cong A \oast_\uZ B_p(\uZ,\uZ_+,\Z)\]
  is $\UG$-equivariant with $U\G$ action on $A \oast_\uZ B_*(\uZ,\uZ_+,\Z)$ induced by the action on $A$. Therefore, the differentials of $K_*(A)$ are also $\UG$-equivariant and the action on its homology coincides with the action of $\UG$ on $\Tor^\uZ_*(A,\Z)$ as described in \autoref{cor:Tor(A,Z)}.

% Using this action, we have the following proposition.
%
%\begin{proposition}
%$K_*(A)$ is a $U\G$ chain complex and on its homology all non-isomorphisms act via zero.
%\end{proposition}
%
%\begin{proof} 
%The boundary map commutes with the $U\G$ action defined above:
%\[ \xymatrix{
%a \otimes (L_1, \dots, L_p) \ar@{|->}[r]\ar@{|->}[d] & (C \subset C \oplus L_1)^*(a) \otimes (L_2, \dots, L_p)\ar@{|->}[d]\\
%(C \to f(C) \subset C_f \oplus f(C))^*(a)  \otimes (f(L_1), \dots, f(L_p)) \ar@{|->}[r] & (C \to f(C)\subset C_f\oplus f(C) \oplus f(L_1))^*(a) \otimes (f(L_2), \dots, f(L_p))\\
%}\]
%Similarly, $B_*(A,\uZ_+,\Z) \cong A \otimes_\uZ B_*(\uZ,\uZ_+,\Z)$ is a $U\G$ chain complex. And the inclusion $K_p(A) \inject B_p(A,\uZ_+,\Z)$ is $U\G$-equivariant. Because $B_*(\uZ,\uZ_+,\Z) \to \Z$
%
%
%
%
%\end{proof}

\subsection{Polynomial coefficient systems}
\label{secPoly}
We now recall the definition of polynomial coefficient systems and describe their basic properties.

We start with the shift endofunctor. Let $(\C,\oplus, 0)$ be a small monoidal category, then we can precompose a $\C$-module $A$ with the functor $\oplus\colon \C \times \C \to \C$ and we obtain the $\C\times \C$-module
\[ A \otimes_\C \Z\C(-, - \oplus -).\]
Fixing an object $c\in \C$, 
\[ \Sigma^c A := A \otimes_\C \Z\C(-, - \oplus c)\]
is a $\C$-module and clearly $\Sigma^c$ is an endofunctor on the category of $\C$-modules. Note that from the construction it is clear that also $\Sigma$ can be understood as a functor from $\C$ to the category of endofunctors of $\C$-modules. So in particular, if there is map $0\to c$ in $\C$, then it induces a morphism of $\C$-modules 
\[ A = \Sigma^0 A\longrightarrow \Sigma^c A.\]

Let $\G$ be a braided stability groupoid and $A$ a $\G$-module, then $\Sigma^p A$ is the $p$-shift of $A$. If $A$ is a $U\G$-module, then $\Sigma^p A$ is a $U\G$-module whose underlying $\G$-module is the same as the $p$-shift of the underlying $\G$-module of $A$. We unambiguously also call it the $p$-shift of $A$. Additionally, $0$ is initial in $U\G$, so we obtain a canonical map
\[ A \longrightarrow \Sigma^p A.\]
We will use the shorthand $\Sigma$ for $\Sigma^1$. Note that $\Sigma^p = (\Sigma)^p$, the endofunctor $\Sigma$ iterated $p$ times.

%%%\begin{remark}Concretely, if $\G$ is braided and $A$ is a $\G$-module, then there are isomorphisms of $G_n$-representations 
%%%\[(\Sigma A)_n = \mathrm{Res}^{G_{n+1}}_{1 \times G_n} A_{n+1} \cong  \mathrm{Res}^{G_{n+1}}_{ G_n \times 1} A_{n+1}  .\] 
%%%\end{remark}
%%\begin{definition}
%%%	The same formula defines an endofunctor $\Sigma^p$ on $U\mathcal G$ and on the category of $U\G$-modules. View $\id$ and $\Sigma^p$ as functors $U\G \to U\G$. 
%%Define a natural transformation $\id \to \Sigma^p$ on the category of $U\G$-modules, defined by precomposition with the natural transformation given by
%%\[ 0\oplus n \longrightarrow p\oplus n.\] 
%%\end{definition}
%
%\begin{convention}\label{ConShi}
%Given a suboject $C \subseteq n$ and $\G$-module $A$, we define $\Sigma^C A$ to be $\Sigma^{n_C} A$ as in  \autoref{SubCon}. For $D$ contained in the complement of $C \subseteq n$, we will identify $(\Sigma^{C} A)_D$ with $A_{C \oplus D}$.
%\end{convention}

\begin{definition} Given a $U\G$-module $A$,  define $U\G$-modules \[\qquad \quad  \ker A := \ker(A \to \Sigma A) \qquad \text{and} \]  \[\coker A := \coker(A\to \Sigma A).\] 
\end{definition}

\begin{definition} \label{defpoly}
We say that a $U\G$-module $A$ has \emph{polynomial degree $-\infty$ in ranks $>d$} if $A_n = 0$ for all $n>d$. For $r\ge0$, we say $A$ has \emph{polynomial degree $\le r$ in ranks $>d$} if $(\ker A)_n = 0$ for all $n>d$  and $\coker A$ has polynomial degree $\le r-1$ in ranks $>d-1$. 

We say $A$ has \emph{polynomial degree $\le r$} if it has polynomial degree $\le r$ in all ranks $>-1$.
\end{definition}

\begin{remark}
Note that if $A$ has polynomial degree $-\infty$ in ranks $>d$, then $(\ker A)_n = 0$ for all $n>d$ and $\coker A$ has polynomial degree $-\infty = -\infty -1$ in ranks $>d-1$.

If $A$ has polynomial degree $\le 0$ in ranks $>d$, then we require $\coker A$ has polynomial degree $\le -1$ in ranks $>d-1$. This forces $\coker A$ to be polynomial degree $-\infty$ in ranks $>d-1$.
\end{remark}

\begin{remark}\label{rem:ZXpoly}
Recall that $1$ denotes the braided stability groupoid whose automorphism groups are all trivial. Note that $U1 \subset U\G$ for every stability category $U\G$ by mapping the unique map from $m \to n$ to $\iota_{n-m}\oplus \id_m$. We remark that the notion of polynomiality of a $U\G$-module only depends on the underlying $U1$-module structure.
\end{remark}

\begin{lemma}\label{lem:polysigma}
Let $A$ be a $U\G$-module of polynomial degree $\le r$ in ranks $>d$ and $p\ge 1$. Then $\Sigma^p A$ has polynomial degree $\le r$ in ranks $>d-p$.
\end{lemma}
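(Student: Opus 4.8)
The statement is that shifting a polynomial $U\G$-module preserves polynomiality with a shift in the rank bound: if $A$ has polynomial degree $\le r$ in ranks $>d$ then $\Sigma^p A$ has polynomial degree $\le r$ in ranks $>d-p$. Since $\Sigma^p = \Sigma^{p-1} \circ \Sigma$, it suffices by induction on $p$ to treat the case $p=1$, i.e. to show that $\Sigma A$ has polynomial degree $\le r$ in ranks $>d-1$; the general case then follows since $(d-1)-(p-1) = d-p$. I would proceed by a second induction, this time on $r$ (the polynomial degree), starting from $r = -\infty$.

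\emph{Base case $r=-\infty$.} Here $A_n = 0$ for $n>d$, so $(\Sigma A)_n = A_{n+1} = 0$ for $n+1 > d$, i.e. for $n > d-1$. Thus $\Sigma A$ has polynomial degree $-\infty$ in ranks $>d-1$, as required. (By \autoref{rem:ZXpoly} it is harmless to work entirely with the underlying $U1$-module structure, which simplifies bookkeeping, but the argument goes through verbatim for $U\G$.)

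\emph{Inductive step.} Suppose the claim holds for polynomial degree $\le r-1$, and let $A$ have polynomial degree $\le r$ in ranks $>d$. I need two things: that $(\ker \Sigma A)_n = 0$ for $n > d-1$, and that $\coker \Sigma A$ has polynomial degree $\le r-1$ in ranks $>d-2$. The key point is that $\Sigma$ commutes with $\ker$ and $\coker$ in the appropriate sense: since $\Sigma$ is defined by a coend against the representable bimodule $\Z U\G(-,-\oplus 1)$ and the structure maps $A \to \Sigma A$ are natural, one checks that the canonical maps $\Sigma(\ker A) \to \ker(\Sigma A)$ and $\coker(\Sigma A) \to \Sigma(\coker A)$ are isomorphisms — concretely, $\ker$ and $\coker$ are computed degreewise from the transition maps $A_n \to A_{n+1}$, and shifting just relabels the index $n \mapsto n+1$, so $(\ker \Sigma A)_n = (\ker A)_{n+1}$ and $(\coker \Sigma A)_n = (\coker A)_{n+1}$. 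Granting this: $(\ker \Sigma A)_n = (\ker A)_{n+1} = 0$ whenever $n+1 > d$, i.e. $n > d-1$; and $\coker \Sigma A = \Sigma(\coker A)$, where $\coker A$ has polynomial degree $\le r-1$ in ranks $>d-1$, so by the inductive hypothesis (applied to degree $r-1$ and rank bound $d-1$) $\Sigma(\coker A)$ has polynomial degree $\le r-1$ in ranks $>d-2$. Both conditions of \autoref{defpoly} are verified, so $\Sigma A$ has polynomial degree $\le r$ in ranks $>d-1$, completing the induction on $r$, hence the induction on $p$.

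\emph{Main obstacle.} The only nontrivial point is the compatibility of $\Sigma$ with $\ker$ and $\coker$ — i.e. that the canonical comparison maps are isomorphisms. This is where one must be slightly careful: a priori $\Sigma$ is only right exact (being a coend, hence a colimit), so it need not commute with kernels in general. What saves us is that the transition map $A \to \Sigma A$ and the endofunctor $\Sigma$ interact through the commuting square relating $A \to \Sigma A$, $\Sigma A \to \Sigma^2 A$, and the naturality of $\Sigma$ applied to $A \to \Sigma A$; equivalently, one uses the degreewise description $(\Sigma A)_n = A_{n+1}$ together with the fact that the transition map $(\Sigma A)_n \to (\Sigma A)_{n+1}$ is precisely the transition map $A_{n+1} \to A_{n+2}$ of $A$. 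From that degreewise identification the statements $(\ker \Sigma A)_n = (\ker A)_{n+1}$ and $(\coker \Sigma A)_n = (\coker A)_{n+1}$ are immediate, and the rest is the bookkeeping carried out above. So the proof is essentially a clean double induction once this identification is recorded.
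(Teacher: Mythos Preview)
Your proof is correct and follows essentially the same approach as the paper: induction on $r$, base case $r=-\infty$ trivial, inductive step via $\coker(\Sigma A)\cong \Sigma(\coker A)$. The paper's proof is a one-liner that handles general $p$ directly rather than reducing to $p=1$, and it leaves the $\ker$ condition and the $\Sigma$--$\coker$ commutation implicit; your version spells these out (and your invocation of \autoref{rem:ZXpoly} to justify the degreewise identifications is exactly the right way to dispatch the ``main obstacle'' you flag).
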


\begin{proof}
This is clearly true for $r=-\infty$. For $r\ge 0$, it follows by induction because $\coker(\Sigma^pA) \cong \Sigma^p \coker(A)$. \end{proof}

The following lemma appears in a very similar form in \cite[Lem 7.3(a)]{Pa2}, but the notion of polynomial degree in that paper is slightly different than the one used here. 

\begin{lemma}\label{lem:polySES}
Let $A'$, $A$, and $A''$ be $U\G$-modules, where  $A'$ has polynomial degree $\le r$ in ranks $>d$ and $A''$ has polynomial degree $\le r$ in ranks $>d-1$. Assume there are maps $A' \to A \to A''$ such that 
\[0 \to A'_n \to A_n \to A''_n \to 0\]
are short exact sequences for all $n>d$. Then $A$ has polynomial degree $\le r$ in ranks $>d$.
\end{lemma}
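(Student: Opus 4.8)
The plan is to argue by induction on $r$, the case $r = -\infty$ being the observation that then $A'_n = 0$ for $n > d$ and $A''_n = 0$ for $n > d-1$, hence $A_n = 0$ for $n > d$, so $A$ has polynomial degree $-\infty$ in ranks $> d$. For the inductive step, assume the statement for $r-1$ (in all rank bounds). The key will be to relate the shift endofunctor $\Sigma$ to the short exact sequences we are given. Recall from \autoref{secPoly} that for a $U\G$-module $B$ the formation of $\ker B = \ker(B \to \Sigma B)$ and $\coker B = \coker(B \to \Sigma B)$ is what controls polynomiality. Since $\Sigma$ is built from the functor $B \mapsto B \otimes_{U\G} \Z U\G(-, -\oplus 1)$ and $\Z U\G(-, -\oplus 1)$ is a flat right $U\G$-module in the relevant ranks — more precisely, in ranks $> d$ the maps $B_n \to (\Sigma B)_n = \Ind$-type objects are built levelwise and the sequence $0 \to A'_n \to A_n \to A''_n \to 0$ for $n > d$ induces a corresponding exact sequence of shifted modules in ranks $> d-1$ — I would first establish that the rows
\[ 0 \to (\Sigma A')_n \to (\Sigma A)_n \to (\Sigma A'')_n \to 0 \]
are exact for all $n > d-1$. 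This follows because $(\Sigma B)_n$ only involves $B_m$ for $m \leq n-1$... wait, no: $(\Sigma B)_n \cong \bigoplus$ over maps $n \to n+1$, so it involves $B_{n+1}$; more carefully $(\Sigma B)_n = B \otimes_{U\G} \Z U\G(-, n\oplus 1)$, which as a $\Z G_n$-module is a summand-type gadget involving $B_{n+1}$ restricted and $B_n$. The cleanest route is: the functor $\Sigma$ is exact on the full subcategory of modules concentrated in ranks $> d-1$ because $\Z U\G(m, n \oplus 1)$ is a free $\Z G_n$-set, so $-\otimes_{U\G}\Z U\G(-,-\oplus 1)$ preserves surjections and, by freeness (flatness over each $\Z G_n$), also injections, as long as all the relevant source ranks exceed the bound where exactness of the original sequence holds.

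Granting the shifted exactness, I next apply the snake lemma to the ladder
\[
\begin{array}{ccccccccc}
0 & \to & A'_n & \to & A_n & \to & A''_n & \to & 0 \\
 & & \downarrow & & \downarrow & & \downarrow & & \\
0 & \to & (\Sigma A')_n & \to & (\Sigma A)_n & \to & (\Sigma A'')_n & \to & 0
\end{array}
\]
valid for $n$ large enough (say $n > d$, so that both the top row and the bottom row, which needs $n > d-1$, are exact and the vertical maps are the canonical ones). This yields a six-term exact sequence relating $\ker A', \ker A, \ker A''$ and $\coker A', \coker A, \coker A''$ in those ranks. Since $A'$ has $(\ker A')_n = 0$ for $n > d$ and $A''$ has $(\ker A'')_n = 0$ for $n > d-1$, the snake sequence forces $(\ker A)_n = 0$ for $n > d$, which is the first requirement of polynomial degree $\le r$ in ranks $> d$. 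For the second requirement, the snake sequence gives a four-term exact sequence
\[ 0 \to \ker A'' \to \coker A' \to \coker A \to \coker A'' \to 0 \]
of $U\G$-modules in ranks $> d$ (after splitting off the vanishing $\ker A', \ker A$ terms and using that $\coker A' \to \Sigma A' / A'$ etc.). Hmm — more precisely I get $\coker A' \to \coker A \to \coker A'' \to 0$ exact in ranks $> d$, and the kernel of $\coker A' \to \coker A$ is a quotient of $\ker A''$. So I split this into $0 \to K \to \coker A' \to \im \to 0$ and $0 \to \im \to \coker A \to \coker A'' \to 0$ where $K$ is a subquotient of $\ker A''$; by hypothesis $\ker A''$ vanishes in ranks $> d-1$, so $K$ vanishes there too, hence $\im = \coker A'$ in ranks $> d$ — wait, I need ranks $> d-1$ for that. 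Let me instead directly use: in ranks $> d$, $\coker A' \to \coker A$ is injective (since $K$ is killed by $\ker A''|_{>d-1}$ and we only look at ranks $>d > d-1$), giving a short exact sequence $0 \to \coker A' \to \coker A \to \coker A'' \to 0$ in ranks $> d$.

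Now I invoke the inductive hypothesis: $\coker A'$ has polynomial degree $\le r-1$ in ranks $> d-1$, hence in particular in ranks $> d$ (a larger bound is weaker... actually ranks $> d-1$ is a stronger statement than ranks $> d$, which is fine — wait, I need it in ranks $> d - 1$ for the conclusion "$\coker A$ has poly degree $\le r-1$ in ranks $> d-1$"). Hmm, here is the subtlety: the definition requires $\coker A$ to have polynomial degree $\le r-1$ in ranks $> d-1$, but my short exact sequence of cokernels only holds in ranks $> d$. To fix this I should be more careful and run the whole argument shifted: I claim the six-term snake sequence actually holds in ranks $> d-1$, not just $> d$, because for $n = d$ I can analyze directly — or, cleaner, I note $\coker A = \coker(A \to \Sigma A)$ and $(\Sigma A)_n$ for $n > d-1$ means $n \geq d$ involves $A_{n+1}$ with $n + 1 > d$, plus $A_n$ with $n \geq d$; the only problematic rank is $n = d-1$ itself for the target, but the definition of "$\coker A$ has polynomial degree $\le r-1$ in ranks $> d-1$" only constrains $(\coker A)_n$ for $n > d-1$, i.e. $n \geq d$, and there the snake sequence is available. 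So in fact the short exact sequence $0 \to \coker A' \to \coker A \to \coker A'' \to 0$ does hold in ranks $> d-1$ after all (the constraint from the $\ker A''$ term needs ranks $> d-1$, which matches), and then since $\coker A'$ has poly degree $\le r-1$ in ranks $> (d-1)$ ... no, $\coker A'$ has poly degree $\le r-1$ in ranks $> d - 1$? The inductive hypothesis applied to $A'$ directly (as the first module of the exact sequence with its own bound $d$) gives $\coker A'$ has poly degree $\le r - 1$ in ranks $> d - 1$. Good. And $\coker A''$ has poly degree $\le r-1$ in ranks $> d - 2$, hence in ranks $> d - 2$ which is weaker — I need it in ranks $> (d-1) - 1 = d - 2$. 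That matches exactly. So by the inductive hypothesis applied to the short exact sequence $0 \to \coker A' \to \coker A \to \coker A'' \to 0$ (holding in ranks $> d-1$), with the "$d$" role played by $d - 1$, the "$r$" role by $r-1$: $\coker A'$ has poly degree $\le r-1$ in ranks $> d-1$ ✓, $\coker A''$ has poly degree $\le r-1$ in ranks $> d-2$ ✓, therefore $\coker A$ has poly degree $\le r-1$ in ranks $> d-1$. Combined with $(\ker A)_n = 0$ for $n > d$, this is exactly the assertion that $A$ has polynomial degree $\le r$ in ranks $> d$, completing the induction.

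The main obstacle, and the step requiring genuine care rather than routine bookkeeping, is establishing the exactness of the shifted sequence $0 \to \Sigma A' \to \Sigma A \to \Sigma A'' \to 0$ in the correct range of ranks, together with tracking exactly which rank bounds propagate through the snake lemma — the off-by-one juggling between the bound $d$ (for $\ker$) and $d - 1$ (for $\coker$) is precisely what makes the inductive step close, and getting it wrong by one would break the induction. Everything else (snake lemma, invoking the inductive hypothesis, the $r = -\infty$ base case) is formal. I would therefore structure the written proof as: (1) reduce to showing $\Sigma$ is exact on modules in the relevant ranks, citing flatness of $\Z U\G(-, -\oplus 1)$ over each $\Z G_n$; (2) apply the snake lemma to the canonical ladder; (3) read off $(\ker A)_n = 0$ for $n > d$ and the short exact sequence of cokernels in ranks $> d-1$; (4) conclude by induction on $r$.
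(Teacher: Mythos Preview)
Your proposal matches the paper's proof: induction on $r$, the base case $r=-\infty$ immediate, and the inductive step via the snake lemma on the ladder from the given short exact sequence to its shift, then the induction hypothesis applied to the resulting short exact sequence of cokernels with $(r,d)$ replaced by $(r-1,d-1)$. Your detour on the exactness of $\Sigma$ is unnecessary --- $(\Sigma B)_n$ is literally $B_{n+1}$ with the $G_n$-action restricted along $G_n \hookrightarrow G_{n+1}$, so $\Sigma$ is exact on the nose and the shifted row is short exact for $n > d-1$ simply because the original row is for $n+1 > d$; the rank juggling you agonize over is handled in the paper exactly as you eventually settle on, using that $(\ker A'')_n = 0$ for $n > d-1$.
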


\begin{proof}
We prove this by induction over $r$. Let us start with $r = -\infty$. This means that $A'_n \cong A''_n \cong 0$ for all $n>d$ and thus $A_n \cong 0$ for all $n>d$. Let $r\ge 0$ and consider the exact sequence
\[ 0 \to \ker A' \to \ker A \to \ker A'' \to \coker A' \to \coker A \to \coker A'' \to 0\]
coming from the snake lemma for $n>d$. Because $(\ker A')_n \cong (\ker A'')_n \cong 0$ for all $n>d$, $(\ker A)_n \cong 0$ for all $n>d$. Because $(\ker A'')_n \cong 0$ for all $n>d-1$, 
\[ 0 \to \coker A'_n \to \coker A_n \to \coker A''_n \to 0\]
is a short exact sequence for $n>d-1$. We can therefore apply the induction hypothesis, showing that $\coker A$ has polynomial degree $\le r-1$ ($-\infty$ if $r=0$) in ranks $>d-1$. This implies that $A$ has polynomial degree $\le r$ in ranks $>d$.
\end{proof}

\begin{lemma}\label{lem:cokerhighersigma}
Let $A$ be a $U\G$-module of polynomial degree $\le r$ in ranks $>d$ and $p\ge 1$. Then $\ker (A \to \Sigma^p A)_n \cong 0$ for all $n>d$ and $\coker( A \to \Sigma^pA)$ is of polynomial degree $\le r-1$ in ranks $>d-1$.
\end{lemma}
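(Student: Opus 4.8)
The plan is to induct on $p$. The base case $p=1$ is immediate: it is exactly the defining clauses of ``$A$ has polynomial degree $\le r$ in ranks $>d$'' in \autoref{defpoly}, since $\ker(A\to\Sigma A)=\ker A$ and $\coker(A\to\Sigma A)=\coker A$ (with the usual convention that $r-1=-\infty$ when $r\in\{-\infty,0\}$).

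For the inductive step, assuming the statement for $p$ and proving it for $p+1$, I would factor the canonical map as
\[ A \xrightarrow{\ \alpha\ } \Sigma A \xrightarrow{\ \beta\ } \Sigma^{p}(\Sigma A)=\Sigma^{p+1}A, \]
where $\alpha$ and $\beta$ are again the canonical maps; here $\beta\circ\alpha$ is the canonical map $A\to\Sigma^{p+1}A$ because $\Sigma^{(-)}$ is functorial and the map $0\to p+1$ in $U\G$ factors through $1$. Then I would apply \autoref{lem:polysigma} to see that $\Sigma A$ has polynomial degree $\le r$ in ranks $>d-1$, and feed this into the inductive hypothesis for $p$: this gives $(\ker\beta)_n=0$ for $n>d-1$ and $\coker\beta$ of polynomial degree $\le r-1$ in ranks $>d-2$. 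Simultaneously, by the hypothesis on $A$, $\ker\alpha=\ker A$ vanishes in ranks $>d$ and $\coker\alpha=\coker A$ has polynomial degree $\le r-1$ in ranks $>d-1$.

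It then remains to assemble these via the kernel--cokernel exact sequence of the composite $\beta\alpha$. Its left portion $0\to\ker\alpha\to\ker(\beta\alpha)\to\ker\beta$, together with the two vanishing ranges above, gives $\ker(A\to\Sigma^{p+1}A)_n=0$ for all $n>d$. Its right portion $\ker\beta\to\coker\alpha\to\coker(\beta\alpha)\to\coker\beta\to0$, combined with $(\ker\beta)_n=0$ for $n>d-1$, yields short exact sequences $0\to(\coker\alpha)_n\to\coker(\beta\alpha)_n\to(\coker\beta)_n\to0$ for all $n>d-1$; \autoref{lem:polySES}, applied with its ``$r$'' equal to $r-1$ and its ``$d$'' equal to $d-1$, then shows that $\coker(A\to\Sigma^{p+1}A)=\coker(\beta\alpha)$ has polynomial degree $\le r-1$ in ranks $>d-1$, closing the induction. (When $r=-\infty$ this can also be read off directly from the remark following \autoref{defpoly}.)

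There is no deep obstacle here; the main point requiring attention is the bookkeeping of the rank shifts, so that the two inputs to \autoref{lem:polySES} sit at ranks differing by exactly one. This works out precisely because passing from $A$ (degree $\le r$ in ranks $>d$) to $\Sigma A$ costs one rank, and then running the inductive hypothesis on $\Sigma A$ produces a cokernel of polynomial degree $\le r-1$ in ranks $>d-2$, exactly one below $\coker\alpha$ as \autoref{lem:polySES} requires. A minor secondary point is verifying that the displayed factorization really recovers the canonical map, which is a formal consequence of the functoriality of $\Sigma$ recorded in \autoref{secPoly}.
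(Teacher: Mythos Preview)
Your proof is correct and follows essentially the same strategy as the paper's. The paper handles the kernel by observing directly that a composition of injections is injective, and for the cokernel it exhibits a filtration of $\coker(A\to\Sigma^pA)$ with successive quotients $\coker(\Sigma^iA)$, then applies \autoref{lem:polysigma} and \autoref{lem:polySES}; unpacking that filtration argument is exactly your induction on $p$, just split at the other end (the paper implicitly factors $A\to\Sigma^{p-1}A\to\Sigma^pA$ rather than $A\to\Sigma A\to\Sigma^pA$). Your use of the kernel--cokernel sequence for the composite is a clean way to package both statements at once, and your bookkeeping of the rank shifts is accurate.
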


\begin{proof}
If $A_n \to (\Sigma A)_n$ is injective for every $n >d$, then so is the composition $A_n \to (\Sigma^p A)_n$. This proves that $\ker (A \to \Sigma^p A)_n \cong 0$ for all $n>d$.

We will prove the second assertion by induction over $r$. For $r = -\infty$, then \[0 \cong (\Sigma^pA)_n\to \coker( A \to \Sigma^p A)_n\] is surjective for $n>d - p$, and thus $\coker( A \to \Sigma^p A)$ has polynomial degree $-\infty = -\infty -1$ in ranks $>d-1$.

For $r\ge 0$ and $n> d-1$,  $\coker( A \to \Sigma^p A)_n$ has a filtration whose factors are $\coker(\Sigma^iA)_n = \coker( \Sigma^i A \to \Sigma^{i+1} A)_n$, because $A_n \to (\Sigma A)_n$ is injective for $n>d$.  Therefore, $\coker( A \to \Sigma^p A)$ has polynomial degree $\le r-1$ in ranks $>d-1$ using \autoref{lem:polysigma} and \autoref{lem:polySES}. \end{proof}

\section{Polynomial modules and derived representation stability}

\label{SecPolyn}

In this section, we prove quantitative versions of \autoref{derivedRepPolynomials}. These theorems give sufficient conditions for polynomial modules to exhibit derived representation stability.

\subsection{Via central stability complexes}

Our goal is to bound the following quantity:

\begin{definition}
Let $\psi\colon (\Z_{\geq -1})^3  \m \N \cup \{ \infty \}$ be the smallest number such that for any polynomial $U \G$-module $A$ of degree $\leq r$ in ranks $>d$, then 
\[\widetilde H^\G_i(A)_n \cong 0 \text{ for } n> \psi(r,d,i).\]
\end{definition}

\begin{definition}
Let $A$ be $U\G$-module. Let 
\[X(A) = (A \otimes \Z U\G(-_1,-)) \otimes_{U\G} \Z U\G(- \oplus -_2,-_3)\]
be a $U\G^\op \times U\G^\op \times U\G$-module, where we consider $A \otimes \Z U\G(-_1,-)$ as an $U\G$-module using the diagonal action. Using the inclusion $\Delta_{inj,+}\to U\G$, we can consider $X(A)$ as a bi-semisimplicial $U\G$-module with augmentations. We denote the corresponding double chain complex by $X_{*,*}(A)$ such that
\[ X_{p,q}(A)_n = (A \otimes \Z U\G(p+1,-)) \otimes_{U\G} \Z U\G(- \oplus q+1,n).\]
(The index shift is explained in \autoref{rem:CS}.) Let $\bar X_{*,*}(A)$ be the transpose of $X_{*,*}(A)$.
\end{definition}

\begin{proposition}\label{prop:descriptionX}
For fixed $q$, there is a  chain complex isomorphism
\[ X_{*,q}(A) \cong  (A \otimes \widetilde C^\G_*(\Z U\G(0,-))) \otimes_{\UG} \Z\UG(-\oplus (q+1),-).\]
For fixed $p$, there is a  chain complex isomorphism
\[ X_{p,*}(A) \cong \widetilde C^\G_*(\Sigma^{p+1} A) \otimes_{\UG} \Z\UG(-\oplus (p+1),-).\]
\end{proposition}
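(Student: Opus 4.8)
The statement is about rewriting the bi-semisimplicial object $X_{*,*}(A)$ along one direction at a time, so the natural strategy is to manipulate coends directly. Fix $q$ and look at $X_{*,q}(A)$. By definition
\[ X_{p,q}(A) = (A \otimes \Z U\G(p+1,-)) \otimes_{U\G} \Z U\G(- \oplus (q+1),-),\]
and the only thing varying with $p$ is the factor $\Z U\G(p+1,-)$ together with the face maps coming from the functor $\Delta_{inj,+} \to U\G$, $\{0,\dots,p\}\mapsto p+1$. So the first move is to pull the $\otimes_{U\G}\Z U\G(-\oplus(q+1),-)$ outside the simplicial direction: since tensoring over $U\G$ is a colimit and commutes with the formation of the associated chain complex of a semisimplicial object, $X_{*,q}(A)$ is the complex obtained by applying $(-)\otimes_{U\G}\Z U\G(-\oplus(q+1),-)$ levelwise to the semisimplicial $U\G$-module $p\mapsto A \otimes \Z U\G(p+1,-)$, where here $A\otimes\Z U\G(p+1,-)$ carries the diagonal $U\G$-action. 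The second move is to recognize that semisimplicial $U\G$-module, with its augmentation, as $A \otimes \widetilde C^\G_*(\Z U\G(0,-))$: indeed $\widetilde C^\G_p(\Z U\G(0,-)) = \Z U\G(0,-)\otimes_{U\G}\Z U\G(-\oplus(p+1),-) \cong \Z U\G(p+1,-)$ by the Yoneda/representability property of coends quoted in the excerpt, and under the diagonal action the tensor with $A$ matches up, face maps included. Combining the two moves gives exactly
\[ X_{*,q}(A) \cong (A \otimes \widetilde C^\G_*(\Z U\G(0,-))) \otimes_{U\G} \Z U\G(-\oplus(q+1),-),\]
which is the first isomorphism.

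For the second isomorphism, fix $p$ and consider $X_{p,*}(A)$. Now the factor $A\otimes\Z U\G(p+1,-)$ is constant in the simplicial direction, and the varying part is $\Z U\G(-\oplus(q+1),-)$, i.e.\ the central-stability direction applied to the $U\G$-module $A\otimes\Z U\G(p+1,-)$. The key identification is $A \otimes \Z U\G(p+1,-) \cong \Sigma^{p+1}A$, which holds because $\Sigma^{p+1}A = A\otimes_{U\G}\Z U\G(-,-\oplus(p+1))$ by definition of the shift, together with \autoref{cor:newbraidiso} (precomposition with the braiding) to turn $\Z U\G(-,-\oplus(p+1))$ into $\Z U\G(p+1\oplus -, -)$ and then the diagonal-action bookkeeping; one has to be a little careful that the $U\G$-module structure used to build central stability chains of $\Sigma^{p+1}A$ is the one coming from $A$, but this is exactly what the constructions are set up to give. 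Granting this, $X_{p,*}(A)$ is the complex obtained from the semisimplicial $U\G$-module $q\mapsto \Sigma^{p+1}A \otimes_{U\G}\Z U\G(-\oplus(q+1),-)$ — i.e.\ $\widetilde C^\G_*(\Sigma^{p+1}A)$ — and after applying $(-)\otimes_{U\G}\Z U\G(-\oplus(p+1),-)$, which is precisely the asserted formula.

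The routine part is that tensor products over $U\G$ (being coequalizers, hence colimits) commute with taking the chain complex of a semisimplicial object and preserve the face maps; I would state this once and reuse it for both halves. The main obstacle — and the only place real care is needed — is the second identification $A\otimes\Z U\G(p+1,-)\cong\Sigma^{p+1}A$ and, more precisely, checking that this is an isomorphism of $U\G$-modules compatible with all the face and augmentation maps, so that the simplicial object $q\mapsto (\text{stuff})$ really is $\widetilde C^\G_*(\Sigma^{p+1}A)$ on the nose. This is where \autoref{cor:newbraidiso} is doing the essential work: the naive identity would only give a $\G$-module isomorphism, and one needs the braiding to promote it to a $U\G$-module isomorphism functorial in the right variables. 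I would handle this by writing $\Z U\G(p+1,-)$ as $\underline{*}\otimes_\G \G(-\oplus(p+1),-)$ and tracking the diagonal $U\G$-action through the coend explicitly, invoking \autoref{cor:newbraidiso} to rewrite $-\oplus(p+1)$ as $(p+1)\oplus-$ where needed; everything else is unwinding definitions.
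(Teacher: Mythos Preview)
Your treatment of the first isomorphism is fine and matches the paper's one-line argument.

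The second isomorphism, however, has a genuine gap. Your ``key identification'' $A \otimes \Z U\G(p+1,-) \cong \Sigma^{p+1}A$ is false already on the level of underlying abelian groups: evaluated at $n$, the left side is $A_n \otimes \Z U\G(p+1,n)$, whose rank is that of $A_n$ times $|G_n/G_{n-p-1}|$, whereas $(\Sigma^{p+1}A)_n = A_{n+p+1}$. Neither the definition $\Sigma^{p+1}A = A\otimes_{U\G}\Z U\G(-,-\oplus(p+1))$ nor \autoref{cor:newbraidiso} can produce the identification you claim, because one side is a coend and the other is a plain tensor with diagonal action; the braiding corollary only swaps the order of summands inside a hom-set and does not collapse the extra $U\G(p+1,n)$ factor. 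Notice also that if your identification were true, plugging it into $X_{p,q}(A) = (A\otimes\Z U\G(p+1,-))\otimes_{U\G}\Z U\G(-\oplus(q+1),-)$ would yield $\widetilde C^\G_*(\Sigma^{p+1}A)$ on the nose, with no trailing $\otimes_{\UG}\Z U\G(-\oplus(p+1),-)$; the extra factor you tack on in your final sentence has no justification in your argument.

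The paper's actual identification is
\[
A \otimes \Z U\G(p',-) \;\cong\; \Sigma^{p'}A \,\otimes_{\UG}\, \Z U\G(-\oplus p',-),
\]
proved by writing down explicit mutually inverse maps (sending $a_{l+p'}\otimes f$ to $f(a_{l+p'})\otimes f\circ(\iota_l\oplus\id_{p'})$, and conversely $a_n\otimes g$ to $\hat g^{-1}(a_n)\otimes \hat g$ for a lift $\hat g\in G_n$ of $g$). This extra induction factor is exactly what propagates through to give the $\otimes_{\UG}\Z U\G(-\oplus(p+1),-)$ in the statement. After that, one still needs to commute the $p'$ past the simplicial variable inside $\Z U\G(-\oplus p'\oplus -,-)$, and this is where \autoref{cor:newbraidiso} is actually used: to move from $\Z U\G(-_1\oplus p'\oplus -_2,-)$ to $\Z U\G(-_1\oplus -_2,-)\otimes_{\UG}\Z U\G(-\oplus p',-)$, so that the $-_2$-direction becomes the central stability complex of $\Sigma^{p'}A$.
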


\begin{proof}
The first isomorphism follows immediately the definition and the fact that the corresponding chain complex to the semisimplicial $\UG$-module $\Z\UG(-,-)$ is precisely $\widetilde C^\G_*(\Z\UG(0,-))$.

The second isomorphism is considerably harder to prove. For simplicity, we replace $p+1$ by $p'$ during the proof. First, we will first prove that
\[\Sigma^{p'} A \otimes_{\UG} \Z\UG(- \oplus p',-) \cong A \otimes \Z\UG(p',-)\]
as $\UG$-modules for fixed $p'$. Let $a_{l+p'} \in A_{l+p'}$ and $f\in U\G( l\oplus p', n)$, then we send
\[ a_{l+p'} \otimes f \longmapsto f(a_{l+p'}) \otimes f \circ (\iota_l \oplus \id_{p'}).\]
Let us first check that this map is well-defined for fixed $n$. To see this, let $a_{l+p'} \in A_{l+p'}$, $h\in U\G(l,m)$ and $f\in U\G( m\oplus p', n)$. We need to show that $(h \oplus \id_{p'})(a_{l+p'}) \otimes f$ is sent to the same thing as $a_{l+p'} \otimes f \circ (h\oplus \id_{p'})$. This is the case, as
\[ f( (h\oplus \id_{p'})(a_{l+p'})) \otimes f \circ (\iota_m \oplus \id_{p'}) =( f \circ (h\oplus \id_{p'}))(a_{l+p'}) \otimes f \circ (h\oplus \id_{p'}) \circ (\iota_l \oplus \id_{p'}).\]
That this map is $U\G$-equivariant is obvious as on both sides it acts by postcomposition on $f$. Next, we will describe the inverse map. So let $a_n \in A_n$, $g\in U\G(p', n)$ and $\hat g \in G_n$ such that $g = \hat g \circ (\iota_{n-p'} \oplus \id_{p'})$. Then, we send
\[ a_n \otimes g \longmapsto {\hat g}^{-1}(a_n) \otimes \hat g.\]
As $\hat g$ only well defined up to precomposition with $(h \oplus \id_{p'})$ for $h \in G_{n-p'}$, we have to check that 
\[ {\hat g}^{-1}(a_n) \otimes \hat g = (\hat g\circ (h \oplus \id_{p'}))^{-1}(a_n) \otimes \hat g \circ (h \oplus \id_{p'}),\]
which is clear from the definition of the coend $\otimes_{\UG}$. Again, this map is clearly $U\G$-equivariant, as $U\G$ acts via postcomposition. The only thing left to check is that these maps are inverses. Let us start with $a_{l+p'} \in A_{l+p'}$ and $f\in U\G( l\oplus p', n)$. In this case, $a_{l+p'} \otimes f$ is sent through both maps to
\[ {\hat g}^{-1}( f(a_{l+p'})) \otimes \hat g,\]
with $\hat g\in G_n$ such that 
\[ f = \hat g \circ (\iota_{n-l-p'} \oplus \id_{l +p'})\]
because then
\[ f \circ (\iota_l \oplus \id_{p'}) =  \hat g \circ (\iota_{n-p'} \oplus \id_{p'}).\]
Therefore, 
\[ {\hat g}^{-1}( f(a_{l+p'})) \otimes \hat g =  (\iota_{n-l-p'} \oplus \id_{l +p'})(a_{l+p'}) \otimes \hat g = a_{l+p'} \otimes \hat g \circ (\iota_{n-l-p'} \oplus \id_{l +p'}) = a_{l+p'} \otimes f. \]
If on the other hand $a_n \in A_n$, $g\in U\G(p', n)$, and $\hat g \in G_n$ such that $g = \hat g \circ (\iota_{n-p'} \oplus \id_{p'})$, then $a_n \otimes g$ is sent through both maps to
\[ \hat g({\hat g}^{-1}(a_n)) \otimes \hat g \circ (\iota_{n-p'} \oplus \id_{p'}) = a_n \otimes g.\]

Using this isomorphism, we conclude that
\begin{multline*} (A \otimes \Z U\G(p',-)) \otimes_{U\G} \Z U\G(- \oplus -,-) \cong \Sigma^{p'} A  \otimes_{\UG} \Z\UG(- \oplus p',-)\otimes_{U\G} \Z U\G(- \oplus -,-) \\
\cong \Sigma^{p'} A  \otimes_{\UG} \Z U\G(- \oplus p' \oplus -,-).\end{multline*}

Next, note that using \autoref{cor:newbraidiso},
\[ \Z U\G(-_1 \oplus p' \oplus -_2,-) \cong \Z\UG(p'\oplus -_2,-) \otimes_{\UG} \Z\UG(-_1 \oplus -,-) \] \[\cong\Z\UG(-_2 \oplus p', -) \otimes_{\UG} \Z\UG(-_1\oplus -, -)\cong \Z U\G(-_1  \oplus -_2\oplus p',-)  \] \[  \cong \Z\UG(-_1 \oplus -_2 , -) \otimes_{\UG} \Z\UG(- \oplus p', -).\]

Since 
\[ \Sigma^{p'} A  \otimes_{\UG} \Z\UG(- \oplus - , -) \cong K(\Sigma^{p'} A),\]
we deduce the asserted isomorphism
\[ X_{p,*} \cong \widetilde C^\G_*(\Sigma^{p'} A) \otimes_{\UG} \Z\UG(-\oplus p',-).\qedhere\]

%
%The face maps define chain complexes in both directions. It is enough to prove that 
%\[ \xymatrix{
%X_{p,q}(A)_n \ar[r]^{\delta_i}\ar[d]^{\bar\delta_j} & X_{p,q-1}(A)_n \ar[d]^{\bar\delta_j} \\
%X_{p-1,q}(A)_n \ar[r]^{\delta_i} & X_{p-1,q-1}(A)_n
%}\]
%commutes to show that it is a double complex. In particular, the summands are indexed by pairs of maps
%\[ \xymatrix{
%(f,g) \ar[r]\ar[d] &( (C_g \subset C_g') \circ f, g\circ(\id_i \oplus \iota_1 \oplus \id_{q-i}) ) \ar[d]\\
%(f\circ (\id_j \oplus \iota_1 \oplus \id_{p-j}),g) \ar[r] & ( (C_g \subset C_g') \circ f (\id_j \oplus \iota_1 \oplus \id_{p-j}), g\circ(\id_i \oplus \iota_1 \oplus \id_{q-i}) )
%}\]
%and the maps on the summands
%\[ \xymatrix{
%A_{C_g} \ar[r]\ar[d] & A_{C_{g'}}\ar[d]\\
%A_{C_{g}} \ar[r] & A_{C_{g'}}
%}\]
%commute.
%
%The first isomorphism of chain complexes is immediate from the definition.
%
%For the second isomorphism we use the bijection from \autoref{prop:swaporder} to change the order of the sums:
%\begin{multline*} X_{p,q}(A)_n  \cong \bigoplus_{g \in  U\G(q+1,n)} \bigoplus_{f \in  U\G(p+1,C_g)}A_{C_{g}} \cong  \bigoplus_{\bar f \in  U\G(p+1,n)} \bigoplus_{\bar g \in  U\G(q+1,C_f)} A_{ C_{\bar g} \oplus \im \bar f} \\\bigoplus_{\bar f \in U\G(p+1,n)} \widetilde C^\G_*(\Sigma^{\im \bar f} A)_{C_{\bar f}} \cong \bigoplus_{\bar f \in U\G(p+1,n)} \widetilde C^\G_*(\Sigma^{p+1} A)_{C_{\bar f}}.\end{multline*}
%
%\todo[Peter]{Need to show that this iso commutes with the face maps.}
\end{proof}

\begin{proposition}
Assume $U\G$ satisfies \con{H3($k,a$)}. Let $E^s_{p,q}(A)_n$ denote the spectral sequence associated to $X_{*,*}(A)_n$ and let $\bar E^s_{p,q}(A)_n$ denote the spectral sequence associated to the transpose $\bar X_{*,*}(A)_n$. Then $\bar E^1_{p,q}(A)_n \cong 0$ for $p+q < \frac{n-a}k -1$. In particular, $E^\infty_{p,q}(A)_n \cong 0$ for $p+q < \frac{n-a}k-1$. 
%   $p\cdot k+a +q+1 \leq n$
\label{EinfinityVanish}
\end{proposition}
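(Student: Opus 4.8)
The plan is to identify $\bar E^1_{p,q}(A)_n$ using the first isomorphism of \autoref{prop:descriptionX} and then kill it with \con{H3($k,a$)} by a universal coefficient argument. Recall that $\Z\UG(0,-)$ is the constant $\UG$-module, so $\widetilde C^\G_*(\Z\UG(0,-))$ is a complex of free $\UG$-modules whose term in degree $p$ is $\Z\UG(p+1,-)$ and whose homology in degree $i$ is $\widetilde H^\G_i(\Z\UG(0,-))$, which vanishes above rank $ik+a$ by \con{H3($k,a$)}. Since $\bar X_{*,*}(A)_n$ is the transpose of $X_{*,*}(A)_n$, the page $\bar E^1_{p,q}(A)_n$ is computed by taking homology first in the coordinate along which, by \autoref{prop:descriptionX}, the complex reads
\[(A\otimes\widetilde C^\G_*(\Z\UG(0,-)))\otimes_{\UG}\Z\UG(-\oplus(q+1),-),\]
where $A\otimes\widetilde C^\G_*(\Z\UG(0,-))$ carries the diagonal $\UG$-action.

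First I would observe that the functor $M\mapsto M\otimes_{\UG}\Z\UG(-\oplus(q+1),n)$ is exact: by \autoref{rem:CS} it is the composite of evaluation at rank $n-q-1$ with $\Ind_{G_{n-q-1}}^{G_n}(-)$, and both are exact. Consequently
\[\bar E^1_{p,q}(A)_n\;\cong\;\Ind_{G_{n-q-1}}^{G_n} H_p\bigl((A\otimes\widetilde C^\G_*(\Z\UG(0,-)))_{n-q-1}\bigr).\]
Writing $m=n-q-1$, the complex $(A\otimes\widetilde C^\G_*(\Z\UG(0,-)))_m$ is the degreewise tensor product over $\Z$ of the fixed abelian group $A_m$ with the complex $C_*:=(\widetilde C^\G_*(\Z\UG(0,-)))_m$, whose terms $\Z\UG(p+1,m)$ are free abelian. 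By the universal coefficient theorem there is a short exact sequence
\[0\longrightarrow \widetilde H^\G_p(\Z\UG(0,-))_m\otimes_\Z A_m\longrightarrow H_p(A_m\otimes_\Z C_*)\longrightarrow \Tor^\Z_1\bigl(\widetilde H^\G_{p-1}(\Z\UG(0,-))_m,\,A_m\bigr)\longrightarrow 0,\]
so $H_p(A_m\otimes_\Z C_*)=0$ whenever $\widetilde H^\G_p(\Z\UG(0,-))_m=0$ and $\widetilde H^\G_{p-1}(\Z\UG(0,-))_m=0$.

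Next I would invoke \con{H3($k,a$)}: these two groups vanish as soon as $m>pk+a$, the condition from the degree-$p$ term being the binding one since $k\ge 1$. Thus $\bar E^1_{p,q}(A)_n=0$ once $n-q-1>pk+a$, and a one-line estimate closes the argument: if $p+q<\frac{n-a}{k}-1$ then $k(p+q)<n-a-k$, hence
\[n>kp+kq+a+k=pk+a+k(q+1)\ge pk+a+q+1,\]
using $k(q+1)\ge q+1$. Therefore $\bar E^1_{p,q}(A)_n\cong 0$ for $p+q<\frac{n-a}{k}-1$.

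For the final assertion, note that for fixed $n$ the double complex $X_{*,*}(A)_n$ is supported in finitely many bidegrees, so $\{\bar E^s_{*,*}(A)_n\}$ and $\{E^s_{*,*}(A)_n\}$ both converge to the homology of the total complex $\Tot X_{*,*}(A)_n$. The vanishing of $\bar E^1$ (hence of $\bar E^\infty$) in total degrees below $\frac{n-a}{k}-1$ forces $H_i(\Tot X_{*,*}(A)_n)=0$ there, and $E^\infty_{p,q}(A)_n$, being a subquotient of $H_{p+q}(\Tot X_{*,*}(A)_n)$, vanishes for $p+q<\frac{n-a}{k}-1$. The only genuine work is index bookkeeping — the shift between $\Delta_{inj,+}$ and $\UG$, the augmentation term in degree $-1$, and checking that the $\Tor$-correction term in homological degree $p-1$ does not worsen the bound (it does not, again because $k\ge 1$); there is no conceptual obstacle beyond \autoref{prop:descriptionX}.
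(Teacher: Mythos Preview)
Your argument is correct and matches the paper's: identify $\bar E^1$ via the first isomorphism of \autoref{prop:descriptionX}, kill it with \con{H3($k,a$)} plus a K\"unneth/universal-coefficient argument (the paper phrases this as ``the K\"unneth spectral sequence''), observe that the induction functor is exact, and conclude by convergence of both spectral sequences to the homology of the total complex. One cosmetic caveat: your indices $p$ and $q$ are swapped relative to the paper's convention---the paper has $\bar E^0_{p,q}(A)_n \cong (A \otimes \widetilde C^\G_q(\Z U\G(0,-))) \otimes_{\UG} \Z\UG(-\oplus (p+1),n)$, so homology is taken in degree $q$ at rank $n-p-1$---but since the conclusion depends only on $p+q$ this is harmless.
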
 

\begin{proof} We have that 
\[ \bar E^0_{p,q}(A)_n \cong  (A \otimes \widetilde C^\G_q(\Z U\G(0,-))) \otimes_{\UG} \Z\UG(-\oplus (p+1),-).\]
The homology $\widetilde H^\G_q(\Z U\G(0,-))_{n-p-1}$ vanishes for $n-p-1 > k\cdot q +a$ since $\G$ satisfies \con{H3($k,a$)}. Therefore, so does the homology of $(A \otimes \widetilde C^\G_*(\Z U\G(0,-)))$ by the K\"unneth spectral sequence. The functor $- \otimes_{\UG} \Z\UG(-\oplus (p+1),-)$ is exact, so we conclude that $E^1_{p,q}(A)_n \cong 0$ for $n-p-1 > k\cdot q +a$. In particular, they vanish when $p+q < \frac{n-a}k-1$, because then
\[p+kq = k(p+q) - (k-1)p <  k\left(\frac{n-a}k-1\right) + (k-1) = n-a -1.\] The second claim follows from the fact that the two spectral sequences both converge to the homology of the total complex of $X_{*,*}(A)_n$. 
\end{proof}

\begin{definition}
Let $A$ be a $U\G$-module. Define 
\[Y(A) = A \otimes_{\UG} \Z\UG(-\oplus -_1, -) \otimes_{\UG} \Z\UG(- \oplus -_2, -_3)\]
and consider it as a augmented simplicial-augmented simplicial-$U\G$-module. Let $Y_{*,*}(A)$ be the associated double complex.
\end{definition}

It is immediate that $Y_{*,*}(A) = \widetilde C^\G_*(\widetilde C^\G_*(A))$.

\begin{proposition}
Let $A$ be $U\G$-module and let $\widehat E^s_{p,q}(A)_n$ be the double complex spectral sequence associated to $Y_{*,*}(A)_n$. Then $d_1\colon \widehat E^1_{0,q}(A)_n \m \widehat E^1_{-1,q}(A)_n$ is the zero map. 
\end{proposition}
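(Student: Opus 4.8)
I want to show that the $d_1$ differential from $\widehat E^1_{0,q}(A)_n$ to $\widehat E^1_{-1,q}(A)_n$ vanishes, where $\widehat E^s_{p,q}(A)_n$ is the spectral sequence of the double complex $Y_{*,*}(A)_n = \widetilde C^\G_*(\widetilde C^\G_*(A))_n$, filtered so that the first page takes homology in one of the two semisimplicial directions. The key input is the formula in \autoref{rem:CS}:
\[ \widetilde C^\G_p(M)_n \cong \Ind_{G_{n-p-1}}^{G_n} M_{n-p-1},\]
applied twice. So $Y_{p,q}(A)_n \cong \Ind_{G_{n-p-1}}^{G_n}\bigl(\widetilde C^\G_q(A)_{n-p-1}\bigr) \cong \Ind_{G_{n-p-1}}^{G_n}\Ind_{G_{n-p-q-2}}^{G_{n-p-1}} A_{n-p-q-2}$.

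**The main point.** The columns $p=-1$ and $p=0$ of the double complex are, in each fixed internal degree $q$, simply $\widetilde C^\G_q(A)_n$ (for $p=-1$, since the inner augmented semisimplicial object in the $p$-direction evaluated at $-1$ gives back $\widetilde C^\G_q(A)$, up to the index conventions) and $\Ind_{G_{n-1}}^{G_n}\widetilde C^\G_q(A)_{n-1}$ (for $p=0$). The horizontal differential $Y_{0,q}(A)_n \to Y_{-1,q}(A)_n$ is the augmentation map $\Ind_{G_{n-1}}^{G_n}\widetilde C^\G_q(A)_{n-1} \to \widetilde C^\G_q(A)_n$ of the outer semisimplicial object. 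On the $\widehat E^1$-page we have replaced each column by its homology in the $q$-direction; thus $\widehat E^1_{0,q}(A)_n \cong \Ind_{G_{n-1}}^{G_n}\widetilde H^\G_q(A)_{n-1}$ and $\widehat E^1_{-1,q}(A)_n \cong \widetilde H^\G_q(A)_n$, and the induced $d_1$ is the augmentation map on central stability homology. So the claim is equivalent to: the augmentation $\Ind_{G_{n-1}}^{G_n}\widetilde H^\G_q(A)_{n-1} \to \widetilde H^\G_q(A)_n$ is zero. The cleanest way to see this: the outer semisimplicial structure makes $\widetilde C^\G_*(A)_\bullet$ itself an augmented semisimplicial object, and applying $\widetilde C^\G_*(-)$ again means that the extra degeneracy / extra face present in the definition of central stability chains gives a chain homotopy. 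Concretely, $\widetilde C^\G_*(M)$ is, by construction, the chain complex of an augmented semisimplicial object of the form $M \otimes_{U\G}\Z U\G(-\oplus (\bullet+1),-)$, and there is an explicit extra face map (``include a new last element'') that provides a contraction of the composite $\widetilde C^\G_{-1}\to \widetilde C^\G_0 \to \widetilde C^\G_{-1}$, or rather shows the relevant composite augmentation-after-$d_0$ is nullhomotopic. I would make this precise by exhibiting, for the outer simplicial direction, the standard extra degeneracy $s_{-1}$ (present because $\widetilde C^\G_\bullet$ is built from a representable and $0$ is initial in $U\G$, cf.\ \autoref{prop:stabcat}), which gives a nullhomotopy of the identity on the augmented complex in a way that forces $d_1$ on the spectral sequence page to vanish.

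**Execution.** First I would write out $Y_{p,q}(A)_n$ via the double application of \autoref{rem:CS} and identify the two semisimplicial directions, being careful that the filtration producing $\widehat E^s_{p,q}$ takes $q$-direction homology first. Second, identify $\widehat E^1_{-1,q}(A)_n = \widetilde H^\G_q(A)_n$ and $\widehat E^1_{0,q}(A)_n = \Ind_{G_{n-1}}^{G_n}\widetilde H^\G_q(A)_{n-1} \cong \widetilde C^\G_0(\widetilde H^\G_q(A))_n$, and identify $d_1$ with the augmentation $\widetilde C^\G_0(\widetilde H^\G_q(A))_n \to \widetilde H^\G_q(A)_n$ coming from the outer augmented semisimplicial structure. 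Third — the crux — show this augmentation is the zero map. This follows because $\widetilde C^\G_*(M)$ is by definition the \emph{reduced / augmented} central stability complex: the map $\widetilde C^\G_0(M)_n \to \widetilde C^\G_{-1}(M)_n = M_n$ is one of the two face maps of an augmented semisimplicial object that extends by an extra degeneracy, and the alternating-sum differential $\partial = d_0 - d_1$ on $\widetilde C^\G_0$ means that on homology the map $\widehat E^1_{0,q}\to \widehat E^1_{-1,q}$ is $d_0 - d_1$; but after passing to $\widetilde H^\G_q$ these two face maps become chain-homotopic (via the extra degeneracy coming from $0$ being initial in $U\G$), hence equal on homology, so their difference is zero.

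**The main obstacle.** The delicate part is bookkeeping: making sure the filtration conventions for $\widehat E^s_{p,q}$ are the ones for which $d_1$ really is the outer augmentation (rather than the inner one, for which the analogous statement would be false), and then correctly identifying the two outer face maps $d_0, d_1 \colon \widetilde C^\G_0(M) \to M$ and proving they agree after passing to central stability homology $\widetilde H^\G_q$. For this I would invoke that $\widetilde C^\G_*(-)$ is the chain complex of an augmented semisimplicial object which, by \autoref{prop:stabcat}\ref{item:semisimpl} and the fact that $0$ is initial, admits an extra degeneracy $s_{-1}$ in the outer direction; the extra degeneracy gives the desired chain homotopy between $d_0$ and $d_1$ termwise, hence on all the homology groups $\widetilde H^\G_q(A)$, and therefore $d_1 = d_0 - d_1 = 0$ on the $\widehat E^1$-page. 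Everything else is a routine unwinding of the coend definitions.
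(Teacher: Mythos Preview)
Your identifications of $\widehat E^1_{-1,q}(A)_n \cong \widetilde H^\G_q(A)_n$ and $\widehat E^1_{0,q}(A)_n \cong \Ind_{G_{n-1}}^{G_n}\widetilde H^\G_q(A)_{n-1}$ are correct, and the intuition that some kind of extra degeneracy is at play is not far off. But the argument you give in the ``crux'' paragraph does not work. The horizontal differential $d^h\colon \widehat E^0_{0,q}\to\widehat E^0_{-1,q}$ is the \emph{single} augmentation map of the outer augmented semisimplicial object; there is only one face map from simplicial degree $0$ to the augmentation, not two. So the claim that ``the map $\widehat E^1_{0,q}\to\widehat E^1_{-1,q}$ is $d_0-d_1$'' has no meaning here, and the conclusion ``hence their difference is zero'' is vacuous. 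Moreover, an actual extra degeneracy on the outer augmented object $\widetilde C^\G_\bullet(M)$ would contract that complex and force $\widetilde H^\G_q(M)=0$ for all $q$, which is certainly false in general; so the extra degeneracy you invoke does not exist.

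What the paper does is construct a chain nullhomotopy of $d^h$ \emph{as a chain map between the two $q$-direction columns} $\widehat E^0_{0,\ast}\to\widehat E^0_{-1,\ast}$. Concretely, writing $\widehat E^0_{p,q}(A)_n \cong A\otimes_{U\G}\Z U\G(-\oplus(p+1)\oplus(q+1),n)$, one notes the tautological identification
\[
\varphi\colon \widehat E^0_{0,q}\cong A\otimes_{U\G}\Z U\G(-\oplus 1\oplus(q+1),n)
= A\otimes_{U\G}\Z U\G(-\oplus 0\oplus(q+2),n)\cong \widehat E^0_{-1,q+1},
\]
and checks directly on face maps that $d_{i+1}\circ\varphi=\varphi\circ d_i$ for $0\le i\le q$ and $d_0\circ\varphi=d^h$. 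These relations give $d^h=d^v\varphi+\varphi d^v$, so $d^h$ induces zero on $q$-direction homology. This $\varphi$ is indeed an ``extra degeneracy'' of sorts, but it is a map \emph{between} the two columns (shifting a $1$ from the $p$-slot to the $q$-slot), not a degeneracy internal to either one; that distinction is exactly what your sketch misses.
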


\begin{proof}
Note that 
\[\widehat E^0_{p,q}(A)_S \cong A \otimes_{\UG} \Z\UG(- \oplus (p+1) \oplus (q+1), n).\]
%\widetilde C^\G_p( \widetilde C^\G_q(A))_n  \cong \widetilde C^\G_q( \widetilde C^\G_p(A))_n  \cong \bigoplus_{g \in  U\G(q+1,n)} \left(\bigoplus_{f \in  U\G(p+1,C_g)} A_{C_f} \right).\] 
Clearly, there is an isomorphism of abelian groups 
\[\varphi \colon  \widehat E_{0,q}^0  \cong A \otimes_{\UG} \Z\UG(- \oplus 1 \oplus (q+1), n) = A \otimes_{\UG} \Z\UG(- \oplus 0 \oplus (q+2), n)  \cong\widehat E_{-1,q+1}^0.\] 
%For $(g,f)$ with $g\in  U\G(q+1,n)$ and $f\in  U\G(1,C_g)$, let $h\in  U\G(q+2,n)$ be such that $h \circ(\id_{q+1} \oplus \iota_1) = g$ and $h\circ(\iota_{q+1} \oplus \id_1) = f$. Then $C_h = C_f$ and $\psi$ is given by $A_{C_f} \to A_{C_h}$.

We will prove that $\varphi$ is a chain homotopy from the differential $d^h\colon  \widehat E_{0,*}^0 \to \widehat E_{-1,*}^0$ to the zero map. Recall that
\begin{align*}
d^h\colon  \widehat E_{0,q}^0 \cong A \otimes_{\UG} \Z\UG(- \oplus 1 \oplus (q+1), n)  &\longrightarrow A \otimes_{\UG} \Z\UG(- \oplus 0 \oplus (q+1), n) \cong \widehat E_{-1,q}^0\\
a_m \otimes (f\colon m \oplus 1 \oplus (q+1) \to n) & \longmapsto a_m \otimes f \circ (\id_m \oplus \iota_1 \oplus \id_{q+1})
\end{align*}
and $d^v = d_0-d_1+\dots+ (-1)^q d_q \colon E^0_{p,q} \to E^0_{p,q-1}$ is given by
\begin{align*}
d_i \colon \widehat E_{p,q}^0 \cong A \otimes_{\UG} \Z\UG(- \oplus (p+1) \oplus (q+1), n)  &\longrightarrow A \otimes_{\UG} \Z\UG(- \oplus (p+1) \oplus q, n) \cong \widehat E_{p,q-1}^0\\
a_m \otimes (f \colon m \oplus (p+1) \oplus (q+1) \to n) &\longmapsto a_m \otimes f \circ (\id_m \oplus \id_{p+1} \oplus \id_{i} \oplus \iota_1 \oplus \id_{q-i}).
\end{align*}

From these definitions it is clear that
\[\xymatrix{
\widehat E^0_{-1,q+1} \ar[d]^{d_{i+1}} &\\
\widehat E^0_{-1,q} &\widehat E^0_{0,q} \ar[ul]_{\varphi} \ar[d]^{d_i} \\
 &\widehat E^0_{0,q-1} \ar[ul]^{\varphi}
}\]
commutes for $0\le i\le q$. Further the diagram
\[\xymatrix{
\widehat E^0_{-1,q+1} \ar[d]^{d_{0}} &\\
\widehat E^0_{-1,q} &\widehat E^0_{0,q} \ar[ul]_{\varphi} \ar[l]^{d^h}
}\]
is commutative.
Thus $d^h = \varphi d^v + d^v\varphi$.
\end{proof}

We will construct a comparison map $F\colon Y(A) \to X(A)$ using the following lemma.

\begin{lemma}\label{lem:mapconstr}
There is a map
\begin{align*}
A \otimes_{\UG} \Z\UG(- \oplus -, -) & \longrightarrow A \otimes \Z\UG(-,-)\\
a_m \otimes f\colon m \oplus p \to n & \longmapsto f( (\id_m \oplus \iota_p)(a_m) ) \otimes f \circ (\iota_m \oplus \id_p)
\end{align*}
of $\UG^\op\times \UG$-modules.
\end{lemma}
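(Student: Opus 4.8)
The claim is that the formula
\[ a_m \otimes (f\colon m\oplus p \to n) \longmapsto f\bigl((\id_m\oplus \iota_p)(a_m)\bigr)\otimes f\circ(\iota_m \oplus \id_p)\]
descends from $\bigoplus_{m}A_m\times \UG(m\oplus p, n)$ to the coend $A\otimes_{\UG}\Z\UG(-\oplus -, n)$, and that the resulting assignment is natural in $n$ (i.e.\ $\UG$-equivariant) and in the ``$p$'' slot. Everything is formal, so the proof is a bookkeeping verification; the only real content is checking well-definedness over the coend.

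First I would unwind what the source of the coend is: an element of $X_{p,q}$-type object $A\otimes_{\UG}\Z\UG(-\oplus p,n)$ is a class of pairs $(a_m, f)$ with $a_m\in A_m$ and $f\in \UG(m\oplus p, n)$, modulo the relation $(h(a_m), f)\sim (a_m, f\circ(h\oplus\id_p))$ for $h\in\UG(m,m')$. To show the map is well-defined I would plug in $(h(a_m),f)$ with $h\colon m\to m'$ and $f\in\UG(m'\oplus p,n)$ on one side and $(a_m, f\circ(h\oplus\id_p))$ on the other, and check the two outputs agree in $A\otimes\Z\UG(-,n)$ (diagonal action). The left output is
\[ f\bigl((\id_{m'}\oplus\iota_p)(h(a_m))\bigr)\otimes f\circ(\iota_{m'}\oplus\id_p),\]
the right output is
\[ (f\circ(h\oplus\id_p))\bigl((\id_m\oplus\iota_p)(a_m)\bigr)\otimes (f\circ(h\oplus\id_p))\circ(\iota_m\oplus\id_p).\]
Using functoriality, $(\id_{m'}\oplus\iota_p)\circ h = (h\oplus\id_p)\circ(\id_m\oplus\iota_p)$ in $\UG$ (both equal the evident map $m\to m'\oplus p$), so the first tensor factors agree once $f$ is applied; and $(h\oplus\id_p)\circ(\iota_m\oplus\id_p) = \iota_{m'}\oplus\id_p$ since $0$ is initial (so $h\circ\iota_m=\iota_{m'}$), giving equality of the second factors. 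Since the diagonal action is used on $A\otimes\Z\UG(-,n)$, this equality is exactly the coend relation, so the map passes to the coequalizer.

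Second, $\UG$-equivariance in $n$ is immediate: $\UG$ acts on both sides by postcomposition on $f$ (and on $f\circ(\iota_m\oplus\id_p)$), and postcomposition commutes with everything in the formula since $a_m$ and the maps $\iota$'s are untouched; I would just note that $g\mapsto g\circ f$ applied to the formula yields the formula for $g\circ f$. Naturality in the ``$p$'' variable (i.e.\ that this is a map of $\UG^{\op}$-modules in that slot, as asserted by ``$\UG^{\op}\times\UG$-modules'') follows the same way: a map $p\to p'$ acts by precomposition on $f$ in the $\UG(m\oplus p,n)$ slot and by the analogous substitution on $\UG(p',n)\hookrightarrow\UG(-,n)$ after applying the diagonal structure; one checks the obvious square commutes using initiality of $0$ and functoriality, exactly as above.

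**Main obstacle.** There is no deep obstacle — this is a routine naturality/well-definedness check — but the one point requiring care is keeping track of \emph{which} $\UG$-structure is used on the target $A\otimes\Z\UG(-,n)$ (the \emph{diagonal} action of $\UG$ in the contravariant ``$-$'' slot, acting simultaneously on $A$ and on $\UG(-,n)$), since it is precisely the diagonal action that makes the two expressions above coincide via the coend relation. Writing the identities $(\id_{m'}\oplus\iota_p)\circ h=(h\oplus\id_p)\circ(\id_m\oplus\iota_p)$ and $(h\oplus\id_p)\circ(\iota_m\oplus\id_p)=\iota_{m'}\oplus\id_p$ carefully, and invoking that all morphisms in $\UG$ are monomorphisms (\autoref{prop:stabcat}) to legitimize the manipulations, completes the argument.
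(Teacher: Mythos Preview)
Your proposal is correct and follows essentially the same approach as the paper: verify that the formula respects the coend relation in the source (using the identities $(\id_{m'}\oplus\iota_p)\circ h=(h\oplus\id_p)\circ(\id_m\oplus\iota_p)$ and $(h\oplus\id_p)\circ(\iota_m\oplus\id_p)=\iota_{m'}\oplus\id_p$, both of which hold because $0$ is initial), then check $\UG$-equivariance in $n$ (postcomposition, immediate) and $\UG^{\op}$-equivariance in $p$ (a short computation the paper spells out but you only sketch). Two small remarks: your sentence ``this equality is exactly the coend relation'' is a slip --- the two outputs are \emph{literally equal} in $A_n\otimes\Z\UG(p,n)$, no relation in the target is invoked --- and your appeal to monomorphisms in $\UG$ is unnecessary, since the identities used are purely formal consequences of functoriality of $\oplus$ and initiality of $0$.
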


\begin{proof}
We first observe that the map is well defined on objects. For that, let $a_l\in A_l$, $h \colon l \to m$ and $f\colon m \oplus p \to n$. Then
\[  f( (\id_m \oplus \iota_p)( h(a_l)) ) \otimes f \circ (\iota_m \oplus \id_p) =  (f\circ (h \oplus \id_p)) (\id_l \oplus \iota_p)(a_l) ) \otimes (f\circ (h \oplus \id_p)) \circ (\iota_l \oplus \id_p),\]
which proves well-definedness.

The map is clearly $\UG$-equivariant. To check that it is also $\UG^\op$-equivariant, let $a_m\in A_m$, $f\colon m \oplus p \to n$, and $h \colon p' \to p$. Then
\[ f( (\id_m \oplus \iota_p)(a_m) ) \otimes f \circ (\iota_m \oplus \id_p) \circ h = (f \circ (\id_m \oplus h))( (\id_m \oplus \iota_{p'})(a_m) ) \otimes (f \circ (\id_m \oplus h)) \circ (\iota_m \oplus \id_p),\]
which proves $\UG^\op$-equivariance.
\end{proof}

\begin{definition}\label{def:comparisonmap}
Using the map of \autoref{lem:mapconstr}, we define the comparison map
\[ F \colon Y(A) = A \otimes_{\UG} \Z\UG(-\oplus -_1, -) \otimes_{\UG} \Z\UG(- \oplus -_2, -_3) \longrightarrow (A \otimes \Z U\G(-_1,-)) \otimes_{U\G} \Z U\G(- \oplus -_2,-_3) = X(A).\]
\end{definition}

%\begin{lemma}\label{lem:Fisshift}
%For fixed $p$, 
%\[ F \colon Y_{p,*}(A) \cong \widetilde C^\G_*(A) \otimes_{\UG} \Z\UG(-\oplus (p+1),-) \longrightarrow \widetilde C^\G_*(\Sigma^{p+1}A) \otimes_{\UG} \Z\UG(-\oplus (p+1),-) \cong X_{p,*}(A)\]
%is induced by the shift map $A \to \Sigma^{p+1} A$.
%\end{lemma}
%
%\begin{proof}
%
%\end{proof}

This map of double complexes induces a map of spectral sequences. For polynomial $U\G$-modules, it turns out to be an isomorphism on the $E^1$-page in a range.

\begin{proposition}\label{prop:EhatE}
 Let $A$ be a polynomial $U\G$-module of degree $\le r$ in ranks $>d$. Then 
 \[F\colon \widehat E^1_{p,q}(A)_n  \m  E^1_{p,q}(A)_n\]
 is an isomorphism for 
 \[n> \max(d+ p+q+2, \psi(r-1,d-1,q+1)+p +1) \] 
 and a surjection for 
 \[n> \max( d+p+q+1,\psi(r-1,d-1,q)+p+1).\]
\end{proposition}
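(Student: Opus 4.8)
The plan is to compare the two double complexes $Y_{*,*}(A)$ and $X_{*,*}(A)$ columnwise via the comparison map $F$ of \autoref{def:comparisonmap}, and to show that for each fixed row $q$ the induced map on $E^1$-pages — which by definition is the homology of the $q$-th column — is an isomorphism (resp. surjection) in the stated range. By \autoref{prop:descriptionX}, the $q$-th column of $X_{*,*}(A)$ is $\widetilde C^\G_*(\Sigma^{q+1}A) \otimes_{\UG} \Z\UG(-\oplus(q+1),-)$, whose homology in horizontal degree $p$ is $\widetilde H^\G_p(\Sigma^{q+1}A)$ pushed forward along the exact functor $-\otimes_{\UG}\Z\UG(-\oplus(q+1),-)$; so $E^1_{p,q}(A)_n \cong \widetilde H^\G_p(\Sigma^{q+1}A)_{n-q-1}$. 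On the $Y$-side, $Y_{*,*}(A) = \widetilde C^\G_*(\widetilde C^\G_*(A))$, so the $q$-th column is $\widetilde C^\G_*(\widetilde C^\G_q(A))$, and its homology in degree $p$ is $\widetilde H^\G_p(\widetilde C^\G_q(A))$; by \autoref{rem:CS}, $\widetilde C^\G_q(A) \cong \Ind_{G_{n-q-1}}^{G_n}A_{n-q-1}$, which is the pushforward of $\Sigma^{q+1}A$ along $-\otimes_{\UG}\Z\UG(-\oplus(q+1),-)$ as well. Thus both $E^1$-pages are "the same homology up to a pushforward along an exact functor," and what must be checked is that $F$ realizes this identification in the claimed range.

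First I would unwind $F$ on the $q$-th column: one must identify the induced chain map $\widetilde C^\G_*(\widetilde C^\G_q(A)) \to \widetilde C^\G_*(\Sigma^{q+1}A)\otimes_{\UG}\Z\UG(-\oplus(q+1),-)$ and show it is (a pushforward of) the augmentation-induced quasi-isomorphism. The point is that $\widetilde C^\G_q(A)$ and $\Sigma^{q+1}A$ agree as $\G$-modules away from low ranks: the augmented semisimplicial object computing $\widetilde C^\G_*(\Z\UG(0,-))$ is exact in a range governed by \con{H3}, so $\widetilde C^\G_q(A)$ differs from $\Sigma^{q+1}A\otimes_{\UG}\Z\UG(-\oplus(q+1),-)$ only by a complex whose homology is bounded using \con{H3} and the polynomiality of $A$. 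Concretely, the cofiber of $F$ restricted to the $q$-th column has homology controlled by $\widetilde H^\G_i$ of $\coker(A\to\Sigma^{q+1}A)$, which by \autoref{lem:cokerhighersigma} has polynomial degree $\le r-1$ in ranks $>d-1$; hence by definition of $\psi$ it vanishes for $n-p-1 > \psi(r-1,d-1,\cdot)$. Combining this with the extra naive vanishing range "$n > d + p + q + 2$" coming from the part of the argument where one only uses that shifts of $A$ agree with $A$ above rank $d$ gives the two bounds in the statement — the maximum of a $\psi$-term (shifted by $p+1$) and a linear term $d+p+q+2$ (resp. $d+p+q+1$ for surjectivity, where one only needs the degree-$q$ piece to vanish and the degree-$(q+1)$ piece to inject).

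The precise bookkeeping of which $\psi$-argument and which linear bound appears in each case — distinguishing the isomorphism range (need $\widetilde H^\G_{q}$ and $\widetilde H^\G_{q+1}$ control, i.e. $\psi(r-1,d-1,q+1)$) from the surjection range (need only $\widetilde H^\G_{q}$ control and injectivity of the next stage, i.e. $\psi(r-1,d-1,q)$) — is the main obstacle, since it requires carefully tracking a mapping-cone long exact sequence through the two columnwise identifications and keeping the index shifts from \autoref{rem:CS} straight. The hexagon/braiding manipulations needed to identify $F$ on columns with the pushforward of the augmentation map (as in the proof of \autoref{prop:descriptionX}) are routine but must be done with care; I expect no conceptual difficulty there, only the combinatorial one of matching up the $p+1$ index shifts on both sides so that the two complexes literally agree outside the claimed range.
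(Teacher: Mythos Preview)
Your overall strategy --- reduce to comparing $\widetilde H^\G_*(A)$ with $\widetilde H^\G_*(\Sigma^{?}A)$ and control the discrepancy via the kernel and cokernel of the shift map, using \autoref{lem:cokerhighersigma} and the definition of $\psi$ --- is exactly right, and is what the paper does. But there are two genuine problems in the execution.

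\medskip
\textbf{The $p$/$q$ swap.} You misread \autoref{prop:descriptionX}. It says that for fixed $p$ one has $X_{p,*}(A)\cong \widetilde C^\G_*(\Sigma^{p+1}A)\otimes_{\UG}\Z\UG(-\oplus(p+1),-)$; the other identification (for fixed $q$) is the one involving $A\otimes \widetilde C^\G_*(\Z\UG(0,-))$. Consequently
\[
E^1_{p,q}(A)_n \cong \widetilde H^\G_q(\Sigma^{p+1}A)\otimes_{\UG}\Z\UG(-\oplus(p+1),n),
\qquad
\widehat E^1_{p,q}(A)_n \cong \widetilde H^\G_q(A)\otimes_{\UG}\Z\UG(-\oplus(p+1),n),
\]
and the map $F$ on $E^1$ is induced by the shift map $\widetilde H^\G_q(A)_{n-p-1}\to \widetilde H^\G_q(\Sigma^{p+1}A)_{n-p-1}$, i.e.\ by $A\to\Sigma^{p+1}A$, not $A\to\Sigma^{q+1}A$. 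Your first paragraph has these reversed: you write $E^1_{p,q}\cong \widetilde H^\G_p(\Sigma^{q+1}A)_{n-q-1}$ and then work with $\coker(A\to\Sigma^{q+1}A)$. Carried through consistently this would produce bounds of the shape $\psi(r-1,d-1,p+1)+q+1$ rather than the ones in the statement. (You recover the correct $\psi(r-1,d-1,q)$ and $\psi(r-1,d-1,q+1)$ in your last paragraph, but that is inconsistent with your own setup, not a derivation.)

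\medskip
\textbf{What $F$ actually is, and what is not needed.} The description of $F$ as ``the augmentation-induced quasi-isomorphism'' and the claim that one must compare $\widetilde C^\G_q(A)$ with $\Sigma^{q+1}A\otimes_{\UG}\Z\UG(-\oplus(q+1),-)$ using \con{H3} are both off. No \con{H3} enters this proposition at all. The paper's argument is cleaner than what you sketch: once you know $F$ on $E^1$ is the pushforward of $\widetilde H^\G_q(A)_{n-p-1}\to \widetilde H^\G_q(\Sigma^{p+1}A)_{n-p-1}$, you split this through $E:=\operatorname{im}(A\to\Sigma^{p+1}A)$ and use two long exact sequences. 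From $0\to B\to A\to E\to 0$ with $B=\ker(A\to\Sigma^{p+1}A)$ and $B_m=0$ for $m>d$, you get that $\widetilde H^\G_q(A)_{n-p-1}\to\widetilde H^\G_q(E)_{n-p-1}$ is an isomorphism for $n>d+p+q+2$ and a surjection for $n>d+p+q+1$, since $\widetilde C^\G_q(B)_{n-p-1}=\Ind B_{n-p-q-2}$. From $0\to E\to\Sigma^{p+1}A\to D\to 0$ with $D$ polynomial of degree $\le r-1$ in ranks $>d-1$, you get that $\widetilde H^\G_q(E)_{n-p-1}\to\widetilde H^\G_q(\Sigma^{p+1}A)_{n-p-1}$ is an isomorphism for $n>\psi(r-1,d-1,q+1)+p+1$ and a surjection for $n>\psi(r-1,d-1,q)+p+1$. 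Taking maxima gives the stated ranges. There is no cofiber analysis beyond these two ordinary long exact sequences, and no braiding manipulation is needed here beyond what is already packaged in the identification of the $E^1$-pages.
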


\begin{proof}
We see that the map from \autoref{def:comparisonmap} 
\[ F\colon \widehat E^1_{p,q} \cong \widetilde H^\G_q(A)\otimes_{\UG} \Z\UG(-\oplus (p+1),n)  \longrightarrow \widetilde H^\G_q(\Sigma^{p+1}A)\otimes_{\UG} \Z\UG(-\oplus (p+1),-)  \cong E^1_{p,q}\]
is induced by the map
\[\widetilde H^\G_q(A)_{n-p-1}   \longrightarrow \widetilde H^\G_q(\Sigma^{p+1}A)_{n-p-1}. \]
We will prove that this is an isomorphism or a surjection in the asserted ranges.

Let $B$ and $D$ be the kernel and cokernel of $A \to \Sigma^{p+1} A$, respectively. Since $\widetilde C^\G_i$ is an exact functor and $\widetilde C^\G_*$ is functorial with respect to $U \G$-morphisms, we have an exact sequence of chain complexes: 
\[0 \m \widetilde C^\G_*(B)_{n-p-1} \m \widetilde C^\G_*(A)_{n-p-1} \m \widetilde  C^\G_*(\Sigma^{p+1}A)_{n-p-1} \m \widetilde C^\G_*(D)_{n-p-1} \m 0.\]
Let $E$ be the cokernel of $B\to A$, which is incidentally the kernel of $\Sigma^{p+1} A \to D$. As explained in \autoref{lem:cokerhighersigma}, $B_n \cong 0$ for $n>d$. That means that $\widetilde C^\G_q(B)_{n-p-1} \cong 0$ for $n>d+p+q+2$. From the long exact sequence associated to: 
\[ 0 \longrightarrow \widetilde C^\G_*(B)_{n-p-1} \longrightarrow \widetilde C^\G_*(A)_{n-p-1} \longrightarrow \widetilde C^\G_*(E)_{n-p-1} \longrightarrow 0,\]
we conclude that \[ \widetilde H^\G_q(A)_{n-p-1} \longrightarrow \widetilde H^\G_q(E)_{n-p-1},\]
is an isomorphism for $n>d+p+q+2$ and a surjection for $n> d+p+q+1$. Further, $D$ has polynomial degree $\le r-1$ in ranks $>d-1$ by \autoref{lem:cokerhighersigma}. Thus $\widetilde H^\G_q(D)_{n-p-1} \cong 0$ if $n> \psi(r-1,d-1,q)+p+1$. From the long exact sequence associated to:
\[ 0 \longrightarrow \widetilde C^\G_*(E)_{n-p-1} \longrightarrow \widetilde C^\G_*(\Sigma^{p+1}A)_{n-p-1} \longrightarrow \widetilde C^\G_*(D)_{n-p-1} \longrightarrow 0,\]
we deduce that \[ \widetilde H^\G_q(E)_{n-p-1} \longrightarrow \widetilde H^\G_q(\Sigma^{p+1}A)_{n-p-1}\]
is an isomorphism for $n> \psi(r-1,d-1,q+1)+p+1$ and a surjection for $n> \psi(r-1,d-1,q)+p+1$. 

Therefore, the composition $ \widetilde H^\G_q(A)_{n-p-1} \to \widetilde H^\G_q(E)_{n-p-1} \to  \widetilde H^\G_q(\Sigma^{p+1}A)_{n-p-1}$ is an isomorphism or a surjection in the given ranges.
\end{proof}

\begin{corollary} \label{corE2central}
 Let $A$ be a polynomial $U\G$-module of degree $\le r$ in ranks $>d$. Then $E^2_{-1,i}(A)_n \cong \widetilde H^\G_i(A)_n$ if 
\[ n> \max( d+i+1, \psi(r-1,d-1,i)+1).\] 
\end{corollary}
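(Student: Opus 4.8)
The plan is to reduce the statement to \autoref{prop:EhatE} together with the already-established vanishing of the relevant $d_1$-differential on the spectral sequence $\widehat E$. First I would pin down the target unconditionally: specializing the second chain-level isomorphism of \autoref{prop:descriptionX} to $p=-1$ gives $X_{-1,*}(A)\cong \widetilde C^\G_*(\Sigma^0 A)\otimes_{\UG}\Z\UG(-\oplus 0,-)$, and since $\Sigma^0 A=A$ and $B\otimes_{\UG}\Z\UG(-,-)\cong B$ for every $\UG$-module $B$, this is just $\widetilde C^\G_*(A)$. Taking $i$-th homology yields a natural isomorphism $E^1_{-1,i}(A)\cong \widetilde H^\G_i(A)$, valid in every rank. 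So it suffices to show $E^2_{-1,i}(A)_n=E^1_{-1,i}(A)_n$ in the stated range.

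Since the column index runs over the objects of $\Delta_{inj,+}$, the group $E^1_{-2,i}(A)_n$ vanishes, so the only $d_1$-differential touching the spot $(-1,i)$ is the incoming one, and $E^2_{-1,i}(A)_n$ is simply the cokernel of $d_1\colon E^1_{0,i}(A)_n\to E^1_{-1,i}(A)_n$. Thus I need only show this differential vanishes once $n>\max(d+i+1,\psi(r-1,d-1,i)+1)$. For this I would use that the comparison map $F\colon Y(A)\to X(A)$ of \autoref{def:comparisonmap} is a map of double complexes, hence induces a morphism of spectral sequences commuting with $d_1$; this gives a commutative square whose top row is $d_1\colon \widehat E^1_{0,i}(A)_n\to\widehat E^1_{-1,i}(A)_n$, whose bottom row is $d_1\colon E^1_{0,i}(A)_n\to E^1_{-1,i}(A)_n$, and whose vertical maps are $F$. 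The top row is the zero map by the proposition (just before \autoref{lem:mapconstr}) showing that $d_1\colon\widehat E^1_{0,q}(A)_n\to\widehat E^1_{-1,q}(A)_n$ is zero, and by \autoref{prop:EhatE} applied with $(p,q)=(0,i)$ the left vertical map $F$ is \emph{surjective} exactly in the range $n>\max(d+i+1,\psi(r-1,d-1,i)+1)$. Commutativity then forces the bottom $d_1$ to vanish there, and combining with the first paragraph gives $E^2_{-1,i}(A)_n\cong\widetilde H^\G_i(A)_n$.

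I do not expect a genuine obstacle here: all the analytic content already sits in \autoref{prop:EhatE} and in the vanishing of $\widehat{d}_1$, so what remains is bookkeeping. The two points to handle with care are (i) reading the range off \autoref{prop:EhatE} using the \emph{surjectivity} clause at $(p,q)=(0,i)$ rather than the isomorphism clause at $(-1,i)$ — it is the former, not the latter, that reproduces the asserted bound; and (ii) the edge effect that makes $E^2_{-1,i}$ a plain cokernel, with no incoming differential from $p=-2$, which is precisely what lets mere surjectivity of $F$ at the $(0,i)$-spot suffice in place of an isomorphism.
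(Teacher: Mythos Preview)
Your proposal is correct and follows essentially the same argument as the paper: use the surjectivity clause of \autoref{prop:EhatE} at $(p,q)=(0,i)$ together with the vanishing of $\widehat d_1$ to conclude that $d_1\colon E^1_{0,i}(A)_n\to E^1_{-1,i}(A)_n$ vanishes in the stated range, whence $E^2_{-1,i}(A)_n\cong E^1_{-1,i}(A)_n\cong\widetilde H^\G_i(A)_n$. Your write-up is in fact more explicit than the paper's about why $E^1_{-1,i}\cong\widetilde H^\G_i(A)$ and why the edge position makes $E^2_{-1,i}$ a plain cokernel.
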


\begin{proof}
In this case $F\colon \widehat E^1_{0,i}(A)_n \m E^1_{0,i}(A)_n$ is surjective and thus  $d_1\colon  E^1_{0,i}(A)_n \m  E^1_{-1,i}(A)_n$ is zero because $d_1\colon \widehat E^1_{0,i}(A)_n \m \widehat E^1_{-1,i}(A)_n$ is the zero map. Therefore, $E^2_{-1,i}(A)_n \cong E^1_{-1,i}(A)_n \cong \widetilde H^\G_i(A)_n.$
\end{proof}

We now prove a quantitative version of the \con{H3} portion of \autoref{derivedRepPolynomials}. Note that by \autoref{thm:res of finite type} and \autoref{prop:res of finite type}, vanishing of central stability homology implies derived representation stability. Thus, it suffices to give a vanishing line for the central stability homology of poloynomial $U\G$-modules.

\begin{theorem} \label{PolynomialsCentral}
Let $A$ be a polynomial $U\G$-module of degree $\leq r$ in ranks $>d$. If $U\G$ satisfies \con{H3($k,a$)} with $k\ge 2$, then $\widetilde H_i^\G(A)_n \cong 0$ for $n >\max(d+i+1,ki+a+r)$.
\end{theorem}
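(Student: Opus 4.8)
The plan is to induct on the polynomial degree $r$, using the tower of results built up in this section. The base case $r=-\infty$ is immediate: if $A_n=0$ for $n>d$, then the central stability chains $\widetilde C^\G_*(A)_n$ vanish for $n>d+i+1$ in homological degree $i$, so $\widetilde H^\G_i(A)_n\cong 0$ for $n>d+i+1\le\max(d+i+1,ki+a+r)$ (interpreting $r=-\infty$ appropriately). For the inductive step, assume the bound holds for all polynomial $U\G$-modules of degree $\le r-1$; concretely this gives the estimate $\psi(r-1,d-1,j)\le\max(d-1+j+1,kj+a+r-1)=\max(d+j,kj+a+r-1)$ for all $j$.

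The engine of the argument is the double complex $X_{*,*}(A)$ and its two spectral sequences. By \autoref{EinfinityVanish}, since $U\G$ satisfies \con{H3($k,a$)} with $k\ge 2$, we have $E^\infty_{p,q}(A)_n\cong 0$ whenever $p+q<\frac{n-a}{k}-1$. On the other hand, by \autoref{corE2central}, the edge $E^2_{-1,i}(A)_n\cong\widetilde H^\G_i(A)_n$ provided $n>\max(d+i+1,\psi(r-1,d-1,i)+1)$, and substituting the inductive bound for $\psi(r-1,d-1,i)$ this condition becomes $n>\max(d+i+1,ki+a+r)$ — exactly the range we want to conclude vanishing in. So it remains to show that in this range the entry $E^2_{-1,i}(A)_n$ is killed, i.e.\ that it equals $E^\infty_{-1,i}(A)_n$ and that the latter vanishes. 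The convergence from \autoref{EinfinityVanish} gives $E^\infty_{-1,i}(A)_n\cong 0$ as soon as $-1+i<\frac{n-a}{k}-1$, i.e.\ $n>ki+a$, which is implied by $n>ki+a+r$ since $r\ge 0$ in the inductive step.

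The remaining (and main) point is to check that no nonzero differentials enter or leave position $(-1,i)$ on pages $\ge 2$, so that $E^2_{-1,i}(A)_n=E^\infty_{-1,i}(A)_n$ in the asserted range. Differentials \emph{out} of $(-1,i)$ land in total degree $i-2$ at column $\le -3$, which is outside the double complex (columns are indexed by $p\ge -1$), so there are none. Differentials \emph{into} $(-1,i)$ on page $s\ge 2$ come from position $(s-2,i-s+1)$; one must show $E^s_{s-2,i-s+1}(A)_n\cong 0$ in our range. For this I would again use the comparison with $\widehat E^s_{p,q}(A)_n$ via \autoref{prop:EhatE}: in the relevant range the map $F$ is an isomorphism on $E^1$-pages, and $\widehat E^1_{p,q}(A)_n\cong\widetilde H^\G_q(A)_{n-p-1}\otimes_{\UG}\Z\UG(-\oplus(p+1),n)$, whose vanishing for $q$ large relative to $n-p-1$ follows from the (same) inductive hypothesis applied after peeling off one copy of $\coker$ — more precisely, from \autoref{prop:EhatE} combined with the observation that $\widehat E^1$ is governed by $\widetilde H^\G_*(A)$ which we are inductively controlling together with the degree bound. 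The bookkeeping to verify that the inequality $p+q<\tfrac{n-a}{k}-1$ from \autoref{EinfinityVanish}, the range condition of \autoref{corE2central}, and the range condition of \autoref{prop:EhatE} are all simultaneously met at $(p,q)=(s-2,i-s+1)$ in the region $n>\max(d+i+1,ki+a+r)$ — using crucially that $k\ge 2$, so that $p+kq$ grows faster than $p+q$ — is where the delicate part lies; I expect the condition $k\ge 2$ to be used here in exactly the way it was used in the proof of \autoref{EinfinityVanish}. Once the entry at $(-1,i)$ is permanent, $\widetilde H^\G_i(A)_n\cong E^\infty_{-1,i}(A)_n\cong 0$ in the stated range, completing the induction.
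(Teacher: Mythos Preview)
Your overall architecture is right and matches the paper's: use the double complex $X_{*,*}(A)$, identify $E^2_{-1,i}(A)_n$ with $\widetilde H^\G_i(A)_n$ via \autoref{corE2central}, and then show this entry survives to $E^\infty$, which vanishes by \autoref{EinfinityVanish}. The base case and the reduction of the range in \autoref{corE2central} using the inductive bound on $\psi(r-1,d-1,i)$ are also correct.

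The genuine gap is in how you rule out incoming differentials. You induct only on $r$, and then try to kill the sources of $d^s$ into $(-1,i)$ using the $(r-1)$-hypothesis ``after peeling off one copy of $\coker$''. But there is nothing to peel: the relevant groups are $E^1_{p,q}(A)_n\cong\widetilde H^\G_q(\Sigma^{p+1}A)\otimes_{\UG}\Z\UG(-\oplus(p+1),n)$ (or, on the $\widehat E$ side, $\widetilde H^\G_q(A)$ induced up), and $\Sigma^{p+1}A$ still has polynomial degree $\le r$, not $\le r-1$. Your $(r-1)$-hypothesis says nothing about these. What you need is a \emph{nested} induction: outer on $r$, inner on $i$. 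Then for the sources $(p,q)$ with $p+q=i$ and $q<i$ (these are exactly the sources of $d^s$, $s\ge 2$, into $(-1,i)$; note your indexing $(s-2,i-s+1)$ is off by one, it should be $(s-1,i-s+1)$), you invoke the theorem for polynomial degree $\le r$ at homological degree $q<i$, applied to $\Sigma^{p+1}A$ which has degree $\le r$ in ranks $>d-p-1$ by \autoref{lem:polysigma}. The check that $\psi(r,d-p-1,q)+p+1\le\max(d+i+1,ki+a+r)$ when $p+q=i$, $p\ge 1$, is where $k\ge 2$ enters, exactly as you suspected. With this inner induction in place, the $E^1$-entries on that diagonal vanish outright, so there is nothing to chase on later pages.
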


\begin{proof}
We will prove the theorem by nested induction---the first over $r$ and the second over $i$. 

We first give a proof for $r=-\infty$. We have that $\widetilde H^\G_i(A)_n \cong 0$ for all $n>d+i+1$ since central stability chains also vanish in that range.

Let us now proceed with $r\ge 0$ and assume the theorem is true for all $U\G$-modules $A$ of polynomial degree $\le s< r$ in ranks $>d$ and all homological degrees. Fix  $i\ge -1$ and assume that the theorem is true for all  $U\G$-modules $A$ of polynomial degree $\le r$ in ranks $>d$ and all homological degrees $j<i$. In particular, 
\[ \max(d+j+1,kj+a+s) \ge \psi(s,d,j)\]
if $s<r$, or $s=r$ and $j<i$.

Let $A$ be a polynomial $U\G$-module of degree $\le r$ in ranks $>d$. We will now prove that $\widetilde H_i^\G(A)_n \cong 0$ for all $n> \max(d+i+1,ki+a+r)$. 
By \autoref{corE2central}, $\widetilde H^\G_i(A)_n \cong E^2_{-1,i}(A)_n$ if 
\[n>  \max(d+i+1,ki+a+r)\]
because
\begin{multline*} \max(d+i+1,ki+a+r) = \max (d+i+1, \max((d-1)+i+1+1, ki+a+(r-1)+1) )\\ \ge \max( d+i+1, \psi(r-1,d-1,i)+1) .\end{multline*}
We will prove that $E^2_{-1,i}(A)_n = E^{\infty}_{-1,i}(A)_n$ in the asserted range. To do this, we will look at $E^1_{p,q}(A)_n$ for $p+q = i$ and $q<i$ and show that these groups vanish. Observe that
\[E^1_{p,q}(A)_n \cong  \widetilde H^\G_q(\Sigma^{p+1} A) \otimes_{\UG} \Z\UG(- \oplus (p+1),n).\]
Because $\Sigma^{p+1} A$ has polynomial degree $\le r$ in ranks $>d-p-1$,  \autoref{lem:polysigma} implies that $E^1_{p,q}(A)_n \cong 0$ for 
\[n>\max(d+i+1,ki+a+r) \stackrel{k\ge2}{\ge} \max(d-p-1+q +1, kq+a+r)+p+1\ge \psi(r,d-p-1,q) + p+1.\] We finish the proof by invoking \autoref{EinfinityVanish} that says that $E^{\infty}_{-1,i}(A)_n$ vanishes for \[n> \max(d+i+1,ki+a+r) \ge ki+ a. \qedhere\] \end{proof}

\begin{remark} \label{H1H2}
\con{H3($1,a$)} implies \con{H3($2,a+1$)}. Therefore, if $k=1$ and if $A$ is a polynomial $U\G$-module of degree $\leq r$ in ranks $>d$, we conclude  $\widetilde H^\G_i(A)_n \cong 0$ for $n >\max(d+i+1,2i+a+1+r)$.
\end{remark}

%\begin{remark}
%For several families of groups, \autoref{PolynomialsCentral} was previously known. For $U\fS$ it follows from \cite[Theorem 3.30]{MPW}. For $U\GL(R)$ and $U\Sp(R)$ for $R$ a PID, this is \cite[Corollaries 3.34 and 3.36]{MPW}. However, \autoref{PolynomialsCentral} proves a better range. 

%This allows one to improve the ranges in \cite[Theorem A, Theorem B, and Theorem C]{MPW} as well as allows Theorem C to apply to all rings with finite Bass stable rank. In personal communication, Andrew Putman informed us that he has an argument improving the ranges in \cite[Theorem A and Theorem B]{MPW}.

%\end{remark}

\subsection{Via the Koszul resolution}

We now prove an improved stable range when $\G$ satisfies the standard connectivity assumptions. The arguments will be similar to those of the previous subsection.

%\begin{definition}
%Let $A$ be $U\G$-module. Let $P_{p,q}(A)=\bigoplus_{V \oplus W=n, |V|=p} K_q(T^V A)_W \otimes \SSt^{\G}_W$.
%\end{definition}

\begin{definition}
Let $A$ be $U\G$-module. Define the double complex $P_{*,*}(A) = K_*(A \otimes K_*(\uZ))_n$, where $U\G$ acts on $A \otimes K_*(\uZ)$ diagonally and $P_{p,q} = K_q(A \otimes K_p(\uZ))$. Let $F^r_{pq}$ denote the spectral sequence associated to $P_{*,*}$.
\end{definition}

We start the discussion of this section by showing that the spectral sequence of the double complex $P_{*,*}$ converge to zero in a range.

\begin{lemma}\label{lem:Fvanish}
$F^\infty_{pq} \cong 0$ for $p+q<n$.
\end{lemma}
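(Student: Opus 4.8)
The plan is to imitate the argument of the previous subsection, replacing the central-stability double complex $X_{*,*}(A)$ by the Koszul double complex $P_{*,*}(A) = K_*(A \otimes K_*(\uZ))_n$ and using that the Koszul complex $K_*$ computes $\Tor^\uZ_*(-,\Z)$. First I would recall from the construction of $K_*$ that for any $U\G$-module $M$ the complex $K_*(M)_n$ is quasi-isomorphic to $B_*(M,\uZ_+,\Z)_n$ and hence computes $\Tor^\uZ_*(M,\Z)_n$, which vanishes in degrees exceeding $n$ for trivial reasons (the bar complex $B_p(M,\uZ_+,\Z)_n$ is supported in $p \le n$, since each of the $p$ factors of $\uZ_+$ contributes at least $1$ to the total degree, and $K_*$ is a subcomplex). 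In particular $K_p(\uZ)_n = (\uZ \oast \SSt_p)_n \cong \Ind_{G_{n-p}\times G_p}^{G_n} \Z \boxtimes \SSt_p$, so $K_p(\uZ)_n = 0$ for $p > n$ and $K_p(\uZ)_n \cong \SSt_n$ when $p = n$.

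Next I would analyze the two spectral sequences of the double complex $P_{*,*}(A)_n$. For fixed $p$, the complex $P_{p,*}(A)_n = K_*(A \otimes K_p(\uZ))_n$ computes $\Tor^\uZ_*(A \otimes K_p(\uZ), \Z)_n$; running the other spectral sequence first, one filtration collapses to show that the total complex computes the homology of $B_*(B_*(A,\uZ_+,\Z), \uZ_+,\Z)_n$ — or more precisely, since $K_*(\uZ) \to \Z$ is a resolution of $\Z$ by $\Tor^\uZ_*(\uZ,\Z) = \Z$ concentrated on the diagonal (this is the standard connectivity assumption), the double complex computes $\Tor^\uZ_*(A, \Z) $ convolved against $\Tor^\uZ_*(\uZ,\Z) = \Z$, which again lives only on the diagonal. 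Concretely: one spectral sequence has $E^1$-page with $\Tor^\uZ_*(\uZ,\Z) = \SSt_p$ in degree $p$ (zero off the diagonal by the standard connectivity assumption), so after the first differential the abutment is $\Tor^\uZ_*(A \otimes \Z, \Z)_n \cong \Tor^\uZ_*(A,\Z)_n$... but wait, that abutment is nonzero. The right statement is that the total complex of $P_{*,*}(A)_n$ computes $H_*(A \oast^{\mathbb L}_\uZ \Z \oast^{\mathbb L}_\uZ \Z)$, and since $\Z \oast^{\mathbb L}_\uZ \Z \simeq \Z$ is degreewise placed so that $\Tor^\uZ_i(\Z,\Z)_m = 0$ unless $i = m$, a diagonal-degree count forces the total homology in total degree $p+q$ of $P_{*,*}(A)_n$ to vanish once $p+q < n$: each contribution to total degree $p+q$ involves $\SSt_{p'}$ for some $p' \le p+q$ sitting in $\G$-degree $p'$, tensored with lower pieces, and the $\G$-degree bookkeeping (every $\SSt$-factor of $\G$-degree $m$ and every $\uZ_+$-factor of $\G$-degree $\ge 1$) shows nothing survives in evaluation degree $n$ unless the total homological degree is at least $n$.

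The cleanest way to execute the last step is: filter $P_{*,*}(A)_n$ as in the construction of $K_*$ itself, obtaining a spectral sequence whose $E^1$-term involves $\Tor^\uZ_{p+q}(\uZ,\Z)_{p'}$ for various $p' $ with the $A$-degree fixed at $n - p'$; by the standard connectivity assumption this is supported on a single diagonal, forcing the $\Tor^\uZ$-degree to match the $\G$-degree of the split-Steinberg factor, and a short arithmetic check (each factor of $\uZ_+$ eats at least one unit of evaluation degree while contributing one unit of homological degree, and the $\SSt_m$ factor contributes $m$ units of both) yields that the abutment $H_{p+q}$ of the total complex in evaluation degree $n$ vanishes whenever $p + q < n$. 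The main obstacle I anticipate is getting the bookkeeping of the two gradings (homological degree versus evaluation/$\G$-degree) exactly right across the nested bar/Koszul constructions, and verifying that the relevant filtration is compatible enough that the collapse argument really does identify the abutment; once that is in place the inequality $p+q < n \Rightarrow F^\infty_{pq} = 0$ is a formal consequence of the standard connectivity assumption (Koszulness of $\uZ$). I would write this as: apply the spectral sequence of \autoref{prop:descriptionX}-type analysis to $P_{*,*}$, use $\Tor^\uZ_i(\Z,\Z)_m \cong 0$ for $i \ne m$, and conclude by the degree count.
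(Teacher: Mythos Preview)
Your proposal has a genuine gap rooted in a misidentification of the double complex. You repeatedly treat $P_{*,*}(A)$ as an iterated bar/Koszul construction on $A$, writing things like ``the total complex computes the homology of $B_*(B_*(A,\uZ_+,\Z),\uZ_+,\Z)_n$'' and ``$H_*(A \oast^{\mathbb L}_\uZ \Z \oast^{\mathbb L}_\uZ \Z)$''. But that is the \emph{other} double complex $Q_{*,*}(A)=K_*(K_*(A))$. By definition $P_{p,q}=K_q(A\otimes K_p(\uZ))$, where $\otimes$ is the degreewise tensor product over $\Z$ with the diagonal $U\G$-action, not the induction product $\oast$. So your derived-tensor-product interpretation and the ensuing ``diagonal degree count'' do not apply to $P_{*,*}$. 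You also conflate $\Tor^\uZ_*(\uZ,\Z)$ with $\Tor^\uZ_*(\Z,\Z)$: the former is $\Z$ in bidegree $(0,0)$ (since $\uZ$ is free over itself), not ``concentrated on the diagonal''; the diagonal statement is the standard connectivity assumption for $\Tor^\uZ_*(\Z,\Z)$.

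The paper's proof is short and uses none of this. Pass to the transposed spectral sequence $\bar F$, so $\bar F^0_{p,q}=K_p(A\otimes K_q(\uZ))_n$ with differential in $q$. The functor $K_p(-)_n=\Ind_{G_{n-p}\times G_p}^{G_n}(-)_{n-p}\boxtimes\SSt_p$ is exact (as $\SSt_p$ is free abelian), so $\bar F^1_{p,q}\cong \Ind_{G_{n-p}\times G_p}^{G_n} H_q\big(A_{n-p}\otimes K_*(\uZ)_{n-p}\big)\boxtimes\SSt_p$. Under the standard connectivity assumption $K_*(\uZ)$ computes $\Tor^\uZ_*(\uZ,\Z)=\Z$, so $K_*(\uZ)_{n-p}$ has vanishing homology in degrees $<n-p$; by K\"unneth the same holds for $A_{n-p}\otimes K_*(\uZ)_{n-p}$. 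Hence $\bar F^1_{p,q}=0$ for $p+q<n$, and since $F$ and $\bar F$ abut to the same thing, $F^\infty_{p,q}=0$ for $p+q<n$. Your vague ``degree count'' should be replaced by exactly this two-line computation of $\bar F^1$.
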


\begin{proof}
Let $\bar F^r_{p,q}$ be the spectral sequence associated to the transpose of $P_{*,*}$. Then
\[ \bar F^0_{pq} \cong K_p(A \otimes K_q(\uZ))_n.\]
From the standard connectivity assumption, the homology of $K_*(\uZ)_{n-p}$ vanishes in degrees $*<n-p$. Using the K\"unneth spectral sequence, this implies that the homology of $A_{n-p} \otimes K_*(\uZ)_{n-p}$ vanishes in the same degrees $*<n-p$. Because $K_p$ is exact,
\[ F^1_{pq} \cong H_q(K_p(A \otimes K_*(\uZ))_n) \cong \Ind^{G_n}_{G_{n-p} \times G_p} H_q(A_{n-p} \otimes K_*(\uZ)_{n-p}) \otimes \SSt_p \cong 0\]
for $p+q < n$.
\end{proof}

We proceed by comparing this double complex to the following double complex.

\begin{definition}
Define the double complex $Q_{*,*}(A) = K_*(K_*(A))$, where $Q_{p,q} = K_q(K_p(A))$. Let $\widehat F^r_{pq}$ denote the spectral sequence associated to $Q_{*,*}$.
\end{definition}

Note that this double complex is symmetric as proved in the following lemma.

\begin{lemma}\label{lem:Kcom}
The braiding induces an isomorphism of double complexes swapping the two complex gradings of $Q_{*,*}(A)$. In particular,
\[ K_p(K_q(A)) \cong K_q(K_p(A)).\]
\end{lemma}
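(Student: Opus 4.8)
The plan is to unwind the definition of the Koszul complex $K_*(-)$ and track the braiding isomorphism through each layer. Recall $K_p(B) = (B \oast \SSt_p)$ with its diagonal $U\G$-action, so that $Q_{p,q}(A) = K_q(K_p(A)) = (K_p(A) \oast \SSt_q) = ((A \oast \SSt_p) \oast \SSt_q)$. By associativity of $\oast_\G$ (which holds because $\oast_\G$ is a monoidal product, see \autoref{lem:oastbraiding}), this is canonically $(A \oast \SSt_p \oast \SSt_q)$, and symmetrically $Q_{q,p}(A) = K_p(K_q(A)) = (A \oast \SSt_q \oast \SSt_p)$. The braiding $b$ of $\G$ (hence of the monoidal category of $\G$-modules, by \autoref{lem:oastbraiding}) induces an isomorphism $A \oast \SSt_p \oast \SSt_q \to A \oast \SSt_q \oast \SSt_p$ swapping the two Steinberg factors while fixing $A$; this is the candidate for the desired isomorphism, and its $U\G$-equivariance follows from the analysis preceding \autoref{lem:comm tensor} (the $U\G$-action on an $\oast$-product $A \oast B$ is induced by the action on $A$ alone, so swapping $B$-factors that do not involve $A$ is automatically $U\G$-equivariant).

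The main point is to check that this swap is a map of double complexes, i.e.\ it intertwines the two families of differentials. Each differential $d\colon K_p(B) \to K_{p-1}(B)$ is, by the commutative diagram \eqref{Kd}, the restriction to $B \oast \SSt_p \subseteq B_p(B,\uZ_+,\Z)$ of the bar differential $m \oast \id$, where $m$ multiplies $B$ against its first $\uZ_+$-factor; concretely it contracts the rightmost $\uZ_+$-slot inside $\SSt_p$ into $\SSt_{p-1}$ via the operad structure, leaving $B$ untouched. So the "horizontal" differential of $Q_{*,*}(A)$ acts on $A \oast \SSt_p \oast \SSt_q$ only through the $\SSt_p$ factor (the first Steinberg slot), while the "vertical" differential acts only through the $\SSt_q$ factor. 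Since the braiding $b$ simply permutes these two independent factors, it clearly carries the horizontal differential of $Q_{*,*}(A)$ to the vertical differential of $Q_{*,*}(A)$ with the gradings swapped, and vice versa. The remaining bookkeeping is to confirm that $b$ really is $\G$-module natural in the requisite sense: this is exactly the statement of \autoref{lem:oastbraiding}, which says $(\Mod_\G, \oast)$ is braided monoidal, so $b_{\SSt_p, \SSt_q}$ is a well-defined isomorphism of $\G$-modules compatible with all the structure maps (in particular with the operadic maps $\uZ_+ \oast \uZ_+ \to \uZ_+$ defining the differentials).

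The step I expect to be the main obstacle, modest as it is, is verifying the sign compatibility: the differentials of $K_*(-)$ are alternating sums of face maps, and after the swap one must make sure the Koszul signs on $\SSt_p$ and $\SSt_q$ match up correctly — but since $b$ only transposes whole Steinberg factors and does not reorder the internal $\uZ_+$-slots within either factor, the signs within each $K_*$ are preserved individually, and no extra Koszul sign is introduced between the two factors because the double-complex total differential already accounts for the sign $(-1)^p$ when passing the vertical differential past the horizontal grading. Thus the swap is an isomorphism of double complexes on the nose, and taking $B = A$, $p,q$ arbitrary, the second assertion $K_p(K_q(A)) \cong K_q(K_p(A))$ follows immediately.
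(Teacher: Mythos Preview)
Your overall approach matches the paper's: swap $\SSt_p$ and $\SSt_q$ via the braiding and check compatibility with the differentials. However, your description of the Koszul differential is incorrect. The map $K_p(B)\to K_{p-1}(B)$ is the restriction of $m\oast\id$, where $m\colon B\oast\uZ_+\to B$ is the \emph{right $\uZ$-action on $B$}; it contracts the leftmost $\uZ_+$-factor of $\SSt_p\subset\uZ_+^{\oast p}$ into $B$, not ``the rightmost $\uZ_+$-slot \dots\ into $\SSt_{p-1}$ via the operad structure, leaving $B$ untouched.'' Concretely, $K_p(B)_n\to K_{p-1}(B)_n$ carries $B_{n-p}$ to $B_{n-p+1}$ via the stabilization map, so $B$ is very much touched.

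Consequently, on $K_q(K_p(A))=A\oast\SSt_p\oast\SSt_q$ both differentials act nontrivially on $A$: the inner one uses the right $\uZ$-action on $A$ directly, while the outer one uses the right $\uZ$-action on $A\oast\SSt_p$, which is again the right action on $A$ braided past $\SSt_p$. The braiding swap intertwines them not because they act on disjoint factors, but because both are built from the same right $\uZ$-action on $A$. The paper invokes \autoref{prop:leftrightuZmod} (commutativity of the left and right $\uZ$-actions) for exactly this point; your argument needs it too. The sign discussion is harmless but unnecessary, since the swap permutes whole Steinberg factors without reordering anything inside either bar complex.
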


\begin{proof}
We see that 
\[ K_p(K_q(A)) \cong (A \oast \SSt_q) \oast \SSt_p \cong (A \oast \SSt_p) \oast \SSt_q \cong  K_q(K_p(A))\]
because $ \SSt_q \oast \SSt_p \cong \SSt_p \oast \SSt_q$ by \autoref{lem:oastbraiding}. As the boundary maps are defined using the right $\uZ$-action on $A$, \autoref{prop:leftrightuZmod} shows that the braiding induces an isomorphism of chain complexes.
\end{proof}

We now uses these lemmas and corollary to study the spectral sequence $\widehat F^s_{p,q}(A)$. 

\begin{proposition} \label{specCol} Let $A$ be a $U\G$-module and assume $\G$ satisfies the standard connectivity assumptions.
The spectral sequence  $\widehat F^*_{*,*}(A)_n$ collapses at the first page. For $s \geq 1$, we have that $\widehat F^s_{p,q}(A) \cong K_p(\Tor_q^{\uZ}(\Z,A))$. In particular, for $s\geq 1$, $\widehat F^s_{0,q}(A) \cong \Tor_q^{\uZ}(\Z,A)$.

%For $s \geq 1$, we have that $\widehat F^s_{0,q}(A)_n \cong \Tor^\uZ_{q}(\Z,A)$. For $s \ge 2$ and $p>0$, we have $\widehat F^s_{p,q}(A)_n \cong 0$, and $\widehat F^1_{1,q}(A)_n \to \widehat F^1_{0,q}(A)_n$ is the zero map.
\end{proposition}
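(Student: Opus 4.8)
**Proof plan for Proposition (spectral sequence $\widehat F^*_{*,*}(A)$ collapses).**

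The plan is to analyze the double complex $Q_{*,*}(A) = K_*(K_*(A))$ by first computing the $E^1$-page in one direction, taking advantage of the fact that $K_*(-)$ computes $\Tor^\uZ_*(-,\Z)$ (equivalently $\Tor^\uZ_*(\Z,-)$ after swapping, by \autoref{cor:Tor(A,Z)}). First I would fix $p$ and consider the column $Q_{p,*}(A) = K_*(K_p(A))$. Since $K_p(A) = A \oast \SSt_p$ and $\SSt_p$ is (in each degree) a free abelian group, $K_p(A)$ is, degreewise over each $G_n$, an extension of induced modules on free abelian groups, hence $\Z G_n$-free — so the Koszul complex $K_*(K_p(A))$ computes $\Tor^\uZ_q(K_p(A),\Z)$. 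The key point is that the homology of $Q_{p,*}(A)$ in the $q$-direction is $\Tor^\uZ_q(K_p(A),\Z)$, and because $K_p(A)$ is degreewise $\Z G_n$-free... but $K_p(A)$ is \emph{not} a free $U\G$-module, so I must instead argue more carefully: the Koszul complex $K_*(B)$ is quasi-isomorphic to $B_*(B,\uZ_+,\Z)$, and since $-\oast_\G\uZ$ and the bar construction are exact in $B$ when $B$ is degreewise $\Z G_n$-free, taking $K_*$ of the (bounded) complex $K_*(A)$ computes the hyper-derived functor, i.e. $\widehat F^1_{p,q}(A) \cong K_p(H_q(K_*(A))) = K_p(\Tor^\uZ_q(A,\Z)) = K_p(\Tor^\uZ_q(\Z,A))$, using $\Tor^\uZ_q(A,\Z)\cong\Tor^\uZ_q(\Z,A)$ from \autoref{cor:Tor(A,Z)}.

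Next I would argue the collapse at $E^1$. By \autoref{lem:Kcom}, the double complex $Q_{*,*}(A)$ is symmetric under swapping the two gradings (via the braiding), so the two spectral sequences $\widehat F$ and $\overline{\widehat F}$ (its transpose) are isomorphic. Running the argument of the previous paragraph in the other direction gives $\overline{\widehat F}^1_{p,q}(A) \cong K_p(\Tor^\uZ_q(\Z,A))$ as well. Now the collapse: I would show $\widehat F^1_{p,q}(A) \cong K_p(\Tor^\uZ_q(\Z,A))$ is concentrated in a single column (namely $q$ determined by $p$) — this is exactly the content of the standard connectivity assumption, which says $\Tor^\uZ_i(\Z,\Z)_m \cong 0$ for $i\ne m$ (Koszulness of $\uZ$). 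Hmm — but $\Tor^\uZ_q(\Z,A)$ need not vanish off the diagonal for general $A$; what collapses is instead the \emph{differential}. The cleaner route: the $E^1$-page $\widehat F^1_{p,q} \cong K_p(\Tor^\uZ_q(\Z,A))$ and the $d^1$ differential is the Koszul differential $K_p \to K_{p-1}$ applied to $\Tor^\uZ_q(\Z,A)$; but $K_*(\Tor^\uZ_q(\Z,A))$ has homology $\Tor^\uZ_p(\Tor^\uZ_q(\Z,A),\Z)$, and by the same Koszulness / acyclicity this is concentrated in $p=0$... no. I should instead observe directly: since $\widehat F^1_{p,*}(A) = K_*(\text{(something degreewise free)})$ — wait.

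Let me restructure: the honest statement to prove is just $\widehat F^s_{p,q}(A) \cong K_p(\Tor^\uZ_q(\uZ{\downarrow}\Z,A)) = K_p(\Tor^\uZ_q(\Z,A))$ for $s\ge 1$, plus collapse. First I compute $\widehat F^1$ by fixing $p$: the complex $K_p(A) = A\oast\SSt_p$ is, in total degree $n$, $\Ind^{G_n}_{G_{n-p}\times G_p} A_{n-p}\boxtimes\SSt_p$, which is degreewise $\Z G_n$-free since $\SSt_p$ is free abelian (here I'd cite that $\Ind$ of a module that is free over the source group is free over the target — this needs the actual freeness of $A$, which fails). So the correct first step must be to resolve: pick a free $U\G$-resolution $P_\bullet \to A$; then $K_*(K_*(P_\bullet))$ is a triple complex whose analysis gives the result, since for free $A$ the proposition is a direct computation (each $K_p(\text{free})$ is again free, $K_*$ of free is its $\Tor$ which is concentrated appropriately). \textbf{The main obstacle} I expect is exactly this: justifying that $K_*$ applied to a complex of non-free modules computes the hyper-derived functor, i.e. that $Q_{*,*}(A)$ really is a (shifted) double complex computing iterated $\Tor$, and pinning down the single-column concentration. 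The cleanest fix is to run everything through a free resolution $P_\bullet\to A$ and reduce to the free case, where $K_p(P_\bullet)$ is free and $K_*$ of a free module is acyclic except in degree $0$ (where it gives $\Tor^\uZ_0$) by the defining property of the Koszul complex — then a comparison-of-spectral-sequences / acyclic-assembly argument yields $\widehat F^1_{p,q}(A) \cong K_p(\Tor^\uZ_q(\Z,A))$ with all higher differentials zero because, in each total degree, only one bidegree $(p,q)$ survives on account of the standard connectivity assumption forcing the grading constraint. The final clause $\widehat F^s_{0,q}(A)\cong\Tor^\uZ_q(\Z,A)$ is then immediate since $K_0(B) = B$ for any $U\G$-module $B$.
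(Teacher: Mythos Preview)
Your identification of the $E^1$-page is correct: using the symmetry from \autoref{lem:Kcom} to write $Q_{p,q}(A)\cong K_p(K_q(A))$, the exactness of $K_p(-)$ gives $\widehat F^1_{p,q}(A)\cong K_p(\Tor^\uZ_q(A,\Z))\cong K_p(\Tor^\uZ_q(\Z,A))$. But your argument for the collapse has a genuine gap.

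Your final claim --- that ``in each total degree, only one bidegree $(p,q)$ survives on account of the standard connectivity assumption'' --- is simply false. The standard connectivity assumption gives a grading constraint on $\Tor^\uZ_i(\Z,\Z)_m$ (namely $i=m$), but $\Tor^\uZ_q(\Z,A)_{n-p}$ has no such constraint for general $A$: many $q$ can contribute for fixed $n-p$. So the $E^1$-page is \emph{not} concentrated in a single bidegree per total degree, and degree reasons alone cannot force the differentials to vanish. Your earlier attempts (freeness of $K_p(A)$, reducing to a free resolution) also do not close this gap; none of them produces a reason for the $d^1$ or higher $d^r$ to be zero.

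The paper's argument is quite different. For $d^1$: the differential $K_p(\Tor^\uZ_q(\Z,A))\to K_{p-1}(\Tor^\uZ_q(\Z,A))$ is the Koszul differential, which by the description in \eqref{Kd} is built from the right $\uZ$-action on $\Tor^\uZ_q(\Z,A)$. But \autoref{cor:Tor(A,Z)} says all non-isomorphisms of $U\G$ act by zero on $\Tor^\uZ_q(\Z,A)$, so this multiplication (through $\uZ_+$) is zero, and hence $d^1=0$. For $d^r$ with $r\geq 2$: the paper passes from $K_*(K_*(A))$ to $B_*(B_*(A,\uZ_+,\Z),\uZ_+,\Z)$ via quasi-isomorphism, then uses \autoref{changeOrder} to swap to $B_*(B_*(\Z,\uZ_+,A),\uZ_+,\Z)$; since $B_*(\Z,\uZ_+,A)$ is a $U\G$-module on which all non-isomorphisms act by zero, this last double complex splits as a $\oast_\G$-tensor product, and its spectral sequence is a K\"unneth spectral sequence (collapsing at $E^2$ because the second factor has free abelian terms). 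You did not find either of these mechanisms; both are needed.
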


\begin{proof}
It is clear that $\widehat F^1_{p,q}(A) \cong K_p(\Tor_q^{\uZ}(\Z,A))$. By \autoref{cor:Tor(A,Z)}, all non-isomorphisms of $\UG$ act on $\Tor_q^{\uZ}(\Z,A)$ by zero. From Equation \eqref{Kd} for the differentials in the Koszul complex, we conclude that the $d^1$ differentials are zero.
%
%For $s=1$, it is clear that $\widehat F^s_{p,q}(A) \cong K_p(\Tor_q^{\uZ}(\Z,A))$. By \autoref{cor:Tor(A,Z)}, $d_1$ vanishes. 
To finish the proof, it suffices to show the differentials $d^i$ vanish for $i>1$.

The quasi-isomorphism \[ K_*(A) \m B_*(A,\uZ_+,\Z)\] induces a quasi-isomorphism 
\[Q_{*,*}(A) = K_*(K_*(A)) \m B_*(B_*(A,\uZ_+,\Z),\uZ_+,\Z).\] By \autoref{changeOrder}, there is a zig-zag of quasi-isomorphisms of double complex maps between  $B_*(B_*(A,\uZ_+,\Z),\uZ_+,\Z)$ and $B_*(B_*(\Z,\uZ_+,A),\uZ_+,\Z)$. A zig-zag of quasi-isomorphisms of double complex maps induces an isomorphism on double complex spectral sequences starting at the second page.  Thus, it suffices to show the higher differentials vanish for the double complex spectral sequences associated to $B_*(B_*(\Z,\uZ_+,A),\uZ_+,\Z)$. Note that $B_*(\Z,\uZ_+,A)$ is a $U\G$-module where all non-isomorphisms act via zero. This means that $B_*(B_*(\Z,\uZ_+,A),\uZ_+,\Z)$ is isomorphic to $B_*(\Z,\uZ,A) \oast_\G   B_*(\Z,\uZ,\Z))$. The double complex spectral sequence associated to $B_*(\Z,\uZ,A) \oast_\G   B_*(\Z,\uZ,\Z))$ is a K\"{u}nneth spectral sequence and hence collapses at the second page since the terms of the right tensor factor are free as abelian groups.
\end{proof}

%The spectral sequence $\widehat F^s_{p,q}(A)$ agrees with a Grothendieck spectral sequence where the two functors are the same functor. Thus it collapses at the second page and is concentrated on the leftmost column starting at the first page. A straight forward calculation also shows that
%\[ \widehat F^1_{0,q}(A)_n \cong K_0(\Tor_q^{\uZ}(A,Z))_n \cong \Tor_q^{\uZ}(A,Z)_n.\]
%In particular, the $d^1$ differential into $\widehat F^1_{0,q}(A)_n$ has to be zero.
%
%Let $'F^s_{p,q}(A)_n$ be the spectral sequence defined analagously to  $\widehat F_{p,q}(A)_n$ except with $\uZ$ replacing $\uZ_+$ in all of the bar constructions. Since reduced and unreduced bar constructions are quasi-equivalent, $'F^s_{p,q}(A)_n$ and $ \widehat F^s_{p,q}(A)_n$ agree for $s \geq 2$ and hence it suffices to prove the claim for $'F^s_{p,q}(A)_n$.
%
%We have that $'F^1_{p,q}(A)_n=\Tor_{q+1}^\Z(A_+,\Z^{\otimes_\G (p+1)} \otimes_\G \Z  )_n$. For $p \geq 0$, $\uZ^{\otimes_\G (p+1)} \otimes_\G \Z$ is free as a $\uZ$-module and so $'F^1_{p,q}(A)_n \cong 0$ for $p \geq 0$. Thus the spectral sequence collases at the first page and so  $'F^s_{-1,q}(A)_n \cong \Tor^\uZ_{q+1}(\Z,A_+)$ for $s \geq 1$.

We will now construct a map of double complex $Q_{*,*} \to P_{*,*}$. This map is induced by the map of the following lemma.

\begin{lemma}
There is map
\begin{align*}
K_p(A) \cong (A \boxtimes \SSt_p) \otimes_{\G\times\G} \Z\G(- \oplus - , -) &\longrightarrow A \otimes (\uZ \boxtimes \SSt_p) \otimes_{\G\times\G} \Z\G(- \oplus - , -) \cong A \otimes K_p(\uZ)\\
(a_{n-p} \otimes w_p) \otimes g_n &\longmapsto g_n (\id_{n-p} \oplus p) (a_{n-p}) \otimes (1 \otimes w_p) \otimes g_n
\end{align*}
of $U\G$-chain complexes.
\end{lemma}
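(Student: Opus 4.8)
The statement asks us to produce a natural map of $U\G$-chain complexes $K_*(A) \to A \otimes K_*(\uZ)$, given explicitly on simplices by
\[(a_{n-p} \otimes w_p) \otimes g_n \longmapsto g_n(\id_{n-p} \oplus \iota_p)(a_{n-p}) \otimes (1 \otimes w_p) \otimes g_n.\]
Here the right-hand side uses the description $K_p(A) \cong (A \boxtimes \SSt_p) \otimes_{\G\times\G} \Z\G(-\oplus -,-)$ and $A \otimes K_p(\uZ) \cong A \otimes \bigl((\uZ \boxtimes \SSt_p) \otimes_{\G\times\G}\Z\G(-\oplus -,-)\bigr)$, where $A$ acts diagonally on the latter. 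The plan is to check, in order: (i) the formula gives a well-defined map of $\G$-modules for each fixed $p$; (ii) the collection is $U\G$-equivariant for the diagonal $U\G$-action on the target; and (iii) the maps commute with the Koszul differentials, so we get a map of chain complexes.

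For (i), the only ambiguity in an element of the coend $(A\boxtimes\SSt_p)\otimes_{\G\times\G}\Z\G(-\oplus -,-)$ is the usual coend relation: $(h_{n-p}(a)\boxtimes k_p(w))\otimes g_n \sim (a\boxtimes w)\otimes g_n\circ(h_{n-p}\oplus k_p)$ for $h_{n-p}\in G_{n-p}$, $k_p\in G_p$. One substitutes into the formula and checks both sides land on the same element of the target coend; this is a direct computation using that $\iota_p$ is natural (so $(\id\oplus k_p)\circ(\id_{n-p}\oplus\iota_p) = (\id_{n-p}\oplus\iota_p)$ after absorbing $k_p$ into the $\SSt_p$-slot, and $h_{n-p}(a)$ versus $a$ is absorbed on the $A$-slot of the diagonal action). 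For (ii), note that $U\G$ acts on $K_p(A)$ by post-composition on $g_n$, and on $A\otimes K_p(\uZ)$ diagonally — i.e.\ by post-composition on $g_n$ in the $K_p(\uZ)$-factor and by the corresponding $U\G$-action on the $A$-factor. One checks that post-composing $g_n$ with a morphism $[\hat f, C]$ of $U\G$ matches, on the nose, post-composition on the target's $K_p(\uZ)$-factor together with the induced action on the $A$-factor; this is essentially the same bookkeeping that appeared in the proof of \autoref{prop:leftrightuZmod} and the discussion after \autoref{def:indprodoverR}, so it is routine once the conventions are unwound.

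For (iii), recall from diagram \eqref{Kd} that the Koszul differential on $K_*(A)$ is the restriction of the bar differential $m\oast\id$, where $m\colon A\oast\uZ_+\to A$ is multiplication by the right $\uZ$-action, and that $\SSt_p\subset B_p(\Z,\uZ_+,\Z)$ lies in the kernel of the bar differential there. So both $K_*(A)$ and $A\otimes K_*(\uZ)$ receive their differentials from bar differentials, and the claimed map is the restriction of the evident bar-level map $B_*(A,\uZ_+,\Z)\to A\oast_\uZ B_*(\uZ,\uZ_+,\Z)$; since that bar-level map is already known to be a chain map (it is the isomorphism used implicitly in \autoref{changeOrder}), compatibility with differentials follows. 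The one subtlety — and what I expect to be the main obstacle — is verifying well-definedness in (i) carefully: the target is a coend of a \emph{diagonal} $U\G$-module, so the element $g_n(\id_{n-p}\oplus\iota_p)(a_{n-p})$ in the $A$-slot is genuinely entangled with the $g_n$ in the $K_p(\uZ)$-slot, and one must track how the coend relations on the target interact with this entanglement. I would organize this by first writing the target coend relations explicitly (separating the action of $G_{n-p}\times G_p$ on $\uZ\boxtimes\SSt_p$ from the diagonal $A$-contribution), then matching them term-by-term against the source relations; the pre-braiding property of $U\G$ (\autoref{prop:stabcat}(ii)) and naturality of the $\iota$'s do all the work, but the indices must be handled with care.
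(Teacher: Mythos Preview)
Your plan for parts (i) and (ii) is correct and matches the paper's approach: well-definedness is a direct coend computation, and $U\G$-equivariance is routine since the action is by post-composition on $g_n$.

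However, there is a genuine gap in part (iii). You claim the map in the lemma is the restriction of ``the evident bar-level map $B_*(A,\uZ_+,\Z)\to A\oast_\uZ B_*(\uZ,\uZ_+,\Z)$'' from \autoref{changeOrder}. But the target of the lemma's map is $A \otimes K_*(\uZ)$, the \emph{pointwise} tensor product over $\Z$ (with diagonal $U\G$-action), not $A \oast_\uZ K_*(\uZ)$, the convolution product. These are different functors: in degree $n$, $(A \otimes K_p(\uZ))_n = A_n \otimes K_p(\uZ)_n$, whereas $(A \oast_\uZ K_p(\uZ))_n$ is built from $A_{n-p} \boxtimes K_p(\uZ)_p$ and is in fact isomorphic to $K_p(A)_n$ itself. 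The bar isomorphism you cite is an identity on the source, not the map being constructed. The explicit formula in the lemma --- which pushes $a_{n-p}$ forward to $A_n$ via $g_n(\id_{n-p}\oplus\iota_p)$ --- is precisely what produces an element of the pointwise tensor, and this is not something that falls out of associativity.

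What the paper does instead is extend the map to the ambient complexes $A \oast \Z_1^{\oast p} \to A \otimes (\uZ \oast \Z_1^{\oast p})$ (using that $\SSt_p \subset \Z_1^{\oast p}$) by the same formula, and then checks directly that the square
\[
\xymatrix{
A \oast \Z_1^{\oast p} \ar[r]\ar[d]_{m\oast\id} & A \otimes (\uZ \oast \Z_1^{\oast p}) \ar[d]^{\id \otimes (m\oast\id)}\\
A \oast \Z_1^{\oast p-1} \ar[r] & A \otimes (\uZ \oast \Z_1^{\oast p-1})
}
\]
commutes. This is an elementary verification using the explicit formulas, but it cannot be bypassed by appealing to \autoref{changeOrder}. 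You should replace your argument for (iii) with this direct check.
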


\begin{proof}
We first check that this map is well defined. Let $h_{n-p} \in G_{n-p}$ and $h_p \in G_p$. Then 
\[ (a_{n-p} \otimes w_p) \otimes g_n\circ  (h_{n-p} \oplus h_p) = (h_{n-p}(a_{n-p}) \otimes h_p(w_p)) \otimes g_n\]
is sent to
\[ g_n (h_{n-p} \oplus h_p) (\id_{n-p} \oplus p) (a_{n-p}) \otimes (1 \otimes w_p) \otimes g_n\circ (h_{n-p} \oplus h_p) = g_n (h_{n-p} \oplus \id_p ) (\id_{n-p} \oplus p) (a_{n-p}) \otimes (1 \otimes h_p(w_p)) \otimes g_n.\] 

The $U\G$-equivariance is easy to see.

To see that this is a chain map, we recall that $\SSt_p \subset \Z_1^{\oast p}$ with $\Z_1$ the $\G$-module $\Z$ in degree $0$ and zero in all other degrees. The differential is given by
\[ A \oast \Z_1 \oast \Z_1^{\oast p-1} \stackrel{m \oast \id}{\longrightarrow} A \oast \Z_1^{\oast p-1}.\]
The above map is the restriction of the map
\begin{align*}
A \oast  \Z_1^{\oast p} & \longrightarrow A \otimes (\uZ \oast \Z_1^{\oast p})\\
a_{n-p} \otimes w_p \otimes g_n &\longmapsto g_n (\id_{n-p} \oplus \iota_p)(a_{n-p}) \otimes ( 1 \otimes w_p \otimes g_n).
\end{align*}
This gives a commutative diagram
\[ \xymatrix{
A \oast  \Z_1^{\oast p}  \ar[r]\ar[d] & A \otimes (\uZ \oast \Z_1^{\oast p})\ar[d]\\
A \oast  \Z_1^{\oast p-1} \ar[r] &  A \otimes (\uZ \oast \Z_1^{\oast p-1})
}\]
that restricts to the assertion on $K_*(A) \to A \otimes K_*(\uZ)$.
\end{proof}

We see that this map induces a map of double complexes
\[ Q_{pq} \cong K_q(K_p(A)) \longrightarrow K_q( A \otimes K_p(\uZ)).\]
We want to prove that this comparison map induces an isomorphism on homology in a range.

% H_*(coker(Q_p* -> P_p*)) = 0 in a range so hat F^1_pq-> F^1_pq iso/epi in a range

% hat F^1 collapses

\begin{definition}
Let $\eta\colon (\Z_{\ge -1})^3  \m \N \cup \{ \infty \}$ be the smallest number such that for any polynomial $U \G$-module $A$ of degree $\leq r$ in ranks $>d$, then 
\[ \Tor^\uZ_i(A,\Z)_n \cong 0 \text{ for } n> \eta(r,d,i).\]
\end{definition}

%A proof identical to \autoref{prop:EhatE} gives the following.
The proof of the following proposition is modeled after the proof of \autoref{prop:EhatE}. 
\begin{proposition}\label{prop:FhatF}
 Let $A$ be a polynomial $U\G$-module of degree $\le r$ in ranks $>d$. Then 
 \[G\colon \widehat F^1_{p,q}(A)_n  \m  F^1_{p,q}(A)_n\]
is an isomorphism when 
\[n> \max(d+ p+q, \eta(r-1,d-1,q+1)+p) \] 
and a surjection when 
\[n> \max( d+p+q-1,\eta(r-1,d-1,q)+p).\]
\end{proposition}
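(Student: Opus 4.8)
The plan is to mimic the proof of \autoref{prop:EhatE} step by step, replacing central stability chains $\widetilde C^\G_*$ by the Koszul complex $K_*$ throughout. First I would identify the comparison map on $E^1$-pages: by \autoref{specCol} we have $\widehat F^1_{p,q}(A)_n \cong K_p(\Tor^\uZ_q(\Z,A))_n$, and by \autoref{lem:Fvanish}'s analysis together with the Künneth spectral sequence (as in the proof of \autoref{lem:Fvanish}) we have $F^1_{p,q}(A)_n \cong K_p(H_q(A_{\bullet} \otimes K_*(\uZ)))_n$. The map $Q_{*,*} \to P_{*,*}$ induced by the lemma preceding \autoref{def:comparisonmap}'s analog is, on the $E^1$-page, induced by $\Tor^\uZ_q(\Z,A)_{n-p} \to H_q((\Sigma^{p} A)_{\bullet}\otimes\cdots)$; more precisely I would check that $F^1_{p,q}(A)_n \cong K_p(\Tor^\uZ_q(\Z,\Sigma^{p}A))_n$, since $A \otimes K_*(\uZ)$ in Koszul degree $p$ involves the shifted module. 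Then since $K_p(-)_n = \Ind_{G_{n-p}\times G_p}^{G_n}(-)_{n-p}\boxtimes \SSt_p$ is exact and only involves degree $n-p$, it suffices to show the map $\Tor^\uZ_q(\Z,A)_{n-p} \to \Tor^\uZ_q(\Z,\Sigma^{p}A)_{n-p}$ is an isomorphism or surjection in the stated ranges.

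The core of the argument is then a four-term exact sequence manipulation identical to the one in \autoref{prop:EhatE}. Let $B = \ker(A \to \Sigma^{p}A)$ and $D = \coker(A \to \Sigma^p A)$, and let $E = \coker(B \to A) = \ker(\Sigma^p A \to D)$. By \autoref{lem:cokerhighersigma}, $B_n = 0$ for $n > d$ and $D$ has polynomial degree $\le r-1$ in ranks $> d-1$. Since $\Tor^\uZ_q(\Z,-)$ (equivalently $K_*$-homology, via \autoref{specCol} applied with the roles of the two factors; note $\Tor^\uZ_q(\Z,-)$ is just $\Tor^\uZ_q(-,\Z)$ up to the swap of \autoref{cor:Tor(A,Z)}) is computed by an exact functor of chain complexes, the short exact sequences $0 \to B \to A \to E \to 0$ and $0 \to E \to \Sigma^p A \to D \to 0$ give long exact sequences in $\Tor^\uZ_*(\Z,-)$. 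From the first: $B_{n-p} = 0$ for $n-p > d$ forces $\Tor^\uZ_q(\Z,A)_{n-p} \to \Tor^\uZ_q(\Z,E)_{n-p}$ to be an isomorphism for $n > d+p+q$ (need vanishing of $\Tor^\uZ_q(\Z,B)_{n-p}$ and $\Tor^\uZ_{q+1}(\Z,B)_{n-p}$, i.e. $n-p-q > d$ suffices for both since $B$ is supported in ranks $\le d$ — wait, one must be slightly careful: $\Tor^\uZ_q(\Z,B)_{n-p}$ vanishes once $n-p > d + \text{(homological degree bound)}$, and for a module supported in ranks $\le d$ this is $n-p-q > d$; I would track this exactly as in \autoref{prop:EhatE}) and a surjection one degree lower. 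From the second: $\Tor^\uZ_q(\Z,D)_{n-p} = 0$ for $n-p > \eta(r-1,d-1,q)$ since $D$ has polynomial degree $\le r-1$ in ranks $> d-1$, and this gives $\Tor^\uZ_q(\Z,E)_{n-p} \to \Tor^\uZ_q(\Z,\Sigma^p A)_{n-p}$ an isomorphism for $n > \eta(r-1,d-1,q+1)+p$ and a surjection for $n > \eta(r-1,d-1,q)+p$. Composing gives the claimed ranges.

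The main obstacle I anticipate is bookkeeping the exact index shifts: the $p$ appearing in $K_p$ shifts the rank by $p$ (not $p+1$ as in the central stability case, because the Koszul complex is indexed so that $K_p(A)_n$ involves $A_{n-p}$), and one must confirm that the comparison map genuinely lands in $\Tor^\uZ_q(\Z,\Sigma^p A)$ rather than $\Sigma^{p+1}$ or $\Sigma^{p-1}$ — this requires unwinding the definition $P_{p,q} = K_q(A \otimes K_p(\uZ))$ and the explicit formula in the lemma just proved, checking that $A \otimes K_p(\uZ)$ evaluated appropriately reproduces a shift of $A$. Once the correct shift is pinned down, the rest is a routine transcription of the \autoref{prop:EhatE} argument. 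I would also double-check that \autoref{cor:Tor(A,Z)} lets us freely interchange $\Tor^\uZ_q(\Z,-)$ and $\Tor^\uZ_q(-,\Z)$ so that polynomiality hypotheses (stated for the left-module structure) apply; this is immediate from the $\uZ$-equivariant swap isomorphism.
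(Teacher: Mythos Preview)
Your outline is correct and matches the paper's approach: reduce to showing that the map induced by $A \to \Sigma^p A$ on $\Tor^\uZ_q(-,\Z)_{n-p}$ is an isomorphism/surjection in the stated ranges, then run the four-term exact sequence argument with $B = \ker(A\to\Sigma^p A)$ and $C = \coker(A\to\Sigma^p A)$ exactly as in \autoref{prop:EhatE}.

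The one place where you are too optimistic is the step you flag as ``bookkeeping the exact index shifts.'' The identification of $F^1_{p,q}(A)_n$ with (copies of $\SSt_p$ tensor) $K_q(\Sigma^p A)_{n-p}$ is \emph{not} just an index check: the $U\G$-module $A \otimes K_p(\uZ)$ is genuinely not $\Sigma^p A$ tensored with anything. What the paper does is write out $F^0_{p,q}(A)_n$ as an iterated induction, use the braiding to factor the induction through $G_{n-p}\times G_p$, and then apply Mackey's double coset formula to the inner induction--restriction. This produces a \emph{noncanonical, non-equivariant} isomorphism
\[
F^0_{p,q}(A)_n \;\cong\; \bigoplus_{G_n/(G_{n-p}\times G_p)} \SSt_p \otimes K_q(\Sigma^p A)_{n-p},
\]
compatible with the $q$-differential, and an analogous decomposition of $\widehat F^0_{p,q}(A)_n$ with $K_q(A)_{n-p}$ in place of $K_q(\Sigma^p A)_{n-p}$. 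The comparison map then visibly restricts on each summand to the map induced by $A \to \Sigma^p A$. The paper explicitly remarks afterward that these isomorphisms are not equivariant and that an equivariant reformulation would be of interest---so this is the real technical content of the proof, not a bookkeeping detail. Once you have this decomposition, your long exact sequence argument goes through verbatim (with the minor correction that the vanishing range for $\Tor^\uZ_q(B,\Z)_{n-p}$ when $B$ is supported in ranks $\le d$ is $n-p > d+q$, since $K_q(B)_{n-p}$ involves $B_{n-p-q}$).
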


\begin{proof}
Let us first consider 
\[ \widehat F^0_{p,q}(A)_n = K_q(K_p(A))_n \cong \Ind^{G_n}_{G_{n-p-q} \times G_p \times G_q} A_{n-p-q} \boxtimes \SSt_p \boxtimes \SSt_q.\]
Using the braiding of $\G$, we see that this is isomorphic to 
\[ K_p(K_q(A))_n \cong \Ind^{G_n}_{G_{n-p} \times G_p} \bigg(\Ind^{G_{n-p}}_{G_{n-p-q} \times G_q}A_{n-p-q} \boxtimes \SSt_q\bigg) \boxtimes \SSt_p.\]
There is a noncanonical isomorphism from the above group to
\[ \bigoplus_{G_n/G_{n-p} \times G_p} \SSt_p \otimes \bigg(\Ind^{G_{n-p}}_{G_{n-p-q} \times G_q}A_{n-p-q} \boxtimes \SSt_q\bigg).\]
Observe that the boundary maps in $q$-direction restrict to 
\[ \Ind^{G_{n-p}}_{G_{n-p-q} \times G_q}A_{n-p-q} \boxtimes \SSt_q\]
(with varying $q$) because $p$ stays fixed.

For $F^0_{p,*}$, there is a similar decomposition. First, note that
\[ F^0_{p,q} \cong \Ind^{G_n}_{G_{n-p-q} \times G_p \times G_q} \bigg(\Res^{G_{n-q}}_{G_{n-p-q} \times G_p} A_{n-q} \otimes ( \Z \boxtimes \SSt_p)\bigg) \boxtimes \SSt_q.\]
Using the braiding, we obtain an inclusion of $G_{n-p-q} \times G_p \times G_q$ into $G_{n-p} \times G_p$ and the induction factors as
\[ \Ind^{G_n}_{G_{n-p} \times G_p} \Ind^{G_{n-p}\times G_p}_{G_{n-p-q} \times G_p \times G_q} \bigg(\Res^{G_{n-q}}_{G_{n-p-q} \times G_p} A_{n-q} \otimes ( \Z \boxtimes \SSt_p)\bigg) \boxtimes \SSt_q.\]
We now restrict the inner induction to $G_{n-p}$ and get
\begin{multline*} \Res^{G_{n-p} \times G_p}_{G_{n-p}}  \Ind^{G_{n-p}\times G_p}_{G_{n-p-q} \times G_p \times G_q} \bigg(\Res^{G_{n-q}}_{G_{n-p-q} \times G_p} A_{n-q} \otimes ( \Z \boxtimes \SSt_p)\bigg) \boxtimes \SSt_q \\\cong \SSt_p \otimes \Ind^{G_{n-p}}_{G_{n-p-q} \times G_q} \Res^{G_{n-q}}_{G_{n-p-q}} A_{n-q} \boxtimes \SSt_q\end{multline*}
using Mackey's double coset formula.
Thus there is a noncanonical isomorphism
\[ F^0_{p,q} \cong  \bigoplus_{G_n/G_{n-p} \times G_p} \SSt_p \otimes \bigg( \Ind^{G_{n-p}}_{G_{n-p-q} \times G_q} \Res^{G_{n-q}}_{G_{n-p-q}} A_{n-q} \boxtimes \SSt_q\bigg)\]
and again, the boundary maps in $q$-direction restrict to 
\[  \Ind^{G_{n-p}}_{G_{n-p-q} \times G_q} \Res^{G_{n-q}}_{G_{n-p-q}} A_{n-q} \boxtimes \SSt_q\]
(with varying $q$) because $p$ stays fixed.

Observe that the map of double complexes $G$ restricts to a map of chain complexes
\[ K_q(A)_{n-p} \cong \Ind^{G_{n-p}}_{G_{n-p-q} \times G_q}A_{n-p-q} \boxtimes \SSt_q \longrightarrow \Ind^{G_{n-p}}_{G_{n-p-q} \times G_q} \Res^{G_{n-q}}_{G_{n-p-q}} A_{n-q} \boxtimes \SSt_q \cong K_q(\Sigma^p A)_{n-p}\]
that is induced by the shift map $A \to \Sigma^p A$.

Let $B$ and $C$ denote the kernel and cokernel of $A \to \Sigma^p A$, respectively. 
If we show that 
%Because $\SSt_p$ is free abelian, it is to show that 
\[ \Tor^\uZ_q(B, \Z)_{n-p} \cong  \Tor^\uZ_q(C, \Z)_{n-p} \cong 0\]
in a range, then $K_q(A)_{n-p} \to K_q(\Sigma^p A)_{n-p}$ is a quasi-isomorphism in a range and thus 
\[ \bigoplus_{G_n/G_{n-p} \times G_p} \SSt_p \otimes \bigg( \Ind^{G_{n-p}}_{G_{n-p-q} \times G_q} \Res^{G_{n-q}}_{G_{n-p-q}} A_{n-q} \boxtimes \SSt_q\bigg) \longrightarrow \bigoplus_{G_n/G_{n-p} \times G_p} \SSt_p \otimes \bigg(\Ind^{G_{n-p}}_{G_{n-p-q} \times G_q}A_{n-p-q} \boxtimes \SSt_q\bigg)\]
 induces an isomorphism on homology because $\SSt_p$ is free abelian. For a precise formulation, let $D$ be the cokernel of $B \to A$ which is also the kernel of $\Sigma^p A \to C$.

 \autoref{lem:cokerhighersigma} implies that $B_n \cong 0$ for $n> d$ and that $C$ is polynomial of degree $\le r-1$ in ranks $>d-1$. Thus 
\[ \Tor^{\uZ}_q(D,\Z)_{n-p} \cong 0\quad \text{if}\quad n> d+q+p\]
and
\[ \Tor^{\uZ}_q(C,\Z)_{n-p} \cong 0\quad \text{if}\quad n> \eta(r-1,d-1,q)+p.\]
 This implies that
\[ \Tor^\uZ_q(A,\Z)_{n-p} \longrightarrow \Tor^{\uZ}_q(D, \Z)_{n-p}\]
is surjective for $n> d+q+p+1$ and bijective for $n> d+q+p$ and
\[ \Tor^\uZ_q(D,\Z)_{n-p} \longrightarrow \Tor^{\uZ}_q(\Sigma^p A, \Z)_{n-p}\]
is surjective for $n> \eta(r-1,d-1,q)+p$ and bijective for $n> \eta(r-1,d-1,q)+p-1$. Putting both maps together implies the assertion.
\end{proof}

\begin{remark}
A key feature of the proof of \autoref{prop:FhatF} is that we construct isomorphisms that are not equivariant. It would be interesting to know if there is an equivariant reformation of the arguments.
\end{remark}

Similarly to \autoref{corE2central}, we have the following.

\begin{corollary} \label{corF2Tor}
 Let $A$ be a polynomial $U\G$-module of degree $\le r$ in ranks $>d$ and assume $\G$ satisfies the standard connectivity assumptions. Then $F^\infty_{0,i}(A)_n \cong \Tor^\uZ_i(A,\Z)_n$ for 
\[n> \max( d+i, \eta(r-1,d-1,i)+1,\eta(r-1,d-1,i-1)+2, \dots, \eta(r-1,d-1,0)+i+1).\] 
\end{corollary}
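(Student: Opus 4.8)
The plan is to mimic the proof of \autoref{corE2central}, using the comparison map $G\colon Q_{*,*}(A) \to P_{*,*}(A)$ and its induced map of spectral sequences $G\colon \widehat F^s_{p,q}(A)_n \to F^s_{p,q}(A)_n$. The target spectral sequence $F^s_{*,*}(A)_n$ converges to the homology of the total complex of $P_{*,*}(A)_n$, which by \autoref{lem:Fvanish} vanishes in total degree $<n$; in particular $F^\infty_{0,i}(A)_n = 0$ whenever $0+i<n$, i.e.\ for $n>i$, so the point is to show that the edge term $F^\infty_{0,i}(A)_n$ agrees with $F^2_{0,i}(A)_n$ and that this in turn is computed by $\Tor^\uZ_i(A,\Z)_n$.

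First I would use \autoref{specCol}: under the standard connectivity assumptions, the spectral sequence $\widehat F^s_{*,*}(A)$ collapses at the first page with $\widehat F^s_{0,q}(A)\cong \Tor^\uZ_q(\Z,A)$, and by \autoref{cor:Tor(A,Z)} this is $\cong \Tor^\uZ_q(A,\Z)$. In particular all the differentials $d^1\colon \widehat F^1_{0,q}(A)_n \to \widehat F^1_{-1,q}(A)_n$ (and all higher ones) vanish. Now invoke \autoref{prop:FhatF}: the comparison map $G\colon \widehat F^1_{p,q}(A)_n \to F^1_{p,q}(A)_n$ is an isomorphism for $n> \max(d+p+q,\eta(r-1,d-1,q+1)+p)$ and a surjection for $n>\max(d+p+q-1,\eta(r-1,d-1,q)+p)$. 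Applying this with $(p,q)=(0,i)$, the map $G\colon \widehat F^1_{0,i}(A)_n \to F^1_{0,i}(A)_n$ is a surjection as soon as $n>\max(d+i-1,\eta(r-1,d-1,i))$, hence certainly for $n$ in the asserted range. Since $d^1=0$ on $\widehat F^1_{0,i}(A)_n$, naturality of $G$ with the differentials forces $d^1\colon F^1_{0,i}(A)_n \to F^1_{-1,i}(A)_n$ to be zero as well, so $F^2_{0,i}(A)_n$ is a quotient of $\widehat F^1_{0,i}(A)_n\cong \Tor^\uZ_i(A,\Z)_n$; using that $G$ is in fact an isomorphism in the (smaller) range where $n>\max(d+i,\eta(r-1,d-1,i+1))$ one gets $F^2_{0,i}(A)_n \cong \Tor^\uZ_i(A,\Z)_n$ there — though we actually only need the combined range below.

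Next I would show that no higher differential can hit or leave the $(0,i)$ spot, so that $F^2_{0,i}(A)_n = F^\infty_{0,i}(A)_n$. The differentials leaving $F^s_{0,i}(A)_n$ land in $F^s_{-s,i+s-1}(A)_n$, which is zero for $s\geq 2$ since $P_{*,*}$ is supported in nonnegative filtration degree (Koszul complexes $K_*$ are concentrated in degrees $\ge 0$). The differentials entering $F^s_{0,i}(A)_n$ come from $F^s_{s,i-s+1}(A)_n$; it suffices to show $\widehat F^1_{p,q}(A)_n$ and $F^1_{p,q}(A)_n$ vanish for $p+q=i$, $p\ge 1$, in the stated range. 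By \autoref{specCol}, $\widehat F^1_{p,q}(A)\cong K_p(\Tor^\uZ_q(\Z,A))$, and since all non-isomorphisms of $\UG$ act by zero on $\Tor^\uZ_q(\Z,A)$, this group $K_p(\Tor^\uZ_q(\Z,A))_n = \Ind^{G_n}_{G_{n-p}\times G_p}\Tor^\uZ_q(\Z,A)_{n-p}\boxtimes\SSt_p$ vanishes once $\Tor^\uZ_q(\Z,A)_{n-p}\cong \Tor^\uZ_q(A,\Z)_{n-p} = 0$, i.e.\ once $n-p>\eta(r,d,q)$. Here one needs the (already-proved, by nested induction on $r$ as in \autoref{PolynomialsCentral}) vanishing line for $\eta(r,d,q)$; arguing as in that proof, $\eta(r,d,q)\le \max(d+q, \text{(affine in }q))$, and for each $p+q=i$ with $p\ge 1$ the bound $n-p>\eta(r,d,q)$ is implied by $n>\max(d+i,\eta(r-1,d-1,i-j)+j+1\ :\ 0\le j\le i)$, which is exactly the range in the statement. (The same bound controls $F^1_{p,q}(A)_n$: by \autoref{prop:FhatF} it is a quotient or isomorph of $\widehat F^1_{p,q}(A)_n$ in this range.) Combining: in the asserted range $F^2_{0,i}(A)_n = F^\infty_{0,i}(A)_n \cong \Tor^\uZ_i(A,\Z)_n$, as desired.

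The main obstacle I anticipate is the bookkeeping in the last step — matching the crude vanishing bound for $F^1_{p,q}(A)_n$ with $p+q=i$, $p\ge1$, against the precise maximum $\max(d+i, \eta(r-1,d-1,i)+1, \dots, \eta(r-1,d-1,0)+i+1)$ written in the statement, and checking that the range in which $G$ is a surjection (needed to transport $d^1=0$) is contained in this range. This is genuinely the same flavor of inequality-juggling as in \autoref{corE2central} and \autoref{PolynomialsCentral}, and I expect it to go through; the substantive input is entirely \autoref{specCol}, \autoref{lem:Fvanish}, and \autoref{prop:FhatF}, all already established.
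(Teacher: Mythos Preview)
There is a genuine gap in your approach. The key step where you propose to show that the sources of incoming differentials $F^1_{p,q}(A)_n$ (and $\widehat F^1_{p,q}(A)_n$) \emph{vanish} for $p+q=i+1$, $p\ge 1$, requires knowing that $\Tor^\uZ_q(A,\Z)_{n-p}=0$, i.e.\ that $n-p>\eta(r,d,q)$ for the \emph{same} polynomial degree $r$ as $A$. But a bound on $\eta(r,d,\cdot)$ is exactly what \autoref{PolynomialsTor} establishes, and its proof invokes \autoref{corF2Tor}. Your appeal to ``already-proved, by nested induction on $r$ as in \autoref{PolynomialsCentral}'' conflates $\psi$ (central stability homology) with $\eta$ (the Tor-vanishing function): \autoref{PolynomialsCentral} bounds $\psi$, not $\eta$. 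So as written your argument is circular, and even if you tried to fold it into the later induction you would produce a range involving $\eta(r,d,\cdot)$ rather than $\eta(r-1,d-1,\cdot)$, which is not the statement of the corollary. (There is also a minor index slip: the sources of $d^s$ into $(0,i)$ lie on the line $p+q=i+1$, not $p+q=i$; and your discussion of $d^1\colon F^1_{0,i}\to F^1_{-1,i}$ is moot since that target is zero anyway---the relevant $d^1$ is the one \emph{into} $(0,i)$.)

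The paper avoids this circularity by never asking the sources to vanish. Instead it uses only the \emph{surjectivity} of $G\colon \widehat F^1_{s,i+1-s}(A)_n \to F^1_{s,i+1-s}(A)_n$ for each $s\ge 1$, which by \autoref{prop:FhatF} needs only $n>\max(d+i,\eta(r-1,d-1,i+1-s)+s)$---precisely the terms appearing in the stated range. Since $\widehat F$ collapses at page one (\autoref{specCol}), surjectivity of $G$ at a spot together with $d^s=0$ on $\widehat F^s$ forces $d^s=0$ on $F^s$ at that spot; moreover, surjectivity of $G$ then persists to the next page, so one inducts on $s$ to kill every incoming differential $d^s\colon F^s_{s,i+1-s}\to F^s_{0,i}$. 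Outgoing differentials from $(0,i)$ land in negative $p$ and vanish automatically. Hence $F^\infty_{0,i}(A)_n=F^1_{0,i}(A)_n\cong \Tor^\uZ_i(A,\Z)_n$ in the stated range, with no reference to $\eta(r,d,\cdot)$.
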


\begin{proof} Fix $n> \max(d+i, \eta(r-1,d-1,i) + 1)$. In this range, we have that $\widehat F^1_{s,i+1-s}(A)_n \to F^1_{s,i+1-s}(A)_n$ is surjective for all $s \geq 1$. Assume by induction that $d^t: F^t_{s,i+1-s} \m F^t_{0,i}$ vanishes for 
\[n> \max( d+i, \eta(r-1,d-1,i)+1,\eta(r-1,d-1,i-1)+2, \dots, \eta(r-1,d-1,0)+i+1) \]  for all $1 \leq t <s$.  Consider the commutative diagram
\[ \xymatrix{
\widehat F^1_{0,i}(A)_n=\widehat F^s_{0,i}(A)_n \ar[d] & \widehat F^s_{s,i+1-s}(A)_n  =\widehat F^1_{s,i+1-s}(A)_n\ar@{->>}[d] \ar[l]_{d^s} \\
F^1_{0,i}(A)_n  = F^s_{0,i}(A)_n & F^s_{s,i+1-s}(A)_n = F^1_{s,i+1-s}(A)_n.\ar[l]_{d^s}
}\] By \autoref{specCol}, $d^s: \widehat F^s_{s,i+1-s}(A)_n   \m  \widehat F^s_{0,i}(A)_n$ vanishes and hence so does $d^s:  F^s_{s,i+1-s}(A)_n   \m   F^s_{0,i}(A)_n$. By induction, we see that there are no differentials into the group $F^s_{0,i}$ for any $s \geq 1$. Therefore, in the given range, \[\Tor^{\uZ}_i(A,\Z)_n \cong F^1_{0,i}(A)_n \cong F^\infty_{0,i}(A)_n.\qedhere\]  \end{proof}

We can now prove an improved version of \autoref{PolynomialsCentral} under the assumption that $\G$ satisfies the standard connectivity assumptions. The following is a quantitative version of \autoref{derivedRepPolynomials} under the assumption that $U\G$ satisfies the standard connectivity assumptions. 

\begin{theorem} \label{PolynomialsTor}
Let $A$ be a polynomial $U\G$-module of degree $\leq r$ in ranks $>d$. If $\G$ satisfies the standard connectivity assumptions, then $\Tor^\uZ_{i}(A,\Z)_n \cong 0$ for $n>i+\max(d,r)$.

%$n >i+d$.

\end{theorem}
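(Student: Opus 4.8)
The plan is to prove the theorem by induction on the polynomial degree $r$, following the same overall strategy as \autoref{PolynomialsCentral} but replacing the central stability chains by the Koszul double complexes $P_{*,*}(A)$ and $Q_{*,*}(A)$ of this subsection. It is convenient to phrase the statement as the inequality $\eta(r,d,i) \le i + \max(d,r)$ for all $i \ge 0$, since then the inductive hypothesis at level $r-1$ directly controls the quantities $\eta(r-1,d-1,j)$ appearing in \autoref{corF2Tor}.

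For the base case $r = -\infty$ one has $A_n = 0$ for $n > d$. Since $K_i(A)_n \cong \Ind_{G_{n-i}\times G_i}^{G_n} A_{n-i}\boxtimes\SSt_i$ vanishes as soon as $n - i > d$, the Koszul complex $K_*(A)_n$ is zero in homological degree $i$ whenever $n > d + i$, so $\Tor^\uZ_i(A,\Z)_n \cong H_i(K_*(A))_n = 0$ for $n > d + i = i + \max(d,r)$ (interpreting $\max(d,-\infty)$ as $d$).

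For the inductive step, fix $r \ge 0$ and assume the statement for polynomial degree $r-1$, with the convention that ``polynomial degree $\le -1$'' means ``$-\infty$'', so that $r = 0$ reduces to the base case; this gives $\eta(r-1,d-1,j) \le j + \max(d-1,r-1) = j + \max(d,r) - 1$ for all $j \ge 0$. Now let $A$ have degree $\le r$ in ranks $> d$ and fix $i \ge 0$. I would invoke \autoref{corF2Tor} — this is where the standing assumption that $\G$ satisfies the standard connectivity assumptions is used — to identify $\Tor^\uZ_i(A,\Z)_n \cong F^\infty_{0,i}(A)_n$ for all $n$ exceeding $d+i$ and each of the numbers $\eta(r-1,d-1,j) + (i+1-j)$ with $0 \le j \le i$. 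By the inductive bound on $\eta$, every one of these is at most $i + \max(d,r)$, so the identification holds for all $n > i + \max(d,r)$. Finally, \autoref{lem:Fvanish} gives $F^\infty_{0,i}(A)_n = 0$ whenever $i < n$, which is implied by $n > i + \max(d,r)$ since $\max(d,r) \ge r \ge 0$; combining the two statements yields $\Tor^\uZ_i(A,\Z)_n = 0$ for $n > i + \max(d,r)$, completing the induction.

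I expect the only genuinely delicate point to be the bookkeeping: one must check that the shifts $+1, +2, \dots, +i+1$ in the range appearing in \autoref{corF2Tor} are exactly absorbed by the $-1$ coming from $\max(d-1,r-1)$, so that the maximum collapses to the clean bound $i + \max(d,r)$. All the conceptual content has already been isolated in \autoref{prop:FhatF} and \autoref{corF2Tor} — namely the non-equivariant identifications that make the comparison map $G\colon Q_{*,*}(A) \to P_{*,*}(A)$ an isomorphism on $E^1$-pages in a range, together with the collapse results \autoref{specCol} and \autoref{lem:Fvanish}. In contrast to the proof of \autoref{PolynomialsCentral}, no secondary induction on the homological degree $i$ is needed, because \autoref{corF2Tor} already eliminates all differentials landing in $F^s_{0,i}$.
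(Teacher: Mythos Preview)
Your proposal is correct and follows essentially the same approach as the paper's proof: induction on $r$ with base case given by vanishing of the Koszul complex when $A$ vanishes, and inductive step combining \autoref{corF2Tor} with \autoref{lem:Fvanish}. You have spelled out the bookkeeping (verifying that $\eta(r-1,d-1,j)+(i+1-j)\le i+\max(d,r)$ and that $n>i+\max(d,r)$ forces $i<n$) more explicitly than the paper does, but the argument is the same.
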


\begin{proof}
We will prove by induction that $\eta(r,d,i) \leq i+\max(d,r)$ for all $r$, $d$, and $i$. The induction beginning is straightforward since $K_i(A)_n \cong 0$ in a range if $A_n \cong 0$ in a range. 

Now fix $n >i+\max(d,r)$. We will assume by induction that the claim is true for modules of polynomial degree $<r$. By \autoref{corF2Tor}, $F^\infty_{0,i}(A)_n \cong \Tor^\uZ_i(A,\Z)_n$. However, by \autoref{lem:Fvanish}, $F^\infty_{0,i}(A)_n \cong 0$. This establishes the induction step. 
\end{proof}

\section{Stability with polynomial coefficients}
\label{Sec4}
In this section, we give tools for proving representation stability and secondary stability for families of groups with polynomial coefficients.

\subsection{Representation stability with polynomial coefficients}

%Before we can prove \autoref{main}, we will need to review some properties of the following spectral sequence. 

In this subsection, we establish a general criterion for representation stability with polynomial coefficients. The following is a quantitative version of \autoref{main}. 

\begin{theorem}\label{mainCentralVersion}
Let
\[  1 \longrightarrow \mathcal N  \longrightarrow \G \longrightarrow  \cQ \longrightarrow 1 \]
be a stability short exact sequence. Assume that $U\G$ satisfies \con{H3($k,a$)}. Let $\Theta$ be a coherence function for $U\cQ$. Let $A$ be a $U\G$-module of polynomial degree $\le r$ in ranks $>d$. Let 
\[g_0 = \max(d,a-k+r), \, \,r_0 = \max(d+1,a+r+1),\]
\[g_i = \max(d+i,ki-k+a+r,\Theta(g_{i-1},r_{i-1},1), \dots,\Theta(g_{0},r_{0},i)), \text{ and}\]  
\[r_i = \max(d+i+1,ki+a+r,\Theta(g_{i-1},r_{i-1},2), \dots,\Theta(g_{0},r_{0},i+1)).\]
Then $\widetilde H^\cQ_{-1}(H_i(\cN;A))_n \cong 0$ for $n>g_i$ and $\widetilde H^\cQ_{0}(H_i(\cN;A))_n \cong 0$ for $n>r_i$.
\end{theorem}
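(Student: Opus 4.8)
The plan is to combine the vanishing line for the central stability homology of $A$ from \autoref{PolynomialsCentral} with the Quillen-type spectral sequence of \autoref{SSSScora}, and then bootstrap via the coherence function $\Theta$. Set $d_q := \max(d+q+1,\,kq+a+r)$ for $q\ge -1$. By \autoref{PolynomialsCentral} (and \autoref{H1H2} in the case $k=1$) we have $\widetilde H^\G_q(A)_n\cong 0$ for $n>d_q$, and since $q\mapsto d_q$ is non-decreasing we get $\max(d_{-1},\dots,d_m)=d_m$. \autoref{SSSScora} then produces a homologically graded spectral sequence with $(E^2_{p,q})_n\cong \widetilde H^\cQ_p(H_q(\cN;A))_n$ whose limit satisfies $(E^\infty_{p,q})_n\cong 0$ for $n>d_{p+q}$. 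Two structural facts will be used repeatedly: first, $\widetilde H^\cQ_p$ vanishes identically for $p<-1$ (central stability chains live in homological degrees $\ge -1$), so there are no differentials emanating from the columns $p=-1$ and $p=0$ on any page $E^{r}$ with $r\ge 2$; second, $H_q(\cN;A)=0$ for $q<0$.

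We argue by induction on $i$. For the base case $i=0$, every possible incoming differential $d^r\colon E^r_{r-1,-r+1}\to E^r_{-1,0}$ or $d^r\colon E^r_{r,-r+1}\to E^r_{0,0}$ with $r\ge 2$ has source of $q$-degree $-r+1<0$, hence vanishing source. Therefore $E^\infty_{-1,0}\cong E^2_{-1,0}=\widetilde H^\cQ_{-1}(H_0(\cN;A))$, which must vanish for $n>d_{-1}=\max(d,a-k+r)=g_0$; likewise $\widetilde H^\cQ_0(H_0(\cN;A))_n\cong 0$ for $n>d_0=\max(d+1,a+r+1)=r_0$.

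For the inductive step, assume the bounds $g_j,r_j$ are known for all $j<i$. Applying the coherence function $\Theta$ for $U\cQ$ to the $U\cQ$-module $H_j(\cN;A)$ gives $\widetilde H^\cQ_p(H_j(\cN;A))_n\cong 0$ for $n>\Theta(g_j,r_j,p)$ whenever $j<i$ and $p\ge 1$. In the column $p=-1$, the only nontrivial differentials hitting $E^r_{-1,i}$ are the incoming $d^r\colon E^r_{r-1,i-r+1}\to E^r_{-1,i}$ for $2\le r\le i+1$; each source is a subquotient of $\widetilde H^\cQ_{r-1}(H_{i-r+1}(\cN;A))$ with $i-r+1\le i-1<i$, hence vanishes for $n>\Theta(g_{i-r+1},r_{i-r+1},r-1)$. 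Consequently $E^\infty_{-1,i}\cong \widetilde H^\cQ_{-1}(H_i(\cN;A))$ as soon as $n>\max\bigl(\Theta(g_{i-1},r_{i-1},1),\dots,\Theta(g_0,r_0,i)\bigr)$, and since $E^\infty_{-1,i}$ vanishes for $n>d_{i-1}=\max(d+i,\,ki-k+a+r)$ we conclude $\widetilde H^\cQ_{-1}(H_i(\cN;A))_n\cong 0$ for $n>g_i$. The identical argument in the column $p=0$, with incoming differentials $d^r\colon E^r_{r,i-r+1}\to E^r_{0,i}$ whose sources are subquotients of $\widetilde H^\cQ_r(H_{i-r+1}(\cN;A))$ vanishing for $n>\Theta(g_{i-r+1},r_{i-r+1},r)$, combined with the bound $d_i=\max(d+i+1,\,ki+a+r)$ for $E^\infty_{0,i}$, yields $\widetilde H^\cQ_0(H_i(\cN;A))_n\cong 0$ for $n>r_i$.

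The step needing the most care — though it is bookkeeping, not a genuine obstacle — is checking that every differential affecting $E^\infty_{-1,i}$ and $E^\infty_{0,i}$ has source of strictly smaller homological degree $q$, so that the induction on $i$ closes; this rests on the vanishing of $\widetilde H^\cQ_p$ for $p<-1$ (no outgoing differentials from the columns $p\in\{-1,0\}$) and on $H_q(\cN;A)=0$ for $q<0$ (killing the low-$q$ sources). One should also verify the arithmetic $\max(d_{-1},\dots,d_{i-1})=d_{i-1}$ and $\max(d_{-1},\dots,d_i)=d_i$ from monotonicity of $q\mapsto d_q$, and note the minor caveat that \autoref{PolynomialsCentral} is stated for $k\ge 2$, so that the case $k=1$ is subsumed after first passing to \con{H3($2,a+1$)}.
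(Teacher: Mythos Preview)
Your proof is correct and follows essentially the same approach as the paper: invoke \autoref{PolynomialsCentral} for the vanishing line on $\widetilde H^\G_q(A)$, feed it into the spectral sequence of \autoref{SSSScora}, and induct on $i$ using the coherence function $\Theta$ to kill the incoming differentials to the columns $p\in\{-1,0\}$ while noting there are no outgoing ones. One harmless arithmetic slip: $d_0=\max(d+1,a+r)$, not $\max(d+1,a+r+1)$, but since $d_0\le r_0$ the stated conclusion is unaffected.
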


\begin{proof}

In \autoref{PolynomialsCentral}, we showed that $\widetilde H^\G_i(A) \cong 0$ for all $n>\max(d+i+1,ki+a+r)$. By \autoref{SSSScora}, there is a spectral sequence with $(E^2_{p,q})_n \cong \widetilde H_p^\cQ( H_q(\cN;A)  )_n$ and with $(  E_{p,q}^\infty)_n \cong 0$ for $n >\max(d+p+q+1,kp+kq+a+r)$. Let 
\[g_0 = \max(d,a-k+r), \, \,r_0 = \max(d+1,a+r+1),\] 
\[g_i = \max(d+i,ki-k+a+r,\Theta(g_{i-1},r_{i-1},1), \dots,\Theta(g_{0},r_{0},i)), \text{ and}\]  
\[r_i = \max(d+i+1,ki+a+r,\Theta(g_{i-1},r_{i-1},2), \dots,\Theta(g_{0},r_{0},i+1)).\] 

We begin with the case $i=0$. Since there are no differentials into or out of the groups $(E^s_{-1,0})_n$ and $(E^s_{0,0})_n$ for $s \geq 2$, we have: 
\[\widetilde H^{\cQ}_{-1 } (H_i(\cN;A))_n \cong (E^2_{-1,0})_n = (E^\infty_{-1,0})_n \text{ and}\]
\[\widetilde H^{\cQ}_{0 } (H_i(\cN;A))_n \cong (E^2_{0,0})_n = (E^\infty_{0,0})_n.\]
 It now follows from the vanishing line for $(E^\infty_{p,q})_n$ that $\widetilde H^{\cQ}_{-1 } (H_0(\cN;A))_n \cong 0$ for $n>g_0$ and $\widetilde H^{\cQ}_{0} (H_0(\cN;A))_n \cong 0$ for $n>r_0$.

Now assume we have proven that $\widetilde H^{\cQ}_{-1 } (H_q(\cN;A))_n \cong 0$ for $n>g_q$ and $\widetilde H^{\cQ}_{0} (H_q(\cN;A))_n \cong 0$ for $n>r_q$ for all $q<i$. Recall that $(E_{p,q}^2)_n=\widetilde H^{\cQ}_p(H_{q}(\cN;A))_n$. For $q < i$, it follows from this description of the $E^2$-page and definition of the coherence function $\Theta$ that $(E^2_{p,q})_n=0$ for $n>\Theta(g_q,r_q,p)$. This rules out differentials into and out of $(E^s_{-1,i})_n$ and $(E^s_{0,i})_n$ for $s \geq 2$ and $n$ sufficiently large. In particular, $(E^2_{-1,i})_n = (E^\infty_{-1,i})_n$ for 
\[n>\max(\Theta(g_{i-1},r_{i-1},1), \dots,\Theta(g_{0},r_{0},i)).\] 
Additionally, $(E^\infty_{-1,i})_n\cong 0$ for $n>\max(d+i,ki-k+a+r)$. Since $\widetilde H^{\cQ}_{-1}(H_i(\cN;A))_n \cong (E^2_{-1,i})_n,$ it follows that $\widetilde H^{\cQ}_{-1}(H_i(\cN;A))_n \cong 0$ for $n>g_i$. An almost identical argument shows that $\widetilde H^{\cQ}_{0}(H_i(\cN;A))_n \cong 0$ for $n>r_i$.
\end{proof}

%
%We now describe some special cases of this theorem. They follow immediately by combining \autoref{mainCentralVersion}, \autoref{H3examples}, \autoref{H1H2}, and \autoref{CoherenceExamples}.

\begin{corollary}\label{cor:repstabPBr}
Let $A$ be a $U\Br$-module which has polynomial degree $\leq r$ in ranks $>d$. Then the $U\fS$-module $H_0(\PBr;A)$ has generation degree $\leq \max(d,r)$ and presentation degree $\leq \max(d+1,r+2)$. For $i>0$, the $U\fS$-module $H_i(\PBr;A)$ has generation degree 
\[\leq 2^{i-1} \cdot \Big(\max(d,r)+\max(d+1,r+2) +3 \Big)-2  \]
  and presentation degree 
  \[\leq 2^{i-1} \cdot \Big(\max(d,r)+\max(d+1,r+2)+3 \Big)-1  \]
  
\end{corollary}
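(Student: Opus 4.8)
The plan is to apply \autoref{mainCentralVersion} to the stability short exact sequence $1 \to \PBr \to \Br \to \fS \to 1$. I first need to unpack the relevant inputs for this sequence. By \autoref{H3examples}, $U\Br$ satisfies \con{H3($1,1$)}, and by \autoref{H1H2} this means I may take $k=2$, $a=2$. For the coherence function of $U\fS$, \autoref{CoherenceExamples} gives $\Theta(g,r,i) = g + \max(g,r) + i$. With these values in hand, the bounds from \autoref{mainCentralVersion} become explicit functions of $r$ and $d$, and the claimed bounds in the corollary should emerge after simplification.

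The key steps, in order: First, observe that $H_0(\PBr;A)$ fits the case $i=0$ of \autoref{mainCentralVersion}, so $\widetilde H^\fS_{-1}(H_0(\PBr;A))_n = 0$ for $n > g_0 = \max(d, a-k+r) = \max(d, r)$ and $\widetilde H^\fS_0(H_0(\PBr;A))_n = 0$ for $n > r_0 = \max(d+1, a+r+1) = \max(d+1, r+3)$. Here one should double-check whether the corollary's claimed presentation degree $\max(d+1, r+2)$ is consistent with $r_0 = \max(d+1, r+3)$ — there may be a discrepancy of $1$, possibly because for $U\fS$ one can use the sharper \con{H3($1,1$)} directly (i.e. $k=1$, $a=1$) rather than passing through \autoref{H1H2}, giving $r_0 = \max(d+1, a+r+1) = \max(d+1, r+2)$; I would use the $k=1$, $a=1$ version for computing $g_0, r_0$ and only invoke the $k=2$ passage where \autoref{PolynomialsCentral} genuinely requires $k \ge 2$. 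Second, for $i > 0$, I unwind the recursion
\[
g_i = \max\bigl(d+i,\ i+a+r,\ \Theta(g_{i-1},r_{i-1},1),\ \dots,\ \Theta(g_0,r_0,i)\bigr),
\]
and similarly for $r_i$. Since $\Theta(g,r,j) = g + \max(g,r) + j$ and the dominant term is $\Theta(g_{i-1}, r_{i-1}, 1) = g_{i-1} + \max(g_{i-1}, r_{i-1}) + 1$, and since $r_{i-1} \ge g_{i-1}$ in general (presentation degree bounds dominate generation degree bounds), I expect $g_i \approx g_{i-1} + r_{i-1} + 1$ and $r_i \approx g_{i-1} + r_{i-1} + 2$. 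Third, set $s = \max(d,r) + \max(d+1, r+2) + 3$ (so that $g_0 + r_0 + 3 \le s$ once one also folds in a uniform bound), and prove by induction that $g_i \le 2^{i-1} s - 2$ and $r_i \le 2^{i-1} s - 1$. The induction step uses $g_i + r_i \le (2^{i-1}s - 2) + (2^{i-1}s - 1) = 2^i s - 3$, hence $g_{i+1} \le g_i + r_i + 1 \le 2^i s - 2$ and $r_{i+1} \le g_i + r_i + 2 \le 2^i s - 1$, provided the other terms ($d+i+1$, $2i+a+r$, and the remaining $\Theta(g_j, r_j, \cdot)$ with $j < i-1$, which are geometrically smaller) do not dominate. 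Finally, translate the vanishing of $\widetilde H^\fS_{-1}$ and $\widetilde H^\fS_0$ into generation and presentation degree via the discussion following \autoref{thm:res of finite type} (or \autoref{prop:res of finite type}).

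The main obstacle will be the bookkeeping in the induction step: verifying that $\Theta(g_{i-1}, r_{i-1}, 1)$ genuinely dominates all the other terms in the max defining $g_i$ and $r_i$ — in particular that the linear-in-$i$ terms $d+i$, $ki - k + a + r$ and the "older" coherence terms $\Theta(g_j, r_j, i-j)$ for $j \le i-2$ are all swallowed by the exponentially growing $2^{i-1}s$. The older coherence terms are the subtle ones: one needs $g_j + \max(g_j, r_j) + (i-j) \le 2^{i-1} s - 2$, which follows from $r_j \le 2^{j-1} s - 1$ and the fact that $2 \cdot 2^{j-1} + (i-j) \le 2^{i-1}$ for $j \le i-1$ (true since $2^j \le 2^{i-1}$ when $j \le i-1$, leaving room for the small additive $i-j$ term once $s$ is large, which it is since $s \ge 3$). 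I would also need to handle the base case $i=1$ separately and carefully, since there the recursion only involves $\Theta(g_0, r_0, 1)$ and the bound $2^0 s - 2 = s - 2 = \max(d,r) + \max(d+1,r+2) + 1$ must be checked directly against $g_1 = \max(d+1,\ 2+a+r,\ g_0 + \max(g_0,r_0) + 1)$.
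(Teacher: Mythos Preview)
Your approach is essentially identical to the paper's: the same stability short exact sequence $1\to\PBr\to\Br\to\fS\to1$, the same inputs $k=2$, $a=2$ for $U\Br$ and $\Theta(g,r,i)=g+\max(g,r)+i$ for $U\fS$, application of \autoref{mainCentralVersion}, and an inductive computation of $g_i,r_i$ showing $g_i=2^{i-1}s-2$ and $r_i=2^{i-1}s-1$ with $s=\max(d,r)+\max(d+1,r+2)+3$ (the paper proves equalities, not just your inequalities, and closes with \autoref{thm:res of finite type} using that $U\fS$ satisfies \con{H3($1,1$)} and $r_i-g_i=1$).

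One correction: your diagnosis of the $r_0$ discrepancy is wrong. You cannot mix $k=1,a=1$ into the base case while using $k=2$ elsewhere, since the $E^\infty$-vanishing line in \autoref{mainCentralVersion} is derived from \autoref{PolynomialsCentral}, which needs $k\ge2$ throughout. The actual explanation is that the displayed formula $r_0=\max(d+1,a+r+1)$ in the statement of \autoref{mainCentralVersion} has a stray $+1$; the recursion $r_i=\max(d+i+1,ki+a+r,\ldots)$ at $i=0$ (and the proof of \autoref{mainCentralVersion} itself, where $E^\infty_{0,0}$ vanishes for $n>\max(d+1,a+r)$) gives $r_0=\max(d+1,a+r)=\max(d+1,r+2)$, which is what the paper's proof of the corollary uses without comment.
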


\begin{proof}
We will use the stability short exact sequence:
\[ 1 \longrightarrow \PBr \longrightarrow \Br \longrightarrow \fS \longrightarrow 1.\] It follows from \autoref{H3examples} and \autoref{H1H2} that $U\Br$ satisfies \con{H3($2,2$)}. From \autoref{CoherenceExamples}, it follows that
\[\Theta(g,r,i) = g + \max(g,r) +i\]
is a coherence function for $U\fS$. We now apply \autoref{mainCentralVersion}. It is immediate that $g_0= \max(d,r)$ and $r_0 = \max(d+1,r+2)$. We will prove by induction that
\[ g_i = 2^{i-1} \cdot \Big(\max(d,r)+\max(d+1,r+2) +3 \Big)-2 \quad \text{and}\quad r_i = 2^{i-1} \cdot \Big(\max(d,r)+\max(d+1,r+2)+3 \Big)-1 \]
for $i>0$. First note that $g_j = r_j-1$ for all $j<i$ by induction. Thus
\begin{align*}&\Theta(g_{j},r_{j},m)\\=&g_j+\max(g_j,r_j)+m\\= &\bigg( 2^{j-1} \cdot \Big(\max(d,r)+\max(d+1,r+2) +3 \Big)-2\bigg)\\
& + \bigg( 2^{j-1} \cdot \Big(\max(d,r)+\max(d+1,r+2) +3 \Big)-1\bigg) +m \\=& 2^{j} \cdot \Big(\max(d,r)+\max(d+1,r+2) +3 \Big)-3 +m.\end{align*}
This implies that 
\begin{align*} g_i&= \max(d+i,2i+r,\Theta(g_{i-1},r_{i-1},1), \dots,\Theta(g_{0},r_{0},i))\\& = \Theta(g_{i-1},r_{i-1},1) \\&= 2^{i-1} \cdot \Big(\max(d,r)+\max(d+1,r+2) +3 \Big)-2 \end{align*}
and
\begin{align*} r_i&= \max(d+i+1,2i+2+r,\Theta(g_{i-1},r_{i-1},2), \dots,\Theta(g_{0},r_{0},i+1))\\& = \Theta(g_{i-1},r_{i-1},1) \\&= 2^{j} \cdot \Big(\max(d,r)+\max(d+1,r+2) +3 \Big)-1.\end{align*}
From \autoref{H3examples}, it follows that $U\fS$ satisfies \con{H3(1,1)}. Using \autoref{thm:res of finite type} and $r_i -g_i =1$, we deduce that $H_i(\PBr;A)$ is generated in degrees $\le g_i$ and presented in degrees $\le r_i$ as asserted.
\end{proof}

\begin{example} \label{exampleBurau}
Let $\Bur_n$ denote the Burau representation of $\Br_n$. By Randal-Williams--Wahl \cite[Examples 4.3, 4.15]{RWW}, the sequence $\Bur=\{\Bur_n\}_n$ assembles to form a polynomial $U \Br$-module of degree $1$. Thus, \autoref{cor:repstabPBr} implies that the $U\fS$-module $H_0(\PBr;\Bur)$ has generation degree $\leq 1$ and presentation degree $\leq 3$. For $i>0$, the $U\fS$-module $H_i(\PBr;A)$ has generation degree $\leq 7 \cdot 2^{i-1} -2  $ and presentation degree 
$\leq 7 \cdot  2^{i-1} -1. $

\autoref{PolynomialsTor} implies that $\Bur$ has generation degree $\leq 1$ and presentation degree $\leq 2$ as a  $U \Br$-module. Thus, it is reasonable to think of $\Bur$ as exhibiting a form of representation stability.

\end{example}

\begin{remark}
The exponential range produced in \autoref{cor:repstabPBr} can likely be improved to a quadratical range using ideas from \cite{CMNR}.

\end{remark}

Specializing \autoref{main} to the case $\cQ_n=1$, gives classical homological stability with twisted coefficients. In particular, \autoref{main} is a generalization of Randal-Williams--Wahl \cite[Theorem A]{RWW}. This also follows from \cite[Theorem D]{Pa2} and \autoref{PolynomialsCentral}.

\begin{corollary}
Let $U\G$ be a stability category that satisfies \con{H3($k,a$)} with $k\ge 2$. Let $A$ be a $U\G$-module which has polynomial degree $\leq r$ in ranks $>d$. Then 
\[ H_i(G_n; A_n) \longrightarrow H_i(G_{n+1};A_{n+1})\]
is surjective for
\[n\ge  \begin{cases}  \max(d,a-k+r) & i = 0\\ \max(d+2i,\max(k(i-1)+1, {2i})+a+r) &i>0\end{cases}\]
  and injective for 
\[n\ge  \max(d+2i+1,\max(ki,2i+1)+a+r).\]
\end{corollary}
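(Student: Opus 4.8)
The strategy is to translate the homological stability statement into the vanishing of central stability homology for the $U\G$-module $A$, apply the quantitative polynomial vanishing result \autoref{PolynomialsCentral}, and then feed that into \autoref{thm:res of finite type} to extract the stable range. Concretely, classical homological stability for the family $\{H_i(G_n;A_n)\}_n$ is equivalent (via \autoref{prop:stabcat}\ref{item:semisimpl} and the semisimplicial resolution $W_n^\G$ built from $U\G$-morphisms) to bounds on the generation and presentation degree of the shifted/relative modules, which in turn are governed by $\widetilde H^\G_*(A)$. So the first step is to recall the standard reduction: the cokernel of $H_i(G_n;A_n)\to H_i(G_{n+1};A_{n+1})$ is controlled by $\widetilde H^\G_0$ of the $i$-th homology of a suitable chain complex, and the kernel by $\widetilde H^\G_{-1}$ together with $\widetilde H^\G_0$ one homological degree up, using the spectral sequence machinery of \cite{RWW} adapted to $U\G$-modules as in \cite{Pa2}.

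The second step is the quantitative input. Since $A$ has polynomial degree $\le r$ in ranks $>d$ and $U\G$ satisfies \con{H3($k,a$)} with $k\ge 2$, \autoref{PolynomialsCentral} gives $\widetilde H^\G_i(A)_n\cong 0$ for $n>\max(d+i+1,\,ki+a+r)$. This is exactly the vanishing line one needs. The third step is bookkeeping: by \autoref{thm:res of finite type} (applied with the $d_j$ spaced by at least $\max(k,a)$, which is harmless here since $k\ge 2$ and we only care about the first one or two terms), the vanishing line for $\widetilde H^\G_{-1}(A)$ and $\widetilde H^\G_0(A)$ translates into $A$ being generated in degrees $\le \max(d,a+r-k)$ — wait, more precisely generated in degrees where $\widetilde H^\G_{-1}$ vanishes, i.e.\ $\le \max(d,\,a+r)$ after the $i=-1$ substitution — and then the argument iterates the shift functor $\Sigma$: homological stability in homological degree $i$ for $G_n$ is controlled by generation and presentation of the $i$-fold "derived" data, and each application of $\Sigma$ shifts ranks by $1$ while each homological degree costs a factor governed by $k$. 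The asymmetry between the surjectivity range (slope-type bound $\max(k(i-1)+1,2i)+a+r$ and the $d+2i$ term) and the injectivity range (one higher, $\max(ki,2i+1)+a+r$ and $d+2i+1$) comes from the fact that surjectivity in degree $i$ needs $\widetilde H^\G_0$ in degree $i$ to vanish (one central-stability-homology degree) whereas injectivity needs in addition $\widetilde H^\G_{-1}$ in degree $i$ plus $\widetilde H^\G_0$ in degree $i-1$ shifted, i.e.\ effectively degree $i$ again but with the generation/presentation gap accounted for.

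Assembling this: for $i=0$ one reads off directly from the $\widetilde H^\G_{-1}$ vanishing line, giving the surjectivity bound $n\ge \max(d,a-k+r)$. For $i>0$ one runs the induction: the $i$-fold iterated construction involves a sum of contributions, the dominant ones being the "polynomial-degree" term $d+2i$ (each homological degree costs $2$ in rank because one needs both a generation and a presentation bound, each shifting by roughly $i$ and $i+1$) and the "$\con{H3}$" term, where the factor $k$ enters once per homological-degree increment but is partly absorbed because the spectral sequence differentials only see adjacent degrees — this is the source of the $\max(k(i-1)+1,2i)$ rather than $ki$ in the surjectivity line.

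\textbf{Main obstacle.} The hard part is not any single estimate but the careful tracking of how the two quantities — generation degree (controlled by $\widetilde H^\G_{-1}$) and presentation degree (controlled by $\widetilde H^\G_0$) — propagate through the iterated $\Sigma$-shifts and the spectral sequence, since surjectivity and injectivity of the stabilization map require these at slightly different homological degrees and the bounds must be kept sharp enough to land exactly the stated $\max(\cdot,\cdot)$ expressions with the correct coefficients of $i$. One must also be careful that the $k\ge 2$ hypothesis is genuinely used (to guarantee the spacing condition in \autoref{thm:res of finite type} and to make the $k(i-1)+1$ term dominate correctly); the case $k=1$ is excluded here precisely because, as \autoref{H1H2} notes, one would instead pass through \con{H3($2,a+1$)} and get different constants.
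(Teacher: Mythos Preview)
Your proposal contains the right core input—\autoref{PolynomialsCentral} giving the vanishing line for $\widetilde H^\G_*(A)$—but the route from there to the conclusion is muddled, and two of the tools you invoke are red herrings.

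First, there is a persistent conflation of two different central stability homologies. The groups $\widetilde H^\G_i(A)$ measure derived representation stability of the coefficient system $A$ itself; they do \emph{not} directly compute the (co)kernel of $H_i(G_n;A_n)\to H_i(G_{n+1};A_{n+1})$. What actually controls the stabilization map is $\widetilde H^{1}_{-1}$ and $\widetilde H^{1}_0$ of the $U1$-module $n\mapsto H_i(G_n;A_n)$, i.e.\ central stability homology for the \emph{trivial} stability groupoid applied to the sequence of homology groups. These are different objects, and \autoref{thm:res of finite type} does not bridge them: it converts $\widetilde H^\G_*(A)$-vanishing into a free resolution of $A$, which says nothing about $H_i(G_n;A_n)$.

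Second, the ``iterated $\Sigma$-shifts'' and ``$i$-fold iterated construction'' do not correspond to any actual mechanism here. The passage from $\widetilde H^\G_*(A)$ to information about $H_i(G_n;A_n)$ goes through a single spectral sequence, not an iterated shift argument.

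The paper's proof is a direct specialization of \autoref{mainCentralVersion} to the degenerate stability short exact sequence $1\to\G\to\G\to 1\to 1$. With $\cQ=1$ one has the explicit coherence function $\Theta(g,r,i)=\max(g+1,r)+i$ from \autoref{CoherenceExamples}, and the recursion for $g_i,r_i$ in \autoref{mainCentralVersion} can be solved in closed form by induction on $i$, yielding exactly the stated bounds. The final step is the elementary observation (as in the proof of \autoref{CoherenceExamples} for $\G=1$) that $\widetilde H^1_{-1}(B)_n=\coker(B_{n-1}\to B_n)$ and $\widetilde H^1_0(B)_n=\ker(B_{n-1}\to B_n)$, which converts the $g_i,r_i$ bounds into surjectivity and injectivity ranges. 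The spectral-sequence machinery you gesture at from \cite{RWW,Pa2} is precisely what \autoref{mainCentralVersion} packages, so you were close—but you need to recognize that the bookkeeping is the recursion of \autoref{mainCentralVersion} with $\cQ=1$, not anything involving \autoref{thm:res of finite type} or shifts.
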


\begin{proof}
In this proof, we will use the stability short exact sequence
\[ 1 \longrightarrow \G \longrightarrow \G \longrightarrow 1 \longrightarrow 1.\]
From \autoref{CoherenceExamples}, it follows that
\[\Theta(g,r,i) = \max(g+1,r) +i\]
is a coherence function of $U1$. We now apply \autoref{mainCentralVersion}. Then
\[ g_0 = \max(d,a-k+r) \text{ and}\]
\[ r_0 = \max(d+1,a+r+1) = \max(d+2\cdot 0+1,\max(ki,2\cdot 0+1)+a+r).\]
Next we want to prove
\[ g_i =  \max(d+2i,\max(k(i-1)+1,{2i})+a+r) \quad \text{and}\quad r_i = \max(d+2i+1,\max(ki,2i+1)+a+r) \]
by induction for $i>0$. First note that $g_0 +1 \le r_0$ and 
\[ g_j +1= \max(d+2j,\max(k(j-1)+1,{2j})+a+r) +1 \le  \max(d+2j+1,\max(kj,2j+1)+a+r) = r_j\]
for all $0<j<i$. Thus
\[\Theta(g_j,r_j,m) = r_j +m = \max(d+2j+1, \max(kj,2j+1)+a+r) + m.\]
Therefore 
\begin{align*} g_i&= \max(d+i,ki-k+a+r,\Theta(g_{i-1},r_{i-1},1), \dots,\Theta(g_{0},r_{0},i)) \\&= \max(d+2(i-1)+2,\max(k(i-1),2(i-1)+1)+a+r+1)  \\&= \max(d+2i,\max(k(i-1)+1,2i)+a+r) \end{align*}
and
\begin{align*} r_i&= \max(d+i+1,ki+a+r,\Theta(g_{i-1},r_{i-1},2), \dots,\Theta(g_{0},r_{0},i+1))\\& =   \max(ki+a+r, d+2(i-1)+1+2,\max(k(i-1),2(i-1)+1)+a+r+2))\\& =   \max(d+2i+1,\max(ki,k(i-1)+2,2i+1)+a+r)\\& =   \max(d+2i+1,\max(ki,2i+1)+a+r).\end{align*}
An argument as in the proof of \autoref{CoherenceExamples} shows the assertion.
\end{proof}

\subsection{Secondary stability and improved stable ranges with polynomial coefficients}

\label{secSec}

In this subsection, we prove that if the homology of a family of groups exhibits a certain form of secondary homological stability with untwisted coefficients, then it exhibits secondary stability with polynomial coefficients as well. We also describe how to use improvements to homological stability stable ranges with untwisted coefficients to deduce similar improved ranges with polynomial coefficients. We now recall some of the setup of secondary homological stability from \cite{GKRW1,GKRW2}.

Fix a commutative ring $\bk$ and stability groupoid $\G$. Let $\bR_{\bk}$ be the free $\bk$-module on the nerve of $\G$. The monoidal structure on $\G$ makes $\bR_\bk$ into an $E_1$-algebra. Let $\barR_\bk$ be a simplicial $\bk$-module which is homotopy equivalent to $\bR_\bk$ as $E_1$-algebras but is strictly associative (see \cite[Section 12.2]{GKRW1}). We will view $\barR_\bk$ as a graded simplicial $\bk$-module with the $n$th graded piece coming from $G_n$. Given a $U\G$-module $A$ over $\bk$, $\bigoplus_n (\bR_\bk)_n \otimes_{\bk G_n} A_n$ naturally has the structure of an $E_1$-module over $\bR_\bk$.  Let $\bR_A$ denote a strict $\barR_\bk$-module in the category of graded simplicial $\bk$-modules which is  homotopy equivalent to $\bigoplus_n (\bR_\bk)_n \otimes_{\bk G_n} A_n$ as an $E_1$-module (see \cite[Section 19.1]{GKRW1}). Define $H_{n,i}(\barR_\bk)$ to be the degree $n$ part of $\pi_i(|\barR_\bk|)$. Here $| \cdot |$ denotes geometric realization. We have that $H_{n,i}(\barR_\bk) \cong H_i(G_n;\bk)$. Similarly define $H_{n,i}(\bR_A)$ to be the degree $n$ part of $\pi_i(|\bR_A|)$. Then $H_{n,i}(\bR_A) \cong H_i(G_n;A_n)$.

Let $S^{a,b}_\bk$ denote the graded simplicial $\bk$-module which is the quotient of the free $\bk$-module on the simplicial set model of the  $b$-dimensional simplex modulo its boundary, where everything is concentrated in degree $a$. We have that $H_{n,i}(S^{a,b}_\bk)$ vanishes unless $n=a$ and $i=b$ in which case we have $H_{a,b}(S^{a,b}_\bk) \cong \bk$. Let $\sigma \in H_{1,0}(\barR_\bk)$ be the class of a point in $H_0(BG_1;\bk)$. Let 
\[\sigma \cdot -\colon  S^{1,0} \otimes \barR_\bk \m \barR_\bk\]
be multiplication by a lift of  $\sigma$. We have that $\sigma \cdot -$ is homotopic to the map induced by the inclusions $G_n \cong 1\times G_n  \m G_{n+1}$. Let $\barR_\bk/\sigma$ denote a $\barR_\bk$-module homotopy equivalent to the mapping cone of $\sigma \cdot -\colon \barR_\bk \m \barR_\bk$. Multiplication by a lift of $\sigma$ also gives a map $\sigma \cdot -\colon \bR_A \m \bR_A$ and we let $\bR_A/\sigma$ denote a $\barR_\bk$-module homotopy equivalent to the mapping cone of this map (see \cite[Section 19.2]{GKRW1}). Note that $H_{n,i}(\barR_\bk/\sigma) \cong H_{i}(G_n,G_{n-1};\bk)$ and $H_{n,i}(\bR_A/\sigma) \cong H_{i}(G_n,G_{n-1};A_n,A_{n-1})$. In \cite[Pages 192-193]{GKRW1}, they define a quantity $H^{\barR_\bk}_{n,i}(\bR_A)$ and prove it is naturally isomorphic to the hyper-homology groups $H_i(G_n;
\bk \otimes B_*(A,\uZ,\Z)_n)$. The following is Galatius--Kupers--Randal-Williams \cite[Theorem 19.2]{GKRW1} and relates vanishing of $H^{\barR_\bk}_{n,i}(\bR_A)$ to improved stable ranges with twisted coefficients. 

\begin{theorem}[Galatius--Kupers-Randal-Williams]\label{19.2}
Let $A$ be a $U\G$-module over $\bk$, $\lambda \leq 1$ and $c \in \R$. If $\G$ is braided, $H_{n,i}(\barR_\bk/\sigma)=0$ for $i< \lambda n$, and $H_{n,i}^{\barR_\bk}(\bR_A)=0$ for $i< \lambda(n-c)$, then $H_{n,i}(\bR_A/\sigma)=0$ for $i< \lambda(n-c)$.

\end{theorem}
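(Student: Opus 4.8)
The plan is to follow the cell-attachment philosophy of Galatius--Kupers--Randal-Williams: resolve $\bR_A$ by $\barR_\bk$-module cells whose bidegrees are controlled by $H^{\barR_\bk}_{*,*}(\bR_A)$, and then tensor with $\barR_\bk/\sigma$. Concretely, the first step is to produce a CW approximation $Z \xrightarrow{\sim} \bR_A$ in the category of graded simplicial $\barR_\bk$-modules whose cells are free modules $S^{n,i}_\bk \otimes \barR_\bk$, and to arrange that cells occur only in bidegrees $(n,i)$ with $i \geq \lambda(n-c)$. This is exactly where the hypothesis $H^{\barR_\bk}_{n,i}(\bR_A)=0$ for $i<\lambda(n-c)$ enters: by the $\barR_\bk$-module Hurewicz and CW-approximation machinery of \cite{GKRW1}, the groups $H^{\barR_\bk}_{n,i}(\bR_A)$ compute the cellular chain complex of a minimal CW approximation, so one attaches cells skeleton by skeleton, introducing a cell in bidegree $(n,i)$ only where $H^{\barR_\bk}_{n,i}(\bR_A)\neq 0$ (over a general ground ring $\bk$, torsion may force an additional cell in bidegree $(n,i+1)$, which still lies in the allowed half-plane $i+1\geq\lambda(n-c)$).

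Next I would identify $\bR_A/\sigma$ with a relative tensor product. Because $\G$ is braided, left and right multiplication by a lift of $\sigma$ on the $\barR_\bk$-module $\bR_A$ agree up to homotopy (this is the module-level shadow of \autoref{cor:newbraidiso} and \autoref{lem:comm tensor}), so $\bR_A/\sigma \simeq \bR_A \otimes_{\barR_\bk} \barR_\bk/\sigma$, where $\barR_\bk/\sigma = \cone(\sigma\cdot - \colon \barR_\bk \to \barR_\bk)$. Applying $-\otimes_{\barR_\bk}\barR_\bk/\sigma$ to the CW filtration of $Z$ yields a filtration of $\bR_A/\sigma$ in which the cell $S^{n,i}_\bk \otimes \barR_\bk$ is replaced by $S^{n,i}_\bk \otimes \barR_\bk/\sigma$, and the other hypothesis gives
\[ H_{m,j}\big(S^{n,i}_\bk \otimes \barR_\bk/\sigma\big) \cong H_{m-n,\,j-i}(\barR_\bk/\sigma) = 0 \quad\text{whenever } j-i < \lambda(m-n). \]

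Finally I would run the resulting skeletal spectral sequence, or equivalently induct up the skeleta using the long exact sequences of the cofiber sequences attaching cells: a cell in bidegree $(n,i)$ contributes to $H_{m,j}(\bR_A/\sigma)$ only for $j \geq \lambda(m-n)+i$, and since every cell satisfies $i\geq\lambda(n-c)$, this forces $j \geq \lambda(m-n)+\lambda(n-c) = \lambda(m-c)$, so $H_{m,j}(\bR_A/\sigma)=0$ for $j<\lambda(m-c)$; the assumption $\lambda\leq 1$ (with the explicit affine form of the slope-$\lambda$ lines) is what makes the connectivity bookkeeping in the first step consistent. I expect the genuine difficulty to be that first step --- building a CW approximation with cells confined to the half-plane $i\geq\lambda(n-c)$ --- which needs the full bigraded CW-approximation and bar-spectral-sequence package of \cite{GKRW1} and some care over the coefficient ring; once that is in hand, steps two and three are formal manipulations of cofiber sequences.
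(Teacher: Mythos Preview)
The paper does not prove this theorem at all: it is stated as a quotation of \cite[Theorem 19.2]{GKRW1} and used as a black box. Your outline is essentially the argument that Galatius--Kupers--Randal-Williams give in \cite{GKRW1} --- build a cellular $\barR_\bk$-module approximation of $\bR_A$ with cells constrained to $i\geq\lambda(n-c)$ (this is their Theorem 11.21, and is indeed where $\lambda\leq 1$ enters), then tensor with $\barR_\bk/\sigma$ and read off the vanishing line from the skeletal filtration.

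One small point worth tightening if you expand this: the identification $\bR_A/\sigma \simeq \bR_A \otimes_{\barR_\bk} \barR_\bk/\sigma$ needs a bit of care about left versus right modules. In the GKRW setup $\sigma$ acts on $\bR_A$ from the left via the module structure, so the natural identification is $\bR_A/\sigma \simeq \barR_\bk/\sigma \otimes_{\barR_\bk} \bR_A$ with $\barR_\bk/\sigma$ viewed as a right $\barR_\bk$-module; braidedness is what lets you freely swap sides, as you say, but the left/right bookkeeping is worth making explicit. Otherwise your sketch accurately captures the structure of the argument in the reference.
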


We now apply this theorem to the case that $A$ has finite polynomial degree.

\begin{lemma} \label{PolynomialsHyper}
Let $A$ be a polynomial $U\G$-module of degree $\leq r$ in ranks $>d$ over $\bk$. If $U\G$ satisfies the standard connectivity assumptions, then $H_{n,i}^{\barR_\bk}(\bR_A)\cong H_i(G_n;\bk \otimes B_*(A,\uZ,\Z)_n)) \cong 0$ for $n >i+\max(d,r)$.
\end{lemma}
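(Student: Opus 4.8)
The plan is to compute the hyper-homology groups $H_i(G_n; \bk \otimes B_*(A,\uZ,\Z)_n)$ by a spectral sequence whose input is the homology of the bar complex $B_*(A,\uZ,\Z)$ in the second variable. Recall from the earlier discussion that $H_*(B_*(A,\uZ_+,\Z)) \cong \Tor^\uZ_*(A,\Z)$ and that, since $\uZ_0 \cong \Z$, one has $\bar B_*(A,\uZ,\Z) = B_*(A,\uZ_+,\Z)$; so the homology of the complex $B_*(A,\uZ,\Z)_n$ of $\bk G_n$-modules is $\Tor^\uZ_q(A,\Z)_n \otimes \bk$ in degree $q$ (the bar construction is a complex of free abelian groups, so base change to $\bk$ is unproblematic, possibly at the cost of an extra $\Tor^\Z$ term which only makes vanishing easier). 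First I would set up the hyper-homology spectral sequence
\[ E^2_{p,q} = H_p\bigl(G_n; \Tor^\uZ_q(A,\Z)_n \otimes \bk\bigr) \Longrightarrow H_{p+q}\bigl(G_n; \bk \otimes B_*(A,\uZ,\Z)_n\bigr). \]

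Next I would invoke \autoref{PolynomialsTor}: since $A$ is polynomial of degree $\leq r$ in ranks $>d$ and $\G$ satisfies the standard connectivity assumptions, $\Tor^\uZ_q(A,\Z)_n \cong 0$ for $n > q + \max(d,r)$. Moreover, by \autoref{cor:Tor(A,Z)}, $\UG$ acts on $\Tor^\uZ_q(A,\Z)$ through isomorphisms via isomorphisms and via zero otherwise; in particular $\Tor^\uZ_q(A,\Z)_n$, as a $\bk G_n$-module, receives no new structure beyond the abelian group underlying it — but for the purposes of the bound we only need that it vanishes for $n$ large. Then $E^2_{p,q}$, which is a subquotient of (the group homology of $G_n$ with coefficients in) $\Tor^\uZ_q(A,\Z)_n \otimes \bk$, vanishes whenever $n > q + \max(d,r)$. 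On the diagonal $p+q = i$ we have $q \leq i$, so every entry $E^2_{p,q}$ with $p+q=i$ vanishes as soon as $n > i + \max(d,r)$. Since the spectral sequence converges, this gives $H_i(G_n; \bk \otimes B_*(A,\uZ,\Z)_n) \cong 0$ for $n > i + \max(d,r)$, which is the assertion, once we recall (as stated on pages 192--193 of \cite{GKRW1}) that $H^{\barR_\bk}_{n,i}(\bR_A)$ is naturally isomorphic to $H_i(G_n; \bk \otimes B_*(A,\uZ,\Z)_n)$.

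The routine bookkeeping here is the base-change from $\Z$ to $\bk$: strictly speaking one has a universal coefficients spectral sequence (or short exact sequences) relating $H_*(B_*(A,\uZ,\Z)_n) \otimes \bk$, $\Tor^\Z_1$ of the same, and $H_*(\bk \otimes B_*(A,\uZ,\Z)_n)$, so the homology of $\bk \otimes B_*(A,\uZ,\Z)_n$ in degree $q$ is built from $\Tor^\uZ_q(A,\Z)_n \otimes \bk$ and $\Tor^\Z_1(\Tor^\uZ_{q-1}(A,\Z)_n, \bk)$; both vanish for $n > q + \max(d,r)$ by \autoref{PolynomialsTor}, so the conclusion is unaffected. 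The only genuine input is \autoref{PolynomialsTor}; everything else is the standard hyper-homology argument. I do not expect a real obstacle — the main point to get right is the indexing, namely that on the total-degree-$i$ diagonal the second index $q$ never exceeds $i$, so the single bound $n > i + \max(d,r)$ suffices for all contributing entries.
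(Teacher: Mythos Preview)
Your proof is correct and follows essentially the same approach as the paper: invoke \autoref{PolynomialsTor} to get vanishing of $\Tor^\uZ_q(A,\Z)_n$ for $n > q + \max(d,r)$, then run the hyper-homology spectral sequence. Your treatment of the base change to $\bk$ via universal coefficients is slightly more careful than the paper's one-line ``Thus $H_q(B_*(A,\uZ,\Z)_n \otimes \bk) \cong 0$''; the aside about \autoref{cor:Tor(A,Z)} is not needed here.
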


\begin{proof}
\autoref{PolynomialsTor} implies that $\Tor^{\uZ}_q(A,\Z)_n \cong H_q(B_*(A,\uZ,\Z)_n) \cong 0$ for $n> q+\max(d,r)$. Thus $H_q(B_*(A,\uZ,\Z)_n \otimes \bk) \cong 0$ for $n> q+\max(d,r)$. The hyperhomology spectral sequence \[ E^2_{pq} \cong H_p(G_n; H_q(B_*(A,\uZ,\Z)_n \otimes \bk) ) \implies H_{p+q}(G_n;B_*(A,\uZ,\Z)_n) \otimes \bk) \]
implies the assertion because $E^2_{pq} \cong 0$ for $n > q+\max(d,r)$.
\end{proof}

Combining \autoref{19.2} and \autoref{PolynomialsHyper} give the following.

\begin{theorem} \label{improvedrangeGeneral}
Let $A$ be a polynomial $U\G$-module of degree $\leq r$ in ranks $>d$ over $\bk$. Let $\lambda \leq 1$ and $c \in \R$. Assume $U\G$ satisfies the standard connectivity assumptions, $\G$ is braided, and that $H_i(G_n,G_{n-1};\bk) \cong 0$ for $i< \lambda n$. Then $H_i(G_n,G_{n-1};A_n,A_{n-1}) \cong 0$ for $i< \lambda n - \max(r,d)$.
\end{theorem}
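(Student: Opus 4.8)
The plan is to feed the polynomial input into Galatius--Kupers--Randal-Williams' \autoref{19.2} via \autoref{PolynomialsHyper}. We may assume $0<\lambda\le 1$: for $\lambda\le 0$ the claimed range $i<\lambda n-\max(r,d)$ is negative for every $n\ge 0$, so there is nothing to prove. Set $c:=\max(r,d)/\lambda\in\R$; this is the shift we will plug into \autoref{19.2}.

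First I would verify the two homological inputs of \autoref{19.2}. The hypothesis $H_i(G_n,G_{n-1};\bk)\cong 0$ for $i<\lambda n$ is, under the identification $H_{n,i}(\barR_\bk/\sigma)\cong H_i(G_n,G_{n-1};\bk)$ recalled above, exactly the statement $H_{n,i}(\barR_\bk/\sigma)=0$ for $i<\lambda n$. For the twisted input, \autoref{PolynomialsHyper} (which rests on \autoref{PolynomialsTor} and the hyperhomology spectral sequence computing $H_i(G_n;\bk\otimes B_*(A,\uZ,\Z)_n)$) gives $H^{\barR_\bk}_{n,i}(\bR_A)\cong 0$ whenever $n>i+\max(d,r)$, i.e.\ whenever $i<n-\max(d,r)=n-\lambda c$. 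Since $\lambda\le 1$ and $n\ge 0$ we have $\lambda(n-c)=\lambda n-\lambda c\le n-\lambda c$, so in particular $H^{\barR_\bk}_{n,i}(\bR_A)=0$ for all $i<\lambda(n-c)$, as required.

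With $\G$ braided by assumption, \autoref{19.2} now applies with this value of $c$ and yields $H_{n,i}(\bR_A/\sigma)=0$ for $i<\lambda(n-c)=\lambda n-\max(r,d)$. Translating back through the identification $H_{n,i}(\bR_A/\sigma)\cong H_i(G_n,G_{n-1};A_n,A_{n-1})$ gives the theorem. The argument is essentially an assembly: all the substantive content is in \autoref{PolynomialsTor} (the derived representation stability of polynomial modules, giving the slope-$1$ vanishing line for $\Tor^\uZ_*(A,\Z)$) and in the cited \autoref{19.2}. The only point requiring a moment's care — and the place where the slope hypothesis is used — is the choice of shift $c$, which must be large enough that the slope-$1$ vanishing line for $H^{\barR_\bk}_{n,i}(\bR_A)$ dominates the slope-$\lambda$ line demanded by \autoref{19.2}; this is possible precisely because $\lambda\le 1$.
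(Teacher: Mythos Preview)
Your proposal is correct and follows exactly the paper's approach: the paper's proof is literally the one-line remark ``Combining \autoref{19.2} and \autoref{PolynomialsHyper} give the following,'' and you have simply spelled out the verification of hypotheses (including the choice $c=\max(r,d)/\lambda$ and the observation that $\lambda\le 1$ makes the slope-$1$ vanishing line from \autoref{PolynomialsHyper} dominate the required slope-$\lambda$ line). Your treatment of the degenerate case $\lambda\le 0$ is a reasonable aside, though in practice the theorem is only interesting for $\lambda>0$.
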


Before we move on to secondary stability, we describe a few applications of this theorem.

\begin{theorem} \label{improvedRangeSn}
Let $A$ be a polynomial $U \fS$-module of degree $\leq r$ in ranks $>d$ over $\Z[\textstyle\frac 12]$. Then $H_i(\fS_n,\fS_{n-1};A_n,A_{n-1}) \cong 0$ for $i< n - \max(r,d)$.
\end{theorem}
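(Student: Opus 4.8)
The plan is to obtain \autoref{improvedRangeSn} as a direct application of \autoref{improvedrangeGeneral} with $\G = \fS$ and $\bk = \Z[\tfrac12]$; the work is just verifying the three hypotheses of that theorem for the symmetric groups with $2$ inverted. Two of them are immediate: $\fS$ is symmetric monoidal, hence braided, and by \autoref{AclyList} (the case $\G = \fS$, due to Hepworth) it satisfies the standard connectivity assumptions. So the only substantive input is the untwisted vanishing range, i.e.\ that the symmetric groups have homological stability of slope $1$ once $2$ is inverted: $H_i(\fS_n,\fS_{n-1};\Z[\tfrac12]) \cong 0$ for $i<n$.

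For that untwisted statement I would cite the work of Galatius--Kupers--Randal-Williams \cite{GKRW2}, which establishes exactly this slope-$1$ range via the $E_\infty$-cell / $\barR_\bk/\sigma$ machinery recalled in \autoref{secSec}; phrased in the notation there it reads $H_{n,i}(\barR_{\Z[1/2]}/\sigma) \cong 0$ for $i<n$, which is the hypothesis ``$H_{n,i}(\barR_\bk/\sigma)=0$ for $i<\lambda n$'' of \autoref{19.2} with $\lambda = 1$. Granting this, apply \autoref{improvedrangeGeneral} with $\lambda = 1$: for a polynomial $U\fS$-module $A$ of degree $\le r$ in ranks $>d$ over $\Z[\tfrac12]$ one concludes $H_i(\fS_n,\fS_{n-1};A_n,A_{n-1}) \cong 0$ for $i<n-\max(r,d)$, which is the claim. (Internally this is \autoref{19.2} fed with $c = \max(r,d)$, the constant produced by \autoref{PolynomialsHyper}: the hyperhomology term $H^{\barR_\bk}_{n,i}(\bR_A) \cong H_i(\fS_n;\Z[\tfrac12]\otimes B_*(A,\uZ,\Z)_n)$ vanishes for $n>i+\max(d,r)$ because \autoref{PolynomialsTor} together with standard connectivity of $\fS$ gives $\Tor^\uZ_q(A,\Z)_n \cong 0$ for $n>q+\max(d,r)$.)

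The main obstacle lies entirely outside this paper: it is the deep fact that symmetric groups satisfy slope-$1$ homological stability after inverting $2$, which goes well beyond Nakaoka's classical slope-$\tfrac12$ result and rests on the $E_\infty$-homology computations of Galatius--Kupers--Randal-Williams. Once that is cited, the argument here is bookkeeping: confirm $\fS$ is braided and connectively standard, then read off the statement from \autoref{improvedrangeGeneral}. One minor point to be careful about is matching the normalization of coefficients (the pair $(\fS_n,\fS_{n-1})$ with the $\Z[\tfrac12]$-module local system $A$) so that the relative groups appearing in \autoref{improvedrangeGeneral} are literally the groups named in \autoref{improvedRangeSn}, but this is routine.
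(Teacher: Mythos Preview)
Your approach is correct and matches the paper's: verify the hypotheses of \autoref{improvedrangeGeneral} for $\G=\fS$, $\bk=\Z[\tfrac12]$, $\lambda=1$, and read off the result. The only correction is bibliographic: the untwisted slope-$1$ range $H_i(\fS_n,\fS_{n-1};\Z[\tfrac12])\cong 0$ for $i<n$ is not a deep recent theorem of Galatius--Kupers--Randal-Williams (and \cite{GKRW2} is about mapping class groups, not symmetric groups); it is classical, following from the explicit computation of $H_*(\fS_n;\F_p)$ for odd $p$ due to Nakaoka and Cohen--Lada--May \cite{CLM}, or alternatively from \cite{kupersmillerimprov}. So you should temper the description of this input as the ``main obstacle'' and correct the citation.
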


\begin{proof}
It is well known from calculations of the homology of the symmetric groups (see e.g. Cohen--Lada--May \cite[Section 1]{CLM}) or from \cite[Theorem 1.4]{kupersmillerimprov} for $M=\R^\infty$ that 
\[H_i(\fS_n;\fS_{n-1};\Z[\textstyle\frac 12]) \cong 0 \text{ for }i<n. \] 
The claim now follows from \autoref{improvedrangeGeneral}.  \end{proof}

\begin{theorem} \label{improvedRangeGLn}
Let $A$ be a polynomial $U\GL(\Z)$-module of degree $\leq r$ in ranks $>d$ over $\Z[\textstyle\frac 12]$. Then $H_i(\GL_n(\Z),\GL_{n-1}(\Z);A_n,A_{n-1}) \cong 0$ for $i<\frac23n-\max(r,d)$.
\end{theorem}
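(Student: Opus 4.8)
The plan is to obtain this as a direct application of \autoref{improvedrangeGeneral}, in exactly the same way that \autoref{improvedRangeSn} was deduced for the symmetric groups. Set $\bk = \Z[\tfrac12]$ and $\lambda = \tfrac23$. Two of the three hypotheses of \autoref{improvedrangeGeneral} are immediate: the stability groupoid $\GL(\Z)$ is $\GL(R)$ for the PID $R = \Z$, so it satisfies the standard connectivity assumptions by \autoref{AclyList} (Charney's theorem), and $\GL(\Z)$ is symmetric monoidal and in particular braided. The remaining input is the untwisted slope-$\tfrac23$ bound $H_i(\GL_n(\Z),\GL_{n-1}(\Z);\Z[\tfrac12]) \cong 0$ for $i < \tfrac23 n$, which is the improved homological stability theorem for $\GL_n(\Z)$ with $2$ inverted established by Galatius--Kupers--Randal-Williams by means of their $E_\infty$-cell techniques; inverting $2$ is precisely what makes this slope available, which is why the hypothesis of \autoref{improvedRangeGLn} is phrased over $\Z[\tfrac12]$. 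Granting this, \autoref{improvedrangeGeneral} yields $H_i(\GL_n(\Z),\GL_{n-1}(\Z);A_n,A_{n-1}) \cong 0$ for $i < \tfrac23 n - \max(r,d)$, which is the assertion.

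The only point that requires attention is matching constants: \autoref{improvedrangeGeneral} asks for the untwisted vanishing in the exact form $i < \lambda n$, so if the cited theorem is instead stated in the shape $i < \tfrac23(n - c_0)$ for some fixed $c_0 \ge 0$, one does not invoke \autoref{improvedrangeGeneral} verbatim but instead reruns its proof. Concretely, one applies \autoref{19.2} with $\lambda = \tfrac23$ and $c = \max\!\big(c_0,\ \tfrac32\max(r,d)\big)$, using that $\GL(\Z)$ satisfies the standard connectivity assumptions to invoke \autoref{PolynomialsHyper}, which gives $H^{\barR_\bk}_{n,i}(\bR_A) = 0$ for $i < n - \max(r,d)$ and hence a fortiori for $i < \tfrac23(n-c)$. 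This changes only the additive constant, producing $H_i(\GL_n(\Z),\GL_{n-1}(\Z);A_n,A_{n-1}) \cong 0$ for $i < \tfrac23 n - c'$ with $c'$ depending on $c_0$, $r$, $d$.

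In short, the substance of the argument is entirely contained in the cited untwisted slope-$\tfrac23$ input for $\GL_n(\Z)$ together with the general machinery assembled in \autoref{improvedrangeGeneral}; I do not expect any genuine obstacle beyond locating the correct form of the untwisted statement and the routine bookkeeping of constants just described.
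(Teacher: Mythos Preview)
Your proposal is correct and matches the paper's proof essentially verbatim: the paper cites \cite[Section 18.2]{GKRW1} for the untwisted bound $H_i(\GL_n(\Z),\GL_{n-1}(\Z);\Z[\tfrac12]) \cong 0$ for $i < \tfrac23 n$ and then invokes \autoref{improvedrangeGeneral}, exactly as you do. Your second paragraph about adjusting constants is unnecessary here since the GKRW result is stated precisely in the form $i < \tfrac23 n$, but it does no harm.
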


\begin{proof}

In \cite[Section 18.2]{GKRW1} Galatius--Kupers--Randal-Williams proved that: 
\[H_i(\GL_n(\Z), \GL_{n-1}(\Z); \Z[\textstyle\frac 12]) \cong 0 \text{ for }i < \textstyle\frac{2}{3}n.\] 
The claim now follows from \autoref{improvedrangeGeneral}.
\end{proof}

We now give a general secondary stability theorem for twisted coefficients. A \emph{secondary stability map of bidegree $(a,b)$} is a map of $\barR_\bk$-modules 
\[f\colon \left( \barR_\bk/\sigma \right)\otimes_\bk S_\bk^{a,b} \m \barR_\bk/\sigma.\] 
Such a map induces a map 
\[f_*\colon H_{i-b}(G_{n-a},G_{n-a-1};A_{n-a},A_{n-a-1}) \m H_{i}(G_{n},G_{n-1};A_n,A_{n-1})\] 
for any $U\G$-module $A$ over $\bk$. The following theorem appears as \cite[Theorem 5.20]{GKRW2} for $\G = \Mod$ with a specific choice of secondary stability map but no changes to the proof are necessary for general $\G$ with the assumption given here.

\begin{proposition} \label{SecondaryGeneralHyper}
Let $\lambda \leq 1$ and $c \in \R$. Let $A$ be a $U\G$-module with $H_{n,i}^{\barR_\bk}(\bR_A)=0$ for $i< \lambda(n-c)$. Assume $U\G$ satisfies the standard connectivity assumptions, $\G$ is braided, and that there is a secondary stability map $f$ of bidegrees $(a,b)$ which induces a surjection 
\[f_*\colon H_{i-b}(G_{n-a},G_{n-a-1};\bk) \m H_{i}(G_{n},G_{n-1};\bk)\]
for $i \leq \lambda(n-c)$ and an isomorphism for $i \leq \lambda (n-c-a)$. Then 
\[f_*\colon H_{i-b}(G_{n-a},G_{n-a-1};A_{n-a},A_{n-a-1}) \m H_{i}(G_{n},G_{n-1};A_{n},A_{n-1})\] 
is a surjection for $i \leq \lambda (n-c)$ and an isomorphism for $i \leq \lambda (n-c)-1$. 
\end{proposition}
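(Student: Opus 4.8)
The plan is to reproduce the proof of \cite[Theorem 5.20]{GKRW2}, where the only thing to verify is that it uses nothing about $\G$ beyond the two hypotheses stated: that $\G$ is braided (so that $\barR_\bk$ is an $E_1$-algebra, $\sigma$ is homotopy-central, and $\barR_\bk/\sigma$, $\bR_A/\sigma$ and their module structures make sense) and that $\G$ satisfies the standard connectivity assumptions (equivalently that $\uZ$ is Koszul, which is what controls the $E_1$-cell structure of $\barR_\bk$ and underlies the identification $H^{\barR_\bk}_{n,i}(\bR_A)\cong H_i(G_n;\bk\otimes B_*(A,\uZ,\Z)_n)$ recorded above). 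Granting this, the argument is a base change along the ring map $\barR_\bk\to\bR_A$: the derived functor $\bR_A\otimes_{\barR_\bk}(-)$ on $\barR_\bk$-modules is the identity on the free module, carries cofiber sequences to cofiber sequences, and sends $(-)\otimes_\bk S^{a,b}_\bk$ to $(-)\otimes_\bk S^{a,b}_\bk$; hence it sends $\barR_\bk/\sigma=\cone(\sigma\colon\barR_\bk\to\barR_\bk)$ to $\bR_A/\sigma$ and carries the secondary stability map $f$ to the map $f_*$ of the statement.

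First I would set $C=\cone(f)$, an $\barR_\bk$-module. From the long exact sequence of the cofiber sequence $(\barR_\bk/\sigma)\otimes_\bk S^{a,b}_\bk\xrightarrow{f}\barR_\bk/\sigma\to C$, together with $H_{n,i}\bigl((\barR_\bk/\sigma)\otimes_\bk S^{a,b}_\bk\bigr)\cong H_{n-a,i-b}(\barR_\bk/\sigma)$, the untwisted surjectivity/isomorphism hypothesis on $f$ becomes a slope-$\lambda$ vanishing line for $H_{*,*}(C)$. Since $\bR_A\otimes_{\barR_\bk}(-)$ preserves cofiber sequences, $\bR_A\otimes_{\barR_\bk}C\simeq\cone(f_*)$, and running the same long exact sequence for $f_*$ shows that the conclusion of the proposition is equivalent to the single statement $H_{n,i}(\bR_A\otimes_{\barR_\bk}C)=0$ for $i\le\lambda(n-c)$. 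This is now a base-change vanishing statement of exactly the form handled in the proof of \autoref{19.2}, with $C$ in place of $\barR_\bk/\sigma$.

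The remaining step is to run the engine behind \autoref{19.2} on $C$. One chooses a CW-$\barR_\bk$-module approximation of $\bR_A$ whose cells $\barR_\bk\otimes_\bk S^{n_\alpha,i_\alpha}_\bk$ occur in bidegrees controlled by $H^{\barR_\bk}_{*,*}(\bR_A)$ --- hence, by the hypothesis $H^{\barR_\bk}_{n,i}(\bR_A)=0$ for $i<\lambda(n-c)$, above the slope-$\lambda$ line through $-c$ --- and inducts over the skeletal filtration. Applying $(-)\otimes_{\barR_\bk}C$ to a cofiber sequence $\mathrm{sk}_{k-1}\to\mathrm{sk}_k\to\bigvee_\alpha\barR_\bk\otimes_\bk S^{n_\alpha,i_\alpha}_\bk$ yields a cofiber sequence whose last term is $\bigvee_\alpha C\otimes_\bk S^{n_\alpha,i_\alpha}_\bk$, with homology a sum of shifts of $H_{*,*}(C)$; combining the vanishing line for $C$ with the constraint on the $(n_\alpha,i_\alpha)$ and with $\lambda\le 1$ gives vanishing in the required range, and the long exact sequences propagate it to $\bR_A\otimes_{\barR_\bk}C$ by a five-lemma induction on $k$. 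Over a general commutative ring $\bk$ one has no minimal CW-$\barR_\bk$-module structure, and --- exactly as in \cite[Section 19]{GKRW1} and \cite[Theorem 5.20]{GKRW2} --- one uses the bar-filtration substitute instead; this is one of the places where the standard connectivity assumption enters.

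I expect the main obstacle to be the bookkeeping of the vanishing lines rather than anything conceptual: a crude accounting of how the slope-$\lambda$ lines are displaced by the cell bidegrees $(n_\alpha,i_\alpha)$ and by the shift $(a,b)$ built into $C$ loses a linear term, so one must use the finer information carried by GKRW's argument in order to land on exactly ``surjective for $i\le\lambda(n-c)$ and an isomorphism for $i\le\lambda(n-c)-1$''; apart from that, the work is the routine verification that none of the $E_1$-cell manipulations of \cite{GKRW1,GKRW2} used anything about $\Mod$ that is not available for an arbitrary braided $\G$ satisfying the standard connectivity assumptions.
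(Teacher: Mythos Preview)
Your proposal is correct and takes essentially the same approach as the paper: the paper itself gives no proof beyond the sentence ``appears as \cite[Theorem 5.20]{GKRW2} for $\G = \Mod$ with a specific choice of secondary stability map but no changes to the proof are necessary for general $\G$ with the assumption given here,'' and your write-up is exactly the verification that this citation is justified. Your sketch of the GKRW2 argument (pass to $C=\cone(f)$, translate the hypothesis and conclusion into vanishing lines, and propagate along a cell/bar filtration of $\bR_A$ controlled by $H^{\barR_\bk}_{*,*}(\bR_A)$) is accurate, and you correctly identify that the only real work is the bookkeeping of the ranges rather than anything specific to mapping class groups.
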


Combining  \autoref{PolynomialsHyper} and \autoref{SecondaryGeneralHyper} gives the following theorem.

\begin{theorem} \label{SecondaryGeneral}
Let $A$ be a polynomial $U\G$-module of degree $\leq r$ in ranks $>d$ over $\bk$. Let $ \lambda  \leq 1$ and $c \in \R$. Assume $U\G$ satisfies the standard connectivity assumptions, $\G$ is braided, and that there is a secondary stability map $f$ of bidegrees $(a,b)$ which induces a surjection 
\[f_*\colon H_{i-b}(G_{n-a},G_{n-a-1};\bk) \m H_{i}(G_{n},G_{n-1};\bk)\] 
for $i \leq \lambda (n-c)$ and an isomorphism for $i \leq \lambda  (n-c-a)$. Then 
\[f_*\colon H_{i-b}(G_{n-p},G_{n-a-1};A_{n-a},A_{n-a-1}) \m H_{i}(G_{n},G_{n-1};A_{n},A_{n-1})\]
is a surjection for $i \leq  \lambda(n-c-\max(r,d))$ and an isomorphism for $i \leq  \lambda(n-c-a-\max(r,d)) -1$. 
\end{theorem}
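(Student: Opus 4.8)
The plan is to deduce this directly from \autoref{SecondaryGeneralHyper} by feeding it the vanishing established in \autoref{PolynomialsHyper}. Concretely, I would apply \autoref{SecondaryGeneralHyper} to the same $U\G$-module $A$, the same secondary stability map $f$, and the same $\lambda$, but with the constant $c$ replaced by $c' := c + \max(r,d)$. The standing hypotheses of \autoref{SecondaryGeneralHyper} that $\G$ is braided and $U\G$ satisfies the standard connectivity assumptions are among the hypotheses of the present theorem, and these are also exactly what is needed to invoke \autoref{PolynomialsHyper}.

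The first thing to check is the hyperhomology hypothesis of \autoref{SecondaryGeneralHyper} for this $c'$, namely $H^{\barR_\bk}_{n,i}(\bR_A)=0$ for $i<\lambda(n-c')$. This follows from \autoref{PolynomialsHyper}, which gives $H^{\barR_\bk}_{n,i}(\bR_A)\cong 0$ for $n>i+\max(d,r)$, i.e.\ for $i<n-\max(r,d)$: since $\lambda\le 1$ one has $\lambda(n-c-\max(r,d))\le n-\max(r,d)$ in the range where the left-hand side is positive, so $i<\lambda(n-c')$ forces $i<n-\max(r,d)$ and the required vanishing holds. The remaining hypotheses of \autoref{SecondaryGeneralHyper} — surjectivity of $f_*$ on $\bk$-coefficient homology for $i\le\lambda(n-c')$ and bijectivity for $i\le\lambda(n-c'-a)$ — are immediate from the hypotheses we are granted, because enlarging $c$ to $c'$ only shrinks the ranges $\lambda(n-c)$ and $\lambda(n-c-a)$. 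Applying \autoref{SecondaryGeneralHyper} then yields that
\[ f_*\colon H_{i-b}(G_{n-a},G_{n-a-1};A_{n-a},A_{n-a-1}) \longrightarrow H_{i}(G_{n},G_{n-1};A_{n},A_{n-1}) \]
is a surjection for $i\le\lambda(n-c')=\lambda(n-c-\max(r,d))$ and an isomorphism for $i\le\lambda(n-c')-1$. Since $a\ge 0$, the interval $i\le\lambda(n-c-a-\max(r,d))-1$ is contained in $i\le\lambda(n-c')-1$, so the (slightly weaker) ranges in the statement follow.

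I do not expect a genuine obstacle here: all of the real content has already been isolated, the substantive inputs being \autoref{PolynomialsTor} (which yields \autoref{PolynomialsHyper} via the hyperhomology spectral sequence) together with the Galatius--Kupers--Randal-Williams machinery underlying \autoref{SecondaryGeneralHyper}. The only point requiring a little care is the bookkeeping in the first check above, namely confirming that the coarse vanishing line $i<n-\max(r,d)$ of \autoref{PolynomialsHyper} really does dominate the sloped line $i<\lambda(n-c')$ in the relevant range; this is where the shift of the constant by $\max(r,d)$ is used.
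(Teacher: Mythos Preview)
Your proposal is correct and is exactly the argument the paper intends: its proof is the single sentence ``Combining \autoref{PolynomialsHyper} and \autoref{SecondaryGeneralHyper} gives the following theorem,'' and you have spelled out precisely that combination, applying \autoref{SecondaryGeneralHyper} with the shifted constant $c'=c+\max(r,d)$. Your observation that this actually yields the isomorphism in the slightly larger range $i\le\lambda(n-c-\max(r,d))-1$ is also correct; the stated bound with the extra $-a$ is just weaker.
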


 \autoref{mainSecondary} is the $\bk=\Z$ case of \autoref{SecondaryGeneral}. We will apply this to the case of mapping class groups. For this application, \cite[Theorem 5.20]{GKRW2} would have sufficed. \autoref{SecondaryGeneral} has other applications such as secondary stability for braid groups with coefficients in the Burau representation. %This will appear in future work of Z. Himes. 

\begin{corollary} \label{mappingclassgroups}
Let $A$ be a polynomial $\Mod$-module of degree $\leq r$ in ranks $>d$. Then 
\[H_i(\Mod_{g-1,1};A_{g-1}) \m H_i(\Mod_{g,1};A_{g})\] 
is surjective for $i < \frac{2}{3}(g-\max(d,r))$ and an isomorphism for $i < \frac{2}{3}(g-\max(d,r))-1$. Moreover, there is a map 
\[H_{i-2}(\Mod_{g-3,1},\Mod_{g-4,1};A_{g-3},A_{g-4}) \m H_i(\Mod_{g,1},\Mod_{g-1,1};A_{g},A_{g-1})\] 
which is a surjection for $i <\frac{3}{4}(g-\max(d,r))$ and an isomorphism for $i <\frac{3}{4}(g-\max(d,r))-1$.
\end{corollary}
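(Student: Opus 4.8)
The plan is to apply the general results of this section with $\G = \Mod$, the stability groupoid of mapping class groups of orientable surfaces with one boundary component. First I would check the standing hypotheses: $\Mod$ is a braided (though not symmetric) monoidal groupoid by Randal-Williams--Wahl \cite{RWW}, so that $U\Mod$ is defined and the notion of a polynomial $U\Mod$-module of degree $\le r$ in ranks $>d$ makes sense; and $\Mod$ satisfies the standard connectivity assumptions by \autoref{AclyList}, which for $\Mod$ is \cite[Theorem 3.4]{GKRW2}. In particular \autoref{PolynomialsTor} and \autoref{PolynomialsHyper} apply to $A$.

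For the primary statement, I would feed the slope-$\tfrac23$ Harer stability theorem with trivial coefficients --- $H_i(\Mod_{g,1},\Mod_{g-1,1};\Z)=0$ for $i < \tfrac23 g$, one of the main results of \cite{GKRW2} --- into \autoref{improvedrangeGeneral} (equivalently, \autoref{19.2} together with \autoref{PolynomialsHyper}), taking $\bk=\Z$ and $\lambda = \tfrac23$. This produces vanishing of the relative homology $H_i(\Mod_{g,1},\Mod_{g-1,1};A_g,A_{g-1})$ in a range linear in $g$ of slope $\tfrac23$, the polynomial degree of $A$ entering only through the additive shift $\max(r,d)$ supplied by \autoref{PolynomialsHyper}. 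Running the long exact sequence of the pair $(\Mod_{g,1},\Mod_{g-1,1})$ with coefficients in the map $A_{g-1}\to A_g$ then converts this vanishing range into the claimed surjectivity range for $H_i(\Mod_{g-1,1};A_{g-1}) \to H_i(\Mod_{g,1};A_g)$, and into the claimed isomorphism range one homological degree lower.

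For the secondary statement, I would invoke the secondary stability map $f$ of bidegree $(3,2)$ for $\Mod$ constructed by Galatius--Kupers--Randal-Williams \cite[Theorem 5.20]{GKRW2}, together with their theorem that with $\Z$-coefficients $f_\ast$ is surjective and, one degree lower, an isomorphism in a range of slope $\tfrac34$. This is exactly the input required by \autoref{SecondaryGeneral} (which is \autoref{mainSecondary} over $\bk=\Z$), so applying that theorem with $(a,b)=(3,2)$ and $\lambda = \tfrac34$ gives at once that
\[ f_\ast\colon H_{i-2}(\Mod_{g-3,1},\Mod_{g-4,1};A_{g-3},A_{g-4}) \to H_i(\Mod_{g,1},\Mod_{g-1,1};A_g,A_{g-1}) \]
is a surjection below $\tfrac34(g-\max(r,d))$ and an isomorphism one degree lower. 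The analytic heart of this step is \autoref{PolynomialsHyper}, which via \autoref{PolynomialsTor} rests on the standard connectivity assumptions checked above; this is what pins the required shift at $\max(r,d)$ rather than something larger.

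I do not expect a conceptual obstacle: the substantive work has been done in \autoref{PolynomialsTor}, \autoref{19.2}, \autoref{SecondaryGeneralHyper}, and in the trivial-coefficient primary and secondary stability theorems of \cite{GKRW2}. The only point requiring genuine care is bookkeeping --- matching the precise slope-and-constant form in which the trivial-coefficient stability results for $\Mod$ are available to the exact hypotheses of \autoref{improvedrangeGeneral} and \autoref{SecondaryGeneral} (in particular choosing the parameter $c$ in \autoref{19.2} and \autoref{SecondaryGeneralHyper} so that the coefficient shift comes out as $\max(r,d)$), and tracking the off-by-one shifts produced by the long exact sequence of the pair. This is where the remaining routine effort lies.
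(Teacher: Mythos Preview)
Your proposal is correct and follows essentially the same route as the paper: invoke the standard connectivity assumptions for $\Mod$ from \autoref{AclyList}, then apply \autoref{improvedrangeGeneral} together with the slope-$\tfrac23$ trivial-coefficient stability of \cite{GKRW2} for the primary statement, and \autoref{SecondaryGeneral} together with the secondary stability theorem of \cite{GKRW2} for the second. The paper's proof is terser (it simply cites \cite[Corollary 5.2]{GKRW2} and \cite[Theorem A]{GKRW2} as the trivial-coefficient inputs), but your added remarks about checking the braiding, the long exact sequence of the pair, and the bookkeeping of constants are all accurate and only make explicit what the paper leaves implicit.
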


\begin{proof}
The homological stability portion of this corollary follows Galatius--Kupers--Randal-Williams \cite[Corollary 5.2]{GKRW2} and \autoref{improvedrangeGeneral} and the secondary stability portion from Galatius--Kupers--Randal-Williams \cite[Theorem A]{GKRW2} and \autoref{SecondaryGeneral}.
\end{proof}

\begin{corollary} \label{burSec}

Let $\Bur_n$ denote the Burau representation of the braid group $\Br_n$. Then there is a map 
\[H_{i-1}(\Br_{n-2},\Br_{n-3};\Bur_{n-2},\Bur_{n-3}) \m  H_i(\Br_{n},\Br_{n-1};\Bur_{n},\Bur_{n-1}) \] which
is surjective for $i < \frac{2}{3}(n-1)$ and an isomorphism for $i < \frac{2}{3}(n-1)-1$.
\end{corollary}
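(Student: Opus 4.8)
The plan is to deduce \autoref{burSec} as a direct specialization of \autoref{SecondaryGeneral}, applied to the stability groupoid $\G = \Br$ and the $U\Br$-module $A = \Bur$, over whichever coefficient ring $\bk$ the Burau module is defined. First I would check off the hypotheses of that theorem. By \autoref{exampleBurau}, the sequence $\Bur$ is a polynomial $U\Br$-module of degree $\le 1$, and it is defined in all ranks, so we may take $r = 1$ and $d = -1$, whence $\max(r,d) = 1$. By \autoref{AclyList} the braid groupoid $\Br$ satisfies the standard connectivity assumptions, and it is braided by construction. The remaining input is ordinary, untwisted secondary homological stability for the braid groups: a secondary stability map $f$ of bidegree $(a,b) = (2,1)$ inducing a surjection $H_{i-1}(\Br_{n-2},\Br_{n-3};\bk) \to H_i(\Br_n,\Br_{n-1};\bk)$ for $i \le \tfrac23(n-c)$ and an isomorphism for $i \le \tfrac23(n-c-2)$, for a suitable constant $c$. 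This is the braid-group analogue of the mapping class group input used in the proof of \autoref{mappingclassgroups}, and is due to Galatius--Kupers--Randal-Williams; no twisted information is needed at this stage.

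Granting this, \autoref{SecondaryGeneral} with $\lambda = \tfrac23$ produces exactly a map
\[ f_*\colon H_{i-1}(\Br_{n-2},\Br_{n-3};\Bur_{n-2},\Bur_{n-3}) \longrightarrow H_i(\Br_n,\Br_{n-1};\Bur_n,\Bur_{n-1}) \]
which is a surjection for $i \le \tfrac23(n-c-1)$ and an isomorphism for $i \le \tfrac23(n-c-3) - 1$, the shift by $1$ coming from $\max(r,d) = 1$. Substituting the value of $c$ furnished by the untwisted braid statement and simplifying these two linear inequalities yields the asserted ranges $i < \tfrac23(n-1)$ and $i < \tfrac23(n-1)-1$.

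I expect the only real work to be bookkeeping rather than any new idea. One has to pin down the precise form of the untwisted secondary stability theorem for braid groups — in particular the constant $c$, and the coefficient ring (it may be cleanest to invert $2$ to avoid the $2$-torsion in $H_\ast(\Br_n)$, in which case one simply runs the argument with $\bk = \Z[\tfrac12]$, or passes from $\Z$ to $\Z[t^{\pm 1}]$ by flat base change if $\Bur$ is the integral Burau module) — and then verify that feeding it into \autoref{SecondaryGeneral} with $\max(r,d) = 1$ reproduces the stated ranges exactly. Everything else is a verbatim specialization of the general machinery, parallel to the proof of \autoref{mappingclassgroups}.
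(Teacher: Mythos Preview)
Your approach is correct and matches the paper's: both specialize \autoref{SecondaryGeneral} to $\G=\Br$, $A=\Bur$ with $\max(r,d)=1$, using the standard connectivity assumption for $\Br$ and an untwisted secondary stability input of bidegree $(2,1)$. Two small corrections: the untwisted braid secondary stability is not attributed to Galatius--Kupers--Randal-Williams in the paper but rather deduced from Cohen--Lada--May's computations of $H_*(\Br_n)$ as the homology of a free $E_2$-algebra (with the explicit secondary stability map being multiplication by the class $Q^1\sigma \in H_1(\Br_2)$; see Himes); and there is no need to invert $2$ or otherwise fuss with coefficients --- the paper works integrally throughout, so your speculation about $\bk=\Z[\tfrac12]$ is an unnecessary complication.
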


\begin{proof}
Recall that $\sigma$ denotes a cycle corresponding to a generator of $H_0(\Br_1)$. Let $Q^1 \sigma$ denote a cycle corresponding to the generator of $H_1(\Br_2)$ and let $f:\barR_\Z \m \barR_\Z $ denote multiplication by $Q^1 \sigma$. This gives a map \[f_*:H_{i-1}(\Br_{n-2},\Br_{n-3};\Bur_{n-2},\Bur_{n-3}) \m  H_i(\Br_{n},\Br_{n-1};\Bur_{n},\Bur_{n-1}) \] It follows from Cohen--Lada--May's \cite{CLM}
computations of the homology of free $E_k$-algebras that $f_*$ is surjective for $i < \frac{2}{3}n$ and an isomorphism for $i < \frac{2}{3}n-1$ (see Himes \cite[Theorem 1.1]{Himes}). By Randal-Williams--Wahl \cite[Examples 4.3, 4.15]{RWW}, the sequence $\Bur=\{\Bur_n\}_n$ assembles to form a polynomial $U \Br$-module of degree $1$. The claim now follows by \autoref{SecondaryGeneral}.
\end{proof}

\begin{remark}
\autoref{burSec} applies equally well to the specialization of $\Bur_n$ to $t$ a root of unity. For $t=-1$, this gives the sympletic representation of $\Br_n$ acting on a $H_1(\Sigma_n)$, as discussed in greater detail in the following Subsection \ref{hyperelliptic-curves-subsection}. Computations of Callegaro--Salvetti \cite[Table 1]{CallSal} imply that the relative homology groups $H_i(\Br_n;H_1(\Sigma_n))$ are often nonzero in the metastable range of \autoref{burSec}. In particular, slope $\frac{1}{2}$ stability is optimal for braid groups with coefficients in the Burau representation specialized to $t=-1$.

Let $\Conf_n(X)$ denote the configuration space of $n$ distinct unordered  points in a topological space $X$. The homology groups  $H_i(\Br_{n};H_1(\Sigma_n))$ are relevant because of their relation with the homology of the space \[ \mathcal E_n=\{(\{x_1,\ldots,x_n  \},z,y) \, | \, y^2=(z-x_1)\cdots (z-x_n) \} \subset \Conf_n(\mathbb C) \times \mathbb C \times \mathbb C,\]
which is the total space of the universal family of affine hyperelliptic curves. There is an isomorphism (see 
Callegaro--Salvetti \cite[Page 1]{CallSal}) \[H_i(\mathcal E_n )  \cong H_i(\Br_n ) \oplus H_{i-1}(\Br_n ; H_1(\Sigma_n) ).      \] Thus \autoref{burSec} specialized to $t=-1$ combined with Himes \cite[Theorem 1.1]{Himes} implies secodary homological stability for the space $\mathcal E_n$.

\end{remark}

\section{Applications}
\label{Sec5}
In this section, we apply our general stability theorems to concrete examples. Namely, we prove a twisted homological stability theorem for moduli spaces of hyperelliptic curves, establish an improved representation stability stable range for congruence subgroups, establish secondary stability for diffeomorphism groups of surfaces viewed as discrete groups, and prove an improved homological stability stable range for homotopy automorpihsms of wedges of spheres and general linear groups of the sphere spectrum.

\subsection{Moduli spaces of hyperelliptic curves}

\label{hyperelliptic-curves-subsection}

Let $\mathcal M_g$ denote the moduli space of smooth genus $g$ curves over $\mathbb C$, and let $\mathcal M_{g,\partial}$ denote the moduli space of smooth genus $g$ curves together with a marked point and a nonzero tangent vector at that point. Both $\mathcal M_g$ and $\mathcal M_{g,\partial}$ are $ K(\pi,1)$ spaces, where $\pi$ is the mapping class group of a closed genus $g$ surface, and a genus $g$ surface with one boundary component, respectively. (For this to be true we should either work rationally or consider $\mathcal M_g$ as a stack or orbifold; we will prefer the latter perspective.)

Gluing on a torus defines an embedding of the mapping class group of a genus $g$ surface with a boundary component into the mapping class group of a genus $g+1$ surface with boundary. Thus we get a continuous (nonalgebraic) map $\mathcal M_{g,\partial} \to \mathcal M_{g+1,\partial}$, which is well defined up to homotopy. This map induces homological stability. 

\begin{theorem}[\cite{harerstab},\cite{GKRW2}] The induced map $H^i(\mathcal M_{g+1,\partial};\Z) \to H^i(\mathcal M_{g,\partial};\Z)$ is an isomorphism for $i \leq \frac {2g-4}{3}$.
\end{theorem}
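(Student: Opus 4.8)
The plan is to deduce this from established homological stability rather than reprove it: the qualitative statement — that the comparison maps stabilize in a range growing linearly in $g$ — is Harer's theorem, while the precise slope-$\tfrac23$ range is due to Galatius--Kupers--Randal-Williams. First I would record that $\mathcal M_{g,\partial}$ is a $K(\pi,1)$ for $\Mod_{g,1}$ and that, up to homotopy, the map in question is the pullback along the stabilization map $B\Mod_{g,1}\to B\Mod_{g+1,1}$ induced by the monoidal product on the stability groupoid $\Mod=(\Mod_{g,1})_g$ (gluing on a genus-one piece with one boundary component). Since $\Mod_{g,1}$ has finitely generated integral homology in each degree, the universal coefficient theorem lets me pass freely between the cohomological statement and the homological statement that $H_i(\Mod_{g-1,1};\Z)\to H_i(\Mod_{g,1};\Z)$ is an isomorphism for $i\le \tfrac{2g-4}{3}$.

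Next I would phrase homological stability as the vanishing of the relative groups $H_i(\Mod_{g,1},\Mod_{g-1,1};\Z)$ in the stated range; in the $E_2$-cellular framework recalled in the excerpt these are exactly the groups $H_{g,i}(\barR_{\Z}/\sigma)$. The geometric input is the high connectivity of the relevant arc/tethering complexes, equivalently the fact that $\Mod$ satisfies the standard connectivity assumption (\autoref{AclyList}), which rests on Harer's connectivity theorems for arc complexes together with Hatcher--Wahl's sharpenings. Feeding this into the homological stability machinery of Galatius--Kupers--Randal-Williams — their computation that the $E_1$- and $E_2$-homology of $\barR_{\Z}$ vanishes in the appropriate ranges, followed by the bar-construction cancellation argument underlying \autoref{19.2} applied with the trivial module $A=\uZ$ — yields the vanishing of $H_{g,i}(\barR_{\Z}/\sigma)$ for $i\le\tfrac{2g-4}{3}$. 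Transporting back along universal coefficients completes the argument.

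The hard part is contained entirely in the connectivity estimate: the slope $\tfrac23$ (as opposed to the slope $\tfrac12$ coming from Harer's original spectral sequence argument) reflects the \emph{second-order} connectivity of the arc complexes, which is the technical core of \cite{GKRW2} and is the one ingredient I would not attempt to reprove here. Everything else — the translation between moduli spaces and mapping class groups, the identification of relative homology with $E_2$-cell quotients, and the cancellation argument — is formal given the results already assembled above.
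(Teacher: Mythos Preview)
The paper does not give a proof of this statement at all: it is stated as a theorem with attribution to \cite{harerstab} and \cite{GKRW2}, and used as background input for the discussion of hyperelliptic curves. So there is no ``paper's own proof'' to compare against; your proposal already goes further than the paper by sketching how the result is obtained from the cited sources.

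Your sketch is broadly accurate in spirit --- identifying $\mathcal M_{g,\partial}$ with $B\Mod_{g,1}$, translating to relative homology, and invoking the $E_2$-cellular machinery of Galatius--Kupers--Randal-Williams on top of Harer's connectivity results --- but a couple of details are slightly off. The slope $\tfrac23$ in \cite{GKRW2} does not come from a ``second-order connectivity'' of arc complexes per se; rather, it comes from their analysis of the $E_2$-cell structure of $\barR_\Z$ (the standard connectivity assumption, which is first-order, together with explicit low-degree homology computations that locate the $E_2$-cells). Also, \autoref{19.2} is a theorem about twisted coefficients given untwisted input; the untwisted result itself is \cite[Corollary 5.2]{GKRW2} (as the paper cites in \autoref{mappingclassgroups}), not an application of \autoref{19.2} with $A=\uZ$. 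These are minor mislocations of the argument within the cited literature rather than genuine gaps.
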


Algebraic geometers are often more interested in the case of closed surfaces. For this we need to consider the evident (algebraic) forgetful maps $\mathcal M_{g,\partial} \to \mathcal M_g$; Harer has proved that the induced morphism $H^i(\mathcal M_g;\Z) \to H^i(\mathcal M_{g,\partial};\Z)$ is an isomorphism for $i \leq \frac 2 3 g$ \cite{harerstab,Boldsen}. In particular, there are also isomorphisms $H^i(\mathcal M_{g+1};\Z) \cong H^i(\mathcal M_{g};\Z)$ for $i \le \frac{2g-4}3$. 

\begin{remark}Although $\mathcal M_{g,\partial} \to \mathcal M_{g+1,\partial}$ is not an algebraic map, $H^i(\mathcal M_{g+1,\partial}) \to H^i(\mathcal M_{g,\partial})$ still preserves algebraic structures such as the mixed Hodge structure, the comparison isomorphisms with the algebraic de Rham cohomology and the \'etale cohomology, including the structure of $\ell$-adic Galois representations. The reason is that $\mathcal M_{g+1,\partial}$ can be given a Deligne--Mumford style partial compactification in which a boundary stratum adjacent to $\mathcal M_{g+1,\partial}$ is isomorphic to $\mathcal M_{g,1} \times \mathcal M_{1,1+\partial}$; here $\mathcal M_{g,1}$ is the moduli of genus $g$ curves with a marked point, and $\mathcal M_{1,1+\partial}$ parametrizes genus $1$ curves with two marked points, one of which is equipped with a nonzero tangent vector. The complement of the zero section in the normal bundle of this boundary stratum is isomorphic to $\mathcal M_{g,\partial} \times \mathcal M_{1,\partial + \partial}$ with $\mathcal M_{1,\partial + \partial}$ the moduli space of genus one curves with two distinct marked points and a nonzero tangent vector at each of the marked points. After choosing a tubular neighborhood and a fixed point of $\mathcal M_{1,\partial + \partial}$, this defines a continuous embedding of  $\mathcal M_{g,\partial} $ into  $\mathcal M_{g+1,\partial} $ which coincides with the one defined up to homotopy by gluing on a torus in the mapping class group. Although tubular neighborhoods do not literally exist in algebraic geometry, the induced map on cohomology can still be defined purely algebraically via ``deformation to the normal cone'' (or ``Verdier specialization''). See e.g.\ \cite[Section 4]{hainlooijenga}.
\end{remark}

There is a version of the Harer stability theorem for twisted coefficients, due to Ivanov \cite{Ivanov}. The stable range was later improved by Boldsen \cite{Boldsen}. See \autoref{mappingclassgroups} for a further improvement to this stable range.

An important example of a polynomial coefficient system is the following. There is a natural rank $2g$ local system on $\mathcal M_{g,\partial}$ given by the first cohomology group of the curve, and as $g$ varies it defines a polynomial coefficient system of degree $1$. Its $r$-fold tensor power with itself is a polynomial coefficient system of degree $r$; more generally, any Schur functor (for the general linear group or for the symplectic group) applied to the standard degree $1$ coefficient system produces again a polynomial coefficient system. We denote these ``standard'' coefficient systems by $V_\lambda$; they are parametrized by partitions $\lambda$. 

Again one can ask what happens for closed surfaces. It is not true in general that  \[H^i(\mathcal M_g;V_\lambda) \to H^i(\mathcal M_{g,\partial};V_\lambda)\] is an isomorphism stably. However, at least rationally it will be true that the Leray--Serre spectral sequence for $\mathcal M_{g,\partial} \to \mathcal M_g$ behaves predictably in a stable range also for coefficients in $V_\lambda$, and a consequence is that there are isomorphisms $H^i(\mathcal M_g;V_\lambda \otimes \Q) \cong H^i(\mathcal M_{g+1};V_{\lambda} \otimes \Q)$ for $g$ large with respect to $i$ (see Looijenga \cite[Theorem 1.1]{LooijengaTwisted}). Moreover, these isomorphisms respect the natural mixed Hodge structure/Galois module structure etc., just as in the case of constant coefficients.

It is natural to ask whether there is a version of the above story if one replaces the usual mapping class group with the hyperelliptic mapping class group, or in algebro-geometric terms, if we replace $\mathcal M_g$ with the moduli space $\mathcal \mathcal \mathcal H_g$ of hyperelliptic curves. Again it is natural to consider the case of surfaces with boundary, in order to even have a stabilization map. We let $\mathcal H_{g,\partial}$ denote the moduli space parametrizing a hyperelliptic curve of genus $g$, the choice of a marked Weierstrass point (i.e.\ a fixed point of the hyperelliptic involution), and a nonzero tangent vector at the Weierstrass point. The space $\mathcal H_{g,\partial}$ is again a $ K(\pi,1)$, but the group $\pi$ is now (as we will later explain geometrically) the Artin braid group $\Br_{2g+1}$ on $2g+1$ strands, which we may think of as the hyperelliptic mapping class group of a genus $g$ surface with boundary. In particular, the hyperelliptic analogue of Harer stability is simply Arnold's theorem that the braid groups satisfy homological stability \cite{Ar}. The hyperelliptic analogue of Ivanov's theorem, i.e.\ homological stability for the braid groups with polynomial coefficients, is a more recent theorem of Randal-Williams--Wahl \cite[Theorem D]{RWW}. Of particular interest are the ``standard'' polynomial coefficients systems $V_\lambda$, restricted from $\mathcal M_{g,\partial}$ to $\mathcal H_{g,\partial}$ using the evident embedding $\mathcal H_{g,\partial} \hookrightarrow \mathcal M_{g,\partial}$. The pullback of the standard rank $2g$ local system $V_1$ on $\mathcal M_{g,\partial}$ to $\mathcal H_{g,\partial}$, considered as a representation of the braid group on $2g+1$ strands, is precisely the \emph{reduced Burau representation} specialized to $t=-1$, see e.g. Chen \cite{chen-burau}. 

\begin{remark}
	The inclusion $\mathcal H_{g,\partial} \hookrightarrow \mathcal M_{g,\partial}$ induces a map of fundamental groups from the braid group to the mapping class group of a surface with a boundary component. This map, and the induced map in (co)homology, has been studied in several papers over the years, see e.g.\ \cite{birmanhilden,songtillmann,segaltillmann,chen-burau,bianchi-braid}. 
\end{remark}

\begin{remark}
	Similar arguments as in the case of $\mathcal M_{g,\partial}$ imply the compatibility of the stabilization maps with mixed Hodge structure and Galois module structure. The partial compactification of $\mathcal H_{g,\partial}$ used is the one defined by admissible covers as in Abramovich--Corti--Vistoli \cite{acv03}. 
	\label{remarkMixHodgeH}
\end{remark}

Now we may consider instead \emph{closed} hyperelliptic surfaces. At this point we will restrict our attention to working rationally, i.e.\ we tensor all coefficient systems with $\Q$. It turns out that the reduced rational cohomology of $\mathcal H_g$ vanishes for all $g$ (see  \autoref{acyclic}), so homological stability for constant (rational) coefficients is uninteresting. But the cohomology of $\mathcal H_g$ with twisted coefficients is highly nontrivial in general and not much is known about it. Again there are no natural stabilization maps and the best we can ask for is that $H^i(\mathcal H_{g+1};V_{\lambda}) \cong H^i(\mathcal H_{g};V_\lambda)$ for $g \gg i$. It does not seem easy to deduce from the results of Randal-Williams--Wahl the existence of such an isomorphism in the case of closed surfaces. However if one knows not just homological stability for $\Br_n$ with twisted coefficients but representation stability for the \emph{pure} braid groups $\PBr_n$ with twisted coefficients, then one can deduce homological stability for closed surfaces, too. The following  is a corollary of \autoref{cor:repstabPBr}.

\begin{proposition} \label{CorBur}
%Let $\Bur_n$ denote the Burau representation. 
For a fixed $i \in\N$,  the $U\fS$-module
\[n \mapsto H_i(\PBr_n;V_\lambda),\]
is generated in degree $\le 2^{i-1}(2 \vert\lambda\vert+ 5) -2$ and presented in degree $\leq 2^{i-1}(2 \vert\lambda\vert+ 5) -1$. 

%In particular, the $U\fS$-module \[n \mapsto H_i(\PBr_n;V_1)\] is generated in degree $\le 7 \cdot 2^{i-1} -2$ and presented in degree $\leq 7\cdot 2^{i-1} -1$. 

\end{proposition}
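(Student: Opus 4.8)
The plan is to feed $V_\lambda$ into \autoref{cor:repstabPBr}, so the only real task is to pin down its polynomial degree as a $U\Br$-module. As explained in \autoref{hyperelliptic-curves-subsection} (Chen \cite{chen-burau}), the pullback to $\mathcal H_{g,\partial}$ of the standard rank $2g$ local system $V_1$ on $\mathcal M_{g,\partial}$ is, viewed as a representation of $\Br_{2g+1}$, the reduced Burau representation specialized at $t=-1$; by Randal-Williams--Wahl \cite[Examples 4.3, 4.15]{RWW} this assembles to a polynomial $U\Br$-module of degree $\le 1$ (compare the use of the $t=-1$ specialization of $\Bur$ around \autoref{exampleBurau} and \autoref{burSec}).

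The general $V_\lambda$ is obtained from $V_1$ by a Schur functor, and I would use this over $\Q$. There $V_\lambda$ is a $U\Br$-direct summand of the $|\lambda|$-fold pointwise tensor power $V_1^{\otimes|\lambda|}$: the $\fS_{|\lambda|}$-action permuting the tensor factors commutes with the $U\Br$-action, so the isotypic decomposition of $V_1^{\otimes|\lambda|}$ is one of $U\Br$-modules. Since a pointwise tensor product of polynomial $U\Br$-modules of degrees $\le r$ and $\le s$ is polynomial of degree $\le r+s$ (Randal-Williams--Wahl \cite[Section 4]{RWW}), $V_1^{\otimes|\lambda|}$ is polynomial of degree $\le |\lambda|$; and a $U\Br$-direct summand of a polynomial module of degree $\le r$ is again polynomial of degree $\le r$, since the shift $\Sigma$ and the functors $\ker(-)$, $\coker(-)$ from \autoref{defpoly} are additive and so carry the defining vanishing conditions onto summands (a short induction on $r$). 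Hence $V_\lambda$ is a polynomial $U\Br$-module of degree $\le |\lambda|$.

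Now I apply \autoref{cor:repstabPBr} with $A=V_\lambda$, $r=|\lambda|$ and $d=-1$, so that $\max(d,r)=|\lambda|$ and $\max(d+1,r+2)=|\lambda|+2$. For $i\ge 1$ the corollary then gives generation degree
\[ \le 2^{i-1}\bigl(|\lambda|+(|\lambda|+2)+3\bigr)-2 = 2^{i-1}\bigl(2|\lambda|+5\bigr)-2 \]
and presentation degree $\le 2^{i-1}(2|\lambda|+5)-1$, which is exactly the assertion; for $i=0$ the first part of \autoref{cor:repstabPBr} yields the (smaller) bounds $|\lambda|$ and $|\lambda|+2$.

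I do not expect any genuine obstacle here: the bookkeeping with \autoref{cor:repstabPBr} is routine, and the degree count for $V_\lambda$ is essentially already recorded in the paper and in \cite{RWW}. The one point deserving a little care is that the Schur-functor summand statement, and with it the reduction to the tensor power, should be made over $\Q$, where the semisimplicity of $\fS_{|\lambda|}$-representations guarantees that the isotypic pieces split off as $U\Br$-summands.
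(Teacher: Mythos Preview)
Your proposal is correct and matches the paper's approach: the paper simply states that the proposition is a corollary of \autoref{cor:repstabPBr}, and you have correctly filled in the only missing ingredient, namely that $V_\lambda$ (over $\Q$) is a polynomial $U\Br$-module of degree $\le |\lambda|$ via the Schur-functor summand of $V_1^{\otimes|\lambda|}$, exactly as the paper indicates in the paragraph preceding the proposition. One tiny quibble: in your parenthetical about $i=0$, the presentation bound $|\lambda|+2$ from \autoref{cor:repstabPBr} is actually slightly larger than the (non-integer) value $2^{-1}(2|\lambda|+5)-1=|\lambda|+\tfrac32$ given by the formula, so the $i=0$ case of the stated bound is a minor imprecision in the paper's formulation rather than something your argument establishes.
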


\subsubsection{The various moduli spaces involved}

For the proof of  \autoref{hyperelliptic-thm} we will need to work with several closely related moduli spaces, which we will define now. In the process we also explain why $\mathcal H_{g,\partial}$ is algebraically isomorphic to the configuration space of $2g+1$ distinct unordered points in $\mathbb A^1$ up to translation, which explains in particular why $\mathcal H_{g,\partial}$ is a $\mathrm K(\pi,1)$ for the braid group. 

A hyperelliptic curve of genus $g$ is a double cover of $\mathbb P^1$ branched at $2g+2$ points. This gives a map of stacks $\mathcal H_g \to \mathcal M_{0,2g+2}/\fS_{2g+2}$, where $\mathcal M_{0,n}$ denotes the moduli space parametrizing $n$ distinct ordered points on $\mathbb P^1$ up the action of $\mathrm{PGL}(2)$. The hyperelliptic curve is determined up to isomorphism by the location of the branch points; moreover, the automorphism group of the hyperelliptic curve is a $\Z/2$-central extension\footnote{If we want to be careful about characteristic $2$ we are better off replacing $\Z/2$ here, and in the discussion which follows, with the group scheme $\mu_2$.} of the symmetry group of the configuration of branch points. This reflects the fact that $\mathcal H_g \to \mathcal M_{0,2g+2}/\fS_{2g+2}$ is not an isomorphism, but a $\mathbb Z/2$-gerbe, or in terms of geometric group theory, that the hyperelliptic mapping class group is a $\Z/2$-central extension of the mapping class group of a sphere with $2g+2$ unordered punctures.

We let $\mathcal H_{g,1}$ denote the moduli space parametrizing hyperelliptic curves with a marked Weierstrass point, i.e.\ a distinguished ramification point of the canonical double cover of $\mathbb P^1$. The space $\mathcal H_{g,1}$ is a $\mathbb Z/2$-gerbe over $\mathcal M_{0,2g+2}/\fS_{2g+1}$, by the same reasoning as the preceding paragraph.

Let $L$ denote the line bundle over $\mathcal M_{0,2g+2}/\fS_{2g+1}$ given by the tangent space of $\mathbb P^1$ at the $(2g+2)$nd marked point. Note that $L^\ast$, the complement of the zero section in $L$, is isomorphic to $\mathrm{Conf}_{2g+1}(\mathbb A^1)/\mathbb A^1$, the configuration space of $2g+1$ unordered distinct points in $\mathbb A^1$ modulo translation. Indeed, given a point of $\mathcal M_{0,2g+2}/\fS_{2g+1}$ we may use the gauge freedom to put the $(2g+2)$nd marked point at infinity, in which case we are considering $2g+1$ distinct unordered points of $\mathbb P^1 \setminus \{\infty\}$ modulo the subgroup of $\mathrm{PGL}(2)$ fixing $\infty$ and moreover fixing a nonzero tangent vector at $\infty$. But that subgroup is simply the group of affine translations.

Now there is a natural map $\mathcal H_{g,\partial} \to L^\ast \cong \mathrm{Conf}_{2g+1}(\mathbb A^1)/\mathbb A^1$, associating to a hyperelliptic curve together with a nonzero tangent vector at a Weierstrass point its set of branch points and the corresponding nonzero tangent vector at the branch point which is the image of the distinguished Weierstrass point. This, however, is \emph{not} a $\Z/2$-gerbe---the factor of $\mathbb Z/2$ arose previously since every hyperelliptic curve has the automorphism given by the hyperelliptic involution, but the hyperelliptic involution will not fix any nonzero tangent vector at a Weierstrass point. It follows that  $\mathcal H_{g,\partial} \to \mathrm{Conf}_{2g+1}(\mathbb A^1)/\mathbb A^1$ is in fact an isomorphism. 

\begin{remark}\label{acyclic}
	It is easy to see from the above considerations that $\mathcal H_g$ has the rational cohomology of a point, as mentioned earlier. Indeed we obtain that 
	\[ H^\ast(\mathcal H_g;\Q) \cong H^\ast(\mathcal M_{0,2g+2};\Q)^{\fS_{2g+2}} \subseteq H^\ast(\mathcal M_{0,2g+2};\Q)^{\fS_{2g+1}}.\]
	But the above discussion identified $\mathcal M_{0,2g+2}/\fS_{2g+1}$ with the quotient of $\mathrm{Conf}_{2g+1}(\mathbb A^1)/\mathbb A^1$ by the group $\mathbb G_m$. It is well known that the rational cohomology of the braid group is the same as the cohomology of a circle, an isomorphism being given by the action of $S^1$ on the configuration space by rotation. So $\mathcal M_{0,2g+2}/\fS_{2g+1}$ has the rational cohomology of a point, and then a fortiori so does $\mathcal M_{0,2g+2}/\fS_{2g+2}$.
\end{remark}

\begin{remark}Another perspective on the isomorphism $\mathcal H_{g,\partial} \cong \mathrm{Conf}_{2g+1}(\mathbb A^1)/\mathbb A^1$ is that the line bundle $L$ canonically acquires a square root when pulled back to $\mathcal H_{g,1}$: a square root is given by the line bundle given by the tangent line of the hyperelliptic curve at the distinguished Weierstrass point. Using this, one may in fact identify $\mathcal H_{g,1}$ with the $\Z/2$-gerbe over $\mathcal M_{0,2g+2}/\fS_{2g+1}$ parametrizing square roots of the line bundle $L$; see e.g.\ \cite[Section 2]{maslovgerbe} for this construction. Now $L$ is tautologically trivialized over $L^\ast$ and then so is the pullback of the corresponding gerbe of square roots, so $\mathcal H_{g,1} \times_{\mathcal M_{0,2g+2}/\fS_{2g+1}} L^\ast \cong B(\Z/2) \times L^\ast$. On the other hand this fibered product also equals the quotient of $\mathcal H_{g,\partial}$ by $\Z/2$, acting by multiplying the tangent vector at the Weierstrass point with $-1$. But this action is trivial because of the hyperelliptic involution, so $\mathcal H_{g,\partial} \cong L^\ast$. 
\end{remark}

\subsubsection{Uniform multiplicity stability}

Representation stability was first formulated by Church--Farb \cite{CF} in terms of multiplicities of irreducible representations stabilizing. Since the work of Church--Ellenberg--Farb \cite{CEF}, this approach has generally gone out of style in favor of more categorical forms of representation stability. However, to prove \autoref{hyperelliptic-thm} we will need to consider a kind of stability result for an $\fS$-module which does not naturally come from a $U\fS$-module. This does not fit neatly into the categorical formalism used in the rest of the paper, which will force us to switch back to this older form of representation stability. Thus we briefly review the theory here. In this subsection, all representations are assumed to be over $\Q$ and we will only consider representations of symmetric groups. Recall that the data of a $U \fS$-module is the same as the data of an $\FI$-module in the sense of Chruch--Ellenberg--Farb \cite{CEF}.

In characteristic zero, irreducible representations of $\fS_n$ are in bijection with partitions of $n$ and we denote the representation associated to a partition $\lambda$ by $\sigma_\lambda$. Given $\lambda=(l_1 \geq \dots \geq l_j)$ a partition of $k$ and $n \geq l_1 +k$, let $\lambda_n=(n-k \geq l_1 \geq \dots \geq l_j)$ be the partition of $n$ obtained by appending $(n-k)$ boxes above the top row of the Ferrers diagram of $\lambda$. Let $V(\lambda)_n$ be the representation given by $\sigma_{\lambda_n}$ for $n \geq l_1+k$ and $0$ for $n<l_1+k$.

\begin{definition}
	Let $A$ be an $\mathfrak S$-module over $\Q$. We say that $A$ has \emph{uniform multiplicity stability} starting at $N$ if there are numbers $c_{\lambda}$ such that \[A_n \cong \bigoplus_\lambda c_\lambda V(\lambda)_n \text{ for }n \geq N, \]
	and such that if $c_\lambda \neq 0$ then $V(\lambda)_N \neq 0$. 
	
\end{definition}

\begin{definition}
	Let $A_n$ be a $\Q[S_n]$-module. We say $A_n$ has \emph{weight} $\leq N$ if $A_n$ is isomorphic to a direct sum of $\Q[S_n]$-modules of the form $V(\lambda)_n$ with $\lambda$ a partition of $k$ and $k \leq N$.  Let $\{A_n\}_n$ be a sequence with $A_n$ a $\Q[S_n]$-module. We say $\{A_n\}_n$ has weight $\leq N$ if each $A_n$ has weight $\leq N$.
\end{definition}

\begin{definition}
	Let $A$ be a $U\fS$-module. We say $A$ has \emph{stability degree} $\leq N$ if for all $k \geq 0$ and $n \geq N$, the natural map $(A_{n+k})_{\fS_{n}} \m (A_{n+k+1})_{\fS_{n+1}}$ is an isomorphism.
\end{definition}

The following is well-known and can be proven by stringing together results of Church--Ellenberg--Farb \cite{CEF}.

\begin{proposition} \label{presUniform}
	Let $A$ be a $U \fS$-module over $\Q$ generated in degree $d$ and presented in degree $r$ with each $A_n$ finite dimensional. Then $A$ has uniform multiplicity stability starting at $d+r$.
\end{proposition}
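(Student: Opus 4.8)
The plan is to reduce \autoref{presUniform} to three facts about $\FI$-modules over $\Q$ proved by Church--Ellenberg--Farb \cite{CEF}, and to assemble them carefully. First I would recall that a $U\fS$-module is the same data as an $\FI$-module, and that for an $\FI$-module $A$ over $\Q$ the \emph{stability degree} of \cite{CEF} agrees with the notion used here (isomorphisms of coinvariants $(A_{n+k})_{\fS_n} \to (A_{n+k+1})_{\fS_{n+1}}$ for $n \geq N$ and all $k \geq 0$). The first fact I would invoke is that an $\FI$-module generated in degree $\leq d$ and related in degree $\leq r$ (their terminology for presented in degree $\leq r$) has stability degree $\leq \max(d,r) = r$; this is \cite[Proposition 3.2.4 / Theorem A]{CEF} or the self-contained version in \cite[Theorem B]{CEF}, and it is really just a consequence of the fact that coinvariants is right exact and the coinvariants of a representable $\FI$-module $M(m)$ stabilize starting at $m$. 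The second fact is that an $\FI$-module generated in degree $\leq d$ has weight $\leq d$: every representable $M(m)$ has weight $\leq m$ by the branching rule (Pieri's formula), the weight of a quotient is no larger, and weight is additive in direct sums, so a generating surjection $\bigoplus M(d_i) \twoheadrightarrow A$ with all $d_i \leq d$ forces weight $\leq d$.

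With these in hand, the main step is the \emph{uniform multiplicity stability} statement itself, which is exactly \cite[Proposition 3.3.3]{CEF} (``stability degree and weight control the multiplicities''): if $A$ is an $\FI$-module over $\Q$ of weight $\leq D$ and stability degree $\leq s$, then the decomposition $A_n \cong \bigoplus_\lambda c_\lambda V(\lambda)_n$ has constant multiplicities $c_\lambda$ for $n \geq s + D$, and moreover $c_\lambda \neq 0$ only for $|\lambda| \leq D$. Feeding in $D = d$ (from generation degree $\leq d$) and $s = r$ (from presentation degree $\leq r$) gives constant multiplicities starting at $d + r$. The remaining clause ``$c_\lambda \neq 0 \Rightarrow V(\lambda)_N \neq 0$'' with $N = d+r$ follows because $V(\lambda)_N \neq 0$ precisely when $N - |\lambda| \geq \lambda_1$ (the appended top row must be at least as long as the second row), and $|\lambda| \leq d$ together with $\lambda_1 \leq |\lambda| \leq d \leq r$ gives $N - |\lambda| = d + r - |\lambda| \geq r \geq d \geq \lambda_1$; so every $\lambda$ with $c_\lambda \neq 0$ already indexes a nonzero irreducible at level $N$. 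I would write this last inequality out explicitly since it is the only genuinely new computation.

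The one subtlety worth flagging is bookkeeping about which ``degree'' invariant of \cite{CEF} corresponds to ``presentation degree'' as defined in \autoref{defPresentation} here: \cite{CEF} work with generation degree and \emph{relation} degree, and elsewhere with ``weight'' and ``stability degree'', and the numerical relationship between these is exactly what Church--Ellenberg--Farb's Theorems A and B encode. I expect the main obstacle to be not mathematical depth but making sure the translation is stated with the correct constants, since the paper's \autoref{CoherenceExamples} and its proof already rely on the relation between $\FI$-homology (which computes $\widetilde H^\fS_{-1}$ and $\widetilde H^\fS_0$ by \cite[Proposition 6.2(c)]{Pa2}) and presentation degree via \cite[Proposition 4.2]{CE}. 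So concretely: cite \cite[Theorem A or Theorem B]{CEF} to get stability degree $\leq r$ and weight $\leq d$ from the hypotheses, cite \cite[Proposition 3.3.3]{CEF} to get constant multiplicities starting at (weight) $+$ (stability degree) $\leq d + r$, and finish with the elementary Ferrers-diagram inequality above to verify the nonvanishing clause. The finite-dimensionality hypothesis on each $A_n$ is used only to ensure the decomposition into irreducibles makes sense with finite multiplicities.
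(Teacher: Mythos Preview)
Your proposal is correct and follows essentially the same route as the paper's proof: both deduce weight $\leq d$ from generation degree, stability degree $\leq r$ from presentation degree (the paper via \cite[3.1.7, 3.1.6]{CEF} on free modules and cokernels, you via a direct citation), and then invoke \cite[Proposition 3.3.3]{CEF}. Your explicit verification of the nonvanishing clause via the Ferrers-diagram inequality is in fact more careful than the paper, which leaves this implicit; note only that your step $\max(d,r)=r$ tacitly assumes $d\leq r$, which is harmless since the paper's definition of ``presented in degree $\leq r$'' already forces generation in degree $\leq r$.
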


\begin{proof}
	Church--Ellenberg--Farb \cite[Proposition 3.2.5]{CEF} implies that $A$ has weight $\leq g$. Since $A$ has generation degree $\leq g$ and presentation degree $r$, there is a resolution \[P_1 \m P_0 \m A \] with $P_0,P_1$ free, $P_0$ generated in degree $\leq d$ and $P_1$ generated in degree $\leq d$. Church--Ellenberg--Farb \cite[3.1.7]{CEF} implies that $P_1$ and $P_0$ have stability degree $\leq r$. Now, Church--Ellenberg--Farb \cite[Lemma 3.1.6]{CEF} implies that $A$ has stability degree $\leq r$. Church--Ellenberg--Farb \cite[Proposition 3.3.3]{CEF}  implies that the sequence $\{A\}_n$ has uniform multiplicity stability starting at $d+r$.
\end{proof}

The following lemma is perhaps known, although we do not know of a reference proving it in precisely this form. The implication (i) $\implies$ (ii) is Church--Farb \cite[Theorem 3.2]{CF}.

\begin{lemma} \label{deShift}
	Let $A$ be an $\fS$-module over $\Q$, and let $\Sigma A$ denotes its shift as in \autoref{secPoly}, i.e. $\Sigma A_n =\mathrm{Res}^{\fS_{n+1}}_{\fS_n} A_{n+1}$. The following are equivalent:
	\begin{enumerate}[\rm (i)]
		\item $A$ is uniformly multiplicity stable.
		\item $\Sigma A$ is uniformly multiplicity stable.
	\end{enumerate}
\end{lemma}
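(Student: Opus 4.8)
The plan is to reduce everything to the branching rule for $\fS_n\hookrightarrow\fS_{n+1}$ written in the ``stabilised'' indexing. First I would record that every irreducible $\Q[\fS_n]$-module equals $V(\mu)_n$ for a unique partition $\mu$, so every $\fS$-module $A$ has a canonical decomposition $A_n\cong\bigoplus_\mu c_\mu(n)\,V(\mu)_n$ with $c_\mu(n)\in\N$, and that the classical branching rule translates to
\[\Res^{\fS_{n+1}}_{\fS_n} V(\nu)_{n+1}\;\cong\;V(\nu)_n\;\oplus\bigoplus_{\nu'}V(\nu')_n\qquad(n\ge|\nu|+\nu_1),\]
where $\nu'$ runs over the partitions obtained from $\nu$ by deleting one removable box; the hypothesis $n\ge|\nu|+\nu_1$ is precisely what ensures the leading term $V(\nu)_n$ is actually present (the first row of $\nu_{n+1}$ carries a removable box) and that every $V(\nu')_n$ is nonzero. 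For small $n$ the formula degenerates, which is the only source of the nonzero stability bounds below.

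Granting this, (i)$\Rightarrow$(ii) is immediate (and is also \cite[Theorem 3.2]{CF}): if $A_n\cong\bigoplus_\mu c_\mu V(\mu)_n$ for $n\ge N$ with $c_\mu\neq0$ only when $|\mu|+\mu_1\le N$, then for $n\ge N$ the branching rule applies to every contributing $\mu$, and summing gives $(\Sigma A)_n=\Res A_{n+1}\cong\bigoplus_\rho\bigl(c_\rho+\sum_{\mu}c_\mu\bigr)V(\rho)_n$, the inner sum over $\mu$ obtained from $\rho$ by adding one box. Since adding a box strictly raises $|\mu|+\mu_1$, any $\rho$ with nonzero coefficient has $|\rho|+\rho_1\le N$, so $\Sigma A$ is uniformly multiplicity stable (henceforth u.m.s.) starting at $N$.

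The content is (ii)$\Rightarrow$(i), and the obstruction is that restriction is not injective on Grothendieck groups — the number of irreducibles of $\fS_{n+1}$ strictly exceeds that of $\fS_n$ — so one cannot simply invert the identity $\Sigma A=\Res A$. I would proceed in two steps. \emph{Step 1 (bounding the weight of $A$):} suppose $\Sigma A$ is u.m.s.\ starting at $M$, so $(\Sigma A)_m$ has weight $\le M-1$ for $m\ge M$. If $\sigma_\tau$ is a constituent of $A_n$ with $n\ge M+1$ and $\tau\ne(n)$, choose a removable box of $\tau$ outside the first row (the rightmost box of the bottom row works); the resulting $\sigma_\rho$ is a constituent of $\Res A_n=(\Sigma A)_{n-1}$ and has $\rho_1=\tau_1$, whence $(n-1)-\tau_1\le M-1$, i.e.\ $\tau$ has weight $\le M$. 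So $A_n$ has weight $\le M$ for all $n\ge M+1$. \emph{Step 2 (solving a triangular system):} fix $n\ge 2M$; then $A_n$, $A_{n+1}$ and $(\Sigma A)_n$ all have weight $\le M$ and $n\ge|\mu|+\mu_1$ for every partition $\mu$ of weight $\le M$, so the branching rule applies to all constituents of $A_{n+1}$. Expanding $\Res A_{n+1}=(\Sigma A)_n$ and comparing multiplicities of $V(\rho)_n$ for each $|\rho|\le M$ gives
\[c_\rho(n+1)\;=\;\tilde c_\rho\;-\sum_{\mu,\,|\mu|\le M}c_\mu(n+1),\]
with $\tilde c_\rho$ the (constant) stable multiplicities of $\Sigma A$, extended by $0$, and the sum running over $\mu$ obtained from $\rho$ by adding one box (all of size $|\rho|+1$). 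Downward induction on $|\rho|$ — the case $|\rho|=M$ giving $c_\rho(n+1)=\tilde c_\rho$ outright — shows $c_\rho(n+1)$ is independent of $n$ for $n\ge 2M$. Hence $A_m\cong\bigoplus_\rho c_\rho V(\rho)_m$ for $m\ge 2M+1$ with $c_\rho$ supported on $|\rho|\le M$, so $|\rho|+\rho_1\le 2M<2M+1$ and $A$ is u.m.s.\ starting at $N=2M+1$.

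The only delicate point, and the one I would treat most carefully, is the choice of range in Steps 1--2: one must ensure that $n$ is large enough that the branching formula is in its non-degenerate regime for \emph{every} partition that can occur and that no weight-$(M+1)$ term intrudes — both of which the crude threshold $n\ge 2M$ secures. Everything else is bookkeeping with the branching rule, and I expect no serious difficulty there.
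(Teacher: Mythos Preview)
Your proof is correct and follows the same approach as the paper's: first bound the weight of $A$ from that of $\Sigma A$, then exploit the triangularity of the branching rule in the stabilised indexing. The paper phrases Step~2 as the invertibility of an upper-triangular-with-ones-on-the-diagonal matrix (restriction on the subspace of bounded-weight representations), while you carry out the back-substitution explicitly by downward induction on $|\rho|$; these are the same argument, and your version is somewhat more explicit about the ranges needed.
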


\begin{proof}Let $\lambda$ be a partition of $n$, and $\sigma_\lambda$ the corresponding representation of $\fS_n$. Pieri's formula says that $\mathrm{Res}^{\fS_{n}}_{\fS_{n-1}} \sigma_\lambda \cong \bigoplus_\mu \sigma_\mu$, where the summation runs over all partitions $\mu$ that can be obtained by removing a box from the Ferrers diagram of $\lambda$. We may interpret $\mathrm{Res}$ as a linear map $R(\fS_n) \to R(\fS_{n-1})$ between rings of virtual representations. This linear map is of course far from invertible, as the number of partitions of $n$ is larger than the number of partitions of $n-1$. 
	
	Note now that if either of the two sequences $\{A_n\}_n$ or  $\{\Sigma A_n\}_n$ is {uniformly} multiplicity stable then both sequences will consist only of representations corresponding to partitions whose corresponding Ferrers diagrams have at most $N$ boxes below the first row, for some $N$. Then for $n > N$ we have that the numbers of partitions of $n$ and of $n-1$ satisfying this restriction are the same. We may consider  $\mathrm{Res}$  as a linear map between the corresponding subspaces of $R(\fS_n)$ and $R(\fS_{n-1})$, and this linear map is now invertible: indeed, the spaces are free abelian groups of the same rank, and the linear map is upper triangular with ones on the diagonal with respect to the bases given by partitions ordered by dominance. The conclusion follows. 
\end{proof}

\subsubsection{Proof of \autoref{hyperelliptic-thm}}

	We know that the cohomology groups $H^\ast(\mathcal H_{g,\partial};V_\lambda)$ satisfy homological stability as $g \to \infty$ by Randal-Williams--Wahl \cite[Theorem D]{RWW}. We will now prove the same for $H^\ast(\mathcal H_g;V_\lambda)$. 

\begin{proof}[Proof of  \autoref{hyperelliptic-thm}]

	Let $\widetilde{\mathcal H}_g$ be the space parametrizing hyperelliptic curves with a total ordering on their sets of Weierstrass points, so that $\mathcal H_g=\widetilde{\mathcal H}_g/\fS_{2g+2}$.  Now it is enough to prove that the sequence of representations of $\fS_{2g+2}$ given by $H^\ast(\widetilde{\mathcal H}_g;V_\lambda)$ is uniformly multiplicity stable since this will imply that $H^\ast(\widetilde{\mathcal H}_g;V_\lambda)_{\fS_{2g+2}}$ stabilizes. Similarly let $\widetilde{\mathcal H}_{g,\partial}$ parametrize hyperelliptic curves with a total ordering on their set of Weierstrass points, as well as a distinguished tangent vector at the last Weierstrass point, so that $\mathcal H_{g,\partial} = \widetilde{ \mathcal H}_{g,\partial}/\fS_{2g+1}$. Now note that the sequence of representations of $\fS_{2g+1}$ given by $H_\ast(\widetilde{ \mathcal H}_{g,\partial};V_\lambda)$ is uniformly multiplicity stable, since they are simply the homology groups of the pure braid groups $\PBr_{2g+1}$ with certain polynomial coefficients, which satisfy representation stability by  \autoref{CorBur} and \autoref{presUniform}. Since the multiplicities of the irreducible subrepresentations of $H_\ast(\widetilde{ \mathcal H}_{g,\partial};V_\lambda)$ and $H^\ast(\widetilde{ \mathcal H}_{g,\partial};V_\lambda)$ agree, these cohomology groups are also uniformly multiplicity stable. We will deduce the result by combining this fact with the preceding lemma.

	The circle bundle $\widetilde {\mathcal H}_{g,\partial} \to \widetilde{ \mathcal H}_{g}$ induces a Gysin sequence
	\[ \dots \to H^i(\widetilde {\mathcal H}_{g,\partial};V_\lambda) \to  H^{i-1}(\widetilde{\mathcal H}_{g};V_\lambda) \to H^{i+1}(\widetilde{\mathcal H}_{g};V_\lambda) \to H^{i+1}(\widetilde{\mathcal H}_{g,\partial};V_\lambda) \to \dots \]
	where $H^{i-1}(\widetilde{\mathcal H}_{g};V_\lambda) \to H^{i+1}(\widetilde{\mathcal H}_{g};V_\lambda)$ is multiplication with the first Chern class of the circle bundle. The circle bundle is pulled back from $\mathcal H_{g,1}$, so by  \autoref{acyclic} its first Chern class vanishes rationally and we obtain isomorphisms $H^i(\widetilde{\mathcal H}_{g,\partial};V_\lambda) \cong H^i(\widetilde {\mathcal H}_{g};V_\lambda) \oplus H^{i-1}(\widetilde{\mathcal H}_{g};V_\lambda)$. These isomorphisms are clearly $\fS_{2g+1}$-equivariant. Then uniform multiplicity stability for the sequence of $\fS_{2g+1}$-representations $\{H^\ast(\widetilde{\mathcal H}_{g,\partial};V_\lambda)\}$ implies the same for $\{H^\ast(\widetilde{ \mathcal H}_{g};V_\lambda)\}$. By \autoref{deShift}, knowing uniform multiplicity stability for $\{H^\ast(\widetilde {\mathcal H}_{g};V_\lambda)\}$ considered as a sequence of representations of $\fS_{2g+1}$ implies stability also considered as a sequence of representations of $\fS_{2g+2}$, finishing the proof. 
\end{proof}

\subsection{Congruence subgroups}

Let $R$ be a ring and $J \subset R$ an ideal. Recall that $\GL_n(J)$ denotes the kernel of $\GL_n(R) \m \GL_n(R/J)$ and that $\GL_n^{\mathfrak U}(R/J)$ denotes the group of matrices with determinant in the image of $R^\times \m R/J$. When the map $\GL_n(R) \m \GL_n^{\mathfrak U}(R/J)$ is surjective, the homology groups $\{H_i(\GL_n(J))\}_n$ assemble to form a $U\GL^{\mathfrak U}(R)$-module which we denote by $H_i(\GL(J))$. We now prove \autoref{thmCong}.

\begin{proof}[Proof of \autoref{thmCong}]

By Gan--Li \cite[Theorem 11]{GanLiCongruenceSubgroups}, as a $U\fS$-module, $H_i(\GL(J))$ is generated in degree $4i+2s-1$ and presented in degree $4i+2s+4$. By \cite[Theorem 3.30]{MPW},  $H_i(\GL(J))$ is polynomial of degree $\leq 4i+2s-1$ in ranks $ >8i+4s+7$. It follows from \autoref{rem:ZXpoly} that a $U\GL^{\mathfrak U}(R/J)$-module has polynomial degree $\leq r$ in ranks $>d$ if and only if its underlying $U\fS$-module does. By \autoref{H3examples}, the category $U\GL^{\mathfrak U}(R/J)$ satisfies \con{H3($2,t+1$)}. By \autoref{PolynomialsCentral}, we have that  
\[ \widetilde H^{\GL^{\mathfrak U}(R/J)}_{-1}(H_i(\GL(J)))_n\quad \text{for $n> \max(8i+4s+7,4i+2s+t-2)$}\] 
and
\[ \widetilde H^{\GL^{\mathfrak U}(R/J)}_{0}(H_i(\GL(J)))_n\quad \text{for $n> \max(8i+4s+8,4i+2s+t)$.}\] 
Because $t\ge 1$ and the category $U\GL^{\mathfrak U}(R/J)$ satisfies \con{H3($2,t+1$)},  \autoref{thm:res of finite type} implies that $H_i(\GL(J))$ is presented in degree $\le \max(8i + 4s +t+8, 4i +2s + 2t -1)$.
\end{proof}

\subsection{Diffeomorphism groups}

In this subsection, we prove a secondary stability result for the homology of diffeomorphism groups viewed as discrete groups. Given an orientable smooth manifold $M$, let $\Diff(M)$ denote the topological group of smooth orientation preserving diffeomorphisms $M \m M$ topologized with the $C^\infty$-topology. If $M$ has boundary, we assume that the diffeomorphisms fix a neighborhood of the boundary. Let $\Diff^\delta(M)$ denote the group $\Diff(M)$ topologized with the discrete topology. We first prove a secondary stability result for moduli spaces of surfaces with highly connected $\theta$-structures. Then we use Mather--Thurston theory as in Nariman \cite{Nar} to deduce our results for diffeomorphism groups. 

\begin{figure}[h]
\labellist
\pinlabel $M_{1,1}$ at 150 -30
\pinlabel $M_{2,1}$ at 650 -30
\Huge
\pinlabel $\subset$ at 370 120
\endlabellist
\includegraphics[width = \textwidth]{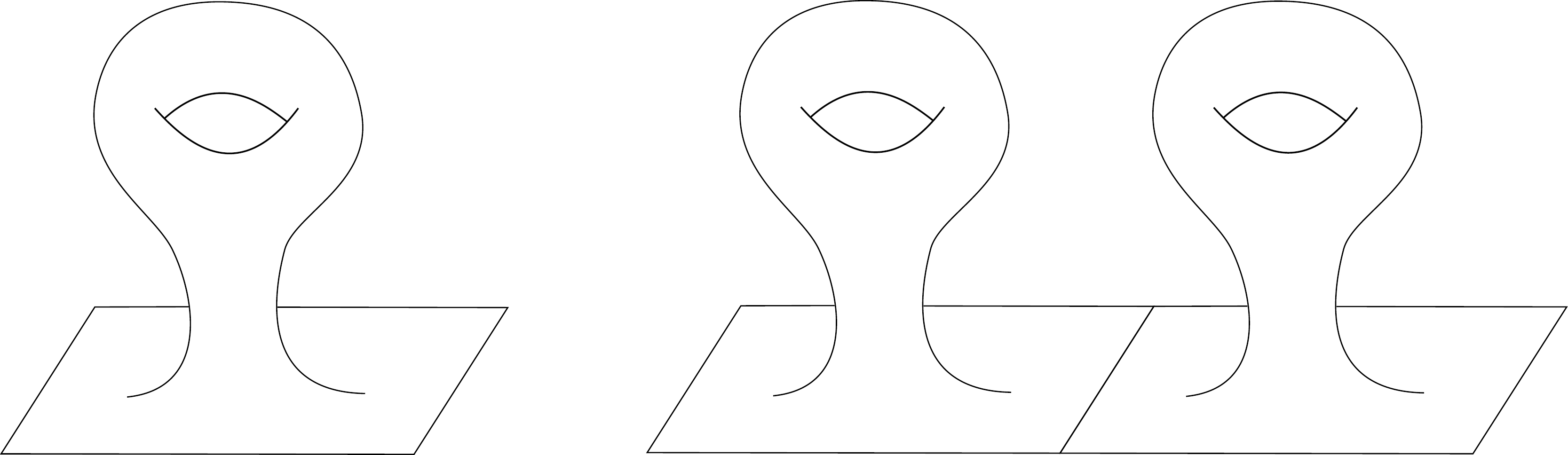}
\vspace{.5ex}
\caption{$M_{1,1} \subset M_{2,1}$}\label{fig:Minfty}
\end{figure}

Let $M_{1,1}$ be an orientable genus one surface with one boundary component. Let $M_{g,1}$ be the $g$-fold boundary connect sum of $M_{1,1}$ as in \autoref{fig:Minfty}. Let $\theta\colon B \m BSO(2)$ be a fibration, let $\gamma_2\colon ESO(2) \times_{SO(2)} \R^2 \m BSO(2)$ be the tautological bundle, and let $V=ESO(2) \times_{SO(2)} \R^2$ be the total space. Given $\pi\colon E \m B$ and $\pi'\colon E' \m B'$ vector bundles of the same dimension, let $\Bun(\pi,\pi')$ denote the space of bundle maps from $\pi$ to $\pi'$ topologized with the compact open topology. That is, an element of $\Bun(\pi,\pi')$ is a map $F\colon E \m E'$ covering a map $f\colon B \m B'$ such that $F$ restricts to give linear isomorphisms on each fiber. Fix $F_1$ covering $f_1$ in $\Bun(TM_{1,1},\theta^*\gamma_2)$ such that $F_1$ can be connect summed with itself to give a bundle map in $\Bun(TM_{2,1},\theta^*\gamma_2)$. Let $F_g \in \Bun(TM_{g,1},\theta^*\gamma_2)$ be the $g$-fold connect sum of $F_1$ and let $f_g\colon M_{g,1} \m B$ be the map that $F_g$ covers. Let $\Bun_c(TM_{g,1},\theta^*\gamma_2)$ denote the subspace of  $\Bun(TM_{g,1},\theta^*\gamma_2)$ of bundle maps that agree with $F_g$ in a neighborhood of the boundary. The group $\Diff(M_{g,1})$ acts on $\Bun_c(TM_{g,1},\theta^*\gamma_2)$ via the usual action of diffeomorphisms on the tangent bundle.

There are natural maps \[\Bun_c(TM_{a,1},\theta^*\gamma_2) \times \Bun_c(TM_{b,1},\theta^*\gamma_2) \m \Bun_c(TM_{a+b,1},\theta^*\gamma_2)\] which are $\Diff(M_{a,1}) \times \Diff(M_{b,1})$-equivariant. Here $\Diff(M_{a,1}) \times \Diff(M_{b,1}) $ acts on $\Bun_c(TM_{a+b,1},\theta^*\gamma_2)$ via the inclusion \[\Diff(M_{a,1}) \times \Diff(M_{b,1})  \m \Diff(M_{a+b,1}).  \] In particular, there are $\Diff(M_{g,1})$-equivariant maps \[\Bun_c(TM_{g,1},\theta^*\gamma_2) \m \Bun_c(TM_{g',1},\theta^*\gamma_2) \] for $g' \geq g$ which we view as stabilization maps. Since $\pi_0(\Diff(M_{g,1})) \cong \Mod_{g,1}$, we have that $H_i(\Bun_c(TM_{g,1},\theta^*\gamma_2))$ is a $\Mod_{g,1}$-representation. The stabilization maps induce $\Mod_{g,1}$-equivariant maps \[H_i(\Bun_c(TM_{g,1},\theta^*\gamma_2)) \m H_i(\Bun_c(TM_{g',1},\theta^*\gamma_2)). \] Since $\Mod_{g'-g,1}$ acts trivially on the image of this map, the representations $\{H_i(\Bun_c(TM_{g,1},\theta^*\gamma_2)) \}_g$ assemble to form a $U\Mod$-module which will denote by $V(i,\theta)$ (see Randal-Williams-Wahl \cite[Proposition 4.2]{RWW}). We will show this functor has finite polynomial degree. Before we can do this, we need to compare it to a space of sections of a bundle.  Given a bundle $\pi\colon E \m M_g$ with preferred section $\sigma\colon M_g \m E$, let $\Gamma_c(\pi)$ denote the space of sections of $\pi$ that agree with $\sigma$ on a neighborhood of the boundary.

\begin{lemma}\label{homotopicToSectionSpace}
There is a weak homotopy equivalence \[\Gamma_c(( \theta\circ f_g)^* \theta) \m \Bun_c(TM_{g,1} , \theta^*\gamma_2) \] where the preferred section of $(\theta\circ f_g)^* \theta$ is the one corresponding to $f_g$. 
\end{lemma}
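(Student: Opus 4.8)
The plan is to identify $\Bun_c(TM_{g,1},\theta^*\gamma_2)$ with a space of sections of a fiber bundle over $M_{g,1}$, using the standard observation that a bundle map $TM_{g,1} \to \theta^*\gamma_2$ covering a map $f$ is the same data as a bundle map $TM_{g,1}\to\gamma_2$ covering $\theta\circ f$ together with a lift of $\theta\circ f$ along $\theta$. First I would recall that since $\theta^*\gamma_2$ is the pullback of $\gamma_2$ along $\theta\colon B\to BSO(2)$, a bundle map $F\colon TM_{g,1}\to\theta^*\gamma_2$ covering $\bar f\colon M_{g,1}\to B$ is equivalent to the pair consisting of the bundle map $TM_{g,1}\to\gamma_2$ covering $\theta\circ\bar f$ (obtained by postcomposing with the canonical map $\theta^*\gamma_2\to\gamma_2$) together with the factorization of $\theta\circ\bar f$ through $B$, i.e. the map $\bar f$ itself. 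Since $M_{g,1}$ is a $2$-manifold, a bundle map $TM_{g,1}\to\gamma_2$ over a fixed map $M_{g,1}\to BSO(2)$ is a contractible choice once one has chosen the underlying classifying map of $TM_{g,1}$; more precisely, the space of bundle maps $TM_{g,1}\to\gamma_2$ is weakly equivalent to the space of maps $M_{g,1}\to BSO(2)$ (with appropriate boundary conditions), both being models for the classifying space of the structure group. This is exactly the kind of reduction carried out in Galatius--Madsen--Tillmann--Weiss and in Randal-Williams--Wahl.

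Concretely, I would argue as follows. The tangent bundle $TM_{g,1}$ has a classifying map, which after the chosen boundary trivialization we may take to be $\theta\circ f_g$ near the boundary; call a representative $\tau_g\colon M_{g,1}\to BSO(2)$. The space of bundle maps $TM_{g,1}\to\theta^*\gamma_2$ restricting to $F_g$ near the boundary fibers over the space of maps $M_{g,1}\to B$ restricting to $f_g$ near the boundary and lifting $\tau_g$ up to the data already recorded; the fiber over such a map is the space of bundle maps $TM_{g,1}\to\theta^*\gamma_2$ covering it with fixed boundary behaviour, which is weakly contractible because it is a space of sections of a bundle with contractible fibers (the fiber over a point being the space of linear isomorphisms $\mathbb R^2\to\mathbb R^2$ compatible with the orientations, which deformation retracts onto a point after using the fixed classifying data). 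Hence $\Bun_c(TM_{g,1},\theta^*\gamma_2)$ is weakly equivalent to the space of lifts of $\theta\circ f_g\colon M_{g,1}\to BSO(2)$ — wait, more precisely to the space of maps $M_{g,1}\to B$ over $BSO(2)$ lying over the classifying map of $TM_{g,1}$, with the prescribed boundary value. By definition this latter space is $\Gamma_c((\theta\circ f_g)^*\theta)$: a map $M_{g,1}\to B$ lifting a fixed map $M_{g,1}\to BSO(2)$ is exactly a section of the pullback of $\theta$ along that fixed map, and the boundary condition $f_g$ corresponds to the preferred section. This gives the asserted weak equivalence, compatibly with the $\Diff(M_{g,1})$-actions and the stabilization maps, since all constructions are natural.

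The main obstacle I anticipate is making the contractibility of the fibers genuinely precise and natural, i.e. checking that the forgetful map from $\Bun_c$ to the section space is a Serre fibration (or at least a weak equivalence onto its image) with fibers that are provably weakly contractible with the compact-open topology, and doing so equivariantly enough that the identification respects the $\Diff(M_{g,1})$-action and the stabilization maps used later to build the $U\Mod$-module. This is essentially the content of the scanning/parametrized-section arguments in the cobordism-category literature; I would cite the relevant statement from Galatius--Randal-Williams or the appendix of Randal-Williams--Wahl rather than reprove it, and then just observe that the boundary conditions and the basepoint section match up as claimed. The only genuinely new bookkeeping is tracking the preferred section: it must correspond precisely to $f_g$, which is immediate from the definitions of $F_g$ and $f_g$ as iterated connect sums, since $f_g$ is a lift of $\theta\circ f_g$ by construction.
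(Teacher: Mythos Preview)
Your overall intuition matches the paper's: the key input is that bundle maps into the \emph{universal} bundle $\gamma_2$ form a contractible space, and the remaining data is a lift along $\theta$, i.e.\ a section of $(\theta\circ f_g)^*\theta$. However, the way you organize the fibration is backwards, and the specific contractibility claim you make is false.

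You propose to project $\Bun_c(TM_{g,1},\theta^*\gamma_2)$ onto the space of maps $M_{g,1}\to B$ (rel boundary) and assert that the fibers are weakly contractible because the fiber over a point is ``the space of linear isomorphisms $\mathbb R^2\to\mathbb R^2$ compatible with the orientations, which deformation retracts onto a point.'' But $GL_2^+(\mathbb R)\simeq SO(2)\simeq S^1$ is not contractible, so the fiber over a fixed $\bar f\colon M_{g,1}\to B$ --- namely the space of sections (rel $\partial$) of the $GL_2^+(\mathbb R)$-bundle $\Iso(TM_{g,1},\bar f^*\theta^*\gamma_2)$ --- has no reason to be contractible. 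Relatedly, your target ``maps $M_{g,1}\to B$ lifting $\tau_g$'' is not actually the image of the forgetful map: different elements of $\Bun_c(TM_{g,1},\theta^*\gamma_2)$ may have different underlying maps to $BSO(2)$, not just the single chosen $\tau_g$.

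The paper runs the argument the other way around. One takes the fibration
\[
\eta\colon \Bun_c(TM_{g,1},\theta^*\gamma_2)\longrightarrow \Bun_c(TM_{g,1},\gamma_2)
\]
given by postcomposing with $\theta^*\gamma_2\to\gamma_2$. The \emph{base} $\Bun_c(TM_{g,1},\gamma_2)$ is weakly contractible by \cite[Lemma~5.1]{GTMW}, since $\gamma_2$ is universal. The \emph{fiber} of $\eta$ over the fixed point $G_g=(\gamma_2^*\theta)\circ F_g$ is, by the universal property of the pullback $\theta^*\gamma_2$, precisely the space of lifts of $\theta\circ f_g$ along $\theta$ (rel $\partial$), i.e.\ $\Gamma_c((\theta\circ f_g)^*\theta)$. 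Hence the fiber inclusion is a weak equivalence. So: contractibility lives on the $\gamma_2$-side, not on the $GL_2^+$-fibers over a fixed map to $B$; once you swap fiber and base your sketch becomes the paper's proof.
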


\begin{proof}
The setup is summarized in the following diagram:
\[ \xymatrix{
\theta^*V \ar[r]^{\gamma_2^*\theta}\ar[d]_{\theta^*\gamma_2} & V \ar[d]^<<<{\gamma_2}\\
B \ar[r]^{\theta} & BSO(2)&TM_{g,1}\ar[lu]_{G_g}\ar[d] \ar[llu]^>>>>>>>{F_g}\\
&(\theta \circ f_g)^*B \ar[lu]^{\theta^*(\theta\circ f_g)} \ar[r]_>>>>{(\theta \circ f_g)^*\theta} & M_{g,1}\ar[lu]_{\theta \circ f_g} \ar[llu]^{f_g}
}\]

Let $G_g\colon TM_{g,1} \m \gamma_2$ be the composition of $F_g\colon TM_{g,1} \m \theta^*\gamma_2$ and $\gamma_2^*\theta\colon \theta^*\gamma_2 \m \gamma_2$. Use $G_g$ as the preferred bundle map to define $\Bun_c(TM_{g,1} ,\gamma_2)$.  The natural map 
\[\eta\colon \Bun_c(TM_{g,1} , \theta^*\gamma_2) \m \Bun_c(TM_{g,1} ,\gamma_2)\]
 is a fibration. Let $\Phi\colon TM_{g,1} \to \theta^* \gamma_2$ be in the fiber of $\eta$ over $G_g$ and $\phi\colon M_{g,1} \to B$ the map of spaces which $\Phi$ covers. Using the universal property of the pull back $(\theta\circ f)^*B$, $\phi$ defines a section $s_\phi \in\Gamma_c((\theta\circ f_g)^* \theta$. Similarly, a section $s\in \Gamma_c((\theta\circ f_g)^* \theta)$ defines a map $\phi_s = \theta^*(\theta\circ f_g) \circ s$ and a cover $\Phi_s\colon TM_{g,1} \to \theta^*V$ by the universal property of the pull back $\theta^*V$. This proves that the fiber of $\eta$ over $G_g$ is naturally homeomorphic to $\Gamma_c((\theta\circ f_g)^* \theta)$.  By \cite[Lemma 5.1]{GTMW}, $\Bun_c(TM_{g,1} ,\gamma_2)$ is weakly contractible so $\Gamma_c((\theta\circ f_g)^* \theta) \m \Bun_c(TM_{g,1} , \theta^*\gamma_2)$ is a weak homotopy equivalence. 
\end{proof}

\begin{proposition}
\label{sectionIsPoly}
Fix an abelian group $A$ and $n \geq 2$. Assume that the fibers of $\theta$ are $K(A,n)$'s. Then $V(i,\theta)$ is polynomial of degree $\leq i$. 
\end{proposition}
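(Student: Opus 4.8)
The plan is to identify $V(i,\theta)$ with the homology of a section space, as provided by \autoref{homotopicToSectionSpace}, and then analyze that section space using obstruction-theoretic/fibration-sequence techniques, showing that adding a handle changes it in a controlled way. Concretely, since the fibers of $\theta$ are $K(A,n)$'s with $n\ge 2$, the space $\Gamma_c((\theta\circ f_g)^*\theta)$ is a section space of a bundle with simply-connected fiber, and relative to the chosen section near the boundary it is nonempty; by obstruction theory the path components and homotopy groups can be computed via twisted cohomology of $M_{g,1}$ with coefficients in $A$. The key point is that $M_{g,1}$ is homotopy equivalent to a wedge of $2g$ circles, so $H^k(M_{g,1};\text{local system})$ is concentrated in degrees $0$ and $1$, with $H^1$ of rank roughly $2g$. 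This is what will make the polynomial degree grow linearly in $g$ (hence degree $\le i$ in each fixed homological degree $i$), after one keeps track of the bookkeeping.

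First I would set up the comparison: replace $\Bun_c(TM_{g,1},\theta^*\gamma_2)$ by $\Gamma_c((\theta\circ f_g)^*\theta)$ using \autoref{homotopicToSectionSpace}, and observe that the stabilization maps correspond to restriction/extension of sections under the inclusion $M_{g,1}\hookrightarrow M_{g+1,1}$, which is the inclusion of a subsurface whose complement is a once-punctured torus glued along an arc. Next I would exhibit a homotopy-theoretic decomposition of the section space upon adding a handle: cutting $M_{g+1,1}$ along the relevant arc, the section space over $M_{g+1,1}$ fibers (via restriction) over the section space over $M_{g,1}$, with fiber the relative section space over the added handle $M_{1,1}$ (sections fixed on part of the boundary). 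Because the fiber of $\theta$ is a $K(A,n)$, this added-handle section space is, up to homotopy, a product of Eilenberg--MacLane spaces whose homotopy is $\widetilde H^{*}(M_{1,1},\partial_0;A)$, i.e.\ essentially $A^{2}$ in degree $n-1$ plus lower-degree contributions; in any case its homology is bounded independently of $g$. The upshot is a fibration sequence relating $V(i,\theta)$ at level $g+1$ to $V(\bullet,\theta)$ at level $g$ tensored with a fixed graded piece.

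From there I would run the polynomiality argument by induction on $i$ together with the Serre spectral sequence of the restriction fibration. The functor $\coker V(i,\theta)$ should be identified with a section space over the complement handle, whose homology lands in degree $\le i-1$ (one homological degree is "used up" crossing the handle, because the relevant cohomology of $(M_{1,1},\partial_0)$ starts in positive degree $n-1\ge 1$, and the $K(A,n)$-fiber structure forces the shift). More precisely, using \autoref{lem:polySES} and \autoref{lem:polysigma} in exactly the way the paper sets up polynomiality, I would show $\ker V(i,\theta)$ vanishes in all ranks (the restriction map on $H_0$ of section spaces is split injective because sections extend) and $\coker V(i,\theta)$ has polynomial degree $\le i-1$, which is precisely the inductive step: $V(i,\theta)$ polynomial of degree $\le i$ follows from $V(i-1,\theta)$ and $V(i,\theta)\to\Sigma V(i,\theta)$ having cokernel of degree $\le i-1$. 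The base case $i=0$ amounts to $V(0,\theta)$ counting (with $H_0$) the components of the section space, which stabilizes because every section over $M_{g,1}$ extends over $M_{g+1,1}$ and the obstruction to uniqueness is measured by a group that grows; a direct check shows $V(0,\theta)$ has polynomial degree $\le 0$.

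The main obstacle will be making the "adding a handle gives a fibration with controlled fiber" step precise, including tracking the hypothesis $n\ge 2$ carefully: one needs the fiber of $\theta$ simply connected so that the section space is a genuine principal-ish fibration tower and so that the Serre spectral sequence argument works without $\pi_1$-complications, and one needs to check that the relevant relative section space over the added handle really has homology concentrated in the expected range and independent of $g$. I expect the cleanest route is to use that $\Gamma_c$ of a bundle with $K(A,n)$-fibers over a CW pair $(X,Y)$ is (weakly) an iterated principal fibration with "structure groups" $H^{k}(X,Y;\underline{A})$ built from the (possibly twisted) coefficient system, specialize $X/Y$ to a wedge of circles (for $M_{g,1}$) respectively to $M_{1,1}/\partial_0$, and then feed the resulting bounds into the paper's polynomiality formalism. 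Once that homotopy-theoretic input is in hand, the induction is formal.
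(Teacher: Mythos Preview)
Your strategy is in the right spirit but is considerably more elaborate than what the paper does, and it misses the one observation that makes the whole thing short. The paper's proof, after invoking \autoref{homotopicToSectionSpace} exactly as you do, simply notes that the tangent bundle of $M_{g,1}$ is \emph{trivial} (the surface has boundary), so the pulled-back fibration $(\theta\circ f_g)^*\theta$ is a trivial $K(A,n)$-bundle and the section space is just the based mapping space $\Map_*(M_{g,1},K(A,n))$. Since by \autoref{rem:ZXpoly} polynomiality depends only on the underlying $U1$-module, one is reduced to checking that $g\mapsto H_i(\Map_*(M_{g,1},K(A,n)))$ is polynomial of degree $\le i$, and the paper simply cites Cohen--Madsen \cite[Example (2) in Section 1.1]{CM} for this.

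Your restriction-fibration/Serre-spectral-sequence argument is essentially a hands-on reproof of the Cohen--Madsen input. With the trivial-tangent-bundle simplification in hand it becomes much cleaner than what you sketch: since $M_{g,1}\simeq\bigvee_{2g}S^1$, one has $\Map_*(M_{g,1},K(A,n))\simeq K(A,n-1)^{2g}$, the ``add a handle'' map is literally the inclusion $K(A,n-1)^{2g}\hookrightarrow K(A,n-1)^{2g+2}$ as a product factor, injectivity of $A\to\Sigma A$ on homology is immediate (K\"unneth splits it), and the cokernel is governed by K\"unneth terms $H_{i-q}\otimes H_q(K(A,n-1)^2)$ with $q\ge 1$, so the inductive step goes through. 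Without that simplification, the ``main obstacle'' you flag (making the restriction map a fibration with controlled fiber, and controlling the $\pi_1$-action for $n=2$) is real work; with it, there is nothing to do. So your argument would succeed, but only after inserting the step you omitted.
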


\begin{proof}
Ignoring the $\Mod_{g,1}$-action, $V(i,\theta)_g \cong H_i(\Gamma_c((\theta\circ f_g)^* \theta)$ by \autoref{homotopicToSectionSpace}. Since the tangent bundle of $M_{g,1}$ is trivial, in fact $V(i,\theta)_g \cong H_i(\Map_*(M_{g,1},K(A,n))$ where $\Map_*$ denotes the based mapping space. Thus, the underlying $U1$-module of $V(i,\theta)$ is the same as the module considered in Cohen--Madsen \cite[Example (2) in Section 1.1]{CM}. They prove that  \[g \mapsto H_i(\Map_*(M_{g,1},K(A,n)) \] is polynomial of degree $\le i$. By \autoref{rem:ZXpoly}, this implies the assertion.
%
%This proof is almost identical to that of Cohen--Madsen \cite[Example (2) in Section 1.1]{CM} but with a few modifications. Note that as abelian groups 
%
%We will prove this by induction over $i$. For $i=0$,
%
%First, they consider only the case that the action on $E$ comes exclusively from the action of $\Diff(M_{\infty,1})$ on $M_{\infty,1}$. However, this is never used in their argument. Second, they consider a surface category that allows surfaces of with more than one boundary component. Thus, they show that the cokerenel of stabilizing by pairs of pants and reverse pairs of pants are of lower polynomial degree and show that the kernels are trivial. Since a torus is the composition of a pair of pants and a reverse pair of pants, the claim follows. 
\end{proof}

Let $\sslash$ denote a choice of homotopy quotient functor. Let $\cM_{g,1}(\theta)= \Bun_c(TM_{g,1} , \theta^*\gamma_2)\sslash \Diff(M_{g,1})$. This is the moduli space of surfaces with $\theta$-structure considered by Randal-Williams \cite{RWres}. There is a natural map $\cM_{g,1}(\theta) \m B\Diff(M_{g,1})$ which forgets the $\theta$-structure. Let $\overline B$ denote the fiber of $\theta$.

\begin{lemma} \label{surjec}
If $\theta\colon  B \to BSO(2)$ is $4$-connected, then $H_2(\cM_{g,1}(\theta) )\m  H_2(B\Diff(M_{g,1})) $ is surjective.
\end{lemma}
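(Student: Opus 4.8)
The plan is to realise the forgetful map $\cM_{g,1}(\theta) \to B\Diff(M_{g,1})$ as a fibration and run the homology Serre spectral sequence, using that its fibre is simply connected. Since $\cM_{g,1}(\theta) = \Bun_c(TM_{g,1},\theta^*\gamma_2)\sslash\Diff(M_{g,1})$ and the map to $B\Diff(M_{g,1})$ is the one induced by the constant map $\Bun_c(TM_{g,1},\theta^*\gamma_2) \to *$, its homotopy fibre over the basepoint is $\Bun_c(TM_{g,1},\theta^*\gamma_2)$. First I would identify this fibre up to weak equivalence: by \autoref{homotopicToSectionSpace} it is $\Gamma_c\big((\theta\circ f_g)^*\theta\big)$, the space of sections of a bundle over $M_{g,1}$ with fibre $\overline B$ which agree with a fixed section near $\partial M_{g,1}$, and since $\overline B$ is connected we may take this fixed section to be constant (this uses that $\overline B$ is simply connected, so that the bundle and the chosen section trivialise over a collar of $\partial M_{g,1}$).

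The heart of the argument is a connectivity estimate for this section space. Because $\theta$ is $4$-connected, the fibre $\overline B$ is $3$-connected, so $\pi_i(\overline B)=0$ for $i\le 3$. Running elementary obstruction theory over the relative $2$-complex $(M_{g,1},\partial M_{g,1})$, the obstructions governing $\pi_0$ and $\pi_1$ of $\Gamma_c\big((\theta\circ f_g)^*\theta\big)$ lie in cohomology groups of a complex of dimension $\le 3$ with coefficients in the groups $\pi_i(\overline B)$ for $i\le 3$, all of which vanish; hence the section space, and therefore $\Bun_c(TM_{g,1},\theta^*\gamma_2)$, is simply connected. It is precisely here that the hypothesis that $\theta$ is $4$-connected rather than merely $3$-connected is used, namely to kill $\pi_3(\overline B)$.

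Finally I would feed this into the homology Serre spectral sequence $E^2_{p,q} = H_p\big(B\Diff(M_{g,1}); H_q(\Bun_c(TM_{g,1},\theta^*\gamma_2))\big) \Rightarrow H_{p+q}(\cM_{g,1}(\theta))$. No differential enters $E^r_{2,0}$, and the only differential leaving it is $d^2\colon E^2_{2,0}\to E^2_{0,1} = H_1(\Bun_c(TM_{g,1},\theta^*\gamma_2)) = 0$, which vanishes by the previous paragraph; hence $E^\infty_{2,0}=E^2_{2,0}=H_2(B\Diff(M_{g,1}))$, and the edge homomorphism $H_2(\cM_{g,1}(\theta))\to H_2(B\Diff(M_{g,1}))$ is surjective. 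The only genuinely delicate point is the bookkeeping of boundary conditions when translating between bundle maps and sections; everything else is a routine spectral sequence argument.
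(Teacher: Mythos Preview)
Your proposal is correct and follows essentially the same route as the paper: both identify the fibre of $\cM_{g,1}(\theta)\to B\Diff(M_{g,1})$ with $\Bun_c(TM_{g,1},\theta^*\gamma_2)$, show it is simply connected because $\overline B$ is $3$-connected and $M_{g,1}$ is $2$-dimensional, and conclude. The only cosmetic difference is that the paper uses the triviality of $TM_{g,1}$ to rewrite the fibre as the based mapping space $\Map_*(M_{g,1},\overline B)$ and then invokes the standard connectivity estimate for mapping spaces, whereas you stay with the section space and run obstruction theory; the paper also phrases the conclusion as ``the map is $2$-connected'' rather than unwinding the Serre spectral sequence explicitly.
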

\begin{proof}
We will show $\cM_{g,1}(\theta) \m B\Diff(M_{g,1})$ is $2$-connected. The fiber of $\cM_{g,1}(\theta) \m B\Diff(M_{g,1})$ is $\Bun_c(TM_{g,1} , \theta^*\gamma_2)$ so it suffices to show this fiber is $1$-connected. As in the proof of \autoref{sectionIsPoly}, we have that $\Bun_c(TM_{g,1} , \theta^*\gamma_2) \simeq \Map_*(M_{g,1}, \overline B)$. Since $M_{g,1}$ is $2$-dimensional and  $\overline B$ is $3$-connected, $\Map_*(M_{g,1}, \overline B)$ is $1$-connected. 
\end{proof}

Let $\lambda \in H_2(B\Diff(M_{3,1})) \cong H_2(\Mod_{3,1})$ be a homology class that pairs with the first kappa class to give $12$. Galatius--Kupers--Randal-Williams \cite[Page 2]{GKRW2} proved that the map inducing secondary stability agrees with the map induced by boundary connect sum with $\lambda/10$ if you work with coefficients where $10$ is invertible. For simplicity, we will prove our secondary stability result with $\Z[\frac{1}{10}]$-coefficients even though an integral result is likely also true. 

From now on, we assume that $\theta$ is $4$-connected. Let $\lambda' \in H_2(\cM_{3,1}(\theta) ;\Z[{\frac1{10}}]) $ be a class that maps to $\lambda/10$. The  $\Diff(M_{a,1}) \times \Diff(M_{b,1})$-equivariant map  \[\Bun_c(TM_{a,1},\theta^*\gamma_2) \times \Bun_c(TM_{b,1},\theta^*\gamma_2) \m \Bun_c(TM_{a+b,1},\theta^*\gamma_2)\] described above induces a map \[ \cM_{a,1}(\theta) \times \cM_{b,1}(\theta) \m \cM_{a+b,1}(\theta).\] In particular, this gives a map of spaces $\cM_{g-1,1}(\theta) \m \cM_{g,1}(\theta)$ which lets us make sense of $H_i(\cM_{g,1}(\theta),\cM_{g-1,1}(\theta))$. Plugging in the class $\lambda'$ gives us a map: \[ t_{\lambda'} \colon  H_{i-2}(\cM_{g-3,1}(\theta),\cM_{g-4,1}(\theta);\Z[\textstyle\frac{1}{10}])  \m H_i(\cM_{g,1}(\theta),\cM_{g-1,1}(\theta);\Z[\textstyle\frac{1}{10}]). \] We will show that this map is an isomorphism in a range.

\begin{lemma} \label{stablilityKpi}
Assume that $\overline B \simeq K(A,n)$ with $n \geq 4$. Then $H_i(\cM_{g,1}(\theta),\cM_{g-1,1}(\theta)) \cong 0$ for $i <
\frac{2}{3}g$ and  \[ t_{\lambda'} \colon  H_{i-2}(\cM_{g-3,1}(\theta),\cM_{g-4,1}(\theta);\Z[\textstyle\frac{1}{10}])  \m H_i(\cM_{g,1}(\theta),\cM_{g-1,1}(\theta);\Z[\textstyle\frac{1}{10}]) \]  is a surjection for $i < \frac{3}{4}g$ and an isomorphism for $i < \frac{3}{4}g-1$.

\end{lemma}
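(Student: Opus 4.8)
The plan is to run the Serre spectral sequence of the fibration $\Bun_c(TM_{g,1},\theta^\ast\gamma_2)\m\cM_{g,1}(\theta)\m B\Diff(M_{g,1})$ and feed in the twisted mapping-class-group stability results of \autoref{mappingclassgroups}. First I would assemble the inputs. Since $M_{g,1}$ has nonempty boundary the components of $\Diff(M_{g,1})$ are contractible, so $B\Diff(M_{g,1})\simeq B\Mod_{g,1}$ and the spectral sequence reads $E^2_{p,q}\cong H_p(\Mod_{g,1};H_q(\Bun_c(TM_{g,1},\theta^\ast\gamma_2)))$, naturally in the stabilization maps. By \autoref{homotopicToSectionSpace} and the proof of \autoref{sectionIsPoly} the fiber is homotopy equivalent to $\Map(M_{g,1},\overline B)\simeq\Map(M_{g,1},K(A,n))$, which is connected because $n\ge 4>2=\dim M_{g,1}$, and $H_q$ of the fiber is the value at $g$ of the $U\Mod$-module $V(q,\theta)$, which is polynomial of degree $\le q$ in ranks $>-1$ by \autoref{sectionIsPoly}.

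For the first claim I would pass to the mapping cone of the map of Serre spectral sequences attached to $\cM_{g-1,1}(\theta)\m\cM_{g,1}(\theta)$, obtaining a spectral sequence
\[ E^2_{p,q}\cong H_p\big(\Mod_{g,1},\Mod_{g-1,1};V(q,\theta)_g,V(q,\theta)_{g-1}\big)\;\Longrightarrow\;H_{p+q}\big(\cM_{g,1}(\theta),\cM_{g-1,1}(\theta)\big).\]
Applying the homological stability half of \autoref{mappingclassgroups} to $A=V(q,\theta)$, for which $\max(d,r)=q$, gives $E^2_{p,q}=0$ whenever $p<\tfrac23(g-q)$. On the line $p+q=i$ this becomes $p<2(g-i)$, which holds for all $0\le p\le i$ as soon as $i<\tfrac23 g$; hence $H_i(\cM_{g,1}(\theta),\cM_{g-1,1}(\theta))\cong 0$ for $i<\tfrac23 g$.

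For the second claim, working over $\Z[\tfrac1{10}]$, I would use that the boundary-connect-sum maps $\cM_{3,1}(\theta)\times\cM_{g-3,1}(\theta)\m\cM_{g,1}(\theta)$ cover the corresponding maps of classifying spaces and of fibers, so multiplication by the class $\lambda'\in H_2(\cM_{3,1}(\theta);\Z[\tfrac1{10}])$ (available by \autoref{surjec}, and lifting $\lambda/10$) induces a bidegree-$(2,0)$ map of the relative spectral sequences above. Because $\theta$ is $4$-connected the map $\cM_{3,1}(\theta)\m B\Mod_{3,1}$ is $2$-connected, so $\lambda'$ can be taken in Serre filtration $2$ with associated graded $\lambda/10\in H_2(\Mod_{3,1};\Z[\tfrac1{10}])$; combined with the Galatius--Kupers--Randal-Williams identification of their secondary stability map with multiplication by $\lambda/10$, the induced map on $E^2$ is, in each column $q$, the secondary stabilization map $H_p(\Mod_{g-3,1},\Mod_{g-4,1};V(q,\theta))\m H_{p+2}(\Mod_{g,1},\Mod_{g-1,1};V(q,\theta))$ of \autoref{mappingclassgroups}. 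By \autoref{mappingclassgroups} with $A=V(q,\theta)$ ($\max(d,r)=q$), this $E^2$-map is an isomorphism when $p+2<\tfrac34(g-q)-1$ and a surjection when $p+2<\tfrac34(g-q)$; since $q\ge 0$ both hold once the target total degree $p+2+q$ is, respectively, $<\tfrac34 g-1$ and $<\tfrac34 g$. The comparison theorem for maps of first-quadrant spectral sequences then shows $t_{\lambda'}$ is an isomorphism in target degrees $<\tfrac34 g-1$ and a surjection in target degrees $<\tfrac34 g$.

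The hard part will be the middle of the last paragraph: checking carefully that $t_{\lambda'}$ genuinely induces a map of the relative Serre spectral sequences which on each $E^2$-column is the secondary stabilization map controlled by \autoref{mappingclassgroups}. This requires reconciling the Serre (skeletal) filtration on $H_\ast(\cM_{3,1}(\theta))$ with the $E_k$-module structure used in \cite{GKRW2}, and is the only step where the hypothesis $n\ge 4$ is really used (through \autoref{surjec} and the existence of $\lambda'$). The remaining steps are the quoted theorems together with elementary range bookkeeping.
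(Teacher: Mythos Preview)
Your argument is essentially the same as the paper's. For the primary stability claim you run exactly the relative Serre spectral sequence the paper uses, with the same vanishing range $p<\tfrac{2}{3}(g-q)$ coming from \autoref{mappingclassgroups} and \autoref{sectionIsPoly}. For the secondary claim the paper packages things slightly differently: rather than comparing two relative Serre spectral sequences via $t_{\lambda'}$ and invoking a comparison theorem, it passes directly to a single relative Serre spectral sequence for the mapping cone of a chain-level lift of $t_{\lambda'}$ (citing \cite[Lemma 2.44]{kupersmillercells}), but this is the same argument in a different wrapper, and the delicate point you flag---that the induced map on $E^2$ really is the secondary stabilization map of \autoref{mappingclassgroups}---is precisely what is absorbed into that mapping-cone formalism.
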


\begin{proof}
For the first claim, consider the relative Serre spectral sequence associated to  the fiber sequence \[ \Bun_c(TM_{g-1,1} , \theta^*\gamma_2)  \m \cM_{g-1,1}(\theta) \m  B\Diff(M_{g-1,1})   \] mapping to
 \[ \Bun_c(TM_{g,1} , \theta^*\gamma_2)  \m \cM_{g,1}(\theta) \m  B\Diff(M_{g,1})  .   \]  This has the form $E^2_{p,q}=$ \[H_p(B \Diff(M_{g,1}),B \Diff(M_{g-1,1});H_q(\Bun_c(TM_{g,1} , \theta^*\gamma_2)),H_q(\Bun_c(TM_{g-1,1} , \theta^*\gamma_2)))\] \[ \implies  H_{p+q}(\cM_{g,1}(\theta),\cM_{g-1,1}(\theta)). \] It follows from the homological stability portion of \autoref{mappingclassgroups} and \autoref{sectionIsPoly} that $E^2_{p,q} \cong 0$ for $p <
\frac{2}{3}(g- q)$ and thus \[H_i(\cM_{g,1}(\theta),\cM_{g-1,1}(\theta)) \cong 0\] for $i < \frac{2}{3}g$. The second claim is proven analogously instead using the secondary stability portion of \autoref{mappingclassgroups}. Here we uses a more sophisticated  version of the relative Serre spectral sequence for the mapping cone of a chain-level lift of $t_{\lambda'}$. This is similar in spirit to the spectral sequence appearing in  \cite[Lemma 2.44]{kupersmillercells}.
\end{proof}

\begin{theorem} \label{stablility4}
Assume that $\theta$ is $4$-connected. Then \[H_i(\cM_{g,1}(\theta),\cM_{g-1,1}(\theta)) \cong 0\] for $i <
\frac{2}{3}g$ and  \[ t_{\lambda'} \colon  H_{i-2}(\cM_{g-3,1}(\theta),\cM_{g-4,1}(\theta);\Z[\textstyle\frac{1}{10}])  \m H_i(\cM_{g,1}(\theta),\cM_{g-1,1}(\theta);\Z[\textstyle\frac{1}{10}]) \]  is a surjection for $i < \frac{3}{4}g$ and an isomorphism for $i < \frac{3}{4}g-1$.

\end{theorem}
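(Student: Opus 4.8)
The plan is to deduce \autoref{stablility4} from \autoref{stablilityKpi} by a Postnikov–tower induction. The first observation is that the proof of \autoref{stablilityKpi} uses the hypothesis $\overline B\simeq K(A,n)$ only through \autoref{sectionIsPoly}: the two (relative, and mapping-cone) Serre spectral sequence arguments there, together with \autoref{homotopicToSectionSpace} and the stability input \autoref{mappingclassgroups}, go through verbatim for any $4$-connected $\theta$ once one knows that $V(q,\theta)$ is a polynomial $U\Mod$-module of degree $\le q$ for every $q$. So the whole theorem reduces to proving this polynomiality statement for an arbitrary $4$-connected $\theta\colon B\to BSO(2)$.

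For that I would take a Moore--Postnikov factorisation $B=B_\infty\to\dots\to B_N\to B_{N-1}\to\dots\to B_3=BSO(2)$ of $\theta$, in which $B_N\to B_{N-1}$ (for $N\ge 4$) is a principal fibration with fibre $K(\pi_N(\overline B),N)$ and $B\to B_N$ is $(N+1)$-connected; write $\theta_N\colon B_N\to BSO(2)$, whose fibre $\overline{B_N}$ is $3$-connected. Since $M_{g,1}$ is homotopy equivalent to a $1$-complex, \autoref{homotopicToSectionSpace} and the triviality of $TM_{g,1}$ identify $\Bun_c(TM_{g,1},\theta^*\gamma_2)\simeq\Map_*(M_{g,1},\overline B)$, and the $(N+1)$-connected map $\overline B\to\overline{B_N}$ then induces an $N$-connected map $\Map_*(M_{g,1},\overline B)\to\Map_*(M_{g,1},\overline{B_N})$ which is $\Mod_{g,1}$-equivariant and compatible with the boundary-connected-sum stabilisations. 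Hence $V(i,\theta)\cong V(i,\theta_N)$ as $U\Mod$-modules for $N\ge i+1$, and it suffices to prove that each $V(i,\theta_N)$ has degree $\le i$, which I would do by induction on $N$. For $N=3$ the fibre is a point, so $V(i,\theta_3)$ is the zero module for $i>0$ and the constant module for $i=0$, hence of degree $\le i$.

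For the inductive step, applying $\Map_*(M_{g,1},-)$ to the principal fibration $K(\pi_N(\overline B),N)\to\overline{B_N}\to\overline{B_{N-1}}$ yields a fibration of spaces, natural in $g$, whose base $\Map_*(M_{g,1},\overline{B_{N-1}})$ is simply connected (in fact $2$-connected). The associated Serre spectral sequence is therefore a spectral sequence of $U\Mod$-modules with trivial local coefficients, whose $E^2$-page at $(p,q)$ is, via the universal coefficient theorem and levelwise in $g$, an extension of $\Tor^\uZ\bigl(V(p-1,\theta_{N-1}),W_q\bigr)$ by $V(p,\theta_{N-1})\otimes W_q$, where $W_q$ denotes the $K(\pi_N(\overline B),N)$-coefficient system, and which converges to $V(p+q,\theta_N)$. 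By the inductive hypothesis $V(p,\theta_{N-1})$ has degree $\le p$, and by \autoref{sectionIsPoly} (i.e.\ by Cohen--Madsen) $W_q$ has degree $\le q$; since the levelwise tensor and $\Tor$ products of polynomial $U\Mod$-modules of degrees $\le a$ and $\le b$ are polynomial of degree $\le a+b$, every term on the diagonal $p+q=i$ has degree $\le i$. As polynomial $U\Mod$-modules of a fixed degree are closed under submodules, quotients and extensions, the abutment $V(i,\theta_N)$ has degree $\le i$, completing the induction. Feeding this polynomiality back into the (now verbatim) argument of \autoref{stablilityKpi} gives \autoref{stablility4}.

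The step I expect to be the main obstacle is the naturality bookkeeping: one must check that the fibrations of mapping spaces above, and the identification $\Bun_c(TM_{g,1},\theta^*\gamma_2)\simeq\Map_*(M_{g,1},\overline B)$, are natural not only for the mapping-class-group action but also under the stabilisation maps, so that the auxiliary Serre spectral sequence really is one of $U\Mod$-modules and the $E^2$-identification above is valid; and one must have on hand the closure of polynomial coefficient systems under submodules, quotients, extensions and (levelwise) tensor and $\Tor$ products in precisely degree-additive form. By contrast, the homotopy-theoretic inputs (connectivity estimates, Moore--Postnikov theory) are routine.
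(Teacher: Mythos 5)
Your reduction of the theorem to the assertion that $V(i,\theta)$ is a polynomial $U\Mod$-module of degree $\le i$ for every $4$-connected $\theta$ is sound: the proof of \autoref{stablilityKpi} really does use the Eilenberg--MacLane hypothesis only through \autoref{sectionIsPoly}. The gap is in your inductive step for this polynomiality. To pass from the $E^2$-page of the fibrewise Serre spectral sequence to the abutment $V(i,\theta_N)$ you invoke closure of polynomial $U\Mod$-modules of a fixed degree under submodules and quotients; this is needed because the $E^\infty$-terms are iterated kernels-modulo-images of differentials, which are merely maps of $U\Mod$-modules. That closure is false. For example, the sub-$U1$-module $C$ of the constant module with value $\Q$ given by $C_g=\tfrac{1}{g!}\Z$ has $\coker(C\to\Sigma C)_g\cong\Z/(g+1)$ with all transition maps zero, so $\coker(C\to\Sigma C)$ has nonvanishing kernel in every rank and $C$ is not polynomial of any finite degree in any ranks; by \autoref{rem:ZXpoly} the same pathology is available for $U\Mod$-modules. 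Only extensions (\autoref{lem:polySES}) and tensor products are degree-additively closed. Since $\pi_*(\overline B)$ need not be finitely generated in the intended application (Haefliger's classifying space), no finiteness hypothesis rescues the step, and the induction does not go through as written; this is not "bookkeeping" but the crux of the matter.

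This is exactly the pitfall the paper's (and Cohen--Madsen's) argument is structured to avoid: one inducts up the Postnikov tower at the level of the moduli spaces, proving at each stage the relative and secondary stability statements for $\cM_{g,1}(\theta_N)$ with coefficients in an \emph{arbitrary} polynomial $U\Mod$-module, using the fibration $\cM_{g,1}(\theta_N)\to\cM_{g,1}(\theta_{N-1})$ whose fibre is a torsor over $\Map_*(M_{g,1},K(\pi_N,N))$; the only polynomiality input is then for Eilenberg--MacLane fibres (\autoref{sectionIsPoly}, \autoref{DwyerCor}), and one never needs to know that $V(i,\theta_N)$ itself is polynomial. Compare the proof of \autoref{HAutThm}, where the coefficient system $A$ is carried through the Postnikov induction for precisely this reason. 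Alternatively, your reduction can be salvaged with no Postnikov induction at all: since $M_{g,1}\simeq\bigvee_{2g}S^1$, one has $\Map_*(M_{g,1},\overline B)\simeq(\Omega\overline B)^{2g}$, and the K\"unneth decomposition exhibits $g\mapsto H_i\bigl((\Omega\overline B)^{2g}\bigr)$ as the restriction of an $\FI\sharp$-module generated in degrees $\le\lfloor i/3\rfloor$ (each summand has at most that many nontrivial factors because $\Omega\overline B$ is $2$-connected), whence polynomiality via \autoref{SharpCor}, \autoref{rem:ZXpoly} and \autoref{lem:cokerhighersigma}. One of these repairs is required; as written the proof is incomplete.
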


\begin{proof}
The claim follows from \autoref{stablilityKpi} and induction up a relative Postnikov tower for the map $\theta$. See Cohen--Madsen \cite[proof of Theorem 2.3]{CM} for a similar argument. 
\end{proof}

The homological stability  portion of the above theorem is a special case of work of Randal-Williams \cite{RWres} but the secondary stability portion is new. We now prove secondary stability for diffeomorphism groups viewed as discrete groups. Extension by the identity gives maps of groups $\Diff(M_{g-1,1}) \m \Diff(M_{g,1}) $ so we can make sense of $H_{i}(B\Diff^\delta(M_{g,1}),B\Diff^\delta(M_{g-1,1}))$.

\begin{theorem} 
We have that \[H_{i}(B\Diff^\delta(M_{g,1}),B\Diff^\delta(M_{g-1,1}) )\cong 0\] for $i <
\frac{2}{3}g$. Moreover, there is a map 
\[H_{i-2}(B\Diff^\delta(M_{g-3,1}),B\Diff^\delta(M_{g-4,1});\Z[\textstyle\frac{1}{10}] ) \m H_{i}(B\Diff^\delta(M_{g,1}),B\Diff^\delta(M_{g-1,1});\Z[\textstyle\frac{1}{10}] )\]
which is a surjection for $i <
\frac{3}{4}g$ and an isomorphism for $i < \frac{3}{4}g-1$.
\end{theorem}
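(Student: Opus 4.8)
The plan is to deduce this theorem from \autoref{stablility4} by feeding in Mather--Thurston theory for flat surface bundles, in the form established by Nariman \cite{Nar}. First I would fix the relevant tangential structure: take $\theta \colon B\Gamma_2 \to BGL_2^+(\R) \simeq BSO(2)$ to be the map classifying codimension-two $C^\infty$-Haefliger structures with oriented normal bundle, so that the homotopy fibre of $\theta$ is the Haefliger space $\overline{B\Gamma}_2$. By Thurston's connectivity theorem $\overline{B\Gamma}_2$ is $2$-connected, and since $M_{g,1}$ is homotopy equivalent to a wedge of circles this is already enough connectivity to run the arguments of \autoref{surjec} and \autoref{stablility4} for $\theta$: in those proofs the fibre of $\theta$ only enters through \autoref{sectionIsPoly} (which needs the fibre to be a tower of $K(A,n)$'s with $n\ge 2$) and through mapping spaces out of the homotopically one-dimensional complex $M_{g,1}$. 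Mather--Thurston theory then produces a map $B\Diff^\delta(M_{g,1}) \to \cM_{g,1}(\theta)$ which is an isomorphism on homology and which factors the forgetful map $B\Diff^\delta(M_{g,1}) \to B\Diff(M_{g,1})$.

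Next I would record the two naturality properties of this comparison needed to transport \autoref{stablility4}. The Mather--Thurston equivalence is natural with respect to codimension-zero inclusions of surfaces, so extension by the identity on $M_{1,1}$ corresponds to boundary connect sum on moduli spaces; the equivalences therefore fit into a commuting ladder over the stabilization maps and induce isomorphisms $H_i(B\Diff^\delta(M_{g,1}),B\Diff^\delta(M_{g-1,1})) \cong H_i(\cM_{g,1}(\theta),\cM_{g-1,1}(\theta))$ for all $i$ and $g$. Moreover the comparison is compatible with the boundary connect sum (monoidal) structure, so multiplication by a homology class of bidegree $(3,2)$ is intertwined on the two sides; because $H_2(B\Diff^\delta(M_{3,1})) \to H_2(\cM_{3,1}(\theta))$ is an isomorphism, the class $\lambda'$ used in \autoref{stablility4} lifts (after inverting $10$) to a class in $H_2(B\Diff^\delta(M_{3,1});\Z[\tfrac{1}{10}])$, and the secondary stability map $t_{\lambda'}$ on moduli spaces is identified with the map on relative homology of $B\Diff^\delta(M_{\bullet,1})$ induced by that lift. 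Applying \autoref{stablility4} to $\theta$ and transporting the conclusion along these isomorphisms yields both the vanishing statement for $i < \tfrac{2}{3}g$ and the secondary stability statement in the asserted ranges.

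I expect essentially all of the work to lie in the second paragraph: one must verify that Nariman's Mather--Thurston comparison is simultaneously a homology isomorphism, natural for the gluings that define the stabilization maps, and compatible with the monoidal structure producing the secondary stability class --- equivalently, that it refines to a homology equivalence of $E_1$-modules over the relevant $E_1$-algebra of flat surfaces. Once these (by now standard but somewhat technical) features of Mather--Thurston theory are in place, no further homological input is required, since all of it has been packaged into \autoref{stablility4} and, through it, into the mapping class group estimates of \autoref{mappingclassgroups}.
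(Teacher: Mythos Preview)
Your approach is essentially the paper's: take $\theta$ to be the Haefliger classifying map for oriented codimension-$2$ foliations, invoke Nariman's Mather--Thurston comparison to identify $H_*(B\Diff^\delta(M_{g,1}))$ with $H_*(\cM_{g,1}(\theta))$ naturally in $g$, and then read off the result from \autoref{stablility4}.

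There is one factual slip that causes you unnecessary work. Thurston's theorem gives that $\overline{B\Gamma}_2$ is $3$-connected (not merely $2$-connected), so $\theta$ is $4$-connected on the nose and \autoref{stablility4} applies \emph{as stated}; you do not need to revisit the proofs of \autoref{surjec} or \autoref{stablility4} with a weakened connectivity hypothesis. The paper simply cites Nariman \cite[Remark~1.5]{Nar} for this connectivity and \cite[Lemma~1.13]{Nar} for the natural homology equivalence, and is done in two sentences. Your more careful discussion of why the comparison respects the stabilization maps and the multiplicative structure producing the secondary stability class is reasonable due diligence (the paper leaves this implicit in the word ``natural''), but it is not additional mathematical input beyond what is already packaged in Nariman's work.
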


\begin{proof} Let $\theta$ be the natural map from Haefliger’s classifying space of orientable foliations of codimension 2 to $B \GL_2^+(\R) \simeq BSO(2)$. 
Nariman \cite[Lemma 1.13]{Nar} showed there is a natural homology equivalence between $B\Diff^\delta(M_{g,1})$ and $\cM_{g,1}(\theta)$. As explained in \cite[Remark 1.5]{Nar}, it follows from the work of Thurston that $\theta$ is $4$-connected and so the claim follows by \autoref{stablility4}. \end{proof}

%  there is a bundle $\mathcal E_g \m M_{g,1}$ and a homology equivalence. \[B\Diff^{\delta}(M_{g,1})  \m \mathcal E_g\sslash \Diff( M_{g,1}).\] Since this bundle is functorialy built out of tangent bundle and  $M_{g,1}$ is parallelizable, it is in fact a trivial bundle. By the work of Thurston \cite{Thur}, the fiber of this bundle is $3$-connected (see Nariman \cite[Remark 1.5]{Nar}). The claim now follow by \autoref{stablility3}.

The homological stability portion of the above theorem is due to Nariman \cite{Nar} but the secondary stability portion is new and is \autoref{DiffSec}.

%\begin{theorem}[Mather--Thurston] Let $M$ be a $d$-dimensional manifold with boundary. There a bundle $\mathcal E \m M$ with $d$-connected fibers, a section $\sigma \colon  M \m \mathcal E$, and a homology $g\colon B\Diff^\delta(M) \m \Gamma(\mathcal E)\sslash\Diff(M)$. 
%\end{theorem}

\begin{remark}
Sam Nariman suggested to us that it might be possible to use the techniques of this subsection to prove secondary homological stability for discrete symplectomorphism groups $\Symp^\delta(M_{g,1})$. A few difficulties arise. By the work of McDuff \cite{McDuff82}, $\Symp^\delta(M_{g,1})$ is homology equivalent to a connected component of a space of sections of a bundle, not the entire space of sections. Plausibly this can be dealt with as in Nariman \cite{NarSymp}. Furthermore, the bundle map associated to this section space is only $2$-connected, not $4$-connected as is required here. However, it follows from work of Kotschick--Morita \cite{KotMor} that $H_2(B\Symp^\delta(M_{g,1})) \m H_2(B\Diff(M_{g,1}))$ is surjective which may be enough to make the arguments go through. 

There are other natural families of subgroups of $\Diff(M_{g,1})$ whose homology groups do not surject onto $H_2(\Diff(M_{g,1}))$, for example the extended Hamiltonian group of $M_{g,1}$. Likely these groups also exhibit some form of secondary stability but of a flavor different from that of $B\Diff(M_{g,1})$ and $B\Diff^\delta(M_{g,1})$. 
\end{remark}

%This bundle $\mathcal E$ is constructed functorially out of $M$ using Haefliger’s classifying space of foliations \cite{Haef}. Given an open embedinng $f\colon N \m M$, there is a natural isomorphism $f^* \mathcal E_M \cong \mathcal E_N$ and this compatible with the choice of section.

\subsection{Homotopy automorphisms and $\GL_n(\mathbb S)$}

The goal of this subsection is to prove an improved range for homological stability for the monoid of homotopy automorphisms of wedges of $d$-dimensional spheres with coefficients in $\Z[\frac 12]$. Specializing this result to $d= \infty$ will yield homological stability for $\GL_n(\mathbb S)$.

\begin{definition}
Let $X$ and $Y$ be based spaces. Let $\Map_*(X,Y)$ denote the space of based maps, topologized with the compact open topology. Let $\hAut(X) \subset \Map_*(X,Y)$ denote the topological monoid of self homotopy equivalences topologised with the subspace topology. The monoid structure is induced by function composition. %Let $\hAut_{\id}(X)$ denote the submonoid of homotopy automorphisms which are homotopic to the identity.
We denote $\Map_*(S^n,X)$ by $\Omega^n X$. 
%Let $\Omega^n_0 X$ denote the connected component of the constant map in $\Omega^n X$.
Let $B$ denote the bar construction for topological monoids/$E_1$-spaces. 
Let $\bigvee_n X$ denote the $n$-fold wedge product of $X$.  Let $\Sigma^d_+$ denote the $d$-fold suspension functor precomposed with the disjoint basepoint functor.

\end{definition}

 Sending a homotopy automorphism to the induced map on $H_d$ gives a map of monoids $\hAut(\bigvee_n S^d) \m \GL_n(\Z)$. For $d \geq 2$, this map is an isomorphism on $\pi_0$.
 % and has kernel $\hAut_{\id}(\bigvee_n S^d)$.
 The action of the fundamental group on higher homotopy groups gives an action of $\GL_n(\Z)$ on $\pi_i(B \hAut(\bigvee_n S^d) )$ for $i \geq 2$.
 Extending a homotopy automorphism of $\bigvee_n S^d$ to a homotopy automorphism of $\bigvee_{n+1} S^d$ via the identity map on the $(n+1)$st sphere induces a $\GL_n(\Z)$-equivariant map $\pi_i(B \hAut(\bigvee_n S^d) ) \m \pi_i(B \hAut_\id(\bigvee_{n+1} S^d) )$.

\begin{lemma} For $d,i \geq 2$, the sequence $\{\pi_i(B\hAut(\bigvee_n S^d))\}_n$ has the structure of a $U\GL(\Z)$-module which we denote by $\pi_i(B\hAut(\bigvee S^d))$. 
\end{lemma}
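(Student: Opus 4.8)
The goal is to promote the sequence of $\GL_n(\Z)$-representations $\pi_i(B\hAut(\bigvee_n S^d))$ to a genuine $U\GL(\Z)$-module, i.e.\ to show that the stabilization maps are compatible with enough of the categorical structure of $U\GL(\Z)$. The strategy mirrors Randal-Williams--Wahl \cite[Proposition 4.2]{RWW}, which is the standard mechanism for producing $U\G$-modules from a family of ``stabilization data'': one needs (a) an action of $G_n = \GL_n(\Z)$ on the $n$th term, (b) $G_n$-equivariant stabilization maps to the $(n+1)$st term, and (c) the condition that the subgroup $G_1 = 1 \subset \GL_{n+1}(\Z)$ (the copy acting on the newly added wedge summand) acts trivially on the image of the stabilization map. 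I would first recall that $\hAut(\bigvee_n S^d)$, together with the wedge sum $\vee$, forms a (braided, in fact symmetric for $d\geq 3$) monoidal object in topological monoids, so that $B\hAut(\bigvee_\bullet S^d)$ is a functor out of the groupoid $\GL(\Z)$ up to homotopy; the content is then to see that $\pi_i$ of it extends to $U\GL(\Z)$.

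\textbf{Key steps in order.} First, establish (a): for $d, i \geq 2$, $\pi_1(B\hAut(\bigvee_n S^d)) \cong \pi_0(\hAut(\bigvee_n S^d)) \cong \GL_n(\Z)$ (using $d \geq 2$ so that a homotopy self-equivalence is detected on $H_d$, and the realization result that every element of $\GL_n(\Z)$ is so realized), and the action of $\pi_1$ on $\pi_i$ for $i \geq 2$ of the connected space $B\hAut(\bigvee_n S^d)$ gives the desired $\GL_n(\Z)$-action on $\pi_i(B\hAut(\bigvee_n S^d))$. Second, establish (b): the monoid map $\hAut(\bigvee_n S^d) \to \hAut(\bigvee_{n+1} S^d)$ given by extending via the identity on the last summand is a map of monoids, hence induces a based map $B\hAut(\bigvee_n S^d) \to B\hAut(\bigvee_{n+1} S^d)$ on classifying spaces; applying $\pi_i$ and checking equivariance with respect to $\GL_n(\Z) \hookrightarrow \GL_{n+1}(\Z)$ is a direct diagram chase using naturality of the bar construction and of the $\pi_1$-action on higher homotopy groups. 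Third, establish (c): the key point is that the ``new'' $\GL_1(\Z) = 1$ subgroup in $\GL_{n+1}(\Z)$ is trivial, so its action on anything, in particular on the image of the stabilization map, is automatically trivial; more carefully one wants that the whole block $\GL_1(\Z)$ sitting in $\GL_{n+1}(\Z) = \GL_n(\Z) \times \GL_1(\Z) \hookrightarrow \GL_{n+1}(\Z)$ via $\oplus$ acts trivially on the image of $\pi_i(B\hAut(\bigvee_n S^d)) \to \pi_i(B\hAut(\bigvee_{n+1} S^d))$, which holds because $\GL_1(\Z) = \{\pm 1\}$ is the automorphism of the last $S^d$ factor and an element of the form $\phi \vee \mathrm{id}_{S^d}$ in the image commutes with $\mathrm{id}_{\bigvee_n S^d} \vee (\text{automorphism of last summand})$ up to the relevant homotopy, so the conjugation action on $\pi_i$ is trivial. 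Once (a)--(c) are in place, \cite[Proposition 4.2]{RWW} (cf.\ the discussion in \autoref{secPoly} and the construction of $V(i,\theta)$ in \autoref{sectionIsPoly}, which uses exactly this mechanism) assembles the $\{\pi_i(B\hAut(\bigvee_n S^d))\}_n$ into a $U\GL(\Z)$-module.

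\textbf{Main obstacle.} I expect the genuinely delicate point to be step (c), or more precisely the passage from the fact that ``$\GL_1(\Z)$ is trivial'' to the correct triviality statement for the $\oplus$-embedded block: one must be careful that the relevant triviality is triviality of the conjugation action coming from $\pi_1$ of the classifying space, not merely a set-theoretic statement, and this requires knowing that the stabilization map factors through the fixed points of the block action in the appropriate homotopical sense. Concretely, the subtlety is whether $B$ of the monoid map behaves well enough with respect to the monoidal/braided structure of $\hAut(\bigvee_\bullet S^d)$ to apply the Randal-Williams--Wahl formalism verbatim, since that formalism is stated for $U\G$-modules arising from actions on spaces, and here the ``space'' is itself a classifying space of a topological monoid. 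I would resolve this by observing that $\{B\hAut(\bigvee_n S^d)\}_n$ is a module (in spaces) over the braided monoidal groupoid $\GL(\Z)$ acting through the wedge-sum monoidal structure, exactly of the type to which \cite[Proposition 4.2]{RWW} applies, with the $\GL_1(\Z)$-triviality condition following as above; the remaining checks are then the routine naturality diagrams I would not grind through here.
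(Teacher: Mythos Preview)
Your approach is essentially the paper's: invoke \cite[Proposition 4.2]{RWW} and verify the triviality hypothesis by observing that homotopy automorphisms with disjoint support commute. Two small corrections are worth flagging. First, your initial claim that ``$\GL_1(\Z)=1$'' is false (it is $\{\pm 1\}$), and you correctly catch this yourself; the point is that the trivial-action statement is \emph{not} automatic and does require the commutation argument you then give. Second, the criterion in \cite[Proposition 4.2]{RWW} requires that the \emph{full} block $G_m \subset G_{n+m}$ act trivially on the image of the stabilization $A_n \to A_{n+m}$, not merely $G_1$; checking only $m=1$ does not suffice, since $\GL_m(\Z)$ is not generated by the copies of $\GL_1(\Z)$. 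Fortunately your actual argument---that $\phi \vee \mathrm{id}$ commutes with $\mathrm{id} \vee \psi$ for any $\psi \in \hAut(\bigvee_m S^d)$---is exactly what is needed for general $m$, and is precisely what the paper uses. So the substance of your proof is right; just state the criterion and the commutation for general $m$ from the start, and drop the misleading $\GL_1(\Z)=1$ sentence.
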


\begin{proof}
We say that a homotopy automorphism $f\colon\bigvee_n S^d \m \bigvee_n S^d$ is supported on a collection of spheres $T \subset \{1,\dots,n\}$ if $f$ is the wedge of a homotopy automorphism of $\Sigma^d_+ T$ with the identity on $\Sigma^d_+ ( \{1,\dots,n\} \setminus T)$. Here we view $\bigvee_n S^d$ as \[\left(\Sigma^d_+ T \right) \vee \left( \Sigma^d_+ ( \{1,\dots,n\} \setminus T) \right).\]
By Randal-Williams--Wahl \cite[Proposition 4.2]{RWW}, it suffices to show that conjugation by an automorphism supported on the last $m$ spheres acts trivially on the image of \[\pi_i(B \hAut(\bigvee_n S^d) ) \m \pi_i(B \hAut(\bigvee_{n+m} S^d) ).\] This follows from the fact that homotopy automorphisms with disjoint support commute. 
\end{proof}

We now recall Church--Ellenberg--Farb's definition of $\FI\sharp$-modules \cite[Section 4.1]{CEF}.

\begin{definition}
Let $\FI\sharp$ be the category of finite sets with elements of ${\FI\sharp}(S,T)$ given by injections $f\colon U \m T$ with $U$ a subset of $S$. An $\FI\sharp$-module is a functor from $\FI\sharp$ to the category of abelian groups. We identify $\FI$ with the subcategory of $\FI\sharp$ where we require that $U=S$.
\end{definition}

We can view an $\FI\sharp$-module as an $\FI$-module via restriction. The following follows immediately from \cite[Proposition 3.23]{MPW} and Church--Ellenberg--Farb \cite[Theorem 4.1.5]{CEF}.

\begin{proposition} \label{SharpGen}
Let $A$ be a $\FI\sharp$-module with generation degree $\leq r$. Then the underlying $U\fS$-module of $A$ is polynomial of degree $\le r$ in ranks $>-1$.
\end{proposition}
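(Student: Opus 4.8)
The plan is to reduce the claim to the $\FI$-theoretic statement that an $\FI\sharp$-module generated in degree $\le r$ is, after restriction to $\FI$, a polynomial coefficient system of degree $\le r$, and then to invoke \autoref{rem:ZXpoly} to see that polynomiality of a $U\fS$-module depends only on its underlying $U1$-module. Concretely, first I would observe that by \autoref{rem:ZXpoly} it suffices to prove the statement for the underlying $U1$-module, so we may work with the $U\fS$-module structure throughout; then the content of the proposition is that the restriction of an $\FI\sharp$-module of generation degree $\le r$ to $U\fS \simeq \FI$ is polynomial of degree $\le r$ in the sense of \autoref{defpoly}.

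The key input is the structure theorem of Church--Ellenberg--Farb \cite[Theorem 4.1.5]{CEF}, which says that every finitely generated $\FI\sharp$-module is a finite direct sum of modules of the form $M(W_a)$, where $W_a$ is the $\FI\sharp$-module corresponding to the regular representation-style generator concentrated in rank $a$, and that the generation degree bounds the largest such $a$ occurring. By \autoref{lem:polySES}, a finite direct sum (indeed any short exact sequence with polynomial ends) of $U\fS$-modules that are polynomial of degree $\le r$ in ranks $>-1$ is again polynomial of degree $\le r$ in ranks $>-1$; so it suffices to check the claim for each generator $M(W_a)$ with $a \le r$. This is where \cite[Proposition 3.23]{MPW} enters: that proposition identifies the underlying $U\fS$-module of such a standard $\FI\sharp$-generator in rank $a$ with an explicit polynomial coefficient system of degree $a$ (one checks directly that its shift cokernel drops in degree by one and its shift kernel vanishes, or simply quotes \cite[Proposition 3.23]{MPW} verbatim). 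Combining the decomposition of \cite[Theorem 4.1.5]{CEF}, the identification of each summand's underlying $U\fS$-module via \cite[Proposition 3.23]{MPW}, and closure of polynomiality under direct sums via \autoref{lem:polySES} yields that the underlying $U\fS$-module of $A$ is polynomial of degree $\le r$, hence so is the underlying $U1$-module, which is what we wanted.

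The main obstacle, such as it is, is bookkeeping: one must make sure that the generation degree bound $\le r$ really does control the top rank $a$ of the $\FI\sharp$-generators appearing in the decomposition of \cite[Theorem 4.1.5]{CEF} (this is exactly the finite-generation hypothesis together with the fact that $M(W_a)$ is generated in rank $a$), and that the notion of polynomial degree used in \cite[Proposition 3.23]{MPW} matches \autoref{defpoly} with the ``in ranks $>-1$'' convention. Neither step is deep, but both require pinning down conventions; once they are pinned down the proof is a two-line citation chain. So I would expect to write essentially: ``This follows immediately from \cite[Proposition 3.23]{MPW} and Church--Ellenberg--Farb \cite[Theorem 4.1.5]{CEF}'' together with an appeal to \autoref{lem:polySES} and \autoref{rem:ZXpoly} to pass between direct sums and between $U\fS$- and $U1$-module structures.
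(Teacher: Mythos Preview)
Your proposal is correct and follows exactly the paper's approach: the paper's proof is the one-line citation ``This follows immediately from \cite[Proposition 3.23]{MPW} and Church--Ellenberg--Farb \cite[Theorem 4.1.5]{CEF},'' and you have simply unpacked what those two citations contribute. One minor imprecision: you phrase \cite[Theorem 4.1.5]{CEF} as a statement about \emph{finitely generated} $\FI\sharp$-modules giving a \emph{finite} direct sum, but the hypothesis here is only generation degree $\le r$, not finite generation; fortunately the CEF structure theorem applies to arbitrary $\FI\sharp$-modules (giving a possibly infinite direct sum, all summands in ranks $\le r$), and polynomiality is preserved under arbitrary direct sums since $\ker$ and $\coker$ commute with direct sums, so the argument goes through unchanged.
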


The proof of Church--Ellenberg--Farb \cite[4.1.7]{CEF} gives the following.

\begin{theorem} \label{polyranksdeg}
Let $A$ be an $\FI\sharp$-module over $\bk$ where the number of generators of $A_n$ as a $\bk$-module is bounded by a polynomial of degree $r$. Then $A$ has generation degree $\leq r$ as a $U\fS$-module. 
\end{theorem}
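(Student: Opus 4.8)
The statement is essentially an $\FI\sharp$-module translation of Church--Ellenberg--Farb's analysis of growth of $\FI\sharp$-modules \cite[4.1.7]{CEF}; my plan is to reduce to their structural classification and then count. The key input is their theorem that an $\FI\sharp$-module over $\bk$ is, up to the action of idempotents, a direct sum of ``atomic'' $\FI\sharp$-modules $M(m)$, where $M(m)_n = \bk\big[\mathrm{Hom}_{\FI\sharp}(m,n)\big]$ freely on the set of injections from a subset of $\underline m$ into $\underline n$; equivalently every $\FI\sharp$-module is a functor that is determined by its values at all finite sets together with the $\FI\sharp$-structure maps, and is projective. In fact the cleanest route is: any $\FI\sharp$-module $A$ decomposes as $A \cong \bigoplus_{m\ge 0} M(m) \otimes_{\bk} V_m$ for suitable $\bk$-modules $V_m = \Tor$-type pieces (the ``$m$-th layer''), because the category $\FI\sharp$ is equivalent to a category whose module category is semisimple-like in this sense (this is exactly the content of \cite[Theorem 4.1.5]{CEF}, which we are already citing for \autoref{SharpGen}). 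I would first recall this decomposition.

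\textbf{Key steps.} First, invoke the $\FI\sharp$-decomposition $A \cong \bigoplus_m M(m)\otimes_{\bk} V_m$, so that $A_n \cong \bigoplus_m \bk\big[\mathrm{Hom}_{\FI\sharp}(m,n)\big]\otimes_\bk V_m$. Second, count: $\#\mathrm{Hom}_{\FI\sharp}(m,n) = \sum_{j=0}^{m}\binom{m}{j}\,(n)_j$ where $(n)_j = n(n-1)\cdots(n-j+1)$, which is a polynomial in $n$ of degree exactly $m$ (when $V_m \ne 0$), with positive leading coefficient. Hence the minimal number of $\bk$-module generators of $A_n$ is (at most, and generically exactly) $\sum_m (\#\mathrm{Hom}_{\FI\sharp}(m,n))\cdot \mu(V_m)$, where $\mu(V_m)$ is the number of generators of $V_m$; this grows like a polynomial in $n$ whose degree is $\max\{m : V_m \ne 0\}$. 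Third, the hypothesis that this generator count is bounded above by a polynomial of degree $r$ forces $V_m = 0$ for all $m > r$: if some $V_m \ne 0$ with $m > r$, then the $m$-th summand alone already contributes a term growing like a degree-$m$ polynomial in $n$, contradicting the degree-$r$ bound (here one uses that the leading coefficients are positive so there is no cancellation between layers). Fourth, conclude that $A \cong \bigoplus_{m \le r} M(m)\otimes_\bk V_m$; each $M(m)$ for $m \le r$ is an $\FI\sharp$-module, hence by \autoref{SharpGen} its underlying $U\fS$-module is generated in degree $\le m \le r$, and a direct sum (tensored with fixed $\bk$-modules $V_m$, which does not affect generation degree as a $U\fS = \FI$-module) of modules generated in degrees $\le r$ is generated in degrees $\le r$. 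This gives the claim.

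\textbf{Main obstacle.} The one point requiring genuine care, rather than routine citation, is the third step: extracting that \emph{each} layer $V_m$ with $m > r$ must vanish purely from an asymptotic bound on the \emph{total} generator count. This is where I must make sure the leading-order contributions of distinct layers do not cancel — i.e.\ that $\dim_{\bk}$ or minimal-generator counts of the $M(m)_n$ are genuinely polynomial with strictly positive top coefficient and strictly increasing degree in $m$, so that the top layer present dominates. For $\bk$ a field this is transparent via dimension count; for general $\bk$ one argues with minimal numbers of generators and notes these are still governed by the same combinatorial polynomials $\sum_j \binom{m}{j}(n)_j$, which are eventually strictly monotone in $m$. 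Once that monotonicity/positivity is in hand, the reduction is immediate. Everything else — the decomposition theorem, \autoref{SharpGen}, and the behavior of generation degree under direct sums and tensoring with a constant $\bk$-module — is either cited or formal.
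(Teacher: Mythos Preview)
Your proposal is correct and follows exactly the approach the paper defers to, namely the proof of Church--Ellenberg--Farb \cite[4.1.7]{CEF}: invoke the structural decomposition of $\FI\sharp$-modules from \cite[Theorem~4.1.5]{CEF}, count generators in each piece to force the layers above degree $r$ to vanish, and conclude. Two small cleanups: the pieces in CEF's decomposition are $M(W_m)$ for $\bk S_m$-modules $W_m$ (with $M(W_m)_n \cong W_m^{\oplus \binom{n}{m}}$ as $\bk$-modules), not $M(m)\otimes_\bk V_m$ with $M(m)_n=\bk[\mathrm{Hom}_{\FI\sharp}(m,n)]$ as you wrote, and in your final step you do not need \autoref{SharpGen} at all---each $M(W_m)$ is by construction a free $U\fS$-module generated in degree $m$.
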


Combining \autoref{rem:ZXpoly}, \autoref{SharpGen}, and \autoref{polyranksdeg} gives the following.

\begin{corollary} \label{SharpCor}
Let $\G$ be a symmetric stability groupoid with a map $\fS \m \G$. Let $A$ be a $U\G$-module such that the $U\fS$-module extends to an $\FI\sharp$-module. If the number of generators of $A_n$ is bounded by a polynomial of degree $r$, then $A$ is a polynomial $U\G$-module in ranks $>-1$ of degree $\leq r$. 
\end{corollary}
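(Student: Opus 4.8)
Looking at this statement, I need to prove \autoref{SharpCor}: if $\G$ is a symmetric stability groupoid with a map $\fS \to \G$, and $A$ is a $U\G$-module whose underlying $U\fS$-module extends to an $\FI\sharp$-module with $A_n$ having at most polynomially many generators (degree $r$), then $A$ is polynomial of degree $\leq r$ in ranks $>-1$.

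The plan is to chain together the three cited results exactly as the sentence preceding the statement advertises. First I would invoke \autoref{polyranksdeg}: since the underlying $U\fS$-module of $A$ extends to an $\FI\sharp$-module (equivalently, by \autoref{rem:ZXpoly}, we may pass to the underlying $\FI = U\fS$ structure) and the number of $\bk$-module generators of $A_n$ is bounded by a polynomial of degree $r$, the $U\fS$-module $A$ has generation degree $\leq r$. Here I should be slightly careful: \autoref{polyranksdeg} is stated for $\FI\sharp$-modules over $\bk$, and its hypothesis is precisely the polynomial bound on the number of generators, so it applies verbatim to the $\FI\sharp$-module extending the underlying $U\fS$-module of $A$.

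Second, I would apply \autoref{SharpGen}: an $\FI\sharp$-module with generation degree $\leq r$ has underlying $U\fS$-module polynomial of degree $\leq r$ in ranks $>-1$. Combining with the previous step, the underlying $U\fS$-module of $A$ is polynomial of degree $\leq r$ in ranks $>-1$. Third and finally, I would invoke \autoref{rem:ZXpoly}, which states that polynomiality of a $U\G$-module depends only on its underlying $U1$-module structure; since the $U1$-structure factors through the $U\fS$-structure (via the inclusion $U1 \subset U\fS \subset U\G$ coming from the maps $1 \to \fS \to \G$), polynomiality of the $U\fS$-module forces polynomiality of the $U\G$-module $A$ of the same degree $\leq r$ in ranks $>-1$. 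This completes the proof.

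There is essentially no obstacle here — the statement is explicitly a corollary assembled from \autoref{rem:ZXpoly}, \autoref{SharpGen}, and \autoref{polyranksdeg}, and the proof is just bookkeeping about which category the polynomiality condition is being checked in. The only point requiring minor attention is making sure the identification $U\fS \simeq \FI$ and the compatibility of the map $\fS \to \G$ with the inclusions $U1 \hookrightarrow U\fS \hookrightarrow U\G$ (via $\iota_{n-m}\oplus\id_m$ as in \autoref{rem:ZXpoly}) is spelled out, so that "underlying $U1$-module" genuinely coincides whether computed through $\fS$ or directly in $\G$. Here is the proof:

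\begin{proof}
By hypothesis the underlying $U\fS$-module of $A$ extends to an $\FI\sharp$-module, which we continue to denote by $A$, and the number of generators of $A_n$ as a $\bk$-module is bounded by a polynomial in $n$ of degree $r$. By \autoref{polyranksdeg}, the $U\fS$-module underlying $A$ has generation degree $\le r$. Applying \autoref{SharpGen}, the underlying $U\fS$-module of $A$ is therefore polynomial of degree $\le r$ in ranks $>-1$. Finally, by \autoref{rem:ZXpoly} the notion of polynomiality of a $U\G$-module depends only on the underlying $U1$-module structure, obtained by restricting along $U1 \subset U\G$; since the map $\fS \m \G$ is compatible with the inclusions $U1 \subset U\fS \subset U\G$, this underlying $U1$-module agrees with the underlying $U1$-module of the $U\fS$-module of $A$. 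Hence $A$ is a polynomial $U\G$-module of degree $\le r$ in ranks $>-1$.
\end{proof}
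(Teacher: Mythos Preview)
Your proof is correct and follows exactly the approach the paper indicates: the paper does not write out a separate proof but states that the corollary is obtained by combining \autoref{rem:ZXpoly}, \autoref{SharpGen}, and \autoref{polyranksdeg}, which is precisely what you do.
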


The following is a special case of Hilton--Milnor splitting  \cite{HiltonMilnor}.

\begin{proposition} \label{HM}
For $d\ge 1$, there is a homotopy equivalence: 
\[ \Omega^d \left(\bigvee_n S^d \right) \simeq \prod_{m \geq 1} \left(\Omega^d  S^{(d-1)m +1} \right)^{l_{m,n}}. \] 
where $l_{m,n}$ is the rank of the submodule of the free Lie algebra on $n$ generators spanned by $m$ nested brackets of generators. 
\end{proposition}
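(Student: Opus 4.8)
The plan is to obtain this as a direct consequence of the classical Hilton--Milnor splitting \cite{HiltonMilnor}. For $d \geq 1$ we may write $S^d = \Sigma S^{d-1}$, so that $\bigvee_n S^d = \Sigma\big(\bigvee_n S^{d-1}\big)$. Applying the Hilton--Milnor theorem to this wedge of suspensions produces a homotopy equivalence
\[ \Omega\Big(\bigvee_n S^d\Big) = \Omega\Sigma\Big(\bigvee_n S^{d-1}\Big) \simeq \prod_{w} \Omega\Sigma\big( S^{d-1} \wedge \cdots \wedge S^{d-1}\big), \]
where $w$ runs over a Hall (or Lyndon) basis of the free Lie algebra $L$ on $n$ generators, and a basic product $w$ of weight $m$ contributes the smash product of exactly $m$ copies of $S^{d-1}$. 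When $n \geq 2$ the indexing set is infinite and the right-hand product is the weak infinite product, which causes no trouble below since the factors become increasingly connected. Since a smash of $m$ copies of $S^{d-1}$ is $S^{(d-1)m}$, the $w$-factor is $\Omega\Sigma S^{(d-1)m} = \Omega S^{(d-1)m+1}$.

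Next I would count the factors by weight. The number of basic products of weight $m$ in a Hall basis of $L$ equals the rank of the weight-$m$ homogeneous summand $L_m$ of the free Lie algebra on $n$ generators (Witt's formula), which is precisely $l_{m,n}$. Grouping the factors of the previous display by the weight $m$ of the corresponding basic product therefore gives
\[ \Omega\Big(\bigvee_n S^d\Big) \simeq \prod_{m \geq 1} \big(\Omega S^{(d-1)m+1}\big)^{l_{m,n}}. \]

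Finally, applying $\Omega^{d-1}$ to both sides and using that $\Omega^{d-1} = \Map_*(S^{d-1}, -)$ commutes with (weak) products yields
\[ \Omega^d\Big(\bigvee_n S^d\Big) = \Omega^{d-1}\,\Omega\Big(\bigvee_n S^d\Big) \simeq \prod_{m \geq 1} \big(\Omega^{d-1}\Omega\, S^{(d-1)m+1}\big)^{l_{m,n}} = \prod_{m \geq 1}\big(\Omega^d S^{(d-1)m+1}\big)^{l_{m,n}}, \]
which is the asserted equivalence. There is no substantial obstacle here beyond two minor bookkeeping points: first, verifying that a weight-$m$ basic product smashes together exactly $m$ copies of $S^{d-1}$, so that one lands on $\Omega S^{(d-1)m+1}$ after the extra loop coming from the outer suspension; and second, checking that passing to the weak infinite product is harmless, which holds because $\Omega^d S^{(d-1)m+1}$ is $\big((d-1)(m-1)-1\big)$-connected, so in any fixed homological degree only finitely many factors contribute and the finite partial products already compute the homology correctly.
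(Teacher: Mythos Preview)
Your derivation is correct and is exactly the approach the paper takes: the paper gives no proof beyond the sentence ``The following is a special case of Hilton--Milnor splitting \cite{HiltonMilnor},'' and you have simply unpacked that citation with the standard bookkeeping. One small caveat is that Hilton--Milnor is usually stated for connected spaces, so your argument via $\Sigma S^{d-1}$ is cleanest for $d\geq 2$; the paper only ever uses the result for $d\geq 2$ (indeed $d\geq 3$ in the applications), so this is harmless.
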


\begin{corollary} \label{HMcor}
Let $\cL(m)_n$ denote the submodule of the free Lie algebra on $n$ generators spanned by $m$ nested brackets of generators. For $d,i \geq 2$, we have that 
\[\pi_i(B\hAut(\bigvee_n S^d))  \cong \left( \bigoplus_{1\le m \le \textstyle\frac {i-1}{d-1}+1}  \pi_{i+d-1}(S^{(d-1)m+1}) \otimes \cL(m)_n \right)^n. \] 
\end{corollary}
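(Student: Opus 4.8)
The plan is to combine the Hilton--Milnor splitting of \autoref{HM} with a homotopy-theoretic description of $B\hAut$ in the stable range and with the computation of $\pi_i$ of a loop space.

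First I would recall that for $d \geq 2$ the space $\bigvee_n S^d$ is simply connected, so $\hAut(\bigvee_n S^d)$ may be analyzed via obstruction theory; in particular there is a fibration sequence, standard from the theory of homotopy automorphisms of a simply-connected finite complex, relating $\hAut(\bigvee_n S^d)$ to the space of self-maps inducing an isomorphism on $H_d$. Since $\bigvee_n S^d$ is a co-$H$-space (indeed a suspension, $\bigvee_n S^d = \Sigma^{d-1}_+(\text{$n$ points})$ up to the usual identification), a based self-map is determined up to homotopy by a map $\bigvee_n S^d \to \bigvee_n S^d$, i.e.\ an element of $\bigl(\pi_d(\bigvee_n S^d)\bigr)^n$, and the self-maps inducing an iso on $H_d$ form the preimage of $\GL_n(\Z)$ under the Hurewicz map. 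This exhibits a fibration $F_n \to \hAut(\bigvee_n S^d) \to \GL_n(\Z)$, where the fiber $F_n$ is, after delooping once, weakly equivalent to the based mapping space of maps $\bigvee_n S^d \to \bigvee_n S^d$ homotopic to the identity. Passing to classifying spaces and taking $\pi_i$ for $i \geq 2$, the long exact sequence of the fibration together with $\pi_i(B\GL_n(\Z)) = 0$ for $i \geq 2$ identifies $\pi_i(B\hAut(\bigvee_n S^d))$ with the $i$-th homotopy group of the identity component of $\Map_*(\bigvee_n S^d, \bigvee_n S^d)$, which by adjunction (since the source is an $n$-fold wedge) is $\bigl(\Omega^d(\bigvee_n S^d)\bigr)^n$ on the level of homotopy groups. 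Thus $\pi_i(B\hAut(\bigvee_n S^d)) \cong \bigl(\pi_i(\Omega^d(\bigvee_n S^d))\bigr)^n = \bigl(\pi_{i+d}(\bigvee_n S^d)\bigr)^n$ for $i \geq 2$.

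Next I would feed in \autoref{HM}: $\Omega^d(\bigvee_n S^d) \simeq \prod_{m \geq 1}\bigl(\Omega^d S^{(d-1)m+1}\bigr)^{l_{m,n}}$, where $l_{m,n} = \dim \cL(m)_n$ is the rank of the weight-$m$ part of the free Lie algebra on $n$ generators. Taking $\pi_i$ of both sides, using that homotopy groups commute with finite products,
\[
\pi_i\Bigl(\Omega^d\bigl(\textstyle\bigvee_n S^d\bigr)\Bigr) \cong \bigoplus_{m \geq 1} \pi_i\bigl(\Omega^d S^{(d-1)m+1}\bigr)^{l_{m,n}} \cong \bigoplus_{m \geq 1} \pi_{i+d-1}\bigl(S^{(d-1)m+1}\bigr) \otimes \cL(m)_n,
\]
where the last identification rewrites a direct sum of $l_{m,n}$ copies of $\pi_{i+d-1}(S^{(d-1)m+1})$ as a tensor product with the $\Z$-module $\cL(m)_n$ of rank $l_{m,n}$; here I use $\pi_i(\Omega^d Y) \cong \pi_{i+d}(Y)$ and then shift by one, matching the indexing of the statement. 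Finally I would observe that the summand for weight $m$ vanishes when the sphere $S^{(d-1)m+1}$ is more than $(i+d-1 - (\text{connectivity}))$-connected: $S^{(d-1)m+1}$ is $(d-1)m$-connected, so $\pi_{i+d-1}(S^{(d-1)m+1}) = 0$ as soon as $(d-1)m + 1 > i + d - 1$, i.e.\ $m > \frac{i-1}{d-1} + 1$. This truncates the direct sum to $1 \leq m \leq \frac{i-1}{d-1}+1$. Raising everything to the $n$-th power (the outer exponent coming from the $n$ wedge summands of the source) yields exactly the asserted formula.

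The main obstacle, I expect, is making the identification of $\pi_i(B\hAut(\bigvee_n S^d))$ with $\pi_{i+d}(\bigvee_n S^d)^n$ fully rigorous and \emph{equivariant} for the $\GL_n(\Z)$-action in a way compatible with the $U\GL(\Z)$-module structure described above --- one must check that the fibration $F_n \to \hAut(\bigvee_n S^d) \to \GL_n(\Z)$ and the adjunction $\Omega^d(\bigvee_n S^d) \simeq \Map_*(\bigvee_n S^d, \bigvee_n S^d)_{\mathrm{id}}$ behave well under the stabilization maps, and that the Hilton--Milnor equivalence can be chosen compatibly. The homotopy-group bookkeeping (connectivity truncation, index shifts) is routine once this is set up; in fact the abstract statement of the corollary only records the abelian-group structure, so for the purposes of \autoref{corE2central}-style applications it suffices to track the underlying $U1$-module, which is what the product formula above gives directly.
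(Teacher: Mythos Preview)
Your approach is essentially the paper's: identify $\pi_i(B\hAut(\bigvee_n S^d))$ with homotopy groups of $\Map_*(\bigvee_n S^d,\bigvee_n S^d)\simeq(\Omega^d\bigvee_n S^d)^n$, apply Hilton--Milnor, and truncate by connectivity. The paper's reduction is cleaner than your fibration story, though: since $\hAut(\bigvee_n S^d)$ is a union of path components of $\Map_*(\bigvee_n S^d,\bigvee_n S^d)$, one has $\pi_{i-1}(\hAut)\cong\pi_{i-1}(\Map_*)$ for all $i-1\geq 1$ directly, and $\pi_i(B\hAut)=\pi_{i-1}(\hAut)$ supplies the shift. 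No fibration over $\GL_n(\Z)$ is needed.

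Your writeup contains an index slip that you should fix. You assert $\pi_i(B\hAut(\bigvee_n S^d))\cong(\pi_i(\Omega^d\bigvee_n S^d))^n$, which is off by one (it should be $\pi_{i-1}$), and then compensate by writing $\pi_i(\Omega^d S^{(d-1)m+1})\cong\pi_{i+d-1}(S^{(d-1)m+1})$ (it should be $\pi_{i+d}$). The two errors cancel and your ``shift by one'' remark papers over it, but the correct chain is
\[
\pi_i(B\hAut(\textstyle\bigvee_n S^d))\cong\pi_{i-1}(\hAut(\textstyle\bigvee_n S^d))\cong\pi_{i-1}(\Map_*(\textstyle\bigvee_n S^d,\textstyle\bigvee_n S^d))\cong(\pi_{i+d-1}(\textstyle\bigvee_n S^d))^n.
\]
Finally, the equivariance concerns in your last paragraph are not needed for this corollary: the statement is about underlying abelian groups only, and the $U\GL(\Z)$-module structure is treated separately (via \autoref{rem:ZXpoly}) in \autoref{prop:polydegpi}.
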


\begin{proof}

Fix $i \geq 2$. We have that 
\begin{gather*}
\pi_{i}(B \hAut(\bigvee_n S^d))\cong \pi_{i-1}( \hAut(\bigvee_n S^d) )\cong \pi_{i-1}\left( \Map_*(\bigvee_n S^d,\bigvee_n S^d) \right)\\
\cong \pi_{i-1}\left(\left ( \Omega^d \bigvee_n S^d \right)^n \right) \cong \left(\pi_{i-1}\left( \Omega^d \bigvee_n S^d \right)\right)^n \cong  \pi_{i-1} \left(  \prod_{m \geq 1} \left(\Omega^d  S^{(d-1)m +1} \right)^{l_{m,n}} \right)^n\\
\cong \left( \bigoplus_{m \ge 1}  \pi_{i+d-1}(S^{(d-1)m+1}) \otimes \cL(m)_n \right)^n.
\end{gather*}
The assertion follows because 
\[ \pi_{i+d-1}(S^{(d-1)m+1}) \cong 0 \]
for $m>\frac {i-1}{d-1}+1$.
\end{proof}

We now bound the polynomial degree of $\pi_i(B\hAut(\bigvee S^d))$.

\begin{proposition}\label{prop:polydegpi}
For $d\ge 3$ and $i \geq 2$, the $U\GL(\Z)$-module $ \pi_i(B\hAut_{\id}(\bigvee S^d))$ is polynomial of degree $\leq i$ in ranks $>-1$.
\end{proposition}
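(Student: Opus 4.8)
The plan is to combine the explicit description of the homotopy groups from \autoref{HMcor} with the $\FI\sharp$-module machinery (\autoref{SharpCor}) and the polynomial bounds already established. First I would observe that by \autoref{HMcor}, as a $U\GL(\Z)$-module, $\pi_i(B\hAut(\bigvee S^d))$ is a finite direct sum of the modules $n \mapsto \pi_{i+d-1}(S^{(d-1)m+1}) \otimes \cL(m)_n$, each wedged together and then raised to the $n$th power. Since polynomial degree is additive under direct sums (via \autoref{lem:polySES} applied to split short exact sequences) and the sum ranges over $1 \le m \le \frac{i-1}{d-1}+1$ with $(d-1)m+1 \le i+1$, it suffices to bound the polynomial degree of each summand and of the outermost "raise to the $n$th power" operation.

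The key step is to check that each piece $\cL(m)$, viewed as a sequence of $\fS_n$-representations, underlies an $\FI\sharp$-module whose values $\cL(m)_n$ have rank growing polynomially in $n$ of degree exactly $m$. The rank statement is classical (Witt's formula): $l_{m,n} = \frac{1}{m}\sum_{e \mid m}\mu(e) n^{m/e}$, which is a polynomial in $n$ of degree $m$. For the $\FI\sharp$-structure, I would use that the free Lie algebra functor $n \mapsto \mathrm{Lie}(n)$ is naturally an $\FI\sharp$-module: a partial injection $U \hookrightarrow T$ with $U \subseteq S$ acts by sending generators in $U$ to the corresponding generators in $T$ and generators in $S \setminus U$ to zero, which is compatible with the bracket, and $\cL(m)$ is the summand of bracket-length $m$, hence a sub-$\FI\sharp$-module. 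Then \autoref{SharpCor} gives that $\cL(m)$ is polynomial of degree $\le m$ in ranks $>-1$, and tensoring with the fixed abelian group $\pi_{i+d-1}(S^{(d-1)m+1})$ preserves this. Finally, the operation $A \mapsto A^{\oplus n}$ (i.e.\ precomposition of the underlying $U1$-module with multiplication by the tautological degree-one module, in the language of \autoref{rem:ZXpoly}) raises polynomial degree by exactly $1$; concretely, $\cL(m)^{\oplus n}$ has rank $\sim n^{m+1}$, and one checks directly from \autoref{defpoly} that multiplying a degree-$\le r$ module by $n$ yields a degree-$\le r+1$ module. Thus the summand indexed by $m$ contributes degree $\le m+1 \le \frac{i-1}{d-1}+2$, and since $d \ge 3$ forces $\frac{i-1}{d-1}+2 \le \frac{i-1}{2}+2 \le i$ for $i \ge 3$ (with the case $i = 2$, where only $m=1$ survives, giving degree $\le 2 = i$ directly), we conclude polynomial degree $\le i$ in ranks $>-1$.

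The main obstacle I anticipate is making the $\FI\sharp$-module structure on $\cL(m)$ genuinely compatible with the $U\GL(\Z)$-module structure coming from the Hilton--Milnor splitting and the action of homotopy automorphisms: the splitting of \autoref{HMcor} is a priori only a statement about $\fS_n$-representations (or even just about isomorphism types), and one must verify that the identification is natural enough that the $\FI\sharp$-structure on the right-hand side matches the transition maps of the $U\GL(\Z)$-module on the left. I would handle this by noting that \autoref{SharpCor} only requires the \emph{underlying $U\fS$-module} to extend to an $\FI\sharp$-module together with a polynomial bound on the number of generators, and by \autoref{rem:ZXpoly} polynomiality of a $U\GL(\Z)$-module is detected on the underlying $U1$-module; so it is enough to exhibit \emph{any} $\FI\sharp$-structure extending the underlying $U\fS$-module, which the Witt-basis description of $\cL(m)$ supplies, plus the polynomial rank bound from Witt's formula fed into \autoref{polyranksdeg}. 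This sidesteps the need for a fully natural geometric splitting and reduces everything to the already-established algebraic lemmas.
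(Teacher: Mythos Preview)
Your proposal is correct and follows essentially the same approach as the paper: both use \autoref{HMcor} to identify the underlying $U\fS$-module, verify an $\FI\sharp$-structure, bound the rank growth polynomially, and then invoke \autoref{SharpCor} together with \autoref{rem:ZXpoly}. The paper packages the argument slightly more efficiently by applying \autoref{SharpCor} once to the entire module---writing the underlying $U\fS$-module as $\bigoplus_m \pi_{i+d-1}(S^{(d-1)m+1}) \otimes \cL(m) \otimes \Z U\fS(1,-)$ and using only the crude bound $l_{m,n}\le n^m$---whereas you decompose into summands, treat $\cL(m)$ separately via Witt's formula, and then handle the outer $(-)^{\oplus n}$ as tensoring with the degree-one module $\Z U\fS(1,-)$; but these are cosmetic differences, and your anticipated obstacle (naturality of the Hilton--Milnor splitting with respect to the $U\GL(\Z)$-structure) is resolved exactly as you say and exactly as the paper does, by reducing to the underlying $U\fS$-module.
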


\begin{proof}

We will first show that the mapping $n \mapsto  \cL(m)_n$ assembles to an $\FI \sharp$-module which we will call $\cL(m)$. It is clear that there is a $U \fS$-module $\cL(m)$ whose value on $n$ is $\cL(m)_n$. This $U \fS$-module factors as the composition of the functor $\Z U\fS(1,-)$ with a functor $\Ab \m \Ab$. By Church--Ellenberg--Farb \cite[Theorem 4.1.5]{CEF}, representable functors $\Z U\fS(m,-)$ are $\FI\sharp$-modules and thus $\cL(m)$ has the structure of an $\FI\sharp$-module.

We have an isomorphism of $U\fS$-modules
\[ \pi_i(B\hAut_{\id}(\bigvee S^d)) \cong  \bigoplus_{1\le m \le \textstyle\frac {i-1}{d-1}+1}  \pi_{i-d+1}(S^{(d-1)m+1}) \otimes \cL(m)\otimes \Z U\fS(1,-) .\] % \otimes \Z U\fS(1,-) 
In particular, $\pi_i(B\hAut_{\id}(\bigvee S^d))$ has the structure of an $\FI\sharp$-module. Note that $l_{m,n} \leq n^m$ so the number of generators of $\pi_i(B\hAut(\bigvee_n S^d ))$ as an abelian group is bounded by polynomial of degree $ \leq \frac {i-1}{d-1}+2$ in $n$.  \autoref{SharpCor} implies that $\pi_i(B\hAut(\bigvee S^d )) $ is a polynomial $U\GL_n(\Z)$-module of degree $\leq \frac {i-1}{d-1}+2$. To prove the assertion, note that the floor of $\frac{i-2}{d-1} +2$ is at most $i$ if $d\ge 3$ and $i\ge 2$.
%
%Observe that $\pi_{i-d+1}(S^{(d-1)m+1}) \cong 0$ for $m \geq \frac{i}{d-1}-1 \geq i$. Combining this with  \autoref{HMcor} shows that \[\pi_i(B\hAut(\bigvee_n S^d ))  \cong \left( \bigoplus_{1 \leq m \leq i}  \pi_{i-d+1}(S^{(d-1)m+1}) \otimes (L_{m}) \right)^n. \] Now note that $l_{m,n} \leq n^m$ so the number of generators of $L_m(n)$ is bounded by a degree $m$ polynomial in $n$. We see that  $\pi_i(B\hAut(\bigvee S^d )) $ is an $\FI\sharp$-module with number of generators of $\pi_i(B\hAut(\bigvee_n S^d) ) $ bounded by $?????$. \autoref{SharpCor} implies that $\pi_i(B\hAut(\bigvee S^d ) $ is a polynomial $U\GL_n(\Z)$-module of degree $\leq ????$. 
%
% \todo{Fix ranges in this proof. Also there are cross terms that we don't deal with correctly.}
\end{proof}

\begin{remark}It is natural to study $\{\pi_i(B\hAut(\bigvee X))\}$ for more general spaces $X$. In general this will only form a $U\fS$-module. Lindell and Saleh \cite{LindellSaleh} have shown that the rational homotopy groups $\{\pi_i^{\Q}(B\hAut(\bigvee X))\}$ define a finitely generated $U\fS$-module, for any simply connected $X$ of finite type. \end{remark}

The following is a direct application of the work of Eilenberg--MacLane \cite[Section 20]{EMII}; also see\textbf{} Dwyer \cite[Lemma 4.3]{DwyerTwisted}. 

\begin{proposition} \label{DwyerCor}
Let $A$ and $B$ be a polynomial module $U \G$-modules of polynomial degree $\leq a$ and $\leq b$ respectively. Then $n \mapsto H_i(K(A_n,j);B_n)$ is polynomial of degree $\le  \frac {a\cdot i}j + b$.
\end{proposition}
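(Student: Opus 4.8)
The plan is to strip off the coefficient module $B$ using the universal coefficient theorem, and then to invoke the Eilenberg--MacLane computation of $H_*(K(\Pi,j);\Z)$ as a functor of the abelian group $\Pi$. First note that $n \mapsto H_i(K(A_n,j);B_n)$ really is a $U\G$-module: $K(-,j)$ is a functor from abelian groups to pointed spaces, so a morphism $\phi\colon m \to n$ of $U\G$ induces $K(A_m,j) \to K(A_n,j)$ as well as $B_m \to B_n$, and $H_i(-;-)$ is functorial in both variables. The universal coefficient sequence is natural in both slots, so (writing $A$, $B$ also for the corresponding $U\G$-modules and suppressing $n$) it gives a short exact sequence of $U\G$-modules
\[ 0 \longrightarrow H_i(K(A,j);\Z) \otimes B \longrightarrow H_i(K(A,j);B) \longrightarrow \Tor\bigl(H_{i-1}(K(A,j);\Z),B\bigr) \longrightarrow 0, \]
where $\otimes$ and $\Tor$ denote the pointwise operations on $U\G$-modules. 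Using the standard fact that the pointwise tensor product (resp.\ $\Tor$) of polynomial $U\G$-modules of degrees $\le p$ and $\le q$ is polynomial of degree $\le p+q$ --- which by \autoref{rem:ZXpoly} may be checked on underlying $U1$-modules and is proved by induction on $p+q$ from the right-exactness of $\otimes$ together with \autoref{lem:polySES} applied to the two-step filtration of $\coker(X\otimes Y)$ with graded pieces built from $\coker X \otimes Y$ and $\Sigma X \otimes \coker Y$ --- and \autoref{lem:polySES} once more, it suffices to show that $n \mapsto H_i(K(A_n,j);\Z)$ is polynomial of degree $\le ai/j$; the $\Tor$-term then has degree $\le a(i-1)/j + b \le ai/j + b$ and the middle term degree $\le ai/j + b$.

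For the $\Z$-coefficient statement I would use the Eilenberg--MacLane description of $H_*(K(\Pi,j);\Z)$ from \cite[Section 20]{EMII}, as packaged by Dwyer in \cite[Lemma 4.3]{DwyerTwisted}: as a functor of $\Pi$, the group $H_i(K(\Pi,j);\Z)$ is a finite direct sum of functors built by iterating $\otimes_\Z$, $\Tor_1^\Z$ and the identity functor, each summand involving at most $\lfloor i/j\rfloor$ tensor/Tor factors of $\Pi$; equivalently, $\Pi \mapsto H_i(K(\Pi,j);\Z)$ is a polynomial functor from abelian groups to abelian groups of degree $\le \lfloor i/j \rfloor$.

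The last ingredient is the closure property: if $T$ is a polynomial functor of abelian groups of degree $\le d$ and $C$ is a polynomial $U\G$-module of degree $\le c$, then $n \mapsto T(C_n)$ is a polynomial $U\G$-module of degree $\le cd$. I would prove this by induction on $d$, the case $d=0$ being trivial. For the inductive step one compares $T(C_n)$ with $T(C_{n+1})$ along the canonical map $T(C_n)\to T(C_{n+1})$: since $C_n \hookrightarrow C_{n+1}$ has cokernel $(\coker C)_n$, polynomial of degree $\le c-1$, and the $k$-th cross-effect of $T$ is additive in each of its $k$ slots, $T(\Sigma C)/T(C)$ acquires a finite filtration whose graded pieces are $k$-th cross-effects of $T$ ($1\le k\le d$) evaluated on copies of $C$ and of $\coker C$, each polynomial of degree $\le cd-1$, and the vanishing of the relevant kernels is inherited from $C$ because the cross-effects are additive. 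Hence $\coker(T\circ C)$ has degree $\le cd-1$ and $T\circ C$ degree $\le cd$. Applying this with $T = H_i(K(-,j);\Z)$, $d = \lfloor i/j\rfloor$, $C = A$ and $c = a$ gives that $n \mapsto H_i(K(A_n,j);\Z)$ is polynomial of degree $\le a\lfloor i/j\rfloor \le ai/j$, which together with the first paragraph finishes the proof.

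The main obstacle is this closure property combined with the rigid ``ranks $>-1$'' form of polynomiality: polynomial functors of abelian groups do not preserve monomorphisms, so establishing that the kernel conditions survive requires genuine care with the cross-effect filtration. The cleanest way to sidestep this is to run the whole argument on the functorial normalized chain complex computing $H_*(K(-,j);\Z)$ (or on the explicit bar/cubical models used by Eilenberg--MacLane), where each term is manifestly exact in the variable abelian group, so that a short exact sequence of $U\G$-modules with polynomial middle term is sent to a filtered object whose graded pieces are handled directly by \autoref{lem:polySES}. The remaining bookkeeping --- in particular that $\lfloor (i-1)/j\rfloor \le \lfloor i/j\rfloor \le i/j$, so no rounding is lost --- is routine.
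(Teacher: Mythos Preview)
Your overall strategy---strip off $B$ via the universal coefficient theorem, use Eilenberg--MacLane's identification of $H_i(K(-,j);\Z)$ as a polynomial functor of abelian groups of degree $\le \lfloor i/j\rfloor$, and then compose with the polynomial $U\G$-module $A$---is exactly what the paper is invoking with the citations to \cite{EMII} and \cite{DwyerTwisted}; no further detail is given there.

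You have correctly located the real obstacle, but your proposed fix does not work. The difficulty you flag for the composition step (polynomial functors $\Ab \to \Ab$ need not preserve monomorphisms, so the $\ker = 0$ clause of \autoref{defpoly} is not automatic) is equally present in your ``standard fact'' about pointwise $\otimes$ and $\Tor$: right-exactness of $\otimes$ controls $\coker(X \otimes Y)$ but says nothing about $\ker(X\otimes Y)$, and $X_n \otimes Y_n \to X_{n+1}\otimes Y_{n+1}$ can fail to be injective when the factors carry torsion. Your workaround---passing to a chain model whose terms are ``manifestly exact'' in $\Pi$---is not available: in any bar or simplicial model of $K(\Pi,j)$ the chain groups are free abelian on sets of the form $\Pi^N$, and $\Pi \mapsto \Z[\Pi^N]$ is not an additive (let alone exact) functor of $\Pi$.

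What rescues the argument in the paper's only use of this proposition (the proof of \autoref{HAutThm}) is that the relevant $A$ there is $\pi_R(B\hAut(\bigvee S^d))$, which by \autoref{prop:polydegpi} underlies an $\FI\sharp$-module; its stabilization maps $A_n \to A_{n+1}$ are therefore \emph{split} injective. For split injections your cross-effect filtration goes through on the nose, and since $K(A_n\oplus C,j)\simeq K(A_n,j)\times K(C,j)$ the retractions propagate through $H_*(K(-,j);-)$, so the inductively produced coefficient modules remain split as well. Your argument becomes correct under that extra hypothesis; make the splitting assumption explicit rather than appeal to a chain-level exactness that is not there.
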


%\begin{proof}
%See Dwyer \cite[Section 3]{DwyerTwisted} for a notion of degree of functors of several variables. The functor $U\G \to \Ab^2$ given by $n \mapsto (A_n,B_n)$ has degree $\le \max( a,b)$ and the functor $\Ab^2 \to \Ab$ given by $(A,B) \mapsto H_i(K(A,j);B)$ has degree $\frac ij -1$ by \cite[Lemma 4.3]{DwyerTwisted}. Then \cite[Lemma 3.2]{DwyerTwisted} implies that the composition has degree  $\le \max(a,b) \cdot ( \frac ij -1)$.
%\end{proof}

We now prove \autoref{HAutThm} which is homological stability for $B\hAut(\bigvee_n S^d)$. This theorem is equivalent to showing that \[H_i(B\hAut(\bigvee_{n} S^d),B\hAut(\bigvee_{n-1} S^d));\Z[\textstyle\frac 12]) \cong 0 \text{ for }i \leq \textstyle\frac{2}{3}n\] for $d \geq 3$.

% \autoref{HAutThm}.

%\begin{theorem}
%For $d\ge 2$, 
 %\[H_i(B\hAut(\bigvee_{n} S^d),B\hAut(\bigvee_{n-1} S^d);\Z[\textstyle\frac12]) \cong 0\quad \text{for }i \leq \textstyle\frac{2}{3}n.\] 
%\end{theorem}

\begin{proof}[Proof of \autoref{HAutThm}]
Let $P_r(n)$ denote $r$th stage of the Postnikov tower of $B\hAut(\bigvee_n S^d)$ and let $K_{r}(n)=K(\pi_r(B\hAut(\bigvee_n S^d )),r) $. Let $A$ be a $U\GL_n(\Z)$-module over $\Z[\frac 12]$ of polynomial degree $\leq a$ in ranks $>-1$ . 
We will prove by induction that 
\begin{equation}\label{eq:induction}H_i(P_r(n),P_r(n-1) ;A_n,A_{n-1}     ) \cong 0 \quad\text{for $i<\textstyle\frac23n-a$.}\end{equation}
Observe that
\[ H_i(P_{i+1}(n),P_{i+1}(n-1) ;\Z[\textstyle\frac12]\displaystyle     ) \cong H_i(B\hAut(\bigvee_{n} S^d),B\hAut(\bigvee_{n-1} S^d);\Z[\textstyle\frac12]).\]
Thus establishing \eqref{eq:induction} establishes the theorem.

Since $P_1(n)=B\GL_n(\Z)$, \autoref{improvedRangeGLn}  establishes the induction beginning. Now assume we have proven the claim for all $r<R$ for some $R\ge 2$. Consider the relative Serre spectral sequence for twisted homology associated to the map of fibrations 
\[K_R(n-1) \m P_{R}(n-1) \m P_{R-1}(n-1) \] 
mapping to 
\[K_R(n) \m P_{R}(n) \m P_{R-1}(n)  \] 
where the first fiber sequence has coefficients in $A_{n-1}$ and the second has coefficients in $A_n$. This spectral sequence has its $E^2$-page given by
\[E^2_{p,q} \cong H_p(P_{R-1}(n),P_{R-1}(n-1);H_q(K_R(n),K_R(n-1);A_n,A_{n-1})) \] 
and converges to $H_{p+q}(P_R(n),P_R(n-1);A_n,A_{n-1})$. 
Combining induction hypothesis, \autoref{prop:polydegpi}, and \autoref{DwyerCor} shows that 
\[ E^2_{p,q} \cong 0 \quad\text{for $p< \textstyle\frac23n -R\cdot \frac qR- a$.}\]
In particular, this is true for $p+q < \frac23n -a$, and thus
$H_{i}(P_R(n),P_R(n-1);A_n,A_{n-1}) \cong 0$ for $i<\frac 23n - a$.
\end{proof}

Recall that \[ \underset{d\to \infty}{\colim} \, H_i(B\hAut(\bigvee_{n}S^d)) \cong H_i(B \GL_n(\mathbb S)).\] Hence homological stability for these homotopy automorphism monoids implies homological stability for $B \GL_n(\mathbb S)$ (\autoref{GLS cor}).

\begin{remark}
It is an interesting question if a version of \autoref{HAutThm} holds for $d=1$ or $2$. For $d=1$, $B\hAut(\bigvee_n S^d )$ has the same homotopy type as the classifying space of the automorphism group of the free group on $n$ letters and can  be thought of as a moduli space of graphs. Hatcher--Vogtmann \cite[Proposition 1.2]{HV45} proved rational homological stability for $B\hAut(\bigvee_n S^1 )$ with a slope $\frac{4}{5}$ stable range. 
\end{remark}

\begin{remark}
In work in progress, the first two authors and Alexander Kupers have established a slope $1$ homological stable range for $\GL_n(\Z)$ with coefficients in $\Z[\frac 12]$.  This will allow one to improve the stable range for $B\hAut(\bigvee_{n}S^d)$ and $\GL_n(\mathbb S)$ to slope $1$ as well.
\end{remark}

\appendix
\section{A review of stability arguments and a heuristic overview of the paper}

\label{appendix}

In this appendix, we give a review of past stability arguments and explain how to generalize them to prove the theorems of the paper. This appendix will not involve rigorous proofs and can be freely ignored by any reader who does not find it helpful or enjoy this style of informal discussion. Many technical hypotheses will be omitted. We will first sketch the standard proof of homological stability for a family of groups. Then we will talk about how to prove stability with polynomial coefficients and how to prove representation stability with untwisted coefficients. Finally, we will sketch an approach to representation stability with polynomial coefficients. This will involve showing the polynomial coefficients themselves satisfy a form of representation stability. Apart from the final subsection, most of this is a summary of arguments appearing in \cite{RWW,PS,CE,Pa2} and others.

\subsection{Homological stability with untwisted coefficients}

Consider a family of groups 
\[G_0 \hookrightarrow G_1 \hookrightarrow G_2 \hookrightarrow\ldots \] such as the braid groups, general linear groups, etc.\ To prove homological stability for these groups, one considers a chain complex 
\[C^\G_n:= \Z \leftarrow \Z[ G_n /G_{n-1}]  \leftarrow \Z[ G_n /G_{n-2}] \leftarrow \ldots\]
This complex $C^\G_n$ is the reduced cellular chains on a certain $CW$-complex associated to the groups $G_n$. In many examples, one can use combinatorial or topological techniques to prove that $C^\G_n$ is highly connected, in a range increasing with $n$.

The filtration of $C^\G_n$ by homological degree induces a filtration of the homotopy orbits \[(C^\G_n)_{hG_n}:=C^\G_n \otimes^{\mathbb{L}}_{\Z G_n} \Z.\] The resulting spectral sequence has the form of \autoref{E1PageHU}. 

\begin{figure}[h!]    \centering \begin{tikzpicture} {\footnotesize
  \matrix (m) [matrix of math nodes,
    nodes in empty cells,nodes={minimum width=3ex,
    minimum height=5ex,outer sep=2pt},
    column sep=9ex,row sep=5ex, text height=1.5ex, text depth=0.25ex]{ 
     &[-4ex]   H_3(G_{n}) &  H_3(G_{n-1})   &   H_3(G_{n-2}) &[4ex]  H_3(G_{n-3})  &  \; \\  
     &[-4ex]   H_2(G_{n}) &  H_2(G_{n-1})   &   H_2(G_{n-2}) &[4ex]  H_2(G_{n-3})  & \;  \\          
     &[-4ex]  H_1(G_{n}) &  H_1(G_{n-1})   &   H_1(G_{n-2}) &[4ex]  H_1(G_{n-3}) &  \; \\             
      &[-4ex]  H_0(G_{n}) &  H_0(G_{n-1})   &   H_0(G_{n-2}) &[4ex]  H_0(G_{n-3}) & \; \\
&      &     &     & &\\       }; 

 \draw[-stealth, red] (m-1-3.west) -- (m-1-2.east) node [midway,above] {$d^1=\iota$} node [midway,below]{} ;
 \draw[-stealth, red] (m-2-3.west) -- (m-2-2.east) node [midway,above] {$d^1=\iota$} node [midway,below] {};
 \draw[-stealth, red] (m-3-3.west) -- (m-3-2.east) node [midway,above] {$d^1=\iota$} node [midway,below] {};
 \draw[-stealth, red] (m-4-3.west) -- (m-4-2.east) node [midway,above] {$d^1=\iota$} node [midway,below] {};

 \draw[-stealth, red] (m-1-4.west) -- (m-1-3.east) node [midway,above] {$d^1=0$} node [midway,below] {};
 \draw[-stealth, red] (m-2-4.west) -- (m-2-3.east) node [midway,above] {$d^1=0$} node [midway,below] {};
 \draw[-stealth, red] (m-3-4.west) -- (m-3-3.east) node [midway,above] {$d^1=0$} node [midway,below] {};
 \draw[-stealth, red] (m-4-4.west) -- (m-4-3.east) node [midway,above] {$d^1=0$} node [midway,below] {}; 

 \draw[-stealth, red] (m-1-5.west) -- (m-1-4.east) node [midway,above] {$d^1=\iota$} node [midway,below] {};
 \draw[-stealth, red] (m-2-5.west) -- (m-2-4.east) node [midway,above] {$d^1=\iota$} node [midway,below] {};
 \draw[-stealth, red] (m-3-5.west) -- (m-3-4.east) node [midway,above] {$d^1=\iota$} node [midway,below] {};
 \draw[-stealth, red] (m-4-5.west) -- (m-4-4.east) node [midway,above] {$d^1=\iota$} node [midway,below] {};

 \draw[-stealth, red] (m-1-6.west) -- (m-1-5.east);
 \draw[-stealth, red] (m-2-6.west) -- (m-2-5.east);
 \draw[-stealth, red] (m-3-6.west) -- (m-3-5.east);
 \draw[-stealth, red] (m-4-6.west) -- (m-4-5.east);

\draw[thick] (m-1-1.north east) -- (m-5-1.east) ;
\draw[thick] (m-5-1.north) -- (m-5-6.north east) ;

}
\end{tikzpicture}
\caption{$E^1$-page for untwisted homological stability} \label{E1PageHU}
\end{figure}  
 
The differentials on the $E^1$-page alternate between being equal to the stabilization map \[\iota :H_i(G_m) \m H_{i}(G_{m+1})\] or the zero map. This implies that the $E^2$-page has the form of \autoref{E2PageHU}. 

If we want to prove stability in homological degree $k$, we should study the $k$th row of the spectral sequence; more specifically, we should prove that the $k$th row of the $E^2$-page vanishes in a range increasing with $n$. By induction on $k$ we assume that all rows below the $k$th vanish in a range on the $E^2$-page. For $n$ large enough, this rules out differentials into the groups $\coker(H_k(G_{n-1}) \m H_k(G_{n}))$ and $  \ker(H_k(G_{n-1}) \m H_k(G_{n}))$, and all differentials out of these two groups leave the first quadrant. Thus, these groups agree with the corresponding values on the $E^\infty$-page. But by connectivity estimates of $C^\G_n$ we know that the $E^\infty$-page vanishes in a range, and we deduce homological stability.

%Suppose we know by induction that $H_0(G_n)$ and $H_1(G_n)$ stabilize. Then the spectral sequence has the form of \autoref{E2PageHUzero}. In other words, the bottom two rows vanishing in a range since their entries are the kernel or cokernel of the stabilization map. If we assume the chain complex $C^\G_n$ has vanishing homology in a range increasing with $n$, then the abutment of the spectral sequence, $H_*((C^\G_n)_{hG_n})$, will be zero in a range increasing with $n$. This implies $coker(H_2(G_{n-1}) \m H_2(G_{n}))$ and $  ker(H_2(G_{n-1}) \m H_2(G_{n}))$ both vanish and we obtain homological stability for $H_2(G_n)$. This is an instance of the inductive step in the standard argument proving homological stability.

\begin{figure}[h!]    \centering \begin{tikzpicture} {\footnotesize
  \matrix (m) [matrix of math nodes,
    nodes in empty cells,nodes={minimum width=3ex,
    minimum height=5ex,outer sep=2pt},
    column sep=9ex,row sep=5ex, text height=1.5ex, text depth=0.25ex]{ 
     &[-4ex]  \coker(H_3(G_{n-1}) \m H_3(G_{n})) &  \ker(H_3(G_{n-1}) \m H_3(G_{n})) &  \coker(H_3(G_{n-3}) \m H_3(G_{n-2})) &  &  \; \\  
     &[-4ex]  \coker(H_2(G_{n-1}) \m H_2(G_{n})) &  \ker(H_2(G_{n-1}) \m H_2(G_{n})) &  \coker(H_2(G_{n-3}) \m H_2(G_{n-2})) &   & \;  \\          
     &[-4ex]  \coker(H_1(G_{n-1}) \m H_1(G_{n})) &  \ker(H_1(G_{n-1}) \m H_1(G_{n})) &  \coker(H_1(G_{n-3}) \m H_1(G_{n-2})) &  &  \; \\             
      &[-4ex] \coker(H_0(G_{n-1}) \m H_0(G_{n})) &  \ker(H_0(G_{n-1}) \m H_0(G_{n})) &  \coker(H_0(G_{n-3}) \m H_0(G_{n-2})) & & \; \\
&      &     &     & &\\       };

\draw[thick] (m-1-1.north east) -- (m-5-1.east) ;
\draw[thick] (m-5-1.north) -- (m-5-6.north east) ;

}
\end{tikzpicture}
\caption{$E^2$-page for untwisted homological stability} \label{E2PageHU}
\end{figure}

%
%
%\begin{figure}[h!]    \centering \begin{tikzpicture} {\footnotesize
  %\matrix (m) [matrix of math nodes,
    %nodes in empty cells,nodes={minimum width=3ex,
    %minimum height=5ex,outer sep=2pt},
    %column sep=9ex,row sep=5ex, text height=1.5ex, text depth=0.25ex]{ 
  %%
     %\\  
     %&[-4ex]  coker(H_2(G_{n-1}) \m H_2(G_{n})) &  ker(H_2(G_{n-1}) \m H_2(G_{n})) &  &   & \;  \\          
     %&[-4ex]  0 &  0&  0 & 0 &  \; \\             
      %&[-4ex] 0 &  0 &  0 & 0 & 0 \; \\
%&      &     &     & &\\       }; 
%
 %
%
%
 %\draw[-stealth, red] (m-4-5.west) -- (m-2-2.east) node [midway,above] {$d^3$} node [midway,below] {};
%
%\draw[-stealth, red] (m-3-4.west) -- (m-2-2.east) node [midway,above] {$d^2$} node [midway,below] {};
%
%
%\draw[thick] (m-1-1.north east) -- (m-5-1.east) ;
%\draw[thick] (m-5-1.north) -- (m-5-6.north east) ;
%
%
%}
%\end{tikzpicture}
%\caption{$E^2$-page for untwisted homological stability with vanishing range shown} \label{E2PageHUzero}
%\end{figure}  

\subsection{Homological stability with twisted coefficients}

Consider a family of representations $A_n$ of the groups $G_n$ equipped with $G_n$-equivariant maps $A_n \m A_{n+1}$. Now suppose we want to prove stability for $H_i(G_n;A_n)$. Consider the chain complex:  
\[C^\G_n(A):= A_n \leftarrow  \Ind_{G_{n-1}}^{G_n} A_{n-1}  \leftarrow  \Ind_{G_{n-2}}^{G_n} A_{n-2} \leftarrow \ldots \]
We call this the \emph{central stability chains} of $A=\{A_n\}$. For $A_n=\Z$, this is the chain complex $C^\G_n$ considered when proving untwisted homological stability in the previous subsection. Considering the homotopy orbits $C^\G_n(A)_{hG_n}$ one obtains a spectral sequence with $E^1$-page of the form of \autoref{E1PageHTdrs}.

\begin{figure}[h!]    \centering \begin{tikzpicture} {\footnotesize
  \matrix (m) [matrix of math nodes,
    nodes in empty cells,nodes={minimum width=3ex,
    minimum height=5ex,outer sep=2pt},
    column sep=9ex,row sep=5ex, text height=1.5ex, text depth=0.25ex]{ 
     &[-4ex]   H_3(G_{n};A_n) &  H_3(G_{n-1};A_{n-1})   &   H_3(G_{n-2};A_{n-2}) &[4ex]  H_3(G_{n-3};A_{n-3})  &  \; \\  
     &[-4ex]   H_2(G_{n};A_n) &  H_2(G_{n-1};A_{n-1})   &   H_2(G_{n-2};A_{n-2}) &[4ex]  H_2(G_{n-3};A_{n-3})  & \;  \\          
     &[-4ex]  H_1(G_{n};A_n) &  H_1(G_{n-1};A_{n-1})   &   H_1(G_{n-2};A_{n-2}) &[4ex]  H_1(G_{n-3};A_{n-3}) &  \; \\             
      &[-4ex]  H_0(G_{n};A_n) &  H_0(G_{n-1};A_{n-1})   &   H_0(G_{n-2};A_{n-2}) &[4ex]  H_0(G_{n-3};A_{n-3}) & \; \\
&      &     &     & &\\       }; 

 \draw[-stealth, red] (m-1-3.west) -- (m-1-2.east) node [midway,above] {$d^1=\iota$} node [midway,below]{} ;
 \draw[-stealth, red] (m-2-3.west) -- (m-2-2.east) node [midway,above] {$d^1=\iota$} node [midway,below] {};
 \draw[-stealth, red] (m-3-3.west) -- (m-3-2.east) node [midway,above] {$d^1=\iota$} node [midway,below] {};
 \draw[-stealth, red] (m-4-3.west) -- (m-4-2.east) node [midway,above] {$d^1=\iota$} node [midway,below] {};

 \draw[-stealth, red] (m-1-4.west) -- (m-1-3.east) node [midway,above] {$d^1=0$} node [midway,below] {};
 \draw[-stealth, red] (m-2-4.west) -- (m-2-3.east) node [midway,above] {$d^1=0$} node [midway,below] {};
 \draw[-stealth, red] (m-3-4.west) -- (m-3-3.east) node [midway,above] {$d^1=0$} node [midway,below] {};
 \draw[-stealth, red] (m-4-4.west) -- (m-4-3.east) node [midway,above] {$d^1=0$} node [midway,below] {}; 

 \draw[-stealth, red] (m-1-5.west) -- (m-1-4.east) node [midway,above] {$d^1=\iota$} node [midway,below] {};
 \draw[-stealth, red] (m-2-5.west) -- (m-2-4.east) node [midway,above] {$d^1=\iota$} node [midway,below] {};
 \draw[-stealth, red] (m-3-5.west) -- (m-3-4.east) node [midway,above] {$d^1=\iota$} node [midway,below] {};
 \draw[-stealth, red] (m-4-5.west) -- (m-4-4.east) node [midway,above] {$d^1=\iota$} node [midway,below] {};

 \draw[-stealth, red] (m-1-6.west) -- (m-1-5.east);
 \draw[-stealth, red] (m-2-6.west) -- (m-2-5.east);
 \draw[-stealth, red] (m-3-6.west) -- (m-3-5.east);
 \draw[-stealth, red] (m-4-6.west) -- (m-4-5.east);

\draw[thick] (m-1-1.north east) -- (m-5-1.east) ;
\draw[thick] (m-5-1.north) -- (m-5-6.north east) ;

}
\end{tikzpicture}
\caption{$E^1$-page for twisted homological stability (using derived representation stability)} \label{E1PageHTdrs}
\end{figure}  

This spectral sequence behaves in exactly the same way as the spectral sequence in the previous subsection. If one knows that $C^\G_n(A)$ has vanishing homology in a range increasing with $n$, then one can use the same argument to prove homological stability with coefficients in $A_n$. Unfortunately this chain complex does not come from a combinatorially defined $CW$-complex. In fact, vanishing of the homology of this chain complex is roughly equivalent to the condition we call \emph{derived representation stability}. Since it is often hard to check this condition, this approach is not commonly used; however, we will return to this approach soon.

One could instead just consider the chain complex $C^\G_n \otimes A_n$, which has vanishing homology in the same range as $C^\G_n$. Taking homotopy orbits we obtain a spectral sequence with $E^1$-page of the form of \autoref{E1PageHTnotRel}. The drawback of this spectral sequence is that it seems to be designed for comparing $H_i(G_n;A_n)$ with $H_i(G_{n-1};A_n)$, and we are instead interested in comparing $H_i(G_n;A_n)$ with $H_i(G_{n-1};A_{n-1})$.

\begin{figure}[h!]    \centering \begin{tikzpicture} {\footnotesize
  \matrix (m) [matrix of math nodes,
    nodes in empty cells,nodes={minimum width=3ex,
    minimum height=5ex,outer sep=2pt},
    column sep=9ex,row sep=5ex, text height=1.5ex, text depth=0.25ex]{ 
     &[-4ex]   H_3(G_{n};A_n) &  H_3(G_{n-1};A_{n})   &   H_3(G_{n-2};A_{n}) &[4ex]  H_3(G_{n-3};A_{n})  &  \; \\  
     &[-4ex]   H_2(G_{n};A_n) &  H_2(G_{n-1};A_{n})   &   H_2(G_{n-2};A_{n}) &[4ex]  H_2(G_{n-3};A_{n})  & \;  \\          
     &[-4ex]  H_1(G_{n};A_n) &  H_1(G_{n-1};A_{n})   &   H_1(G_{n-2};A_{n}) &[4ex]  H_1(G_{n-3};A_{n}) &  \; \\             
      &[-4ex]  H_0(G_{n};A_n) &  H_0(G_{n-1};A_{n})   &   H_0(G_{n-2};A_{n}) &[4ex]  H_0(G_{n-3};A_{n}) & \; \\
&      &     &     & &\\       }; 

 \draw[-stealth, red] (m-1-3.west) -- (m-1-2.east) node [midway,above] {} node [midway,below]{} ;
 \draw[-stealth, red] (m-2-3.west) -- (m-2-2.east) node [midway,above] {} node [midway,below] {};
 \draw[-stealth, red] (m-3-3.west) -- (m-3-2.east) node [midway,above] {} node [midway,below] {};
 \draw[-stealth, red] (m-4-3.west) -- (m-4-2.east) node [midway,above] {} node [midway,below] {};

 \draw[-stealth, red] (m-1-4.west) -- (m-1-3.east) node [midway,above] {} node [midway,below] {};
 \draw[-stealth, red] (m-2-4.west) -- (m-2-3.east) node [midway,above] {} node [midway,below] {};
 \draw[-stealth, red] (m-3-4.west) -- (m-3-3.east) node [midway,above] {} node [midway,below] {};
 \draw[-stealth, red] (m-4-4.west) -- (m-4-3.east) node [midway,above] {} node [midway,below] {}; 

 \draw[-stealth, red] (m-1-5.west) -- (m-1-4.east) node [midway,above] {} node [midway,below] {};
 \draw[-stealth, red] (m-2-5.west) -- (m-2-4.east) node [midway,above] {} node [midway,below] {};
 \draw[-stealth, red] (m-3-5.west) -- (m-3-4.east) node [midway,above] {} node [midway,below] {};
 \draw[-stealth, red] (m-4-5.west) -- (m-4-4.east) node [midway,above] {} node [midway,below] {};

 \draw[-stealth, red] (m-1-6.west) -- (m-1-5.east);
 \draw[-stealth, red] (m-2-6.west) -- (m-2-5.east);
 \draw[-stealth, red] (m-3-6.west) -- (m-3-5.east);
 \draw[-stealth, red] (m-4-6.west) -- (m-4-5.east);

\draw[thick] (m-1-1.north east) -- (m-5-1.east) ;
\draw[thick] (m-5-1.north) -- (m-5-6.north east) ;

}
\end{tikzpicture}
\caption{$E^1$-page of a not so useful spectral sequence} \label{E1PageHTnotRel}
\end{figure}

The spectral sequence of \autoref{E1PageHTnotRel} is nice since we can check that it converges to zero in a range. The spectral sequence of \autoref{E1PageHTdrs} is nice because it has the desired $E^1$-page. If these two spectral sequences were equal in a range, then we could profit off of the desirable properties of each spectral sequence. This is where the polynomial condition comes in. Unlike derived representation stability, polynomiality is often straightforward to verify.

Polynomiality implies that for large $n$, if  $n<m$, then the map $A_n \m A_m$ is injective and the sequence of cokernels $\{\coker(A_{n} \m A_m)\}$ is ``simpler'' than the sequence $\{A_n\}$. If ``simpler'' implied $H_i(G_m;\coker(A_m \m A_n  ))=0  $, then the two spectral sequences would agree and this would prove homological stability with polynomial coefficients. It is not true that we can assume that $H_i(G_n;\coker(A_n \m A_m  ))$ vanishes, only that it stabilizes. In particular, the spectral sequences of \autoref{E1PageHTdrs} and \autoref{E1PageHTnotRel} do not in fact agree in a range in general. However, if one considers \emph{relative} versions, then they do agree in a range. One is led to considering the spectral sequence associated to \[\cone \Big((C^\G_{n-1} \otimes A_{n-1})_{hG_{n-1}} \m  (C^\G_n \otimes A_n)_{hG_n}   \Big).\]
This has $E^1$-page depicted in \autoref{E1PageHTcone}. If the sequence of representations $A=\{A_n\}$ satisfy the polynomial condition, then one can show in a range that the $E^1$-page can be simplified to that depicted in \autoref{E1PageHTconeSimp}. The $E^1$- and $E^2$-pages of \autoref{E1PageHTconeSimp} are easy to understand, and it is easy to see that the $E^\infty$-page of \autoref{E1PageHTcone} vanishes in a range. At this point we can run the same argument as in the case of constant coefficients, to prove homological stability with polynomial coefficients.

\begin{figure}[h!]    \centering \begin{tikzpicture} {\footnotesize
  \matrix (m) [matrix of math nodes,
    nodes in empty cells,nodes={minimum width=3ex,
    minimum height=5ex,outer sep=2pt},
    column sep=9ex,row sep=5ex, text height=1.5ex, text depth=0.25ex]{ 
     &[-4ex] H_3(G_{n},G_{n-1};A_n,A_{n-1}) &  H_3(G_{n-1},G_{n-2};A_n,A_{n-1})  &  H_3(G_{n-2},G_{n-3};A_n,A_{n-1}) & &  \; \\  
     &[-4ex]  H_2(G_{n},G_{n-1};A_n,A_{n-1}) &  H_2(G_{n-1},G_{n-2};A_n,A_{n-1})  &  H_2(G_{n-2},G_{n-3};A_n,A_{n-1}) & &   \\          
     &[-4ex]  H_1(G_{n},G_{n-1};A_n,A_{n-1}) &  H_1(G_{n-1},G_{n-2};A_n,A_{n-1})  &  H_1(G_{n-2},G_{n-3};A_n,A_{n-1}) & &   \; \\             
      &[-4ex]  H_0(G_{n},G_{n-1};A_n,A_{n-1}) &  H_0(G_{n-1},G_{n-2};A_n,A_{n-1})  &  H_0(G_{n-2},G_{n-3};A_n,A_{n-1}) & & \; \\
&      &     &     & &\\       }; 

 \draw[-stealth, red] (m-1-3.west) -- (m-1-2.east) node [midway,above] {} node [midway,below]{} ;
 \draw[-stealth, red] (m-2-3.west) -- (m-2-2.east) node [midway,above] {} node [midway,below] {};
 \draw[-stealth, red] (m-3-3.west) -- (m-3-2.east) node [midway,above] {} node [midway,below] {};
 \draw[-stealth, red] (m-4-3.west) -- (m-4-2.east) node [midway,above] {} node [midway,below] {};

 \draw[-stealth, red] (m-1-4.west) -- (m-1-3.east) node [midway,above] {} node [midway,below] {};
 \draw[-stealth, red] (m-2-4.west) -- (m-2-3.east) node [midway,above] {} node [midway,below] {};
 \draw[-stealth, red] (m-3-4.west) -- (m-3-3.east) node [midway,above] {} node [midway,below] {};
 \draw[-stealth, red] (m-4-4.west) -- (m-4-3.east) node [midway,above] {} node [midway,below] {}; 

 \draw[-stealth, red] (m-1-5.west) -- (m-1-4.east) node [midway,above] {} node [midway,below] {};
 \draw[-stealth, red] (m-2-5.west) -- (m-2-4.east) node [midway,above] {} node [midway,below] {};
 \draw[-stealth, red] (m-3-5.west) -- (m-3-4.east) node [midway,above] {} node [midway,below] {};
 \draw[-stealth, red] (m-4-5.west) -- (m-4-4.east) node [midway,above] {} node [midway,below] {};

\draw[thick] (m-1-1.north east) -- (m-5-1.east) ;
\draw[thick] (m-5-1.north) -- (m-5-6.north east) ;

}
\end{tikzpicture}
\caption{$E^1$-page associated to $\cone \Big((C^\G_{n-1} \otimes A_{n-1})_{hG_{n-1}} \m  (C^\G_n \otimes A_n)_{hG_n} \Big )$  } \label{E1PageHTcone}
\end{figure}

\begin{figure}[h!]    \centering \begin{tikzpicture} {\footnotesize
  \matrix (m) [matrix of math nodes,
    nodes in empty cells,nodes={minimum width=3ex,
    minimum height=5ex,outer sep=2pt},
    column sep=9ex,row sep=5ex, text height=1.5ex, text depth=0.25ex]{ 
     &[-4ex] H_3(G_{n},G_{n-1};A_n,A_{n-1}) &  H_3(G_{n-1},G_{n-2};A_{n-1},A_{n-2})  &  H_3(G_{n-2},G_{n-3};A_{n-2},A_{n-3}) & &  \; \\  
     &[-4ex]  H_2(G_{n},G_{n-1};A_n,A_{n-1}) &  H_2(G_{n-1},G_{n-2};A_{n-1},A_{n-2})  &  H_2(G_{n-2},G_{n-3};A_{n-2},A_{n-3}) & &   \\          
     &[-4ex]  H_1(G_{n},G_{n-1};A_n,A_{n-1}) &  H_1(G_{n-1},G_{n-2};A_{n-1},A_{n-2})  &  H_1(G_{n-2},G_{n-3};A_{n-2},A_{n-3}) & &   \; \\             
      &[-4ex]  H_0(G_{n},G_{n-1};A_n,A_{n-1}) &  H_0(G_{n-1},G_{n-2};A_{n-1},A_{n-2})  &  H_0(G_{n-2},G_{n-3};A_{n-2},A_{n-3}) & & \; \\
&      &     &     & &\\       }; 

 \draw[-stealth, red] (m-1-3.west) -- (m-1-2.east) node [midway,above] {} node [midway,below]{} ;
 \draw[-stealth, red] (m-2-3.west) -- (m-2-2.east) node [midway,above] {} node [midway,below] {};
 \draw[-stealth, red] (m-3-3.west) -- (m-3-2.east) node [midway,above] {} node [midway,below] {};
 \draw[-stealth, red] (m-4-3.west) -- (m-4-2.east) node [midway,above] {} node [midway,below] {};

 \draw[-stealth, red] (m-1-4.west) -- (m-1-3.east) node [midway,above] {} node [midway,below] {};
 \draw[-stealth, red] (m-2-4.west) -- (m-2-3.east) node [midway,above] {} node [midway,below] {};
 \draw[-stealth, red] (m-3-4.west) -- (m-3-3.east) node [midway,above] {} node [midway,below] {};
 \draw[-stealth, red] (m-4-4.west) -- (m-4-3.east) node [midway,above] {} node [midway,below] {}; 

 \draw[-stealth, red] (m-1-5.west) -- (m-1-4.east) node [midway,above] {} node [midway,below] {};
 \draw[-stealth, red] (m-2-5.west) -- (m-2-4.east) node [midway,above] {} node [midway,below] {};
 \draw[-stealth, red] (m-3-5.west) -- (m-3-4.east) node [midway,above] {} node [midway,below] {};
 \draw[-stealth, red] (m-4-5.west) -- (m-4-4.east) node [midway,above] {} node [midway,below] {};

\draw[thick] (m-1-1.north east) -- (m-5-1.east) ;
\draw[thick] (m-5-1.north) -- (m-5-6.north east) ;

}
\end{tikzpicture}
\caption{Simplified $E^1$-page associated to $\cone \Big((C^\G_{n-1} \otimes A_{n-1})_{hG_{n-1}} \m  (C^\G_n \otimes A_n)_{hG_n} \Big )$  } \label{E1PageHTconeSimp}
\end{figure}

\subsection{Representation stability with untwisted coefficients}

We now change gears and discuss representation stability with untwisted coefficients. 	Consider a family of short exact sequences of groups 
	\[1 \m N_n \m G_n \m Q_n  \m 1.\]   For example, this could be the pure braid groups mapping to the braid groups with symmetric groups as quotients. A way to try to prove representation stability for the $H_i(N_n)$ as $Q_n$-representations is as follows. The chain complex $C^\G_n$ has an action of $N_n$, so one can consider the spectral sequence associated to $(C^\G_n)_{hN_n}$. The $E^1$-page is depicted in \autoref{E1PageRU}.

\begin{figure}[h!]    \centering \begin{tikzpicture} {\footnotesize
  \matrix (m) [matrix of math nodes,
    nodes in empty cells,nodes={minimum width=3ex,
    minimum height=5ex,outer sep=2pt},
    column sep=9ex,row sep=5ex, text height=1.5ex, text depth=0.25ex]{ 
     &[-4ex]   H_3(N_{n}) & \Ind_{Q_{n-1}}^{Q_n}  H_3(N_{n-1})   &   \Ind_{Q_{n-2}}^{Q_n} H_3(N_{n-2}) & \Ind_{Q_{n-3}}^{Q_n}  H_3(N_{n-3})  &  \; \\  
     &[-4ex]   H_2(N_{n}) & \Ind_{Q_{n-1}}^{Q_n}  H_2(N_{n-1})   &   \Ind_{Q_{n-2}}^{Q_n} H_2(N_{n-2}) & \Ind_{Q_{n-3}}^{Q_n}  H_2(N_{n-3})  & \;  \\          
     &[-4ex] H_1(N_{n}) & \Ind_{Q_{n-1}}^{Q_n}  H_1(N_{n-1})   &   \Ind_{Q_{n-2}}^{Q_n} H_1(N_{n-2}) & \Ind_{Q_{n-3}}^{Q_n}  H_1(N_{n-3})  &  \; \\             
      &[-4ex]  H_0(N_{n}) & \Ind_{Q_{n-1}}^{Q_n}  H_0(N_{n-1})   &   \Ind_{Q_{n-2}}^{Q_n} H_0(N_{n-2}) & \Ind_{Q_{n-3}}^{Q_n}  H_0(N_{n-3})  & \; \\
&      &     &     & &\\       }; 

 \draw[-stealth, red] (m-1-3.west) -- (m-1-2.east) node [midway,above] {} node [midway,below]{} ;
 \draw[-stealth, red] (m-2-3.west) -- (m-2-2.east) node [midway,above] {} node [midway,below] {};
 \draw[-stealth, red] (m-3-3.west) -- (m-3-2.east) node [midway,above] {} node [midway,below] {};
 \draw[-stealth, red] (m-4-3.west) -- (m-4-2.east) node [midway,above] {} node [midway,below] {};

 \draw[-stealth, red] (m-1-4.west) -- (m-1-3.east) node [midway,above] {} node [midway,below] {};
 \draw[-stealth, red] (m-2-4.west) -- (m-2-3.east) node [midway,above] {} node [midway,below] {};
 \draw[-stealth, red] (m-3-4.west) -- (m-3-3.east) node [midway,above] {} node [midway,below] {};
 \draw[-stealth, red] (m-4-4.west) -- (m-4-3.east) node [midway,above] {} node [midway,below] {}; 

 \draw[-stealth, red] (m-1-5.west) -- (m-1-4.east) node [midway,above] {} node [midway,below] {};
 \draw[-stealth, red] (m-2-5.west) -- (m-2-4.east) node [midway,above] {} node [midway,below] {};
 \draw[-stealth, red] (m-3-5.west) -- (m-3-4.east) node [midway,above] {} node [midway,below] {};
 \draw[-stealth, red] (m-4-5.west) -- (m-4-4.east) node [midway,above] {} node [midway,below] {};

 \draw[-stealth, red] (m-1-6.west) -- (m-1-5.east);
 \draw[-stealth, red] (m-2-6.west) -- (m-2-5.east);
 \draw[-stealth, red] (m-3-6.west) -- (m-3-5.east);
 \draw[-stealth, red] (m-4-6.west) -- (m-4-5.east);

\draw[thick] (m-1-1.north east) -- (m-5-1.east) ;
\draw[thick] (m-5-1.north) -- (m-5-6.north east) ;

}
\end{tikzpicture}
\caption{$E^1$-page for untwisted representation stability} \label{E1PageRU}
\end{figure}  

%Using that the spectral sequence converges to zero in a range, %it suffices to produce zeros on the $E^2$-page as indicated in %\autoref{E2PageRUzero}.

On the $E^2$-page, the leftmost column is given by \[\coker( \Ind_{Q_{n-1}}^{Q_n} H_i(N_{n-1}) \m H_i(N_{n})). \] Thus, the leftmost column measures the \emph{generators} of $H_i(N):=\{H_i(N_{n})\}$ in the sense of representation stability. Similarly, the second column from the left measures \emph{relations}. 

Let us try to run a similar argument as in the case of homological stability. Thus we assume inductively that all rows below the $k$th row are zero in a range on the $E^2$-page, and we analyze the $k$th row. By the same argument as in the constant coefficient case, we deduce that the entries in the first two columns of the $E^2$-page on the $k$th row are zero for $n$ large enough. Note however that we can not immediately say anything about the third column or beyond, since a priori a nontrivial $E^2$-differential may emanate from these entries. Since the first two columns measure generators and relations, what we can deduce is precisely that $H_k(N)$ is presented in finite degree as a sequence of $Q_n$-representations.  To make this argument work inductively, we need that zeros on the left two columns on the $E^2$-page propagate to the right as $n$ increases. In other words, we need that if a sequence of $Q_n$-representations is presented in finite degree, then it has derived representation stability. This condition on the groups $Q_n$ is what we call \emph{degreewise coherence}. It is currently only known for a few families of groups, such as the symmetric groups.

\subsection{Representation stability with twisted coefficients}

As before, consider a collection of short exact sequences of groups \[1 \m N_n \m G_n \m Q_n  \m 1\] and let $A_n$ be a $G_n$-representation. Now suppose we want to prove $H_i(N_n;A_n)$ has representation stability with respect to the $Q_n$-action. Taking the homotopy orbits $C_n^\G(A)_{hN_n}$ with respect to the $N_n$-action gives a spectral sequence with $E^1$-page as in \autoref{E1PageRTdr}. The $E^1$ and $E^2$-pages can be analyzed exactly as was done in the previous section. The problem is that in order to know that the spectral sequences converges to zero in a range, we need to know derived representation stability for $A=\{A_n\}$, which is hard to check.

\begin{figure}[h!]    \centering \begin{tikzpicture} {\footnotesize
  \matrix (m) [matrix of math nodes,
    nodes in empty cells,nodes={minimum width=3ex,
    minimum height=5ex,outer sep=2pt},
    column sep=9ex,row sep=5ex, text height=1.5ex, text depth=0.25ex]{ 
     &[-4ex]   H_3(N_{n};A_n) & \Ind_{Q_{n-1}}^{Q_n}  H_3(N_{n-1};A_{n-1})   &   \Ind_{Q_{n-2}}^{Q_n} H_3(N_{n-2};A_{n-2}) & \Ind_{Q_{n-3}}^{Q_n}  H_3(N_{n-3};A_{n-3})  &  \; \\  
     &[-4ex]   H_2(N_{n};A_n) & \Ind_{Q_{n-1}}^{Q_n}  H_2(N_{n-1};A_{n-1})   &   \Ind_{Q_{n-2}}^{Q_n} H_2(N_{n-2};A_{n-2}) & \Ind_{Q_{n-3}}^{Q_n}  H_2(N_{n-3};A_{n-3})  & \;  \\          
     &[-4ex] H_1(N_{n};A_n) & \Ind_{Q_{n-1}}^{Q_n}  H_1(N_{n-1};A_{n-1})   &   \Ind_{Q_{n-2}}^{Q_n} H_1(N_{n-2};A_{n-2}) & \Ind_{Q_{n-3}}^{Q_n}  H_1(N_{n-3};A_{n-3})  &  \; \\             
      &[-4ex]  H_0(N_{n};A_n) & \Ind_{Q_{n-1}}^{Q_n}  H_0(N_{n-1};A_{n-1})   &   \Ind_{Q_{n-2}}^{Q_n} H_0(N_{n-2};A_{n-2}) & \Ind_{Q_{n-3}}^{Q_n}  H_0(N_{n-3};A_{n-3})  & \; \\
&      &     &     & &\\       }; 

 \draw[-stealth, red] (m-1-3.west) -- (m-1-2.east) node [midway,above] {} node [midway,below]{} ;
 \draw[-stealth, red] (m-2-3.west) -- (m-2-2.east) node [midway,above] {} node [midway,below] {};
 \draw[-stealth, red] (m-3-3.west) -- (m-3-2.east) node [midway,above] {} node [midway,below] {};
 \draw[-stealth, red] (m-4-3.west) -- (m-4-2.east) node [midway,above] {} node [midway,below] {};

 \draw[-stealth, red] (m-1-4.west) -- (m-1-3.east) node [midway,above] {} node [midway,below] {};
 \draw[-stealth, red] (m-2-4.west) -- (m-2-3.east) node [midway,above] {} node [midway,below] {};
 \draw[-stealth, red] (m-3-4.west) -- (m-3-3.east) node [midway,above] {} node [midway,below] {};
 \draw[-stealth, red] (m-4-4.west) -- (m-4-3.east) node [midway,above] {} node [midway,below] {}; 

 \draw[-stealth, red] (m-1-5.west) -- (m-1-4.east) node [midway,above] {} node [midway,below] {};
 \draw[-stealth, red] (m-2-5.west) -- (m-2-4.east) node [midway,above] {} node [midway,below] {};
 \draw[-stealth, red] (m-3-5.west) -- (m-3-4.east) node [midway,above] {} node [midway,below] {};
 \draw[-stealth, red] (m-4-5.west) -- (m-4-4.east) node [midway,above] {} node [midway,below] {};

 \draw[-stealth, red] (m-1-6.west) -- (m-1-5.east);
 \draw[-stealth, red] (m-2-6.west) -- (m-2-5.east);
 \draw[-stealth, red] (m-3-6.west) -- (m-3-5.east);
 \draw[-stealth, red] (m-4-6.west) -- (m-4-5.east);

\draw[thick] (m-1-1.north east) -- (m-5-1.east) ;
\draw[thick] (m-5-1.north) -- (m-5-6.north east) ;

}
\end{tikzpicture}
\caption{$E^1$-page for twisted homological stability (assuming derived representation stability)} \label{E1PageRTdr}
\end{figure}

As before, it is reasonable to expect that polynomiality should be a sufficient replacement for derived representation stability. If we had considered the action of $N_n$ on $C^\G_n \otimes A_n$ instead of on $C^\G_n(A_n)$, then we would have obtained a spectral sequence similar to \autoref{E1PageRTdr} except with $H_q(N_p;A_p)$ replaced with $H_q(N_p;A_n)$ on the $E^1$-page. This spectral sequence does converge to zero, but has an undesirable $E^1$-page. As before, we cannot immediately use the polynomial condition to replace $H_i(N_p;A_n)$ with $H_i(N_p;A_p)$. In the case of untwisted coefficients, the spectral sequence we used came from considering \[\cone \Big((C^\G_{n-1} \otimes A_{n-1})_{hG_{n-1}} \m  (C^\G_n \otimes A_n)_{hG_n}   \Big).\] One might guess that we just need to replace $G$'s with $N$'s and instead consider \[\cone \Big((C^\G_{n-1} \otimes A_{n-1})_{hN_{n-1}} \m  (C^\G_n \otimes A_n)_{hN_n}   \Big).\] But as far as we can tell, this is not useful.

The mapping cone works very well to measure homological stability, and this was useful when we were trying to prove homological stability. However, when we are trying to prove representation stability, we should replace ``$\cone$'' with something that measures representation stability, like central stability chains.  

We remark, however, that \[\cone \Big((C^\G_{n-1} \otimes A_{n-1})_{hN_{n-1}} \m  (C^\G_n \otimes A_n)_{hN_n}   \Big)\] is naturally a triple complex with one of the three directions concentrated in only two degrees. When we replace ``$\cone$'' with central stability chains, we obtain something that is a triple complex in a more essential way. Triple complexes are hard to study. To simplify things, let us assume each $N_n=1$ (and hence $Q_n=G_n$).  With untwisted coefficients, there is nothing interesting about $H_*(1)$. However, $H_0(1;A_n)=A_n$, so representation stability for $1$ with coefficients in $A$ is the same as representation stability for $A$. Once we set $N_n=1$, the triple complex become a double complex. This spectral sequence converges to zero as before. The polynomial condition lets us simplify the $E^1$-page in a range. This is the reasoning that led us to study the spectral sequences used in \autoref{SecPolyn} of this paper. The upshot is that this spectral sequence lets us prove that polynomial coefficient systems have derived representation stability. Once we have establish this we can go back and reconsider the spectral sequence of \autoref{E1PageRTdr}. The only reason why that spectral sequence did not seem useful was we did not know if it converged to zero. With the polynomiality implies derived representation stability result, we do know that it converges to zero and thus we can profit off of its simpler $E^1$-page. This is what we do in \autoref{Sec4}.

\bibliographystyle{amsalpha}
\bibliography{Polynomial}

\end{document}